\newcommand{\lvt}{\left|\kern-1.35pt\left|\kern-1.3pt\left|}
\newcommand{\rvt}{\right|\kern-1.3pt\right|\kern-1.35pt\right|}
\renewcommand*\env@matrix[1][*\c@MaxMatrixCols c]{%
 \hskip -\arraycolsep
 \let\@ifnextchar\new@ifnextchar
 \array{#1}}
\theoremstyle{plain}
\newtheorem{teo}{Theorem}[section]
\newtheorem{coro}[teo]{Corollary}
\newtheorem{lemma}[teo]{Lemma}
\newtheorem{pro}[teo]{Proposition}
\theoremstyle{defi}
\theoremstyle{remark}
\newtheorem{rem}[teo]{Remark}
\newcommand{\ii}{\operatorname{i}}
\renewcommand{\d}{\operatorname{d}}
\newcommand{\Exp}[1]{\operatorname{e}^{#1}}
\newcommand{\N}{\mathbb{N}}
\newcommand{\R}{\mathbb{R}}
\newcommand{\Z}{\mathbb{Z}}
\newcommand{\1}{\text{\fontseries{bx}\selectfont \textup 1}}
\newcommand{\sz}[1]{\left| \vec{#1} \right|}
\newcommand{\BO}{\operatorname{O}}
\DeclareRobustCommand{\gaussk}{\DOTSB\gaussk@\slimits@}
\newcommand{\gaussk@}{\mathop{\vphantom{\sum}\mathpalette\bigcal@{K}}}
\newcommand{\bigcal@}[2]{%
 \vcenter{\m@th
 \sbox\z@{\(#1\sum\)}%
 \dimen@=\dimexpr\ht\z@+\dp\z@
 \hbox{\resizebox{!}{0.8\dimen@}{\( \mathcal{K} \)}}%
 }%
}
\newcommand{\cfracplus}{\mathbin{\cfracplus@}}
\newcommand{\cfracplus@}{%
 \sbox\z@{\( \dfrac{1}{1}\)}%
 \sbox\tw@{\( +\)}%
 \raisebox{\dimexpr\dp\tw@-\dp\z@\relax}{\(+\)}%
}
\newcommand{\cfracdots}{\mathord{\cfracdots@}}
\newcommand{\cfracdots@}{%
 \sbox\z@{\(\dfrac{1}{1}\)}%
 \sbox\tw@{\(+\)}%
 \raisebox{\dimexpr\dp\tw@-\dp\z@\relax}{\(\cdots\)}%
}
\newcommand*{\relrelbarsep}{.386ex}
\newcommand*{\relrelbar}{%
 \mathrel{%
 \mathpalette\@relrelbar\relrelbarsep
 }%
}
\newcommand*{\@relrelbar}[2]{%
 \raise#2\hbox to 0pt{\(\m@th#1\relbar\)\hss}%
 \lower#2\hbox{\(\m@th#1\relbar\)}%
}
\providecommand*{\rightrightarrowsfill@}{%
 \arrowfill@\relrelbar\relrelbar\rightrightarrows
}
\providecommand*{\leftleftarrowsfill@}{%
 \arrowfill@\leftleftarrows\relrelbar\relrelbar
}
\providecommand*{\xrightrightarrows}[2][]{%
 \ext@arrow 0359\rightrightarrowsfill@{#1}{#2}%
}
\providecommand*{\xleftleftarrows}[2][]{%
 \ext@arrow 3095\leftleftarrowsfill@{#1}{#2}%
}
\newcommand*\pFqskip{8mu}
\newcommand*\pFq{\begingroup
 \catcode`\,\active
 \def ,{\mskip\pFqskip\relax}%
 \dopFq
}
\def\dopFq#1#2#3#4#5{%
 {}_{#1}F_{#2}\biggl[\genfrac..{0pt}{}{#3}{#4};#5\biggr]%
 \endgroup
}
\newcommand{\KF}[5]{F^{#1}_{#2}\left[{#3\atop #4}\Bigg\vert #5\right]}
\tikzstyle{block} = [draw, rectangle, minimum height=3em, minimum width=2em]
\title[Integral and hypergeometric representations of multiple discrete orthogonality]{
 Classical discrete multiple orthogonal polynomials:\\ hypergeometric and integral representations
}
\keywords{Discrete multiple orthogonal polynomials, hypergeometric representations, Kampé de Fériet series, integral representations, Hahn, Meixner, Kravchuk, Charlier, AT systems, Askey scheme}
\author[Branquinho]{Amílcar Branquinho\( ^1\)}
\address{\( ^1\)CMUC, Departamento de Matem\'atica,
 Universidade de Coimbra, Largo D. Dinis, 3000-143 Coimbra, Portugal}
\email{\(^1\)ajplb@mat.uc.pt}
\author[Díaz]{Juan E.F. Díaz\(^{2}\)}
\address{\(^2\)CIDMA, Departamento de Matemática, Universidade de Aveiro, 3810-193 Aveiro, Portugal}
\email{\(^2\)juan.enri@ua.pt}
\author[Foulquié]{Ana Foulquié-Moreno\(^3\)}
\address{\(^3\)CIDMA, Departamento de Matemática, Universidade de Aveiro, 3810-193 Aveiro, Portugal}
\email{\(^3\)foulquie@ua.pt}
\author[Mañas]{Manuel Mañas\(^4\)}
\address{\(^4\)Departamento de Física Teórica, Universidad Complutense de Madrid, 28040-Madrid, Spain}
\email{\(^4\)manuel.manas@ucm.es}
\author[Wolfs]{Thomas Wolfs\(^5\)}
\address{\(^5\)Department of Mathematics, KU Leuven, 3001 Leuven, Belgium}
\email{\(^5\)thomas.wolfs@kuleuven.be} 
\subjclass{42C05,33C45,33C47}
\begin{document}

\maketitle

\begin{abstract}
This work explores classical discrete multiple orthogonal polynomials, including Hahn, Meixner of the first and second kinds, Kravchuk, and Charlier polynomials, with an arbitrary number of weights. Explicit expressions for the recursion coefficients of Hahn multiple orthogonal polynomials are derived. By leveraging the multiple Askey scheme and the recently discovered explicit hypergeometric representation of type I multiple Hahn polynomials, corresponding explicit hypergeometric representations are provided for the type I polynomials and recursion coefficients of all the aforementioned descendants within the Askey scheme. Additionally, integral representations for these families within the Hahn class in the Askey scheme are presented. The multiple Askey scheme is further completed by providing the corresponding limits for the weights, polynomials, and recurrence coefficients.
\end{abstract}

\tableofcontents

\section{Introduction}

The first four coauthors embarked on an ambitious project to derive explicit hypergeometric expressions for classical multiple orthogonal polynomials in both their continuous and discrete forms. Their initial focus, detailed in \cite{HahnI}, involved identifying all Hahn polynomial families within the multiple Askey scheme for the simpler case of two weights, excluding the Hermite family. This effort successfully covered the Jacobi--Piñeiro, Laguerre (both first and second kinds), Meixner (both first and second kinds), Kravchuk, and Charlier polynomials. Subsequently, their research expanded to handle an arbitrary number of weights. In \cite{HS:JP-L1}, they presented formulas for the Jacobi--Piñeiro and Laguerre polynomials of the first kind. Further, \cite{CMOPI} provided explicit expressions for the Laguerre polynomials of the second kind and for type I Hermite polynomials. The latter was particularly noteworthy as explicit formulas for type I Hermite polynomials, even with two weights, had previously eluded researchers. Additionally, they were able to determine the recurrence coefficients for all these cases.

Despite these advancements, finding explicit hypergeometric expressions for the Hahn polynomials of type I with an arbitrary number of weights remained unresolved by the original team. However, a recent breakthrough in \cite{BDFMW}, achieved in collaboration with the final author, has provided the sought explicit hypergeometric formulas and contour integral representations for these Hahn polynomials. These developments build upon the work presented in earlier papers \cite{VAWolfs, Wolfs}.

In this paper, we derive explicit expressions for the near-neighbor recurrence coefficients of the multiple Hahn polynomials. Building on the multiple Askey scheme, we obtain hypergeometric representations for the Hahn's descendants, specifically for the type I discrete multiple orthogonal polynomials within the Meixner of the first and second kinds, Kravchuk, and Charlier families. Additionally, we provide explicit formulas for the near-neighbor recurrence coefficients for these families. Furthermore, drawing on insights from our previous work \cite{BDFMW}, we offer contour integral representations for all these discrete polynomial families. Additionally, the multiple Askey scheme is completed by providing the corresponding limits for the weights, polynomials, and recurrence coefficients, thereby completing the scheme and ensuring a comprehensive integration of the discrete and continuous families.

\subsection{Discrete Multiple Orthogonal Polynomials}

To establish some fundamental concepts regarding discrete multiple orthogonal polynomials, see \cite{Ismail,nikishin_sorokin}, consider the system defined by \( p \geqslant 2 \) weight functions:
\[ 
w_1, \ldots, w_p : \Delta \subseteq \mathbb{Z} \rightarrow \mathbb{R}_{\geqslant}.
\]

Under certain conditions, a sequence of type II polynomials \( B_{\vec{n}}^{({\rm II})} \) can be found. These polynomials are monic and have degrees \(\deg B_{\vec{n}}^{({\rm II})} \leqslant |\vec{n}|\), satisfying the orthogonality relations:
\begin{align}
 \label{DO:II}
 \sum_{k \in \Delta} k^j B_{\vec{n}}^{({\rm II})}(k) w_i(k) = 0, \quad j \in \{0, \ldots, n_i - 1\}, \quad i \in \{1, \ldots, p\}.
\end{align}

In addition, there exist \( p \) sequences of type I polynomials, denoted \( A^{(1)}_{\vec{n}}, \ldots, A^{(p)}_{\vec{n}} \), each with \(\deg A^{(i)}_{\vec{n}} \leqslant n_i - 1\), satisfying the orthogonality conditions:
\begin{align}
 \label{DO:I}
 \sum_{i=1}^p \sum_{k \in \Delta} k^j A^{(i)}_{\vec{n}}(k) w_i(k) =
 \begin{cases}
 0, & \text{if } j \in \{0, \ldots, |\vec{n}| - 2\}, \\
 1, & \text{if } j = |\vec{n}| - 1.
 \end{cases}
\end{align}

In this context, let \( \vec{n} = (n_1, \ldots, n_p) \in \mathbb{N}_0^p \) with \( |\vec{n}| \coloneqq n_1 + \cdots + n_p \). The notation \( \mathbb{N} \coloneqq \{1, 2, 3, \ldots\} \) and \( \mathbb{N}_0 \coloneqq \{0\} \cup \mathbb{N} \) will be used. These conditions uniquely define the polynomials to fulfill the biorthogonality conditions:
\begin{equation}
 \label{biorthogonality}
 \sum_{i=1}^p \sum_{k \in \Delta} B_{\vec{n}}^{({\rm II})}(k) A^{(i)}_{\vec{m}}(k) w_i(k) =
 \begin{cases}
 0, & \text{if } m_i \leqslant n_i \text{ for } i = 1, \ldots, p, \\
 1, & \text{if } |\vec{m}| = |\vec{n}| + 1, \\
 0, & \text{if } |\vec{n}| + 1 < |\vec{m}|.
 \end{cases}
\end{equation}

Sometimes it can be convenient to rewrite the previous expressions using the type I linear form
\begin{equation}
 \label{LF}
 A_{\vec{n}}^{({\rm I})}(x)\coloneq\sum_{i=1}^p A_{\vec{n}}^{(i)}(x)w_i(x).
\end{equation}

\subsection{Recurrence Relations}
Both type I and type II polynomials follow a recurrence relation. According to the notation in \cite[\S 23.1.4]{Ismail}, let \( \big(\pi(1), \pi(2), \ldots, \pi(p)\big) \) represent a permutation of \( (1, 2, \ldots, p) \), and let \( \vec{e}_k \in \mathbb{R}^p \) denote the \( k \)-th vector of the canonical basis in \( \mathbb{R}^p \). Define the vectors:
\[
\vec{s}_0 \coloneqq \vec{0}, \quad \vec{s}_j \coloneqq \sum_{i=1}^j \vec{e}_{\pi(i)}, \quad j \in \{1, \ldots, p\}.
\]

The type II and type I polynomials satisfy the following recurrence relations:
\begin{equation}
 \label{NNRR}
 \begin{aligned}
 x B_{\vec{n}}^{({\rm II})}(x) &= B_{\vec{n} + \vec{e}_k}^{({\rm II})}(x) + b_{\vec{n}}^0(k) B_{\vec{n}}^{({\rm II})}(x) + \sum_{j=1}^p b^j_{\vec{n}} B_{\vec{n} - \vec{s}_j}^{({\rm II})}(x), \\
 x A^{(i)}_{\vec{n}}(x) &= A^{(i)}_{\vec{n} - \vec{e}_k}(x) + b_{\vec{n} - \vec{e}_k}^0(k) A^{(i)}_{\vec{n}}(x) + \sum_{j=1}^p b^j_{\vec{n} + \vec{s}_{j-1}} A^{(i)}_{\vec{n} + \vec{s}_j}(x),
 \end{aligned}
\end{equation}
where \( i \in \{1, \ldots, p\} \).

From the biorthogonality condition \eqref{biorthogonality}, the recurrence coefficients can be expressed as:
\begin{align}
 \label{NextNeighbourRecurrenceIntegralRep}
 b_{\vec{n}}^0(k) &= \sum_{x \in \Delta} x B_{\vec{n}}^{({\rm II})}(x) A^{({\rm I})}_{\vec{n} + \vec{e}_k}(x), \\
 b_{\vec{n}}^j &= \sum_{x \in \Delta} x B_{\vec{n}}^{({\rm II})}(x) A^{({\rm I})}_{\vec{n} - \vec{s}_{j-1}}(x) ,\quad j \in \{1, \ldots, p\} .
\end{align}

\begin{rem}
 The multi-index \( \vec{n}-\vec{s}_{j-1}\) corresponds to subtracting \( 1 \) to the \( j-1\) different entries 
\begin{align*}
 \{{n}_{\pi(1)},{n}_{\pi(2)},\ldots,{n}_{\pi(j-1)}\}
\end{align*}
of the multi-index \( \vec{n} \). Consequently,
\begin{align*}
 |\vec{n}+\vec{e}_k|&=|\vec{n}|+1, &
&
\begin{aligned}
 |\vec{n}-\vec{s}_{j-1}|&=|\vec{n}|-j+1,&j&=\{1,\ldots,p\}.
\end{aligned}
\end{align*}
 The condition \( \left(\vec{n}-\vec{s}_{j-1}\right)_i=n_i\) is equivalent to write \( j\leqslant \pi^{-1}(i)\) while \( \left(\vec{n}-\vec{s}_{j-1}\right)_i=n_i-1\) is equivalent to \( j>\pi^{-1}(i)\). Since these conditions will be relevant along the paper, let us define the following sets
\begin{align*}
 S(\pi,j)&\coloneq\left\{i \in \{1, \ldots, p\} \mid j \leqslant \pi^{-1}(i)\right\}, &
 S^{\textsf c}(\pi,j)&\coloneq \{1, 2, \ldots, p\} \setminus S(\pi,j).
\end{align*}

For example, with \( p = 4 \) and the permutation
\begin{align*}
 \pi = \begin{pNiceMatrix}
 1 & 2 & 3 & 4 \\
 4 & 2 & 1 & 3
 \end{pNiceMatrix}
\end{align*}
the sets are:
\begin{align*}
 S(\pi,1)&=\{1, 2, 3, 4\}, & S(\pi,2)&=\{1, 2, 3\}, & S(\pi,3)&=\{1, 3\}, & S(\pi,4)&=\{3\}.
\end{align*}
\end{rem}

In this paper, explicit hypergeometric and integral expressions for the type I polynomials \eqref{DO:I} and their recurrence coefficients \eqref{NNRR} will be derived for the following families within the partial multiple Askey scheme, see Figure \ref{figure:Askey_discrete}.

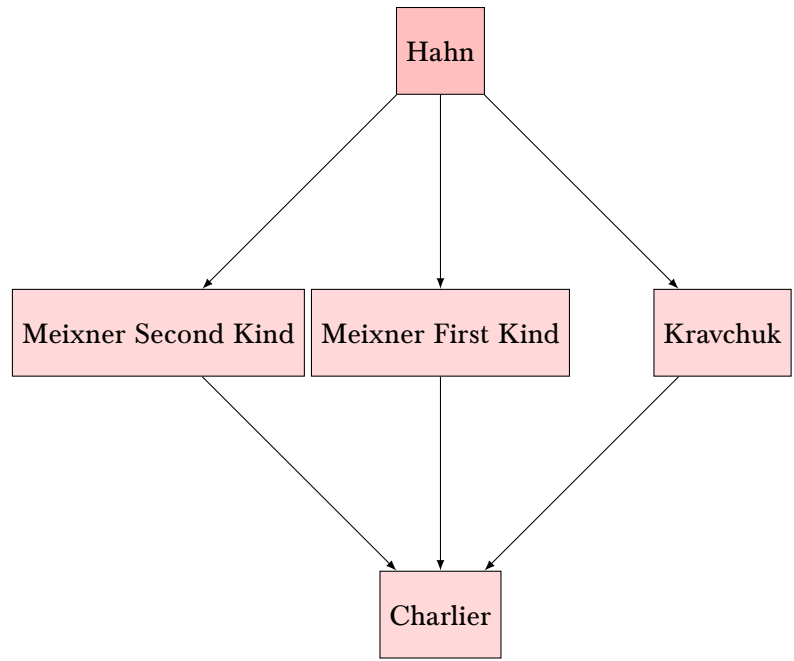
\begin{figure}[htbp]
 \begin{tikzpicture}[node distance=3.75cm]

 \node[ fill=red!25,block,] (a) {Hahn};

 \node[ fill=red!15,block, below of = a] (b) {Meixner First Kind};

 \node[ fill=red!15,block, left of = b] (c) {Meixner Second Kind};
 \node[ fill=red!15,block, right of = b] (d) {Kravchuk};
 
 \node[ fill=red!15,block, below of =b] (e) {Charlier};
 
 \draw[-latex] (a)--(b); 
 \draw[-latex] (a)--(c);
 \draw[-latex] (a)--(d);
 \draw[-latex] (b)--(e);
 \draw[-latex] (d)--(e);
 \draw[-latex] (c)--(e);
 \end{tikzpicture}
	\caption{Askey scheme for Hahn discrete descendants}
	\label{figure:Askey_discrete}
\end{figure}

\subsection{Hypergeometric Series}
Before delving into the main results, it's important to recall that, in many cases, these polynomials can be represented using generalized hypergeometric series \cite{andrews,slater},
\begin{align}
\label{GHF}
 \pFq{p}{q}{a_1,\ldots,a_p}{\alpha_1,\ldots,\alpha_q}{x}\coloneqq \sum_{l=0}^{\infty}\dfrac{(a_1)_l\cdots(a_p)_l}{(\alpha_1)_l\cdots(\alpha_q)_l}\dfrac{x^l}{l!}.
\end{align}

We are going to need a generalization of these previous ones; known as the multiple Kampé de Fériet series \cite[Chapter 1.4]{Srivastava}
\begin{multline}
 \label{MultipleKF}
 \KF{n:r_1;\cdots;r_p}{q:k_1;\cdots;k_p}{(a_1,\ldots,a_n):(b_1,\ldots,b_{r_1});\cdots;(c_1,\ldots,c_{r_p})}{(\alpha_1,\ldots,\alpha_q):(\beta_1,\ldots,\beta_{k_1});\cdots;(\gamma_1,\ldots,\gamma_{k_p})}{x_1,\ldots,x_p}\\
 \coloneq\sum_{l_1=0}^{\infty}\cdots\sum_{l_p=0}^{\infty}\dfrac{(a_1)_{l_1+\cdots+l_p}\cdots(a_n)_{l_1+\cdots+l_p}}{(\alpha_1)_{l_1+\cdots+l_p}\cdots(\alpha_q)_{l_1+\cdots+l_p}}\dfrac{(b_1)_{l_1}\cdots(b_{r_1})_{l_1}}{(\beta_1)_{l_1}\cdots(\beta_{k_1})_{l_1}}\cdots\dfrac{(c_1)_{l_p}\cdots(c_{r_p})_{l_p}}{(\gamma_1)_{l_p}\cdots(\gamma_{k_p})_{l_p}}\dfrac{x_1^{l_1}}{l_1!}\cdots\dfrac{x_p^{l_p}}{l_p!}.
\end{multline}
All these series are defined through the Pochhammer symbols
\begin{align*}
 (x)_n\coloneqq\dfrac{\Gamma(x+n)}{\Gamma(x)}=\begin{cases}
 x(x+1)\cdots(x+n-1)\;\text{if}\;n\in\N,\\
 1\;\text{if}\;n=0.
 \end{cases}
\end{align*}

The Chu--Vandermonde formula, that follows the umbral paradigm correspondences to Newton's binomial formula for Pochhammer symbols, is
\begin{equation}\label{eq:Chu--Vandermonde}
 \begin{aligned}
 (x+y)_n&=\sum_{k=0}^n\dbinom{n}{k}(x)_k(y)_{n-k},& n&\in\mathbb N_0.
 \end{aligned}
\end{equation}
This formula can be deduced from this particular case of the celebrated Gauss' formula
\begin{equation}\label{eq:Gauss_hypergeometric}
 \begin{aligned}
 \pFq{2}{1}{-n,x}{y}{1}&=\sum_{k=0}^{n}\dfrac{(-n)_k}{k!}\dfrac{(x)_k}{(y)_k}=\dfrac{(y-x)_n}{(y)_n},& n&\in\mathbb N_0.
 \end{aligned}
\end{equation}

In order to prove the recurrence coefficients
of the multiple Hahn polynomials
we will use the following generalization of the Pfaff--Saalschütz formula, see \cite[Theorem 9]{GeneralizedPS}:
\begin{equation}\label{GeneralizedPfaff-Saalschutz}
 \begin{aligned}
 \pFq{3}{2}{a,b, -n}{c+k, a+b+1-n-c}{1}
 &=\dfrac{(c-a)_n(c-b+k)_n}{(c+k)_n(c-a-b)_n}\pFq{3}{2}{-k,b,-n}{c-a, b-c-k-n+1}{1}, & k,n&\in\mathbb N_0 .
\end{aligned}
\end{equation}
We also recall Stirling's formula
\begin{align}
 \label{stirling}
\begin{aligned}
 \Gamma(z) & \sim \sqrt{2\pi} z^{z-\frac{1}{2}} \operatorname{e}^{-z}, & z & \rightarrow\infty, & |\arg z| < \pi,
\end{aligned}
\end{align}
for the asymptotic behavior of the gamma function. 
We will use the convention that whenever an exponential function has a branch cut, we assume it is taken along the negative real line.

Sometimes it will be convenient to use the following asymptotics of a ratio of gamma functions directly
\begin{equation} \label{GFB}
\begin{aligned}
 \frac{\Gamma(az+b)}{\Gamma(az+c)}&\sim (az)^{b-c},& z&\rightarrow\infty,& |\arg az| &< \pi.
\end{aligned}
\end{equation}

\subsection{Hahn Hypergeometric Representations 
}

The weight functions for the Hahn family are defined as
\begin{equation}
 \label{WeightsHahn}
\begin{aligned}
 w_{i}(x;\alpha_i,\beta,N)&=\dfrac{\Gamma(\alpha_i+x+1)}{\Gamma(\alpha_i+1)\Gamma(x+1)}\dfrac{\Gamma(\beta+N-x+1)}{\Gamma(\beta+1)\Gamma(N-x+1)}, & i &\in\{1,\ldots,p\},& \Delta &=\{0,\ldots,N\},
\end{aligned}
\end{equation}
with
\( \alpha_1,\ldots,\alpha_p,\beta>-1\). In order to establish an AT system, we require that \( \sz{n} \leqslant N\in\mathbb N_0\) and \( \alpha_i-\alpha_j\not\in\mathbb Z\) for \( i\neq j\). We will denote \(\vec{\alpha}\coloneq(\alpha_1,\ldots,\alpha_p)\).

The corresponding type II polynomials were first found for \( p=2\) in \cite{Arvesu} and then generalized for \( p \geqslant 2\) in~\cite{AskeyII}. One showed that
\begin{equation}
\label{HahnTypeII}
 Q_{\vec{n}}^{({\rm II})} (x;\vec{\alpha},\beta,N)=\sum_{l_1=0}^{n_1}\cdots\sum_{l_p=0}^{n_p}
 C_{\vec{n}}^{l_1,\ldots,l_p}\, (-x)_{l_1+\cdots+l_p},
 \end{equation}
 where
 \begin{multline}\label{Coefficients_HahnTypeII}
C_{\vec{n}}^{l_1,\ldots,l_p} \coloneq
 \dfrac{(-N)_{|\vec{n}|}}{(-N)_{l_1+\cdots+l_p}} \prod_{i=1}^p \dfrac{(\alpha_i+1)_{n_i}}{(\alpha_i+\beta+|\vec{n}|+1)_{n_i}}\dfrac{(-n_i)_{l_i}}{l_i!}
 \dfrac{(\alpha_i+\beta+\sum_{k=1}^i n_k+1)_{\sum_{j=i}^p l_j}}{(\alpha_i+1)_{\sum_{j=i}^{p}l_j}} \\\times \prod_{i=1}^{p-1} \dfrac{(\alpha_i+n_i+1)_{\sum_{j=i+1}^p l_j}}{(\alpha_i+\beta+\sum_{k=1}^i n_k +1)_{\sum_{j=i+1}^{p}l_j}}. 
 \end{multline}

In \cite{BDFMW}, the following alternative representation was proven
\begin{multline}
 \label{HahnTypeIIWeighted}
 Q_{\vec{n}}^{({\rm II})}(x;\vec{\alpha},\beta,N)
 =
 {
 (-1)^{|\vec{n}|}}\dfrac{\Gamma(N+\beta+1)}{(N-|\vec{n}|)!}\prod_{i=1}^p\dfrac{(\alpha_i+1)_{n_i}}{(\alpha_i+\beta+|\vec{n}|+1)_{n_i}} \dfrac{\Gamma(N-x+1)}{\Gamma(\beta+N-x+1)} \\
 \times \pFq{p+2}{p+1}{-|\vec{n}|-\beta,-x,\vec{\alpha}+\vec{n}+\vec{1}_p}{-N-\beta,\vec{\alpha}+\vec{1}_p}{1}.
 \end{multline}
Also in \cite{BDFMW}, the first explicit hypergeometric representation of the Hahn polynomials of type I for \( p \geqslant 2 \) was presented as follows:
\begin{multline}
\label{HahnTypeI}
 Q_{\vec{n}}^{(i)}(x;\vec{\alpha},\beta,N)
 =\dfrac{(-1)^{|\vec{n}|-1}(N+1-|\vec{n}|)!}{(n_i-1)!(\beta+1)_{|\vec{n}|-1}(\alpha_i+\beta+|\vec{n}|)_{N+2-|\vec{n}|}}\dfrac{\prod_{k=1}^p({\alpha}_k+\beta+|\vec{n}|)_{n_k}}{\prod_{k=1,k\neq i}^p({\alpha}_k-{\alpha}_i)_{{n}_k}}\\
 \times\pFq{p+2}{p+1}{-n_i+1,\alpha_i+\beta+|\vec{n}|,(\alpha_i+1)\vec{1}_{p-1}-\vec{\alpha}^{*i}-\vec{n}^{*i},x+\alpha_i+1}{\alpha_i+1,(\alpha_i+1)\vec{1}_{p-1}-\vec{\alpha}^{*i},\alpha_i+\beta+N+2}{1} ,
 \end{multline}
 or equivalently
\begin{align*}
 Q_{\vec{n}}^{(i)}(x;
 \vec{\alpha} ,\beta,N)
 = \sum_{l=0}^{n_i-1}C_{\vec{n}}^{(i),l}\,(x+\alpha_i+1)_l,
\end{align*}
with
\begin{multline}
 \label{coefHahnTypeI}
 C_{\vec{n}}^{(i),l}\coloneq\dfrac{(-1)^{|\vec{n}|-1}(N+1-|\vec{n}|)!}{(n_i-1)!(\beta+1)_{|\vec{n}|-1}(\alpha_i+\beta+|\vec{n}|)_{N+2-|\vec{n}|}}\dfrac{\prod_{k=1}^p({\alpha}_k+\beta+|\vec{n}|)_{n_k}}{\prod_{k=1,k\neq i}^p({\alpha}_k-{\alpha}_i)_{{n}_k}}\\
 \times\dfrac{(-n_i+1)_l(\alpha_i+\beta+|\vec{n}|)_l}{l!(\alpha_i+1)_l(\alpha_i+\beta+N+2)_l}\prod_{k=1,k\neq i}^p\dfrac{(\alpha_i-{\alpha}_k-{n}_k+1)_l}{(\alpha_i-{\alpha}_k+1)_l} .
\end{multline}

\begin{rem}
Notice that we used the following notation. We denoted the unit vector by
\(\vec{1}_p\coloneq (1,\ldots,1)\in\mathbb R^p\). Given a vector $\vec{a}\in\mathbb R^p$, we wrote $\vec{a}^{\ast q}\in\mathbb R^{p-1}$ for the vector obtained from $\vec{a}$ after removing its $q$-th component. This notation will often appear along the rest of the text.
\end{rem}

\subsection{Hahn
Integral Representations 
}

In \cite{BDFMW}, the following integral representations for the Hahn multiple orthogonal polynomials were obtained.

\begin{teo}
 \label{HI_IR}
 Let $\Sigma$ be a clockwise contour in $\{t\in \mathbb{C} \mid \operatorname{Re}(t) > -1\}$ enclosing $\cup_{j=1}^p[\alpha_j,n_j+\alpha_j-1]$ exactly once. Then the type I linear forms are given by
 \begin{multline*}
 Q_{\vec{n}}^{({\rm I})}(x;\vec{\alpha},\beta,N) = (-1)^{\sz{n}} (N-\sz{n}+1)! \frac{\prod_{j=1}^p (\alpha_j+\beta+\sz{n})_{n_j}}{\Gamma(\beta+\sz{n})} \frac{\Gamma(N-x+\beta+1)}{\Gamma(x+1)\Gamma(N-x+1)} \\
 \times \bigintsss_\Sigma \frac{\Gamma(t+\beta+\sz{n})}{\Gamma(t+1) \Gamma(t+\beta+N+2)} \frac{\Gamma(x+t+1)}{\prod_{j=1}^p (\alpha_j-t)_{n_j}} \frac{\d t }{2 \pi \ii }.
 \end{multline*}
In particular, the type I polynomials are given by
\begin{multline*}
 Q_{\vec{n}}^{(i)}(x;\vec{\alpha},\beta,N) = (-1)^{\sz{n}} (N-\sz{n}+1)! \frac{\prod_{j=1}^p (\alpha_j+\beta+\sz{n})_{n_j}}{(\beta+1)_{\sz{n}-1}} \frac{\Gamma(\alpha_i+1)}{\Gamma(\alpha_i+x+1)} \\
 \times \bigintsss_{\Sigma_i} \frac{\Gamma(t+\beta+\sz{n})}{\Gamma(t+1) \Gamma(t+\beta+N+2)} \frac{\Gamma(x+t+1)}{\prod_{j=1}^p (\alpha_j-t)_{n_j}} \frac{\d t }{2 \pi \ii },
 \end{multline*}
where \(\Sigma_i\) is a clockwise contour in $\{t\in \mathbb{C} \mid \operatorname{Re}(t) > -1\}$ enclosing \(\cup_{k=0}^{n_i-1} \{\alpha_i+k\}\) exactly once without enclosing any of the other points in \(\cup_{j=1}^p \cup_{k=0}^{n_j-1} \{\alpha_j+k\}\).
\end{teo}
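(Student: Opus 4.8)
The plan is to obtain the integral representations by computing the residues of the integrand at the poles it has inside the prescribed contours, and to check that the resulting \emph{finite} residue sums coincide with the already-established explicit formula \eqref{HahnTypeI} (equivalently the coefficients \eqref{coefHahnTypeI}). Abbreviate the integrand by
\[
 f(t) \coloneq \frac{\Gamma(t+\beta+\sz{n})}{\Gamma(t+1)\,\Gamma(t+\beta+N+2)}\,\frac{\Gamma(x+t+1)}{\prod_{j=1}^p(\alpha_j-t)_{n_j}} .
\]
In the half-plane $\{\operatorname{Re}(t)>-1\}$ the only poles of $f$ relevant here are the $\sz{n}$ simple poles produced by the denominators $(\alpha_j-t)_{n_j}^{-1}$, located at $\bigcup_{j=1}^p\{\alpha_j,\alpha_j+1,\dots,\alpha_j+n_j-1\}$; the hypothesis $\alpha_i-\alpha_j\notin\mathbb Z$ for $i\neq j$ guarantees that these clusters are pairwise disjoint and that the poles are simple, while the poles of $\Gamma(x+t+1)$ all lie in $\{\operatorname{Re}(t)\leqslant -1\}$, hence outside $\Sigma$ and the $\Sigma_i$, and the (at most one) pole of $\Gamma(t+\beta+\sz{n})$ that can enter the strip is excluded by the choice of contour. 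Since $\Sigma_i$ is clockwise and encircles exactly the $n_i$ points $\alpha_i+l$, $l=0,\dots,n_i-1$, the residue theorem gives
\[
 \bigintsss_{\Sigma_i} f(t)\,\frac{\d t}{2\pi\ii} = -\sum_{l=0}^{n_i-1}\operatorname*{Res}_{t=\alpha_i+l} f(t),
\]
a finite sum of $n_i$ terms, matching the $n_i$-term series in \eqref{HahnTypeI}.

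\textbf{The key residue.} The heart of the argument is the residue at $t=\alpha_i+l$. Isolating the vanishing factor $\alpha_i-t+l$ in $(\alpha_i-t)_{n_i}$ gives $\operatorname*{Res}_{t=\alpha_i+l}(\alpha_i-t)_{n_i}^{-1}=-(-n_i+1)_l/(l!\,(n_i-1)!)$, while the remaining part of $f$ is holomorphic near $t=\alpha_i+l$ and is evaluated there by the gamma shift $\Gamma(z+l)=(z)_l\Gamma(z)$ (applied to the four gamma functions) together with the elementary Pochhammer identity
\[
 \frac{1}{(\alpha_j-\alpha_i-l)_{n_j}}=\frac{1}{(\alpha_j-\alpha_i)_{n_j}}\,\frac{(\alpha_i-\alpha_j-n_j+1)_l}{(\alpha_i-\alpha_j+1)_l},\qquad j\neq i,
\]
for the remaining Pochhammer denominators. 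Pulling the $l$-independent gamma factors out of the sum, using $\Gamma(\alpha_i+\beta+\sz{n})/\Gamma(\alpha_i+\beta+N+2)=1/(\alpha_i+\beta+\sz{n})_{N+2-\sz{n}}$, and multiplying by the prefactor in the statement, one recovers term by term the series $\sum_{l=0}^{n_i-1}C_{\vec{n}}^{(i),l}(x+\alpha_i+1)_l$ of \eqref{coefHahnTypeI}; this proves the formula for $Q_{\vec n}^{(i)}$, the sign being a straightforward bookkeeping of the clockwise orientation and of the $-1$ carried by each reciprocal-Pochhammer residue.

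\textbf{The linear form.} One inserts the just-proved expression into $Q_{\vec n}^{({\rm I})}=\sum_{i=1}^p Q_{\vec n}^{(i)}w_i$ with $w_i$ as in \eqref{WeightsHahn}: the factor $\Gamma(\alpha_i+x+1)$ cancels against the numerator of $w_i$, the identity $(\beta+1)_{\sz{n}-1}\Gamma(\beta+1)=\Gamma(\beta+\sz{n})$ absorbs the $\beta$-Pochhammer, and the surviving $x$-dependent factor $\Gamma(N-x+\beta+1)/[\Gamma(x+1)\Gamma(N-x+1)]$ becomes independent of $i$. It then only remains to observe that $\sum_{i=1}^p\bigintsss_{\Sigma_i}f\,\tfrac{\d t}{2\pi\ii}=\bigintsss_{\Sigma}f\,\tfrac{\d t}{2\pi\ii}$: because the clusters $\{\alpha_i+k\}_{k=0}^{n_i-1}$ are pairwise disjoint, a single clockwise contour $\Sigma$ enclosing $\bigcup_{j=1}^p[\alpha_j,n_j+\alpha_j-1]$ exactly once can be deformed, without crossing any pole, into the disjoint union of the $\Sigma_i$. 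This yields the stated representation of $Q_{\vec n}^{({\rm I})}$.

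\textbf{Expected obstacle.} I expect the main difficulty to be organizational rather than conceptual: keeping the Pochhammer-and-gamma algebra exactly aligned with the intricate coefficient \eqref{coefHahnTypeI} (in particular tracking every sign), and making precise the harmless but slightly delicate claim that $\Sigma$ and the $\Sigma_i$ can indeed be chosen to enclose exactly the intended points $\alpha_j+k$ while excluding the possible pole of $\Gamma(t+\beta+\sz{n})$. Once the single residue $\operatorname*{Res}_{t=\alpha_i+l}f$ has been computed, the rest is routine simplification.
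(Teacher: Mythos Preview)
Your approach is correct: compute the residues of $f$ at $t=\alpha_i+l$ (which are simple by the AT assumption $\alpha_i-\alpha_j\notin\mathbb Z$), check that the resulting finite sum reproduces the coefficients \eqref{coefHahnTypeI}, and then assemble the linear form by summing the $\Sigma_i$ into $\Sigma$. The residue you give, $\operatorname*{Res}_{t=\alpha_i+l}(\alpha_i-t)_{n_i}^{-1}=-(-n_i+1)_l/(l!\,(n_i-1)!)$, and the Pochhammer shift for the $j\neq i$ factors are exactly what is needed.

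Note, however, that this theorem is \emph{not} proved in the present paper: it is quoted from \cite{BDFMW} (see the opening line of \S1.5). So there is no in-paper proof to compare against. Your argument is the natural one and is, in essence, how such contour-integral formulas are verified in \cite{BDFMW}: one shows that the clockwise residue sum over the Pochhammer poles reproduces the known ${}_{p+2}F_{p+1}$ expansion \eqref{HahnTypeI}. Your remark about excluding any stray pole of $\Gamma(t+\beta+\sz{n})$ from the contour is the only point that needs a sentence of care (for small $|\vec n|$ and $\beta$ close to $-1$ such a pole can sit in $\{\operatorname{Re}(t)>-1\}$, but it never coincides with an $\alpha_j+k$ and the contour can always be taken small enough around the clusters); otherwise the anticipated ``organizational'' bookkeeping is indeed the only work.
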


\begin{teo}
 \label{HII_IR}
 Let $\mathcal{C}$ be a counterclockwise contour enclosing $[-N,0]$ exactly once. Then, for $x \in
 \{0,\ldots,N\}$, the type II polynomials are given by
 \begin{multline*}
 Q_{\vec{n}}^{({\rm II})}(x;\vec{\alpha},\beta,N) = 
 \frac{(-1)^{\sz{n}} \Gamma(\sz{n}+\beta+1)}{(N-\sz{n})! \prod_{j=1}^p (\alpha_j+\beta+\sz{n}+1)_{n_j}} \frac{\Gamma(x+1) \Gamma(N-x+1)}{\Gamma(N-x+\beta+1)} \\
 \times \bigintsss_{\mathcal{C}} \frac{\Gamma(s)\Gamma(s+\beta+N+1)}{\Gamma(s+\beta+\sz{n}+1)}\frac{\prod_{j=1}^p (\alpha_j+1-s)_{n_j}}{\Gamma(x+s+1)} \frac{\d s}{2 \pi \ii}.
 \end{multline*}

\end{teo}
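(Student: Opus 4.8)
The plan is to deduce the integral representation from the explicit hypergeometric formula \eqref{HahnTypeIIWeighted} by rewriting the terminating generalized hypergeometric series there as a sum of residues, which is then reassembled into a Mellin--Barnes type contour integral; Theorem \ref{HI_IR} would be obtained in the same fashion, starting from \eqref{HahnTypeI}. First I would fix $x\in\{0,\ldots,N\}$, so that the series in \eqref{HahnTypeIIWeighted} terminates: its $k$-th term
\begin{equation*}
 T_k\coloneqq\frac{(-|\vec{n}|-\beta)_k\,(-x)_k\,\prod_{j=1}^p(\alpha_j+n_j+1)_k}{(-N-\beta)_k\,\prod_{j=1}^p(\alpha_j+1)_k\,k!}
\end{equation*}
vanishes for $k>x$ because of the factor $(-x)_k$.

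Next I would introduce the kernel
\begin{equation*}
 G(s)\coloneqq\frac{\Gamma(s)\,\Gamma(s+\beta+N+1)}{\Gamma(s+\beta+|\vec{n}|+1)}\,\frac{\prod_{j=1}^p(\alpha_j+1-s)_{n_j}}{\Gamma(x+s+1)},
\end{equation*}
which is precisely the integrand of the claimed formula, and analyse its poles. Since $\prod_{j=1}^p(\alpha_j+1-s)_{n_j}$ is a polynomial in $s$ and $1/\Gamma(s+\beta+|\vec{n}|+1)$, $1/\Gamma(x+s+1)$ are entire, the only poles of $G$ are the simple poles of $\Gamma(s)$ at $s\in\{0,-1,-2,\ldots\}$ and the poles of $\Gamma(s+\beta+N+1)$ at $s=-\beta-N-1-m$, $m\in\mathbb{N}_0$. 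Because $\beta>-1$, the latter all satisfy $\operatorname{Re}(s)<-N$ and so lie outside the counterclockwise contour $\mathcal{C}$ enclosing $[-N,0]$; hence the only poles of $G$ inside $\mathcal{C}$ are $s=-k$ with $k\in\{0,\ldots,N\}$, each simple with $\operatorname{Res}_{s=-k}\Gamma(s)=(-1)^k/k!$.

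Then I would match the residues against the terms of the series. Using the Pochhammer--Gamma identities $(a)_k=(-1)^k\Gamma(1-a)/\Gamma(1-a-k)$ (for the quotient $(-|\vec{n}|-\beta)_k/(-N-\beta)_k$), $(-x)_k=(-1)^k\Gamma(x+1)/\Gamma(x-k+1)$ for $0\leqslant k\leqslant x$, and $(\alpha_j+n_j+1)_k/(\alpha_j+1)_k=(\alpha_j+1+k)_{n_j}/(\alpha_j+1)_{n_j}$, a direct computation gives
\begin{equation*}
 \operatorname{Res}_{s=-k}G(s)=\frac{\Gamma(N+\beta+1)\,\prod_{j=1}^p(\alpha_j+1)_{n_j}}{\Gamma(|\vec{n}|+\beta+1)\,\Gamma(x+1)}\,T_k,\qquad 0\leqslant k\leqslant x,
\end{equation*}
while for $x<k\leqslant N$ the residue vanishes, since there $1/\Gamma(x+s+1)=1/\Gamma(x-k+1)=0$ and the remaining factors of $G$ stay finite (again because $\beta>-1$). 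The residue theorem on $\mathcal{C}$ then gives
\begin{equation*}
 \frac{1}{2\pi\ii}\int_{\mathcal{C}}G(s)\,\d s=\sum_{k=0}^{N}\operatorname{Res}_{s=-k}G(s)=\frac{\Gamma(N+\beta+1)\,\prod_{j=1}^p(\alpha_j+1)_{n_j}}{\Gamma(|\vec{n}|+\beta+1)\,\Gamma(x+1)}\sum_{k=0}^{x}T_k,
\end{equation*}
and $\sum_{k=0}^{x}T_k$ is exactly the generalized hypergeometric series appearing in \eqref{HahnTypeIIWeighted}. Substituting this back into \eqref{HahnTypeIIWeighted} makes the factor $\Gamma(N+\beta+1)$ and the product $\prod_{i=1}^p(\alpha_i+1)_{n_i}$ cancel, and collecting the surviving prefactors $(-1)^{|\vec{n}|}\Gamma(|\vec{n}|+\beta+1)/\big((N-|\vec{n}|)!\,\prod_{j=1}^p(\alpha_j+\beta+|\vec{n}|+1)_{n_j}\big)$ and $\Gamma(x+1)\Gamma(N-x+1)/\Gamma(N-x+\beta+1)$ reproduces the formula in Theorem \ref{HII_IR} verbatim.

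I expect the main obstacle to be bookkeeping: arranging the signs and the Pochhammer-to-Gamma conversions so that $\operatorname{Res}_{s=-k}G(s)$ equals $T_k$ up to a single $k$-independent constant, together with the care needed for the pole configuration — that the poles of $\Gamma(s+\beta+N+1)$ stay to the left of $\mathcal{C}$, which is exactly where $\beta>-1$ enters, and that $x\in\{0,\ldots,N\}$ forces the series to terminate and the residues at $k>x$ to vanish. No step is conceptually difficult; the delicate point is just to line up the contour and its enclosed pole set with the terminating hypergeometric sum.
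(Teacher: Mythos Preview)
Your argument is correct: the identification of the poles of $G(s)$ inside $\mathcal{C}$, the residue computation, and the matching with the terms $T_k$ of the terminating ${}_{p+2}F_{p+1}$ in \eqref{HahnTypeIIWeighted} all go through exactly as you describe, and the final prefactors collapse to the claimed formula.

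Regarding comparison with the paper: note that the present paper does \emph{not} supply its own proof of Theorem~\ref{HII_IR}. Both Theorems~\ref{HI_IR} and~\ref{HII_IR} are quoted from \cite{BDFMW} without argument, as announced in \S1.5 (``In \cite{BDFMW}, the following integral representations for the Hahn multiple orthogonal polynomials were obtained''). Your Mellin--Barnes derivation---writing the terminating hypergeometric sum as a sum of residues of $G(s)$ and reassembling it as a contour integral---is the natural direct route once one has the representation \eqref{HahnTypeIIWeighted} in hand (itself also taken from \cite{BDFMW}); it is self-contained and requires no further machinery. So there is nothing to compare against within this paper, and your proof stands on its own.
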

\section{Hahn Recurrence Coefficients 
}

In this section, we proceed to derive explicit expressions for the recurrence coefficients given in \eqref{NNRR} for the Hahn family. For that aim we need to establish the following technical lemmas.

\begin{lemma}
\label{lemma1}
Let \(C_{\vec{n}}^{l_1,\ldots,l_p}\) be as defined in \eqref{Coefficients_HahnTypeII} and \(x\in\mathbb R \) then,
\begin{align*}
\sum_{l_1=0}^{n_1}\cdots\sum_{l_p=0}^{n_p}\dfrac{(-N)_{l_1+\cdots+l_p}(x)_{l_1+\cdots+l_p}}{(x+\beta+1)_{l_1+\cdots+l_p}}
C^{l_1,\ldots,l_p}_{\vec{n}}=\dfrac{(\beta+1)_{|\vec{n}|}(-N)_{|\vec{n}|}}{(x+\beta+1)_{|\vec{n}|}}\prod_{q=1}^{p}\dfrac{(\alpha_q-x+1)_{{n}_q}}{(\alpha_q+\beta+|\vec{n}|+1)_{n_q}}.
\end{align*}
\end{lemma}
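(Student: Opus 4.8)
The plan is to insert the explicit form of $C_{\vec n}^{l_1,\ldots,l_p}$ from \eqref{Coefficients_HahnTypeII} into the left-hand side and recognize the resulting multiple sum as an iterated application of a single-variable summation formula, taken in the order $l_p, l_{p-1}, \ldots, l_1$. The key observation is that in \eqref{Coefficients_HahnTypeII} the index $l_p$ enters only through the factors $(-N)_{l_1+\cdots+l_p}$ (here cancelled against the numerator factor in the statement), the ``$x$''-factors $(x)_{l_1+\cdots+l_p}/(x+\beta+1)_{l_1+\cdots+l_p}$, the $i=p$ block $\tfrac{(-n_p)_{l_p}}{l_p!}\tfrac{(\alpha_p+\beta+\sz n+1)_{l_p}}{(\alpha_p+1)_{l_p}}$, and the $i=p-1$ term of the second product, namely $\tfrac{(\alpha_{p-1}+n_{p-1}+1)_{l_p}}{(\alpha_{p-1}+\beta+\sum_{k=1}^{p-1}n_k+1)_{l_p}}$ together with the analogous factors carrying $\sum_{j=i}^p l_j$ for $i<p$. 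Writing $L=l_1+\cdots+l_{p-1}$, the innermost sum over $l_p$ is a balanced ${}_3F_2$ (or can be brought to one after shifting Pochhammer indices using $(a)_{L+l_p}=(a)_L(a+L)_{l_p}$), and I expect to evaluate it in closed form using the Pfaff--Saalschütz formula — either its classical form or the generalized version \eqref{GeneralizedPfaff-Saalschutz}. The output should again be a product of the same shape but with $p$ replaced by $p-1$ and $\vec n$ replaced by $\vec n - n_p \vec e_p$-type data, so that the identity propagates.

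Concretely, I would set up an induction on $p$ (or equivalently on the number of remaining summations). The base case $p=1$ is a single ${}_3F_2$-type evaluation: the sum
\[
\sum_{l=0}^{n}\frac{(x)_{l}}{(x+\beta+1)_{l}}\,\frac{(-n)_{l}}{l!}\,\frac{(\alpha+\beta+n+1)_{l}}{(\alpha+1)_{l}}\cdot\frac{(\alpha+1)_{n}}{(\alpha+\beta+n+1)_{n}}\frac{(-N)_n}{1}
\]
should collapse, after recognizing it as a Saalschützian ${}_3F_2$ at $1$, to $\tfrac{(\beta+1)_n(-N)_n}{(x+\beta+1)_n}\tfrac{(\alpha-x+1)_n}{(\alpha+\beta+n+1)_n}$; the balancing condition is exactly the relation among $x$, $x+\beta+1$, $\alpha+1$, $\alpha+\beta+n+1$ that makes Pfaff--Saalschütz applicable. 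For the inductive step, after performing the $l_p$-sum I would carefully track how the ``$x$''-denominator shifts: the factor $(x+\beta+1)_{l_1+\cdots+l_p}$ in the denominator combines with the evaluated $l_p$-sum to produce a new factor of the form $(x'+\beta'+1)_{l_1+\cdots+l_{p-1}}$ with $x'$, $\beta'$ suitably redefined (most likely $\beta$ gets replaced by $\beta+n_p$ and $x$ stays, or vice versa), matching the structure of \eqref{Coefficients_HahnTypeII} with $p-1$ weights — up to the clean product factor $\tfrac{(\alpha_p-x+1)_{n_p}}{(\alpha_p+\beta+\sz n+1)_{n_p}}$ that peels off at each stage.

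The main obstacle will be the bookkeeping in the inductive step: verifying that after summing over $l_p$ the remaining expression is \emph{exactly} the left-hand side of the lemma for a $(p-1)$-weight Hahn-type coefficient with correctly shifted parameters, rather than merely something of similar shape. In particular I expect to need the telescoping/shift identities $(a)_{m+l}=(a)_m (a+m)_l$ repeatedly, and to check that the numerous cross-terms $\tfrac{(\alpha_i+\beta+\sum_{k=1}^i n_k+1)_{\sum_{j=i}^p l_j}}{(\alpha_i+1)_{\sum_{j=i}^{p}l_j}}$ and $\tfrac{(\alpha_i+n_i+1)_{\sum_{j=i+1}^p l_j}}{(\alpha_i+\beta+\sum_{k=1}^i n_k +1)_{\sum_{j=i+1}^{p}l_j}}$ reorganize so that the $l_p$-dependence isolates into a single Saalschützian ${}_3F_2$. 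An alternative route that avoids some of this, and which I would try if the induction gets unwieldy, is to use the weighted hypergeometric representation \eqref{HahnTypeIIWeighted}: the left-hand side of the lemma is essentially $\sum_k$ of $Q^{({\rm II})}_{\vec n}$ against a kernel, which by \eqref{HahnTypeIIWeighted} becomes a single ${}_{p+2}F_{p+1}$ with an extra $(x)_m/(x+\beta+1)_m$ weight, and one then applies a known contiguous/transformation identity; but I expect the direct iterated-Saalschütz argument to be the cleanest.
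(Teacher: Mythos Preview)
Your iterated Pfaff--Saalsch\"utz strategy is sound in spirit, but two points need attention.

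First, the order of summation is reversed. From \eqref{Coefficients_HahnTypeII} the index $l_1$ appears only in $(-n_1)_{l_1}/l_1!$, in $(x)_{L+l_1}/(x+\beta+1)_{L+l_1}$ (with $L=l_2+\cdots+l_p$), and in the $i=1$ block $(\alpha_1+\beta+n_1+1)_{L+l_1}/(\alpha_1+1)_{L+l_1}$; after the shift $(a)_{L+l_1}=(a)_L(a+L)_{l_1}$ this is genuinely a terminating Saalsch\"utzian ${}_3F_2(-n_1,x+L,\alpha_1+\beta+n_1+1+L;x+\beta+1+L,\alpha_1+1+L;1)$. By contrast $l_p$ sits inside every subscript $\sum_{j=i}^p l_j$ for all $i$, so the $l_p$-sum is not a ${}_3F_2$ without further telescoping. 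The paper's proof of the companion Lemma~\ref{lemma2} carries out exactly your iteration but in the order $l_1,l_2,\ldots,l_p$, using \eqref{GeneralizedPfaff-Saalschutz} at each step; with that correction your plan goes through and the parameter shift at each stage is $\beta\mapsto\beta+n_1$, $N\mapsto N-n_1$, peeling off the factor $(\alpha_1-x+1)_{n_1}/(\alpha_1+\beta+|\vec n|+1)_{n_1}$ as you anticipated.

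Second, the paper's own proof of this particular lemma takes a different and much shorter route: it observes that the rescaled coefficients $\tilde C_{\vec n}^{l_1,\ldots,l_p}=(-1)^{|\vec n|}\dfrac{(-N)_{l_1+\cdots+l_p}}{(-N)_{|\vec n|}}\,C_{\vec n}^{l_1,\ldots,l_p}$ coincide with the Jacobi--Pi\~neiro type II coefficients, and then invokes an identity already established in \cite[Lemma~2.1]{CMOPI} (originally stated for $x=\alpha_i+n_i+m$ but extendable to arbitrary $x$). So your approach is a direct proof where the paper cites prior work; your method has the advantage of being self-contained and is precisely the machinery the paper deploys for the harder Lemma~\ref{lemma2}, but the index order must be $l_1\to l_p$, not $l_p\to l_1$.
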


\begin{proof}
Consider the Hahn coefficients \eqref{Coefficients_HahnTypeII} and transform them as follows:
\begin{align*}
 \tilde{C}_{\vec{n}}^{l_1,\ldots,l_p} \coloneq (-1)^{|\vec{n}|} \frac{(-N)_{l_1 + \cdots + l_p}}{(-N)_{|\vec{n}|}} C_{\vec{n}}^{l_1,\ldots,l_p}.
\end{align*}
The transformed coefficients \(\tilde{C}_{\vec{n}}^{l_1,\ldots,l_p}\) are exactly the coefficients of the monic type II polynomials for the Jacobi--Piñeiro family, as shown in \cite[Theorem 3.2]{AskeyII}. Specifically, the Jacobi--Piñeiro type II polynomials can be expressed as:
\begin{align*}
 P_{\vec{n}}^{({\rm II})}(z) = \sum_{l_1=0}^{n_1} \cdots \sum_{l_p=0}^{n_p} \tilde{C}_{\vec{n}}^{l_1,\ldots,l_p} z^{l_1 + \cdots + l_p}.
\end{align*}

In \cite[Lemma 2.1]{CMOPI}, we demonstrated that for \(m \in \mathbb{N}_0\):
\begin{align*}
 \sum_{l_1=0}^{n_1} \cdots \sum_{l_p=0}^{n_p} \frac{(\alpha_i + n_i + m)_{l_1 + \cdots + l_p}}{(\alpha_i + \beta + n_i + m + 1)_{l_1 + \cdots + l_p}} \tilde{C}_{\vec{n}}^{l_1,\ldots,l_p} = (-1)^{|\vec{n}|} \frac{(\beta + 1)_{|\vec{n}|}}{(\alpha_i + \beta + n_i + m + 1)_{|\vec{n}|}} \prod_{q=1}^p \frac{(\alpha_q - \alpha_i - n_i - m + 1)_{n_q}}{(\alpha_q + \beta + |\vec{n}| + 1)_{n_q}}.
\end{align*}
This result can be generalized by replacing \(\alpha_i + n_i + m\) with any \(x \in \mathbb{R}\). By applying this replacement and using the coefficients \(\tilde{C}_{\vec{n}}^{l_1,\ldots,l_p}\), we obtain the desired result.
\end{proof}

\begin{lemma}
\label{lemma3}
Let consider \( x,\alpha_1,\ldots,\alpha_p\in\mathbb C\) and \( n_1,\ldots,n_p\in\mathbb N_0\) then
\begin{align*}
\sum_{l=1}^p\dfrac{n_l}{(\alpha_l-x)}\prod_{q=1}^{l-1}(\alpha_q-x+1)_{n_q}\prod_{q=l}^p(\alpha_q-x)_{n_q}&\stackrel{}{=}\prod_{q=1}^p(\alpha_q-x+1)_{n_q}-\prod_{q=1}^p(\alpha_q-x)_{n_q} .
\end{align*}
\end{lemma}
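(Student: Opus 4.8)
The plan is to prove the identity by induction on $p$, which is the natural parameter since the left-hand side is a telescoping-type sum whose $l$-th term involves a split at position $l$ between the $(\alpha_q-x+1)_{n_q}$ factors and the $(\alpha_q-x)_{n_q}$ factors. First I would record the one-variable building block: for a single index, $(\alpha-x+1)_{n}-(\alpha-x)_{n}=\frac{n}{\alpha-x}(\alpha-x+1)_{n-1}\cdot(\alpha-x)$ — more precisely, writing $(\alpha-x)_n=(\alpha-x)(\alpha-x+1)_{n-1}$ and $(\alpha-x+1)_n=(\alpha-x+n)(\alpha-x+1)_{n-1}$, the difference is $n\,(\alpha-x+1)_{n-1}=\frac{n}{\alpha-x}(\alpha-x)_n$. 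This is the $p=1$ case of the lemma and will be the base of the induction.

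For the inductive step, I would isolate the last summand $l=p$, which equals $\frac{n_p}{\alpha_p-x}\prod_{q=1}^{p-1}(\alpha_q-x+1)_{n_q}\cdot(\alpha_p-x)_{n_p}$, and factor $\prod_{q=1}^{p-1}(\alpha_q-x+1)_{n_q}$ out of the remaining sum $\sum_{l=1}^{p-1}$; by the induction hypothesis applied to the indices $1,\ldots,p-1$, that remaining sum contributes $\bigl(\prod_{q=1}^{p-1}(\alpha_q-x+1)_{n_q}-\prod_{q=1}^{p-1}(\alpha_q-x)_{n_q}\bigr)(\alpha_p-x)_{n_p}$. Adding the $l=p$ term and using the $p=1$ identity $\frac{n_p}{\alpha_p-x}(\alpha_p-x)_{n_p}=(\alpha_p-x+1)_{n_p}-(\alpha_p-x)_{n_p}$ on the isolated piece, the total becomes $\prod_{q=1}^{p-1}(\alpha_q-x+1)_{n_q}\cdot(\alpha_p-x+1)_{n_p}-\prod_{q=1}^{p-1}(\alpha_q-x)_{n_q}\cdot(\alpha_p-x)_{n_p}$, which is exactly the right-hand side for $p$.

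An alternative I might present instead is a direct telescoping argument: define $T_l\coloneq\prod_{q=1}^{l}(\alpha_q-x+1)_{n_q}\prod_{q=l+1}^p(\alpha_q-x)_{n_q}$ for $l=0,\ldots,p$, so that $T_0=\prod_{q=1}^p(\alpha_q-x)_{n_q}$ and $T_p=\prod_{q=1}^p(\alpha_q-x+1)_{n_q}$; then show that the $l$-th summand on the left equals $T_l-T_{l-1}$ using the one-variable identity $(\alpha_l-x+1)_{n_l}-(\alpha_l-x)_{n_l}=\frac{n_l}{\alpha_l-x}(\alpha_l-x)_{n_l}$ on the $l$-th factor, and conclude $\sum_{l=1}^p(T_l-T_{l-1})=T_p-T_0$. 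This is essentially the same computation packaged without an explicit induction and is probably the cleaner writeup.

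The only genuine obstacle is bookkeeping: one must be careful that the product ranges match up — in the $l$-th summand the factors $q<l$ carry the $+1$ shift, the factor $q=l$ appears as $(\alpha_l-x)_{n_l}$ multiplied by $\frac{n_l}{\alpha_l-x}$ (i.e. it is the ``difference'' slot), and the factors $q>l$ are unshifted — so that replacing $\frac{n_l}{\alpha_l-x}(\alpha_l-x)_{n_l}$ by $(\alpha_l-x+1)_{n_l}-(\alpha_l-x)_{n_l}$ splits the summand cleanly into $T_l-T_{l-1}$. No convergence or analyticity issues arise since everything is a finite sum of rational functions of $x$ (with the understood poles at $x=\alpha_l$ cancelling, as $n_l\,(\alpha_l-x+1)_{n_l-1}$ is a polynomial). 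I expect the proof to be three or four lines once the telescoping structure is made explicit.
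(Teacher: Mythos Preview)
Your proposal is correct and follows essentially the same inductive strategy as the paper; the paper first divides both sides by the common factor $\prod_{q=1}^p(\alpha_q-x+1)_{n_q-1}$ to reduce to a polynomial identity before inducting, whereas you induct directly on the original form (note a small slip: in the inductive step you should say you factor out $(\alpha_p-x)_{n_p}$ from the sum $\sum_{l=1}^{p-1}$, not $\prod_{q=1}^{p-1}(\alpha_q-x+1)_{n_q}$, though the result you then write is correct). Your telescoping variant with $T_l$ is not in the paper and is indeed the cleanest writeup, as it exposes the identity as $\sum_{l=1}^p(T_l-T_{l-1})=T_p-T_0$ in one line.
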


\begin{proof}
By removing the common factor \(\prod_{q=1}^p (\alpha_q - x + 1)_{n_q - 1}\) from both sides of the equation, it reduces to:
\begin{align*}
 \sum_{l=1}^p n_l \prod_{q=1}^{l-1} (\alpha_q - x + n_q) \prod_{q=l+1}^p (\alpha_q - x) = \prod_{q=1}^p (\alpha_q - x + n_q) - \prod_{q=1}^p (\alpha_q - x),
\end{align*}
which is much simpler.

We will proceed by induction over the number of parameters \(p\). For \(p = 1\), the identity is easily proven. Now, assume that it is satisfied for some \(p\geq 1\). Then,
\begin{align*}
 & \sum_{l=1}^{p+1} n_l \prod_{q=1}^{l-1} (\alpha_q - x + n_q) \prod_{q=l+1}^{p+1} (\alpha_q - x) \\
 &\hspace{2.5cm}= (\alpha_{p+1} - x) \sum_{l=1}^{p} n_l \prod_{q=1}^{l-1} (\alpha_q - x + n_q) \prod_{q=l+1}^{p} (\alpha_q - x) + n_{p+1} \prod_{q=1}^{p} (\alpha_q - x + n_q) \\
 &\hspace{2.5cm}= (\alpha_{p+1} - x) \left(\prod_{q=1}^p (\alpha_q - x + n_q) - \prod_{q=1}^p (\alpha_q - x)\right) + n_{p+1} \prod_{q=1}^{p} (\alpha_q - x + n_q) \\
 &\hspace{2.5cm}= (\alpha_{p+1} - x) \prod_{q=1}^p (\alpha_q - x + n_q) - (\alpha_{p+1} - x) \prod_{q=1}^p (\alpha_q - x) + n_{p+1} \prod_{q=1}^p (\alpha_q - x + n_q) \\
 &\hspace{2.5cm}= \prod_{q=1}^{p+1} (\alpha_q - x + n_q) - \prod_{q=1}^{p+1} (\alpha_q - x).
\end{align*}
Hence, the identity also holds for \(p + 1\).
\end{proof}

\begin{lemma}
\label{lemma2}
Let \( C_{\vec{n}}^{l_1,\ldots,l_p}\) be as in Equation \eqref{HahnTypeII} and let \(x\in\mathbb R\), then
\begin{multline*}
\sum_{l_1=0}^{n_1}
\cdots\sum_{l_p=0}^{n_p}\dfrac{(-N)_{l_1+\cdots+l_p}(x)_{l_1+\cdots+l_p}}{(x+\beta+2)_{l_1+\cdots+l_p}}
C^{l_1,\ldots,l_p}_{\vec{n}}\\
=\dfrac{(\beta+2)_{|\vec{n}|-1}(-N)_{|\vec{n}|}}{(x+\beta+2)_{|\vec{n}|}\prod_{q=1}^p(\alpha_q+\beta+|\vec{n}|+1)_{n_q}}
\bigg(
(x+\beta+|\vec{n}|+1)\prod_{q=1}^p(\alpha_q-x+1)_{n_q}
-x\prod_{q=1}^p(\alpha_q-x)_{n_q}\bigg).
\end{multline*}
\end{lemma}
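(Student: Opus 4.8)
The plan is to reduce Lemma~\ref{lemma2} to Lemma~\ref{lemma1} by relating the sum with shift $(x+\beta+2)$ in the denominator to the sum with shift $(x+\beta+1)$. The key algebraic trick is the partial-fraction-type identity
\[
\frac{(x)_{L}}{(x+\beta+2)_{L}}=\frac{(x+\beta+1)}{(\beta+1)}\cdot\frac{(x)_{L}}{(x+\beta+1)_{L}}\cdot\frac{(x+\beta+1)_{L}}{(x+\beta+2)_{L}}\cdot\frac{1}{x+\beta+1+L}\cdot\bigl(\text{something}\bigr),
\]
but more cleanly: write $L:=l_1+\cdots+l_p$ and observe that
\[
\frac{1}{(x+\beta+2)_{L}}=\frac{1}{(x+\beta+1)_{L}}\cdot\frac{x+\beta+1}{x+\beta+1+L}.
\]
Then $\dfrac{x+\beta+1}{x+\beta+1+L}=1-\dfrac{L}{x+\beta+1+L}$, which still is not directly summable; instead I would use the decomposition
$\dfrac{x+\beta+1}{x+\beta+1+L}=(x+\beta+1)\Bigl(\dfrac{1}{x+\beta+1}-\dfrac{L}{(x+\beta+1)(x+\beta+1+L)}\Bigr)$ — equivalently, split $(x)_L/(x+\beta+2)_L$ into a piece proportional to $(x)_L/(x+\beta+1)_L$ and a piece proportional to $(x+1)_{L}/(x+\beta+2)_L$ via the identity $x+\beta+1+L=(x+L)+(\beta+1)$, so that
\[
\frac{(x)_L}{(x+\beta+2)_L}=\frac{1}{\beta+1}\left(\frac{(\beta+1)(x)_L}{(x+\beta+1)_L}\cdot\frac{x+\beta+1}{x+\beta+1+L}\right)
=\frac{1}{\beta+1}\left(\frac{x\,(x+1)_{L-1}}{(x+\beta+1)_{L}}(x+\beta+1)\cdots\right).
\]
The cleanest version: use $(x+\beta+1+L)\dfrac{(x)_L}{(x+\beta+2)_L}=(x+L)\dfrac{(x)_L}{(x+\beta+1)_L}+(\beta+1)\dfrac{(x)_L}{(x+\beta+1)_L}\cdot\dfrac{x+\beta+1}{x+\beta+1+L}$ — I will pick whichever of these regroupings lets me express the left-hand side of Lemma~\ref{lemma2} as a linear combination of \emph{two} sums, each of the form handled by Lemma~\ref{lemma1} (one in the variable $x$, one in the variable $x+1$, noting $\dfrac{(x)_L (x+\beta+1)}{(x+\beta+2)_L}=\dfrac{x\,(x+1)_{L-1}}{(x+\beta+2)_{L-1}}$ so a shift of $N\mapsto$ same, $x\mapsto x+1$ reduces it to Lemma~\ref{lemma1} with argument $x+1$, up to reindexing the $l_i$'s).

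Concretely, the steps are: (1) isolate the factor $\dfrac{(x)_{L}}{(x+\beta+2)_{L}}$ and rewrite it, using $x=(x+\beta+1)-(\beta+1)$ in the numerator after shifting, as $\dfrac{x}{x+\beta+1}\cdot\dfrac{(x+1)_{L-1}\,(x+\beta+1)}{(x+\beta+2)_{L-1}}\cdot\dfrac{1}{(x+\beta+1)_{?}}$ — the aim is a clean two-term split $\dfrac{(x)_L}{(x+\beta+2)_L}=A\,\dfrac{(x)_L}{(x+\beta+1)_L}+B\,\dfrac{(x+1)_L}{(x+\beta+2)_L}$ with constants $A,B$ independent of $L$; matching at $L=0$ and $L=1$ forces $A=\dfrac{\beta+1}{x+\beta+1}$, $B=\dfrac{x}{x+\beta+1}$, and one checks the identity holds for all $L$ since both sides satisfy the same first-order recursion in $L$. (2) Apply Lemma~\ref{lemma1} to the first resulting sum directly. (3) For the second sum, observe $\sum \dfrac{(-N)_L (x+1)_L}{(x+\beta+2)_L} C_{\vec n}^{l_1,\dots,l_p}$ is exactly the left-hand side of Lemma~\ref{lemma1} with $x$ replaced by $x+1$, so it equals $\dfrac{(\beta+1)_{|\vec n|}(-N)_{|\vec n|}}{(x+\beta+2)_{|\vec n|}}\prod_{q=1}^p\dfrac{(\alpha_q-x)_{n_q}}{(\alpha_q+\beta+|\vec n|+1)_{n_q}}$. (4) Combine the two closed forms over the common denominator $(x+\beta+1)_{|\vec n|+1}$ or $(x+\beta+2)_{|\vec n|}$, using $\dfrac{(\beta+1)_{|\vec n|}}{x+\beta+1}\cdot\dfrac{1}{(x+\beta+1)_{|\vec n|}}=\dfrac{(\beta+1)_{|\vec n|}}{(x+\beta+1)_{|\vec n|+1}}$ and the relation $(x+\beta+1)_{|\vec n|+1}=(x+\beta+1)(x+\beta+2)_{|\vec n|}$, together with $(\beta+1)_{|\vec n|}=(\beta+1)(\beta+2)_{|\vec n|-1}$. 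After simplification the prefactor $\dfrac{(\beta+2)_{|\vec n|-1}(-N)_{|\vec n|}}{(x+\beta+2)_{|\vec n|}\prod_q(\alpha_q+\beta+|\vec n|+1)_{n_q}}$ should factor out, leaving
$(\beta+1)\prod_q(\alpha_q-x+1)_{n_q}\cdot\dfrac{x+\beta+1}{?}+x\prod_q(\alpha_q-x)_{n_q}$, and the two products must assemble into $(x+\beta+|\vec n|+1)\prod(\alpha_q-x+1)_{n_q}-x\prod(\alpha_q-x)_{n_q}$.

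The main obstacle I anticipate is the bookkeeping in step (4): verifying that the linear combination of the two Lemma~\ref{lemma1} outputs collapses to precisely the asserted right-hand side, in particular that the coefficient of $\prod_q(\alpha_q-x+1)_{n_q}$ comes out as $(x+\beta+|\vec n|+1)$ and not some other linear polynomial in $x$ — this requires carefully tracking the Pochhammer-shift identities relating $(x+\beta+1)_{|\vec n|}$, $(x+\beta+2)_{|\vec n|}$, and $(x+\beta+1)_{|\vec n|+1}$, and the factor $x+\beta+1+|\vec n|$ that appears from $\dfrac{1}{(x+\beta+1)_{|\vec n|}}-\dfrac{x+\beta+1}{(x+\beta+1)_{|\vec n|+1}}$-type manipulations. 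A secondary point needing care is justifying the two-term split of $\dfrac{(x)_L}{(x+\beta+2)_L}$ uniformly in $L$; this is a one-line induction on $L$ (both sides obey $f(L+1)=\dfrac{x+L}{x+\beta+2+L}f(L)$ once the constants $A,B$ are fixed by $L=0,1$), but it should be stated explicitly. If the direct split proves awkward, an alternative is to start from the $p$-fold hypergeometric sum, recognize it after the substitution $\tilde C$ (as in the proof of Lemma~\ref{lemma1}) as a Jacobi--Piñeiro type II evaluation, and invoke the corresponding contiguous relation from \cite{CMOPI}; but I expect the elementary split above to be shorter.
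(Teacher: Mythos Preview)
Your overall strategy --- a two–term split of $\dfrac{(x)_L}{(x+\beta+2)_L}$ that reduces Lemma~\ref{lemma2} to two applications of Lemma~\ref{lemma1} (once at $x$, once at $x+1$) --- is sound and in fact considerably shorter than the paper's proof. However, the specific decomposition you wrote down is incorrect, and the justification you proposed for it is flawed.

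You claim that $\dfrac{(x)_L}{(x+\beta+2)_L}=A\,\dfrac{(x)_L}{(x+\beta+1)_L}+B\,\dfrac{(x+1)_L}{(x+\beta+2)_L}$ with $A=\dfrac{\beta+1}{x+\beta+1}$, $B=\dfrac{x}{x+\beta+1}$, arguing that both sides satisfy the recursion $f(L+1)=\dfrac{x+L}{x+\beta+2+L}f(L)$. But the three building blocks obey \emph{different} first-order recursions in $L$ (with ratios $\frac{x+L}{x+\beta+2+L}$, $\frac{x+L}{x+\beta+1+L}$, $\frac{x+1+L}{x+\beta+2+L}$ respectively), so matching at $L=0,1$ does not propagate; and indeed your identity already fails at $L=1$. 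The correct identity, proven directly rather than by recursion, is
\[
(\beta+1)\,\frac{(x)_L}{(x+\beta+2)_L}\;=\;(x+\beta+1)\,\frac{(x)_L}{(x+\beta+1)_L}\;-\;x\,\frac{(x+1)_L}{(x+\beta+2)_L},
\]
which follows from $(x+\beta+1+L)(x)_L-(x+L)(x)_L=(\beta+1)(x)_L$ together with $(x+L)(x)_L=x(x+1)_L$ and $\dfrac{(x+\beta+2)_L}{(x+\beta+1)_L}=\dfrac{x+\beta+1+L}{x+\beta+1}$. With these corrected coefficients your steps (2)--(4) go through verbatim: Lemma~\ref{lemma1} at $x$ gives $\dfrac{(\beta+1)_{|\vec n|}(-N)_{|\vec n|}}{(x+\beta+1)_{|\vec n|}}\prod_q\dfrac{(\alpha_q-x+1)_{n_q}}{(\alpha_q+\beta+|\vec n|+1)_{n_q}}$, Lemma~\ref{lemma1} at $x+1$ gives the same with $x\mapsto x+1$; then $\dfrac{x+\beta+1}{(x+\beta+1)_{|\vec n|}}=\dfrac{x+\beta+|\vec n|+1}{(x+\beta+2)_{|\vec n|}}$ and $\dfrac{(\beta+1)_{|\vec n|}}{\beta+1}=(\beta+2)_{|\vec n|-1}$ collapse the combination exactly to the stated right-hand side, with no further obstacle in step~(4).

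For comparison, the paper takes a quite different and longer route: it isolates the $l_1$-sum as a ${}_3F_2$ and applies the generalized Pfaff--Saalsch\"utz formula~\eqref{GeneralizedPfaff-Saalschutz} with $k=1$, producing two terms; one of them is directly summable via Lemma~\ref{lemma1} (with shifted parameters), the other reproduces the original sum with $p\to p-1$. Iterating this $p$ times yields a telescoping expression that is finally closed using Lemma~\ref{lemma3}. Your partial-fraction approach bypasses both the Pfaff--Saalsch\"utz identity and Lemma~\ref{lemma3} entirely, and is the more efficient argument once the split is written correctly.
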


\begin{proof}
Replacing \(C_{\vec{n}}^{l_1,\ldots,l_p}\) by its expression given in \eqref{Coefficients_HahnTypeII} the LHS can be written as
\begin{multline*}
(-N)_{|\vec{n}|}\prod_{q=1}^p\dfrac{(\alpha_q+1)_{n_q}}{(\alpha_q+\beta+|\vec{n}|+1)_{n_q}}\sum_{l_p=0}^{n_p}\cdots\sum_{l_2=0}^{n_2}\dfrac{(-n_p)_{l_p}}{l_p!}\cdots\dfrac{(-n_2)_{l_2}}{l_2!}\dfrac{(x)_{l_2+\cdots+l_p}}{(x+\beta+2)_{l_2+\cdots+l_p}}
\dfrac{(\alpha_1+\beta+n_1+1)_{l_2+\cdots+l_p}}{(\alpha_1+1)_{l_2+\cdots+l_p}}
 \\
 \times \dfrac{(\alpha_2+\beta+n_1+n_2+1)_{l_2+\cdots+l_p}\cdots(\alpha_{p}+\beta+|\vec{n}|+1)_{l_p}}{(\alpha_2+1)_{l_2+\cdots+l_p}\cdots(\alpha_{p}+1)_{l_p}} \\\times
\dfrac{(\alpha_1+n_1+1)_{l_2+\cdots+l_p}\cdots(\alpha_{p-1}+n_{p-1}+1)_{l_p}}{(\alpha_1+\beta+n_1+1)_{l_2+\cdots+l_p}\cdots(\alpha_{p-1}+\beta+n_1+\cdots+n_{p-1}+1)_{l_p}}\\
\times \pFq{3}{2}{-n_1,x+l_2+\cdots+l_p,\alpha_1+\beta+n_1+1+l_2+\cdots+l_p}{x+\beta+2+l_2+\cdots+l_p,\alpha_1+1+l_2+\cdots+l_p}{1}.
\end{multline*}
Now, applying formula \eqref{GeneralizedPfaff-Saalschutz} for \( k=1\) we find that the \( _3F_2\) series in the previous equation can be written as
\begin{multline*}
 \pFq{3}{2}{-n_1,x+l_2+\cdots+l_p,\alpha_1+\beta+n_1+1+l_2+\cdots+l_p}{x+\beta+2+l_2+\cdots+l_p,\alpha_1+1+l_2+\cdots+l_p}{1}\\
 \quad=\dfrac{(\beta+2)_{n_1}(\alpha_1-x+1)_{n_1}}{(\alpha_1+1+l_2+\cdots+l_p)_{n_1}(x+\beta+2+l_2+\cdots+l_p)_{n_1}}
 +\dfrac{n_1(\beta+2)_{n_1-1}(\alpha_1-x+1)_{n_1-1}(x+l_2+\cdots+l_p)}{(\alpha_1+1+l_2+\cdots+l_p)_{n_1}(x+\beta+2+l_2+\cdots+l_p)_{n_1}} .
\end{multline*}
Replacing this and simplifying we find that the LHS reads
\begin{multline*}
\dfrac{(\beta+2)_{n_1}(\alpha_1-x+1)_{n_1}(-N)_{n_1}}{(x+\beta+2)_{n_1}(\alpha_1+\beta+|\vec{n}|+1)_{n_1}}\\\times
(-N+n_1)_{n_2+\cdots+n_p}\prod_{q=2}^p\dfrac{(\alpha_q+1)_{n_q}}{(\alpha_q+\beta+|\vec{n}|+1)_{n_q}}
\sum_{l_p=0}^{n_p}\cdots\sum_{l_2=0}^{n_2}\dfrac{(-n_p)_{l_p}}{l_p!}\cdots\dfrac{(-n_2)_{l_2}}{l_2!}\dfrac{(x)_{l_2+\cdots+l_p}}{(x+\beta+n_1+2)_{l_2+\cdots+l_p}}
 \\ \times
 \dfrac{(\alpha_2+\beta+n_1+n_2+1)_{l_2+\cdots+l_p}\cdots(\alpha_{p}+\beta+|\vec{n}|+1)_{l_p}
 (\alpha_2+n_2+1)_{l_3+\cdots+l_p}\cdots(\alpha_{p-1}+n_{p-1}+1)_{l_p}
 }{
 (\alpha_2+1)_{l_2+\cdots+l_p}\cdots(\alpha_{p}+1)_{l_p}(\alpha_2+\beta+n_1+n_2+1)_{l_3+\cdots+l_p}\cdots(\alpha_{p-1}+\beta+n_1+\cdots+n_{p-1}+1)_{l_p}
 }
\\
 +\dfrac{n_1x(\beta+2)_{n_1-1}(\alpha_1-x+1)_{n_1-1}(-N)_{n_1}}{(x+\beta+2)_{n_1}(\alpha_1+\beta+|\vec{n}|+1)_{n_1}}\\\times
 (-N+n_1)_{n_2+\cdots+n_p}\prod_{q=2}^p\dfrac{(\alpha_q+1)_{n_q}}{(\alpha_q+\beta+|\vec{n}|+1)_{n_q}}\sum_{l_p=0}^{n_p}\cdots\sum_{l_2=0}^{n_2}\dfrac{(-n_p)_{l_p}}{l_p!}\cdots\dfrac{(-n_2)_{l_2}}{l_2!}\dfrac{(x+1)_{l_2+\cdots+l_p}}{(x+\beta+n_1+2)_{l_2+\cdots+l_p}}
 \\\times
\dfrac{(\alpha_2+\beta+n_1+n_2+1)_{l_2+\cdots+l_p}\cdots(\alpha_{p}+\beta+|\vec{n}|+1)_{l_p}(\alpha_2+n_2+1)_{l_3+\cdots+l_p}\cdots(\alpha_{p-1}+n_{p-1}+1)_{l_p}
}{
(\alpha_2+1)_{l_2+\cdots+l_p}\cdots(\alpha_{p}+1)_{l_p}
(\alpha_2+\beta+n_1+n_2+1)_{l_3+\cdots+l_p}\cdots(\alpha_{p-1}+\beta+n_1+\cdots+n_{p-1}+1)_{l_p}}
 .
\end{multline*}
This last expression can be written through the type II coefficients \eqref{HahnTypeII} with the parameters shifted as follows
\begin{multline*}
\dfrac{(\beta+2)_{n_1}(\alpha_1-x+1)_{n_1}(-N)_{n_1}}{(x+\beta+2)_{n_1}(\alpha_1+\beta+|\vec{n}|+1)_{n_1}}
\sum_{l_p=0}^{n_p}\cdots\sum_{l_2=0}^{n_2}\dfrac{(-N+n_1)_{l_2+\cdots+l_p}(x)_{l_2+\cdots+l_p}}{(x+\beta+n_1+2)_{l_2+\cdots+l_p}}C^{l_2,\ldots,l_p}_{(n_2,\ldots,n_p)}\big(\alpha_2,\ldots,\alpha_p,\beta+n_1,N-n_1\big)
\\
 +\dfrac{n_1x(\beta+2)_{n_1-1}(\alpha_1-x+1)_{n_1-1}(-N)_{n_1}}{(x+\beta+2)_{n_1}(\alpha_1+\beta+|\vec{n}|+1)_{n_1}}
 \\
 \times\underbrace{\sum_{l_p=0}^{n_p}\cdots\sum_{l_2=0}^{n_2}\dfrac{(-N+n_1)_{l_2+\cdots+l_p}(x+1)_{l_2+\cdots+l_p}}{(x+\beta+n_1+2)_{l_2+\cdots+l_p}}
 C^{l_2,\ldots,l_p}_{(n_2,\ldots,n_p)}\big(\alpha_2,\ldots,\alpha_p,\beta+n_1,N-n_1\big).}_{=\dfrac{(\beta+n_1+1)_{|\vec{n}|-n_1}(-N+n_1)_{|\vec{n}|-n_1}}{(x+\beta+n_1+2)_{|\vec{n}|-n_1}}\prod_{q=2}^{p}\dfrac{(\alpha_q-x)_{{n}_q}}{(\alpha_q+\beta+|\vec{n}|+1)_{n_q}}\;\text{by Lemma \ref{lemma1}}}
\end{multline*}
Substituting and reducing, we find the following expression for the left-hand side:
\begin{multline*}
\sum_{l_p=0}^{n_p}
\cdots\sum_{l_1=0}^{n_1}\dfrac{(-N)_{l_1+\cdots+l_p}(x)_{l_1+\cdots+l_p}}{(x+\beta+2)_{l_1+\cdots+l_p}}C^{l_1,\ldots,l_p}_{\vec{n}}
=
\dfrac{(\beta+2)_{n_1}(\alpha_1-x+1)_{n_1}(-N)_{n_1}}{(x+\beta+2)_{n_1}(\alpha_1+\beta+|\vec{n}|+1)_{n_1}}
\\
\times
\sum_{l_p=0}^{n_p}\cdots\sum_{l_2=0}^{n_2}\dfrac{(-N+n_1)_{l_2+\cdots+l_p}(x)_{l_2+\cdots+l_p}}{(x+\beta+n_1+2)_{l_2+\cdots+l_p}}C^{l_2,\ldots,l_p}_{(n_2,\ldots,n_p)}\big(\alpha_2,\ldots,\alpha_p,\beta+n_1,N-n_1\big)
\\
 +\dfrac{n_1x(\beta+2)_{|\vec{n}|-1}(-N)_{|\vec{n}|}}{(\alpha_1-x)(x+\beta+2)_{|\vec{n}|}}{\prod_{q=1}^{p}\dfrac{(\alpha_q-x)_{{n}_q}}{(\alpha_q+\beta+|\vec{n}|+1)_{n_q}}}.
\end{multline*}
Hence, we have found a recursive formula for the multiple sum, applying it \(p\) times we arrive to
\begin{multline*}
\sum_{l_1=0}^{n_1}\cdots\sum_{l_p=0}^{n_p}\dfrac{(-N)_{l_1+\cdots+l_p}(x)_{l_1+\cdots+l_p}}{(x+\beta+2)_{l_1+\cdots+l_p}}
C^{l_1,\ldots,l_p}_{\vec{n}}=\dfrac{(\beta+2)_{|\vec{n}|-1}(-N)_{|\vec{n}|}}{(x+\beta+2)_{|\vec{n}|}\prod_{q=1}^p(\alpha_q+\beta+|\vec{n}|+1)_{n_q}}\\
\times \bigg(
(\beta+|\vec{n}|+1)\prod_{q=1}^p(\alpha_q-x+1)_{n_q}
+x
\underbrace{\sum_{l=1}^p\dfrac{n_l}{\alpha_l-x}\prod_{q=1}^{l-1}(\alpha_q-x+1)_{n_q}\prod_{q=l}^p(\alpha_q-x)_{n_q}}_{=\prod_{q=1}^p(\alpha_q-x+1)_{n_q}-\prod_{q=1}^p(\alpha_q-x)_{n_q}\;\text{by Lemma \ref{lemma3}}}\bigg) .
\end{multline*}
Simplifying we obtain the desired result.
\end{proof}

Equipped with these three lemmas, we are now ready to establish the following result.
\begin{teo}
The Hahn multiple orthogonal polynomials of type I, as expressed in \eqref{HahnTypeI}, and type II, as defined in \eqref{HahnTypeII}, each adhere to their respective recurrence relations, as outlined in \eqref{NNRR}, with respect to the coefficients:
\begin{align}
\label{HahnRecurrence}
b_{\vec{n}}^0(k)=&
\begin{multlined}[t][.8\textwidth]
(\alpha_k+n_k+1)\bigg(
\dfrac{\alpha_k+\beta+n_k+N+2}{\alpha_k+\beta+n_k+|\vec{n}|+2}\prod_{q=1}^p\dfrac{\alpha_k-{\alpha}_q+n_k+1}{\alpha_k-{\alpha}_q+n_k+1-n_q}-1\bigg)\\
+
(\alpha_k+\beta+n_k+|\vec{n}|+1)\sum_{i=1}^p 
\dfrac{(\alpha_i+n_i)(\alpha_i+\beta+n_i+N+1)}{(\alpha_i-\alpha_k-n_k-1+n_i)(\alpha_i+\beta+n_i+|\vec{n}|)_{2}}
\dfrac{\prod_{q=1}^p{(\alpha_i-\alpha_q+n_i)}}{\prod_{q=1,q\neq i}^p(\alpha_i-{\alpha}_q-n_q+n_i)},
\end{multlined}
\\
 b_{\vec{n}}^j=&\begin{multlined}[t][.8\textwidth]
 \dfrac{(N-|\vec{n}|+1)_{j}(\beta+1+|\vec{n}|-j)_{j}}{\prod_{q\in S^{\textsf c}(\pi,j)}({\alpha}_q+\beta+|\vec{n}|-j+n_q)}\prod_{q=1}^p\dfrac{({\alpha}_q+\beta+|\vec{n}|-j+1)_{n_q}}{(\alpha_q+\beta+|\vec{n}|+1)_{n_q}}\\
\begin{aligned}&\times
 \sum_{i\in S(\pi,j)}
\dfrac{(\alpha_i+n_i)(\alpha_i+\beta+n_i+N+1)}{(\alpha_i+\beta+n_i+|\vec{n}|-j)_{j+2}}
\dfrac{\prod_{q=1}^p(\alpha_i-\alpha_q+n_i)}{\prod_{q\in S(\pi,j),q\neq i}(\alpha_i-{\alpha}_q-n_q+n_i)},& j&\in\{1,\ldots,p\}.
\end{aligned}
 \end{multlined}
\end{align}
\end{teo}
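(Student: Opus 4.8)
The plan is to compute the near-neighbor recurrence coefficients directly from the integral-representation formulas for the biorthogonal forms given in \eqref{NextNeighbourRecurrenceIntegralRep}, reducing everything to residue calculus. Concretely, for $b_{\vec n}^0(k)$ one must evaluate $\sum_{x\in\Delta} x\,Q_{\vec n}^{(\mathrm{II})}(x)\,Q_{\vec n+\vec e_k}^{(\mathrm{I})}(x)$ and for $b_{\vec n}^j$ the analogous sum against $Q_{\vec n-\vec s_{j-1}}^{(\mathrm{I})}(x)$. Rather than working with the hypergeometric series, I would first establish a ``generating-function'' identity: using the type II representation \eqref{HahnTypeII} with coefficients \eqref{Coefficients_HahnTypeII}, the key object is the sum $\sum_{l_1=0}^{n_1}\cdots\sum_{l_p=0}^{n_p} \frac{(-N)_{l_1+\cdots+l_p}(x)_{l_1+\cdots+l_p}}{(x+\beta+c)_{l_1+\cdots+l_p}} C_{\vec n}^{l_1,\ldots,l_p}$, which is precisely what Lemmas \ref{lemma1} and \ref{lemma2} evaluate in closed product form (for $c=1$ and $c=2$ respectively). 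These closed forms are what turn the a priori complicated multiple sums into manageable rational expressions in $x$ times a ratio of Pochhammer symbols. The role of Lemma \ref{lemma3} is to collapse the telescoping sum $\sum_l \frac{n_l}{\alpha_l-x}\prod(\alpha_q-x+1)\prod(\alpha_q-x)$ that appears in the proof of Lemma \ref{lemma2} into a difference of two products.

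The next step is to pair these with the type I linear form. For the type II side I would use the weighted representation \eqref{HahnTypeIIWeighted}, whose $_{p+2}F_{p+1}$ summed against powers $(-x)_m$ reproduces exactly the structure that Lemmas \ref{lemma1}--\ref{lemma2} handle after multiplying by the Hahn weight, since $\sum_{x=0}^N w_i(x)\,(-x)_a\,(\text{something}) $ reduces via Chu--Vandermonde \eqref{eq:Chu--Vandermonde} and Gauss' formula \eqref{eq:Gauss_hypergeometric} to finite closed sums. Alternatively, and more cleanly, one substitutes the contour-integral form for $Q_{\vec n}^{(\mathrm I)}$ from Theorem \ref{HI_IR} and the series for $Q_{\vec n}^{(\mathrm{II})}$: then $\sum_{x=0}^N x\,Q^{(\mathrm{II})}_{\vec n}(x)\,\frac{\Gamma(N-x+\beta+1)}{\Gamma(x+1)\Gamma(N-x+1)}\frac{\Gamma(x+t+1)}{\Gamma(\alpha_i+x+1)}$ can be summed in closed form (again Chu--Vandermonde), producing an integrand that is an explicit ratio of gamma functions in $t$; the whole coefficient is then a single contour integral, which one evaluates by residues at $t=\alpha_q,\alpha_q+1,\ldots,\alpha_q+n_q-1$. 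Matching the $x$-linear factor $x = (x+\alpha_i+1)-(\alpha_i+1)$ (or, on the type II side, $x=-(-x)$) lets us express the recurrence coefficient as a combination of the $c=1$ and $c=2$ evaluations from the two lemmas; collecting residues gives the double structure in \eqref{HahnRecurrence}: the ``main term'' $(\alpha_k+n_k+1)(\cdots-1)$ coming from the boundary/degree-$|\vec n|$ contribution and the sum over $i$ (over all of $\{1,\ldots,p\}$ for $b^0$, over $S(\pi,j)$ for $b^j$) coming from the residues, with the denominators $(\alpha_i+\beta+n_i+|\vec n|)_2$, resp.\ $(\alpha_i+\beta+n_i+|\vec n|-j)_{j+2}$, arising from the shifted Pochhammer symbols $(x+\beta+c)_{|\vec n|}$ evaluated at the poles.

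The case $b^j_{\vec n}$ for $j\ge1$ requires the type I form at the shifted index $\vec n-\vec s_{j-1}$, i.e.\ with $j-1$ of the entries $n_{\pi(1)},\ldots,n_{\pi(j-1)}$ decreased by one; this is exactly where the sets $S(\pi,j)$ and $S^{\textsf c}(\pi,j)$ from the Remark enter, since the residue at $\alpha_i+k$ only survives when $i\in S(\pi,j)$ (the index is not lowered there, so the relevant Pochhammer $(\alpha_i-t)_{n_i}$ retains its full length), and the prefactor $\prod_{q\in S^{\textsf c}(\pi,j)}$ collects the contributions of the lowered indices. The verification that the resulting $b^0,b^j$ actually satisfy \emph{both} recurrences in \eqref{NNRR} can be done in one stroke: by the standard biorthogonality argument (as in \cite[\S23.1.4]{Ismail}), once the coefficients in \eqref{NextNeighbourRecurrenceIntegralRep} are identified, both the type II and type I relations hold automatically, so it suffices to confirm that our closed forms agree with those integrals. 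The main obstacle will be bookkeeping: carefully tracking which Pochhammer symbols shift and by how much under $\vec n\mapsto\vec n+\vec e_k$ and $\vec n\mapsto\vec n-\vec s_{j-1}$, and organizing the residue sum so that the telescoping of Lemma \ref{lemma3} and the two-lemma split ($c=1$ vs $c=2$) combine into the stated compact form — in particular matching the factor $(\alpha_i-\alpha_k-n_k-1+n_i)$ in $b^0(k)$, which comes from the single extra pole introduced by the $\vec e_k$-shift relative to the others.
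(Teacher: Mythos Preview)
Your proposal follows essentially the paper's approach: start from the biorthogonality formulas \eqref{NextNeighbourRecurrenceIntegralRep}, expand both polynomials in their hypergeometric coefficients \eqref{Coefficients_HahnTypeII} and \eqref{coefHahnTypeI}, evaluate the resulting discrete integral over $\Delta$ by Chu--Vandermonde, and then collapse the remaining multiple sum over $l_1,\ldots,l_p$ using Lemma~\ref{lemma2}. Two points of divergence are worth noting. First, the paper's linear split is $x=(\beta+N+1)-(\beta+N-x+1)$, not $x=(x+\alpha_i+1)-(\alpha_i+1)$: the factor $(\beta+N-x+1)$ absorbs into the weight $w_i(x)$ to give a weight with $\beta\mapsto\beta+1$, so the discrete sum $\sum_x(-x)_{l_1+\cdots+l_p}(x+\alpha_i+1)_n(\beta+N-x+1)w_i(x)$ becomes a single Chu--Vandermonde evaluation yielding precisely the ratio $\frac{(-N)_{\cdot}(\alpha_i+n+1)_{\cdot}}{(\alpha_i+\beta+n+3)_{\cdot}}$ that matches the $(x+\beta+2)$-denominator of Lemma~\ref{lemma2}. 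Your split on the type I side would instead produce the $(x+\beta+1)$-denominator shape of Lemma~\ref{lemma1} and would require an extra degree-shift argument to recover the top-coefficient term for $b^0_{\vec n}(k)$; it can be made to work but is less direct. Second, in the paper Lemma~\ref{lemma1} is not a separate ``$c=1$'' contribution to the recurrence coefficients --- it is used only internally, inside the inductive proof of Lemma~\ref{lemma2}, and the final expressions for $b^0,b^j$ come from a single application of Lemma~\ref{lemma2} followed by substitution of the explicit coefficients $C^{(k),n_k}_{\vec n+\vec e_k}$, $C^{(i),n_i-1}_{\vec n+\vec e_k}$, $C^{(i),n_i-1}_{\vec n-\vec s_{j-1}}$ from \eqref{coefHahnTypeI}. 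Your contour-integral/residue alternative via Theorem~\ref{HI_IR} is a genuinely different route that the paper does not pursue; it would likely also succeed, but the paper stays entirely on the series side.
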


\begin{proof}
According \eqref{NextNeighbourRecurrenceIntegralRep}, the recurrence coefficients can be expressed as
\begin{align*}
&\begin{aligned}
 b_{\vec{n}}^0(k)&=\sum_{i=1}^p \sum_{x\in\Delta} 
x\,Q_{\vec{n}}^{({\rm II})}(x) Q^{(i)}_{\vec{n}+\vec{e}_k}w_i(x)\\
&=(\beta+N+1)\underbrace{\sum_{i=1}^p \sum_{x\in\Delta} 
Q_{\vec{n}}^{({\rm II})}(x) Q^{(i)}_{\vec{n}+\vec{e}_k}w_i(x)}_{=1\;\text{by biorthogonality equation \eqref{biorthogonality}}}-\sum_{i=1}^p \sum_{x\in\Delta} 
(\beta+N-x+1)\,Q_{\vec{n}}^{({\rm II})}(x) Q^{(i)}_{\vec{n}+\vec{e}_k}w_i(x),
\\
 b_{\vec{n}}^j&=
 \sum_{i=1}^p \sum_{x\in\Delta} 
x\,Q_{\vec{n}}^{({\rm II})}(x) Q^{(i)}_{\vec{n}-\vec{s}_{j-1}}w_i(x)\\
&=(\beta+N+1)\underbrace{\sum_{i=1}^p \sum_{x\in\Delta} 
Q_{\vec{n}}^{({\rm II})}(x) Q^{(i)}_{\vec{n}-\vec{s}_{j-1}}w_i(x)}_{ =0\;\text{by biorthogonality equation \eqref{biorthogonality}}}-\sum_{i=1}^p \sum_{x\in\Delta} 
(\beta+N-x+1)\,Q_{\vec{n}}^{({\rm II})}(x) Q^{(i)}_{\vec{n}-\vec{s}_{j-1}}w_i(x),& j&\in\{1,\ldots,p\}.
\end{aligned}
\end{align*}

Let's recall that the type II polynomials, see \eqref{HahnTypeII} and \eqref{Coefficients_HahnTypeII}, and the type I polynomials, see \eqref{HahnTypeI} and \eqref{coefHahnTypeI}, can be written respectively as:
\begin{align*}
 Q_{\vec{n}}^{({\rm II})}(x)=\sum_{l_1=0}^{n_1}\cdots\sum_{l_p=0}^{n_p}C^{l_1,\ldots,l_p}_{\vec{n}} (-x)_{l_1+\cdots+l_p}, \quad
 Q_{\vec{n}}^{(i)}(x)=\sum_{l=0}^{n_i-1}C^{(i),l}_{\vec{n}} (x+\alpha_i+1)_l.
\end{align*}
This, along with the orthogonality conditions given by \eqref{DO:I}, allows us to write:
\begin{align}
\label{GeneralCoef}
&\begin{aligned}
 b_{\vec{n}}^0(k)&=\beta+N+1-\sum_{i=1}^p\sum_{l=0}^{n_i-1+\delta_{i,k}}C^{(i),l}_{\vec{n}+\vec{e}_k} \sum_{x\in\Delta} 
Q_{\vec{n}}^{({\rm II})}(x)(x+\alpha_i+1)_{l}(\beta+N-x+1)w_i(x)\\&
 =\begin{multlined}[t][.8\textwidth]\beta+N+1-C^{(k),n_k}_{\vec{n}+\vec{e}_k}\sum_{x\in\Delta}Q_{\vec{n}}^{({\rm II})}(x)(x+\alpha_k+1)_{n_k}(\beta+N-x+1)w_k(x)\\
 -
 \sum_{i=1}^p C_{\vec{n}+\vec{e}_k}^{(i),n_i-1}\sum_{x\in\Delta} 
Q_{\vec{n}}^{({\rm II})}(x)(x+\alpha_i+1)_{n_i-1}(\beta+N-x+1)w_i(x)\end{multlined}
\\
&=\begin{multlined}[t][.8\textwidth]\beta+N+1-C^{(k),n_k}_{\vec{n}+\vec{e}_k}\sum_{l_1=0}^{n_1}\cdots\sum_{l_p=0}^{n_p}C^{l_1,\ldots,l_p}_{\vec{n}}\sum_{x\in\Delta}(-x)_{l_1+\cdots+l_p}(x+\alpha_k+1)_{n_k}(\beta+N-x+1)w_k(x)\\-\sum_{i=1}^p C_{\vec{n}+\vec{e}_k}^{(i),n_i-1} 
\sum_{l_1=0}^{n_1}\cdots\sum_{l_p=0}^{n_p}C^{l_1,\ldots,l_p}_{\vec{n}}\sum_{x\in\Delta} (-x)_{l_1+\cdots+l_p}(x+\alpha_i+1)_{n_i-1}(\beta+N-x+1)w_i(x),
\end{multlined}
\end{aligned}\\
 & \begin{aligned}
 b_{\vec{n}}^j&=-\sum_{i=1}^p\sum_{l=0}^{\deg{A_{\vec{n}-\vec{s}_{j-1}}^{(i)}} }C^{(i),l}_{\vec{n}-\vec{s}_{j-1}} \sum_{x\in\Delta} Q_{\vec{n}}^{({\rm II})}(x)
(x+\alpha_i+1)_{l}(\beta+N-x+1)w_i(x)\\
&=-\sum_{i\in S(\pi,j)} C^{(i),n_i-1}_{\vec{n}-\vec{s}_{j-1}} \sum_{x\in\Delta} Q_{\vec{n}}^{({\rm II})}(x)
(x+\alpha_i+1)_{n_i-1}(\beta+N-x+1)w_i(x)\\
&=-\sum_{i\in S(\pi,j)} C^{(i),n_i-1}_{\vec{n}-\vec{s}_{j-1}}
\sum_{l_1=0}^{n_1}\cdots\sum_{l_p=0}^{n_p}C^{l_1,\ldots,l_p}_{\vec{n}}\sum_{x\in\Delta}
(-x)_{l_1+\cdots+l_p}(x+\alpha_i+1)_{n_i-1}(\beta+N-x+1)w_i(x),
\end{aligned}
\end{align}
\( j \in\{1,\ldots,p\} \),
where we have used that
\begin{align*}
 \deg{A_{\vec{n}-\vec{s}_{j-1}}^{(i)}} =\begin{cases}
 n_i-2\;\text{if}\;i\not\in S(\pi,j),
 \\
 n_i-1\;\text{if}\; i\in S(\pi,j).
 \end{cases}
\end{align*}

Let's now calculate the discrete integral
\begin{align*}
 \sum_{x\in\Delta}
(-x)_{l_1+\cdots+l_p}(x+\alpha_i+1)_{n}(\beta+N-x+1)w_i(x).
\end{align*}
By replacing the weight functions from \eqref{WeightsHahn} and simplifying, we get:
\begin{multline*}
 \sum_{x=0}^N
\dfrac{(x+\alpha_i+1)_{n}\Gamma(x+\alpha_i+1)(-x)_{l_1+\cdots+l_p}}{\Gamma(x+1)\Gamma(\alpha_i+1)}\dfrac{(\beta+N-x+1)\Gamma(\beta+N-x+1)}{\Gamma(\beta+1)\Gamma(N-x+1)}\\
\begin{aligned}
 &=(-1)^{l_1+\cdots+l_p}\dfrac{(\beta+1)(\alpha_i+1)_{n+l_1+\cdots+l_p}}{(N-l_1-\cdots-l_p)!}\underbrace{\sum_{x=0}^{N-l_1-\cdots-l_p}\dbinom{N-l_1-\cdots-l_p}{x}(\alpha_i+n+l_1+\cdots+l_p+1)_{x}(\beta+2)_{N-l_1\cdots-l_p-x}}_{\text{\( =(\alpha_i+\beta+n+l_1+\cdots+l_p+3)_{N-l_1-\cdots-l_p} \) by Chu--Vandermonde formula \eqref{eq:Chu--Vandermonde}}}\\
&=\dfrac{(\beta+1)(\alpha_i+1)_{n}(\alpha_i+\beta+n+3)_{N}}{N!}{\dfrac{(-N)_{l_1+\cdots+l_p}(\alpha_i+n+1)_{l_1+\cdots+l_p}}{(\alpha_i+\beta+n+3)_{l_1+\cdots+l_p}}}.
\end{aligned}
\end{multline*}
Replacing this expression into \eqref{GeneralCoef} we get
\begin{multline*}
b_{\vec{n}}^0(k)=
\beta+N+1\\-\dfrac{(\beta+1)(\alpha_k+1)_{n_k}(\alpha_k+\beta+n_k+3)_{N}}{N!}C^{(k),n_k}_{\vec{n}+\vec{e}_k}\sum_{l_1=0}^{n_1}\cdots\sum_{l_p=0}^{n_p}C^{l_1,\ldots,l_p}_{\vec{n}}{\dfrac{(-N)_{l_1+\cdots+l_p}(\alpha_k+n_k+1)_{l_1+\cdots+l_p}}{(\alpha_k+\beta+n_k+3)_{l_1+\cdots+l_p}}} 
\\
-\dfrac{(\beta+1)}{N!}\sum_{i=1}^p (\alpha_i+1)_{n_i-1}(\alpha_i+\beta+n_i+2)_{N}\,C_{\vec{n}+\vec{e}_k}^{(i),n_i-1} 
\sum_{l_1=0}^{n_1}\cdots\sum_{l_p=0}^{n_p}C^{l_1,\ldots,l_p}_{\vec{n}}{\dfrac{(-N)_{l_1+\cdots+l_p}(\alpha_i+n_i)_{l_1+\cdots+l_p}}{(\alpha_i+\beta+n_i+2)_{l_1+\cdots+l_p}}}
\end{multline*}
and, for $j\in\{1,\ldots,p\}$, that
\[
 b_{\vec{n}}^j=-\dfrac{(\beta+1)}{N!}\sum_{i\in S(\pi,j)} (\alpha_i+1)_{n_i-1}(\alpha_i+\beta+n_i+2)_{N}\,C^{(i),n_i-1}_{\vec{n}-\vec{s}_{j-1}}
\sum_{l_1=0}^{n_1}\cdots\sum_{l_p=0}^{n_p}C^{l_1,\ldots,l_p}_{\vec{n}}{\dfrac{(-N)_{l_1+\cdots+l_p}(\alpha_i+n_i)_{l_1+\cdots+l_p}}{(\alpha_i+\beta+n_i+2)_{l_1+\cdots+l_p}}}.\]
 Lemma \ref{lemma2} allows us to simplify the sums labeled by \( l_1,\ldots,l_p\):
\begin{align*}
b_{\vec{n}}^0(k)=&\begin{multlined}[t][.92\textwidth]
 \beta+N+1+\dfrac{(-N)_{|\vec{n}|}(\beta+1)_{|\vec{n}|}}{N!\prod_{q=1}^p(\alpha_q+\beta+|\vec{n}|+1)_{n_q}}
\dfrac{(\alpha_k+1)_{n_k}(\alpha_k+\beta+n_k+3)_{N}}{(\alpha_k+\beta+n_k+3)_{|\vec{n}|}}
C^{(k),n_k}_{\vec{n}+\vec{e}_k}\\
\times\bigg(
(\alpha_k+n_k+1)\prod_{q=1}^p(\alpha_q-\alpha_k-n_k-1)_{n_q}-
(\alpha_k+\beta+n_k+|\vec{n}|+2)\prod_{q=1}^p(\alpha_q-\alpha_k-n_k)_{n_q}\bigg)\\
+\dfrac{(-N)_{|\vec{n}|}(\beta+1)_{|\vec{n}|}}{N!\prod_{q=1}^p(\alpha_q+\beta+|\vec{n}|+1)_{n_q}}
\sum_{i=1}^p 
\dfrac{(\alpha_i+1)_{n_i-1}(\alpha_i+\beta+n_i+2)_{N}}{(\alpha_i+\beta+n_i+2)_{|\vec{n}|}}\,C_{\vec{n}+\vec{e}_k}^{(i),n_i-1} \\
\times\bigg(
(\alpha_i+n_i)\prod_{q=1}^p(\alpha_q-\alpha_i-n_i)_{n_q}-(\alpha_i+\beta+n_i+|\vec{n}|+1)\underbrace{\prod_{q=1}^p(\alpha_q-\alpha_i-n_i+1)_{n_q}}_{=0\;\text{because of the \(i\)-th factor}}
\bigg),
\end{multlined}
\\
 b_{\vec{n}}^j=&\begin{multlined}[t][.85\textwidth]\dfrac{(-N)_{|\vec{n}|}(\beta+1)_{|\vec{n}|}}{N!\prod_{q=1}^p(\alpha_q+\beta+|\vec{n}|+1)_{n_q}}\sum_{i\in S(\pi,j)}
\dfrac{(\alpha_i+1)_{n_i-1}(\alpha_i+\beta+n_i+2)_{N}}{(\alpha_i+\beta+n_i+2)_{|\vec{n}|}}\,C^{(i),n_i-1}_{\vec{n}-\vec{s}_{j-1}}\\
\begin{aligned}
& \times\bigg(
(\alpha_i+n_i)\prod_{q=1}^p(\alpha_q-\alpha_i-n_i)_{n_q}-
(\alpha_i+\beta+n_i+|\vec{n}|+1)\underbrace{\prod_{q=1}^p(\alpha_q-\alpha_i-n_i+1)_{n_q}}_{=0\;\text{because of the \( i\)-th factor}}
\bigg),& j&\in\{1,\ldots,p\}.
\end{aligned}
\end{multlined}
\end{align*}
Finally, we replace the type I coefficients using \eqref{coefHahnTypeI}. We have
\begin{align*}
C_{\vec{n}+\vec{e}_k}^{(k),n_k}=&\dfrac{N!(\alpha_k+\beta+n_k+2)_{|\vec{n}|}}{(-N)_{|\vec{n}|}(\beta+1)_{|\vec{n}|}(\alpha_k+1)_{n_k}(\alpha_k+\beta+n_k+2)_{N}}\prod_{q=1}^p\dfrac{(\alpha_k-{\alpha}_q+n_k+1)({\alpha}_q+\beta+|\vec{n}|+1)_{n_q}}{(\alpha_k-{\alpha}_q+n_k+1-n_q)(\alpha_q-{\alpha}_k-n_k-1)_{n_q}},
 \\
C_{\vec{n}+\vec{e}_k}^{(i),n_i-1}=&\begin{multlined}[t][.9\textwidth]
 -\dfrac{N!}{(-N)_{|\vec{n}|}(\beta+1)_{|\vec{n}|}(\alpha_i+1)_{n_i-1}(\alpha_i+\beta+|\vec{n}|+n_i)_{N+1-|\vec{n}|}}\\
 \times \dfrac{(\alpha_k+\beta+|\vec{n}|+n_k+1)}{(\alpha_k-\alpha_i-n_i+n_k+1)\prod_{q=1,q\neq i}^p(\alpha_i-{\alpha}_q+n_i-n_q)}
\prod_{q=1}^p\dfrac{(\alpha_i-\alpha_q+n_i)({\alpha}_q+\beta+|\vec{n}|+1)_{n_q}}{(\alpha_q-\alpha_i-n_i)_{n_q}},
\end{multlined}
\\
 C_{\vec{n}-\vec{s}_{j-1}}^{(i),n_i-1}=&\begin{multlined}[t][.9\textwidth]
 \dfrac{(-1)^{j}N!}{(-N)_{|\vec{n}|-j}(\beta+1)_{|\vec{n}|-j}(\alpha_i+1)_{n_i-1}(\alpha_i+\beta+|\vec{n}|-j+n_i)_{N+1-|\vec{n}|+j}}\\\times
\dfrac{\prod_{q\in S(\pi,j)}({\alpha}_q+\beta+|\vec{n}|-j+1)_{n_q}\prod_{q\in S^{\textsf c}(\pi,j)}({\alpha}_q+\beta+|\vec{n}|-j+1)_{n_q-1}}{\prod_{q\in S(\pi,j),q\neq i}(\alpha_i-{\alpha}_q+n_i-n_q)}
\prod_{q=1}^p\dfrac{\alpha_i-\alpha_q+n_i}{(\alpha_q-{\alpha}_i-n_i)_{n_q}}.
 \end{multlined}
\end{align*}
Simplifying, we arrive at the expressions given in \eqref{HahnRecurrence}.
\end{proof}

\section{Meixner of the Second Kind 
}

For the multiple Meixner polynomials of the second kind, the weight functions are defined as
\begin{equation}
 \label{WeightsM2}
\begin{aligned}
 w_i^{(M2)}(x;\beta_i,c) &\coloneq \dfrac{\Gamma(\beta_i+x)}{\Gamma(\beta_i)\Gamma(x+1)} c^x, & i &\in \{1,\ldots,p\}, & \Delta &= \mathbb{N}_0,
\end{aligned}
\end{equation}
with \( \beta_1, \ldots, \beta_p > 0 \), \( 0 < c < 1 \) and, to ensure an AT system, \( \beta_i - \beta_j \not\in \mathbb{Z} \) for \( i \neq j \). We will denote \(\vec{\beta}\coloneqq(\beta_1,\ldots,\beta_p)\). As suggested in \cite{AskeyII}, we find a limiting relation of the following form. 

\begin{pro}
 The following limiting relations between the Hahn \eqref{WeightsHahn} and the Meixner of the second kind weights hold
\begin{equation}
 \label{H->M2:W}
 w_i^{(M2)}(x;{\beta_i},c)=\lim_{N\to\infty} \sqrt{2\pi N} c^N (1-c)^{\frac{1-c}{c}N+\frac{1}{2}}\,w_i\left(x;
 {\beta_i}-1,\frac{1-c}{c}N,N\right),\quad i\in\{1,\ldots,p\}.
 \end{equation}
\end{pro}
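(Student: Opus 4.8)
The plan is to substitute the prescribed parameters into the Hahn weight \eqref{WeightsHahn}, split off the factor that is already independent of $N$, and then apply Stirling's formula \eqref{stirling} (equivalently \eqref{GFB}) to the surviving ratio of gamma functions. Setting $\alpha_i=\beta_i-1$ and $\beta=\frac{1-c}{c}N$ in \eqref{WeightsHahn} and using the identity $\frac{1-c}{c}N+N=\frac{N}{c}$, one gets
\begin{equation*}
 w_i\Bigl(x;\beta_i-1,\tfrac{1-c}{c}N,N\Bigr)=\frac{\Gamma(\beta_i+x)}{\Gamma(\beta_i)\Gamma(x+1)}\cdot\frac{\Gamma\bigl(\tfrac{N}{c}-x+1\bigr)}{\Gamma\bigl(\tfrac{1-c}{c}N+1\bigr)\,\Gamma(N-x+1)} .
\end{equation*}
The first factor is exactly $c^{-x}\,w_i^{(M2)}(x;\beta_i,c)$, so the proposition reduces to showing that
\begin{equation*}
 \lim_{N\to\infty}\sqrt{2\pi N}\,c^N(1-c)^{\frac{1-c}{c}N+\frac12}\,\frac{\Gamma\bigl(\tfrac{N}{c}-x+1\bigr)}{\Gamma\bigl(\tfrac{1-c}{c}N+1\bigr)\,\Gamma(N-x+1)}=c^x .
\end{equation*}

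I would handle this in two steps. First, isolate the $x$-dependence: by \eqref{GFB} one has $\Gamma\bigl(\tfrac{N}{c}-x+1\bigr)/\Gamma\bigl(\tfrac{N}{c}+1\bigr)\sim(N/c)^{-x}$ and $\Gamma(N+1)/\Gamma(N-x+1)\sim N^{x}$ as $N\to\infty$, so the ratio above equals $c^{x}\bigl(1+o(1)\bigr)$ times its value at $x=0$. It then suffices to prove
\begin{equation*}
 \sqrt{2\pi N}\,c^N(1-c)^{\frac{1-c}{c}N+\frac12}\,\frac{\Gamma\bigl(\tfrac{N}{c}+1\bigr)}{\Gamma\bigl(\tfrac{1-c}{c}N+1\bigr)\,\Gamma(N+1)}\longrightarrow 1 ,
\end{equation*}
which is a Stirling computation: applying \eqref{stirling} to the three gamma functions and using $\frac{N}{c}=\frac{1-c}{c}N+N$, the exponential and $\sqrt{2\pi}$ contributions collapse to a pure constant, and collecting the remaining powers shows the ratio to be asymptotic to $\frac{1}{\sqrt{2\pi N}}\,c^{-N}(1-c)^{-\frac{1-c}{c}N-\frac12}$, which is precisely the reciprocal of the prefactor.

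Everything here is elementary, so the only genuine care needed is in bookkeeping the sub-exponential part of the Stirling expansion: one must check that factors of the shape $\bigl(1+O(1/N)\bigr)^{A_N}$ with $A_N$ linear in $N$ converge to the intended exponentials, and that these together with the $\operatorname{e}^{-z}$ terms and the $2\pi$'s produce exactly the constant absorbed by the prefactor. A slightly more direct variant avoids the $x=0$ reduction altogether and applies \eqref{stirling} straight to the three gamma functions in the displayed ratio; the identity $\frac{N}{c}-x+1=\bigl(\frac{1-c}{c}N+1\bigr)+(N-x+1)-1$ again forces all exponentials and all powers of $N$, $c$ and $1-c$ to cancel, leaving $c^x$. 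Either way one should record that the standing hypotheses $\beta_i>0$ and $0<c<1$ keep all gamma arguments eventually positive, so \eqref{stirling} and \eqref{GFB} apply, and that no restriction such as $\sz{n}\le N$ or $\beta_i-\beta_j\notin\mathbb Z$ enters, since the statement concerns only the weights.
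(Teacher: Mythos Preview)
Your proof is correct and follows essentially the same approach as the paper: substitute the parameters into the Hahn weight, split off the $N$-independent factor $\Gamma(\beta_i+x)/(\Gamma(\beta_i)\Gamma(x+1))$, and apply Stirling's formula to the remaining ratio of gamma functions to obtain the asymptotic $c^x/\bigl(\sqrt{2\pi N}\,c^N(1-c)^{\frac{1-c}{c}N+\frac12}\bigr)$. The paper applies Stirling directly to the full ratio (your ``more direct variant''), whereas your main write-up first peels off the $x$-dependence via \eqref{GFB} and then checks the $x=0$ normalization, but this is only a minor organizational difference.
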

\begin{proof}
 Note that
\[w_i\left(x;\beta_i-1,\frac{1-c}{c}N,N\right) = \frac{\Gamma(x+\beta_i)}{\Gamma(\beta_i)\Gamma(x+1)} \frac{\Gamma\left(\frac{N}{c}-x+1\right)}{\Gamma\left(\frac{1-c}{c}N+1\right)\Gamma(N-x+1)}.
\]
 An application of Stirling's formula \eqref{stirling}, gives
\[ \begin{aligned}
 \frac{\Gamma\left(\frac{N}{c}-x+1\right)}{\Gamma\left(\frac{1-c}{c}N+1\right)\Gamma(N-x+1)} &\sim 
 \dfrac{c^x}{\sqrt{2\pi N} c^N (1-c)^{\frac{1-c}{c}N+\frac{1}{2}}},& N&\to\infty,
\end{aligned}\]
 which then readily leads to the desired result.
\end{proof}

\begin{coro} \label{Cor:H->M2}
 For the linear forms $M_{2:\vec{n}}^{({\rm I})}$, type I polynomials $M_{2:\vec{n}}^{(i)}$, type II polynomials $M_{2:\vec{n}}^{({\rm II})}$ and recurrence coefficients $b_{\vec{n}}^{(M2),j}$; the following respective limiting relations from Hahn hold:
 \begin{subequations}
 \label{H->M2}
\begin{align}
\label{H->M2:MOP}
 \quad M_{2:\vec{n}}^{(\ast)}(x;\vec{\beta},c)= \lim_{N\to\infty} \kappa^{(\ast)}_N Q_{\vec{n}}^{(\ast)}\left(x;\vec{\beta}-\vec{1}_p,\frac{1-c}{c}N,N\right),\\
\label{H->M2:R} b_{\vec{n}}^{(M2),j}(\vec{\beta}, c) = \lim_{N \rightarrow \infty} b_{\vec{n}}^j \left( \vec{\beta}-\vec{1}_p, \frac{1 - c}{c} N, N \right),
\end{align}
 \end{subequations}
 where
 $$\kappa^{({\rm I})}_N = \kappa^{({\rm II})}_N = 1,\quad \kappa^{(i)}_N = \frac{1}{\sqrt{2\pi N} c^N (1-c)^{\frac{1-c}{c}N+\frac{1}{2}}}.$$
\end{coro}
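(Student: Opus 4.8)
The plan is to prove all four limiting relations by substituting $\vec\alpha\mapsto\vec\beta-\vec1_p$, $\beta\mapsto\tfrac{1-c}{c}N$ into the explicit Hahn data, letting $N\to\infty$, and then identifying the limits through the intrinsic characterization of the Meixner-2 families (orthogonality, degree, near-neighbour recurrence, uniqueness in an AT system). I would start with the type II polynomials. In the representation \eqref{HahnTypeII} the monomials $(-x)_{l_1+\cdots+l_p}$ do not depend on $N$, so it suffices to show $\lim_{N\to\infty}C_{\vec n}^{l_1,\ldots,l_p}$ exists under the substitution. Each factor of \eqref{Coefficients_HahnTypeII} is a ratio of Pochhammer symbols whose $N$-dependence sits in $(-N)_{|\vec n|}/(-N)_{l_1+\cdots+l_p}$ and in $(\alpha_i+\beta+|\vec n|+1)_{n_i}$ and the related shifted factors; using \eqref{GFB} (equivalently Stirling \eqref{stirling}) one checks each ratio converges, the powers of $N$, of $c$ and of $1-c$ cancelling by $\tfrac{1-c}{c}+1=\tfrac1c$. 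Hence $Q^{({\rm II})}_{\vec n}\big(x;\vec\beta-\vec1_p,\tfrac{1-c}{c}N,N\big)$ converges coefficient-wise to a monic polynomial of degree $|\vec n|$, call it $P$.

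To identify $P$ with $M^{({\rm II})}_{2:\vec n}$ I would pass to the limit in the orthogonality relations. For $0\leqslant j\leqslant n_i-1$ one has $\sum_{k=0}^N k^j\,Q^{({\rm II})}_{\vec n}(k;\vec\beta-\vec1_p,\tfrac{1-c}{c}N,N)\,w_i(k;\beta_i-1,\tfrac{1-c}{c}N,N)=0$; multiplying by $\sigma_N\coloneqq\sqrt{2\pi N}\,c^N(1-c)^{\frac{1-c}{c}N+\frac12}$ and using the Proposition, the summand tends pointwise to $k^j P(k)\,w_i^{(M2)}(k;\beta_i,c)$. Interchanging the limit with the infinite sum over $k\in\mathbb{N}_0$ is justified by dominated convergence: writing $r_N(k)=\sigma_N w_i(k;\beta_i-1,\tfrac{1-c}{c}N,N)/\big(\tfrac{\Gamma(\beta_i+k)}{\Gamma(\beta_i)\Gamma(k+1)}\big)$ one computes $r_N(k)/r_N(k-1)=\tfrac{N-k+1}{N/c-k+1}\leqslant c$ for all $1\leqslant k\leqslant N$, while $r_N(0)\to 1$, so $\sigma_N w_i(k;\ldots)\leqslant C\,(k+1)^{\beta_i-1}c^k$ uniformly for large $N$ and $0\leqslant k\leqslant N$ (the weight vanishes for $k>N$); combined with the uniform coefficient bound $|Q^{({\rm II})}_{\vec n}(k;\ldots)|\leqslant C\,(k+1)^{|\vec n|}$ this gives a summable majorant. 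Therefore $\sum_{k\geqslant0}k^jP(k)w_i^{(M2)}(k;\beta_i,c)=0$ for all admissible $i,j$, and since $P$ is monic of degree $|\vec n|$ and the Meixner-2 weights form an AT system for $\beta_i-\beta_j\notin\mathbb{Z}$, uniqueness of the type II multiple orthogonal polynomial yields $P=M^{({\rm II})}_{2:\vec n}$, proving \eqref{H->M2:MOP} for $\ast={\rm II}$.

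The case $\ast=i$ is analogous: from \eqref{HahnTypeI}–\eqref{coefHahnTypeI} one checks, again via \eqref{GFB} and $\tfrac{1-c}{c}+1=\tfrac1c$, that $\kappa_N^{(i)}C_{\vec n}^{(i),l}$ has a finite limit — here $\kappa_N^{(i)}=1/\sigma_N$ is precisely what cancels, after Stirling, the growth of $(N+1-|\vec n|)!$ against that of $\Gamma(\alpha_i+\beta+N+2)$ — so $\kappa_N^{(i)}Q_{\vec n}^{(i)}\big(x;\vec\beta-\vec1_p,\tfrac{1-c}{c}N,N\big)$ converges to a polynomial $R_i$ of degree $\leqslant n_i-1$. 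Passing to the limit in the type I orthogonality \eqref{DO:I}, rewriting each term as $Q_{\vec n}^{(l)}w_l=\big(\kappa_N^{(l)}Q_{\vec n}^{(l)}\big)\big(\sigma_N w_l\big)$ and using the same majorant (the right-hand side $\delta_{j,|\vec n|-1}$ is constant), shows that $(R_1,\ldots,R_p)$ satisfies the type I orthogonality for the Meixner-2 weights, whence $R_i=M^{(i)}_{2:\vec n}$ by uniqueness. The linear form ($\ast={\rm I}$, $\kappa_N^{({\rm I})}=1$) then follows immediately: $Q^{({\rm I})}_{\vec n}=\sum_{i=1}^p\big(\kappa_N^{(i)}Q_{\vec n}^{(i)}\big)\big(\sigma_N w_i\big)$ is, for fixed $x$, a finite sum converging term-by-term to $\sum_{i=1}^p M^{(i)}_{2:\vec n}w_i^{(M2)}=M^{({\rm I})}_{2:\vec n}$, with no infinite sum to justify.

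Finally, for \eqref{H->M2:R} I would substitute directly into the explicit formulas \eqref{HahnRecurrence} and let $N\to\infty$. In $b_{\vec n}^0(k)$ the ratio $\tfrac{\alpha_k+\beta+n_k+N+2}{\alpha_k+\beta+n_k+|\vec n|+2}\to\tfrac1{1-c}$ and the combination $(\alpha_k+\beta+n_k+|\vec n|+1)\,(\alpha_i+\beta+n_i+N+1)/(\alpha_i+\beta+n_i+|\vec n|)_2\to\tfrac1{1-c}$; in $b_{\vec n}^j$ the net power of $N$ coming from $(N-|\vec n|+1)_j$, $(\beta+1+|\vec n|-j)_j$, $\prod_{q\in S^{\mathsf c}(\pi,j)}(\alpha_q+\beta+\cdots)$, $(\alpha_i+\beta+n_i+|\vec n|-j)_{j+2}$ and $\alpha_i+\beta+n_i+N+1$ vanishes, using $|S(\pi,j)|=p-j+1$, so every remaining $\alpha$-difference becomes a $\beta$-difference and the coefficients have finite limits. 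That these limits equal $b_{\vec n}^{(M2),0}(k)$ and $b_{\vec n}^{(M2),j}$ follows by letting $N\to\infty$ in the recurrences \eqref{NNRR} for Hahn — the polynomials converge by the above — and invoking uniqueness of the near-neighbour recurrence coefficients of the Meixner-2 families. I expect the main obstacle to be the dominated-convergence step, i.e.\ producing the single geometric majorant for $\sigma_N w_i(k;\beta_i-1,\tfrac{1-c}{c}N,N)$ valid for all large $N$; the rest is the routine but lengthy bookkeeping of Pochhammer and gamma-function asymptotics needed to show the normalized coefficients and the recurrence coefficients actually converge.
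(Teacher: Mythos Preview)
Your proof is correct. The paper itself gives no proof for this corollary: it is stated as an immediate consequence of the preceding Proposition on the convergence of the weights, with the type~II case attributed to \cite{AskeyII}, and is then \emph{used} (rather than proved) in the subsequent theorems to compute the explicit Meixner-2 formulas. Your argument supplies exactly the rigorous infrastructure that the paper leaves implicit: coefficient-wise convergence of $Q_{\vec n}^{({\rm II})}$ and of $\kappa_N^{(i)}Q_{\vec n}^{(i)}$ via \eqref{GFB}, a dominated-convergence justification for passing the limit through the discrete orthogonality sums (your ratio estimate $r_N(k)/r_N(k-1)=\tfrac{N-k+1}{N/c-k+1}\leqslant c$ for $k\geqslant1$ is sharp and gives the required geometric majorant), and identification of the limits by uniqueness in the AT system. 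The paper's route and yours are therefore the same in spirit; the difference is only that you actually carry out the analysis, whereas the paper treats the passage ``weight limit $\Rightarrow$ polynomial limit'' as folklore and moves on.
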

\begin{rem}
 The limiting relation for the type II polynomials was already deduced in \cite{AskeyII}.
\end{rem}

\subsection{Hypergeometric Representations}

An expression for the monic type II polynomials can be derived from the one for the Hahn polynomials in \eqref{HahnTypeII} using Corollary \ref{Cor:H->M2}.
These polynomials are given by, cf. \cite{AskeyII}:
\begin{multline}
\label{M2:II}
 M_{2:\vec{n}}^{({\rm II})}(x;\vec{\beta}, c) 
 = \left( \dfrac{c}{c-1} \right)^{|\vec{n}|} \prod_{j=1}^p (\beta_j)_{n_j} \sum_{l_1=0}^{n_1} \cdots \sum_{l_p=0}^{n_p} \prod_{j=1}^p \dfrac{(-n_j)_{l_j}}{l_j!} \dfrac{\prod\limits_{i=1}^{p-1}(\beta_i+n_i)_{\sum_{j=i+1}^p l_j}}{\prod\limits_{i=1}^p(\beta_i)_{\sum_{j=i}^{p}l_j}} \left( \dfrac{c-1}{c} \right)^{\sum_{j=i}^{p}l_j}
 (-x)_{\sum_{j=i}^{p}l_j}
\end{multline}
From \eqref{HahnTypeIIWeighted}, the following alternative expression can also be obtained by applying the same limit.
\begin{pro}
The type II Meixner polynomials of the second kind can be written as
\begin{align}
 \label{M2:II_Weighted}
 M_{2:\vec{n}}^{({\rm II})}(x;\vec{\beta},c)=&
 \left(\dfrac{c}{c-1}\right)^{|\vec{n}|}\prod_{j=1}^p{(\beta_j)_{n_j}} \dfrac{1}{c^x} \,\pFq{p+1}{p}{-x,\vec{\beta}+\vec{n}}{\vec{\beta}}{1-c}.
\end{align}
\end{pro}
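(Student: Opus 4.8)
The plan is to obtain \eqref{M2:II_Weighted} by letting $N\to\infty$ in the weighted Hahn representation \eqref{HahnTypeIIWeighted}, following the limiting relation of Corollary \ref{Cor:H->M2}. Since $\kappa^{({\rm II})}_N=1$, we have $M_{2:\vec{n}}^{({\rm II})}(x;\vec{\beta},c)=\lim_{N\to\infty}Q_{\vec{n}}^{({\rm II})}\bigl(x;\vec{\beta}-\vec{1}_p,\tfrac{1-c}{c}N,N\bigr)$, so I would substitute $\alpha_i=\beta_i-1$ and $\beta=\tfrac{1-c}{c}N$ into \eqref{HahnTypeIIWeighted}, for which $\alpha_i+1=\beta_i$, $\alpha_i+n_i+1=\beta_i+n_i$, $N+\beta+1=\tfrac{N}{c}+1$, $-N-\beta=-\tfrac{N}{c}$, $\alpha_i+\beta+|\vec{n}|+1=\beta_i+\tfrac{1-c}{c}N+|\vec{n}|$ and $\Gamma(\beta+N-x+1)=\Gamma(\tfrac{N}{c}-x+1)$. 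It is enough to establish the identity for $x\in\mathbb{N}_0$, where the ${}_{p+2}F_{p+1}$ in \eqref{HahnTypeIIWeighted} terminates at summation index $l=x$; I would then split the right-hand side into its scalar prefactor and this (now finite) hypergeometric sum and treat the two pieces separately.

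For the hypergeometric factor, since it is a fixed finite sum of $x+1$ terms the limit can be taken termwise. The only $N$-dependent factor in the $l$-th term is $(-|\vec{n}|-\tfrac{1-c}{c}N)_l/(-\tfrac{N}{c})_l$; each of its $l$ numerator factors is asymptotic to $-\tfrac{1-c}{c}N$ and each denominator factor to $-\tfrac{N}{c}$, so the ratio tends to $(1-c)^l$. Hence the hypergeometric factor converges to $\pFq{p+1}{p}{-x,\vec{\beta}+\vec{n}}{\vec{\beta}}{1-c}$, which is precisely the series in \eqref{M2:II_Weighted} — this is the confluence that turns the argument $1$ into $1-c$.

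For the prefactor I would apply Stirling's formula \eqref{stirling}, or the ratio asymptotics \eqref{GFB}, to the three gamma quotients, namely $\Gamma(N/c+1)/\Gamma(N/c-x+1)\sim(N/c)^{x}$, $\Gamma(N-x+1)/(N-|\vec{n}|)!\sim N^{|\vec{n}|-x}$ and $\prod_{i=1}^p(\beta_i+\tfrac{1-c}{c}N+|\vec{n}|)_{n_i}\sim(\tfrac{1-c}{c}N)^{|\vec{n}|}$, while $\prod_i(\beta_i)_{n_i}$ is unaffected. Together with the overall $(-1)^{|\vec{n}|}$ all powers of $N$ cancel and the factors of $c$ recombine into $\bigl(\tfrac{c}{c-1}\bigr)^{|\vec{n}|}\prod_{j=1}^p(\beta_j)_{n_j}\,c^{-x}$, the prefactor of \eqref{M2:II_Weighted}. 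Combining the two limits proves the identity for $x\in\mathbb{N}_0$, and it extends to general $x$ since — after the cancellation implicit in \eqref{M2:II}, or by comparison with that expansion — both sides are polynomials of degree $|\vec{n}|$.

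The exchange of limit and sum is immediate because the Hahn series terminates, so the only genuine obstacle is the careful bookkeeping of the powers of $N$ and of $c$ in the prefactor: one must check that the $N^{x}$ from the first quotient, the $N^{|\vec{n}|-x}$ from the second and the $N^{-|\vec{n}|}$ from the Pochhammer product exactly annihilate and that the remaining factors of $c$ assemble into $\bigl(\tfrac{c}{c-1}\bigr)^{|\vec{n}|}$. As a cross-check one can expand the resulting ${}_{p+1}F_p$ and match it against the known multi-sum \eqref{M2:II}.
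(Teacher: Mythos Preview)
Your proposal is correct and follows exactly the route the paper indicates: apply the limit of Corollary~\ref{Cor:H->M2} (with $\kappa^{({\rm II})}_N=1$) to the weighted Hahn representation~\eqref{HahnTypeIIWeighted}, handle the terminating ${}_{p+2}F_{p+1}$ termwise so that $(-|\vec n|-\tfrac{1-c}{c}N)_l/(-N/c)_l\to(1-c)^l$, and use the ratio asymptotics~\eqref{GFB} to collapse the prefactor to $\bigl(\tfrac{c}{c-1}\bigr)^{|\vec n|}\prod_j(\beta_j)_{n_j}\,c^{-x}$. The paper gives no further detail beyond ``applying the same limit,'' so your write-up is in fact more explicit than the original.
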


Below, we will derive a hypergeometric representation for the type I Meixner polynomials of the second kind. For $p=2$, an alternative representation was found in \cite[Proposition 5.1]{HahnI}.

\begin{teo}
The type I Meixner polynomials of the second kind are:
\begin{align}
 \label{M2:I}
 M_{2:\vec{n}}^{(i)}&(x;\vec{\beta},c)\\
 &=
 \dfrac{(-1)^{|\vec{n}|-1}(1-c)^{\beta_i+|\vec{n}|-1}}{c^{|\vec{n}|-1}(n_i-1)!\prod_{k=1,k\neq i}^p({\beta}_k-{\beta}_i)_{{n}_k}} \pFq{p+1}{p}{-n_i+1,(\beta_i+1)\vec{1}_{p-1}-\vec{\beta}^{*i}-\vec{n}^{*i},x+\beta_i}{(\beta_i+1)\vec{1}_{p-1}-\vec{\beta}^{*i},\beta_i}{1-c},
\end{align}
for \( i \in \{1, \ldots, p\} \).
\end{teo}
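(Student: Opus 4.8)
The plan is to obtain \eqref{M2:I} as a limit of the Hahn type I hypergeometric representation \eqref{HahnTypeI}, exactly as prescribed by Corollary~\ref{Cor:H->M2}. Concretely, I would substitute $\vec{\alpha}=\vec{\beta}-\vec{1}_p$ and $\beta=\frac{1-c}{c}N$ into \eqref{HahnTypeI}, multiply by $\kappa_N^{(i)}=\big(\sqrt{2\pi N}\,c^N(1-c)^{\frac{1-c}{c}N+\frac12}\big)^{-1}$, and let $N\to\infty$, checking that the limit coincides with the right-hand side of \eqref{M2:I}. It is convenient to split the Hahn expression into its prefactor and its $_{p+2}F_{p+1}$ factor and treat the two separately.

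First I would handle the hypergeometric factor. Since its leading upper parameter is the nonpositive integer $-n_i+1$, the $_{p+2}F_{p+1}$ is in truth a finite sum over $l\in\{0,\ldots,n_i-1\}$, so the limit may be passed inside without any uniformity argument. Under the substitution one has $\alpha_i+1=\beta_i$, $(\alpha_i+1)-\alpha_k=\beta_i-\beta_k+1$, $(\alpha_i+1)-\alpha_k-n_k=\beta_i-\beta_k+1-n_k$ and $x+\alpha_i+1=x+\beta_i$, all independent of $N$; the only $N$-dependent parameters are $\alpha_i+\beta+|\vec{n}|\sim\frac{1-c}{c}N$ in the numerator and $\alpha_i+\beta+N+2=\frac{N}{c}+\beta_i+1\sim\frac{N}{c}$ in the denominator. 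Hence by \eqref{GFB} the ratio $(\alpha_i+\beta+|\vec{n}|)_l/(\alpha_i+\beta+N+2)_l\to(1-c)^l$ for each fixed $l$, and the $_{p+2}F_{p+1}$ at argument $1$ converges termwise to the $_{p+1}F_{p}$ at argument $1-c$ appearing in \eqref{M2:I}.

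The bulk of the work is the prefactor. Writing each Pochhammer symbol as a ratio of gamma functions, I would apply Stirling's formula \eqref{stirling} (equivalently \eqref{GFB}) to
\[
\kappa_N^{(i)}\,\frac{(N+1-|\vec{n}|)!\;\prod_{k=1}^p(\alpha_k+\beta+|\vec{n}|)_{n_k}}{(\beta+1)_{|\vec{n}|-1}\,(\alpha_i+\beta+|\vec{n}|)_{N+2-|\vec{n}|}},
\]
keeping careful track of the exponentially large/small factors: $(N+1-|\vec{n}|)!\sim\sqrt{2\pi N}\,N^{N+1-|\vec{n}|}\operatorname{e}^{-N}$; the factor $(\alpha_i+\beta+|\vec{n}|)_{N+2-|\vec{n}|}=\Gamma\!\big(\tfrac{N}{c}+\beta_i+1\big)/\Gamma\!\big(\tfrac{1-c}{c}N+\beta_i+|\vec{n}|-1\big)$ contributes an $\operatorname{e}^{-N}$ together with powers of $\tfrac{N}{c}$ and $\tfrac{1-c}{c}N$; and $\prod_k(\alpha_k+\beta+|\vec{n}|)_{n_k}\sim(\tfrac{1-c}{c}N)^{|\vec{n}|}$, $(\beta+1)_{|\vec{n}|-1}\sim(\tfrac{1-c}{c}N)^{|\vec{n}|-1}$. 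The factors $\sqrt{2\pi N}$ and $\operatorname{e}^{-N}$ cancel against the corresponding ones in $\kappa_N^{(i)}$; collecting the surviving powers of $N$, $c$ and $1-c$, every factor that scales with $N$ cancels and what remains is exactly $(1-c)^{\beta_i+|\vec{n}|-1}/c^{|\vec{n}|-1}$. It then remains only to note that $\alpha_k-\alpha_i=\beta_k-\beta_i$, so $\prod_{k\neq i}(\alpha_k-\alpha_i)_{n_k}=\prod_{k\neq i}(\beta_k-\beta_i)_{n_k}$, while $(-1)^{|\vec{n}|-1}$ and $(n_i-1)!$ pass through unchanged, which produces \eqref{M2:I}.

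The main obstacle is precisely this last bookkeeping: several competing exponential factors ($c^N$, $(1-c)^{\frac{1-c}{c}N}$, $N^N$, $\operatorname{e}^{-N}$, and the various $(\tfrac{N}{c})^{\,\cdot}$ and $(\tfrac{1-c}{c}N)^{\,\cdot}$) must be shown to cancel exactly, and a misplaced constant in an exponent is easy to introduce; expanding $(\tfrac{1-c}{c}N)^{\,\cdot}=(1-c)^{\,\cdot}(\tfrac{N}{c})^{\,\cdot}$ at the outset keeps the accounting manageable, and the case $p=2$ can be checked against \cite[Proposition 5.1]{HahnI}. Should the limit computation prove unwieldy, a self-contained alternative would be to verify directly that the right-hand side of \eqref{M2:I} has degree $n_i-1$ in $x$ and that the linear form $\sum_i M_{2:\vec{n}}^{(i)}w_i^{(M2)}$ with weights \eqref{WeightsM2} satisfies the orthogonality conditions \eqref{DO:I}, evaluating the emerging sums by the Gauss/Chu--Vandermonde summation \eqref{eq:Gauss_hypergeometric}.
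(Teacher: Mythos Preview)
Your proposal is correct and follows essentially the same route as the paper: substitute the parameters from Corollary~\ref{Cor:H->M2} into the Hahn type~I formula \eqref{HahnTypeI}, use Stirling's formula to determine the asymptotics of the $N$-dependent prefactor, and pass the limit through the (terminating) hypergeometric series by observing that $(\alpha_i+\beta+|\vec n|)_l/(\alpha_i+\beta+N+2)_l\to(1-c)^l$. The paper records the prefactor asymptotic in one line and then says the limit is straightforward, whereas you spell out the termwise argument for the $_{p+2}F_{p+1}$ more explicitly; but the method is the same.
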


\begin{proof}
By substituting into the expression \eqref{HahnTypeI} as indicated in Corollary \ref{Cor:H->M2}, we obtain:
\begin{multline*}
 M_{2:\vec{n}}^{(i)}(x; \vec{\beta},c)\\
 = \lim_{N\rightarrow\infty}\dfrac{1}{\sqrt{2\pi N}c^{N}(1-c)^{\frac{1-c}{c}N+\frac{1}{2}}}
 \dfrac{(-1)^{|\vec{n}|-1}}{(n_i-1)! \prod_{k=1,k\neq i}^p({\beta}_k-{\beta}_i)_{{n}_k}}\dfrac{(N-|\vec{n}|+1)!\prod_{k=1}^p\left({\beta}_k+\frac{1-c}{c}N+|\vec{n}|-1\right)_{n_k}}{\left(\frac{1-c}{c}N+1\right)_{|\vec{n}|-1}\left(\beta_i+\frac{1-c}{c}N+|\vec{n}|-1\right)_{N+2-|\vec{n}|}}\\
 \times\pFq{p+2}{p+1}{-n_i+1,\beta_i+\frac{1-c}{c}N+|\vec{n}|-1,(\beta_i+1)\vec{1}_{p-1}-\vec{\beta}^{*i}-\vec{n}^{*i},x+\beta_i}{\beta_i+\frac{1}{c}N+1,(\beta_i+1)\vec{1}_{p-1}-\vec{\beta}^{*i},\beta_i}{1} .
\end{multline*}
By applying again Stirling's formula \eqref{stirling}, we find that
\[\begin{aligned}
 \dfrac{(N - |\vec{n}| + 1)! \prod_{k=1}^p \left( \beta_k + \frac{1-c}{c}N + |\vec{n}| - 1 \right)_{n_k}}{\left( \frac{1-c}{c}N + 1 \right)_{|\vec{n}| - 1} \left( \beta_i + \frac{1-c}{c}N + |\vec{n}| - 1 \right)_{N + 2 - |\vec{n}|}} \sim \sqrt{2 \pi N} (1 - c)^{\frac{1-c}{c}N + \beta_i + |\vec{n}| - \frac{1}{2}} c^{N - |\vec{n}| + 1},\quad N\rightarrow\infty.
\end{aligned}\]
Using this result, the limit can be applied straightforwardly to obtain the expression given by \eqref{M2:I}.
\end{proof}

\subsection{Integral Representations}


The limits in Corollary \ref{Cor:H->M2} that lead to the Meixner polynomials of the second kind will enable us to derive integral representations, beginning with those established for the multiple Hahn polynomials in Theorems \ref{HI_IR} and \ref{HII_IR}.

\begin{teo}
\label{M2I_IR}
Let $\Sigma$ be a clockwise contour in $\{t\in \mathbb{C} \mid \operatorname{Re}(t) > -1\}$ enclosing \(\bigcup_{i=1}^p[\beta_i-1,n_i+\beta_i-2]\) exactly once. Then, the type I linear forms are given by
 \[
 M_{2:\vec{n}}^{({\rm I})}(x;\vec{\beta},c) = \frac{(c-1)^{\sz{n}}}{c^{\sz{n}-1}} \frac{c^x}{\Gamma(x+1)} \bigintsss_\Sigma \frac{(1-c)^{t}}{\Gamma(t+1)} \frac{\Gamma(x+t+1)}{\prod_{j=1}^p (\beta_j-1-t)_{n_j}} \frac{\d t }{2 \pi \ii }.
 \]
In particular, the type I polynomials are given by
\[ M_{2:\vec{n}}^{(i)}(x;\vec{\beta},c) = \frac{(c-1)^{\sz{n}}}{c^{\sz{n}-1}} \frac{\Gamma(\beta_i)}{\Gamma(\beta_i+x)} \bigintsss_{\Sigma_i} \frac{(1-c)^{t}}{\Gamma(t+1)} \frac{\Gamma(x+t+1)}{\prod_{j=1}^p (\beta_j-1-t)_{n_j}} \frac{\d t }{2 \pi \ii },
 \]
where \(\Sigma_i\) is a clockwise contour in $\{t\in \mathbb{C} \mid \operatorname{Re}(t) > -1\}$ enclosing \(\cup_{k=0}^{n_i-1} \{\beta_i+k-1\}\) exactly once without enclosing any of the other points in \(\cup_{j=1}^p \cup_{k=0}^{n_j-1} \{\beta_j+k-1\}\).

\end{teo}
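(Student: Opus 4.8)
The plan is to obtain both integral representations as limits of the corresponding Hahn integral representations from Theorem \ref{HI_IR}, using the limiting relation \eqref{H->M2:MOP} from Corollary \ref{Cor:H->M2}. First I would start from the Hahn type I linear form integral in Theorem \ref{HI_IR} with the substitution $\alpha_j \mapsto \beta_j-1$, $\beta \mapsto \frac{1-c}{c}N$, and track the $N$-dependent prefactors. Since $\kappa^{({\rm I})}_N = 1$, the claim is simply that $Q_{\vec{n}}^{({\rm I})}\left(x;\vec{\beta}-\vec{1}_p,\frac{1-c}{c}N,N\right)$ converges to $M_{2:\vec{n}}^{({\rm I})}(x;\vec{\beta},c)$, and I must show the right-hand side of the Hahn formula converges to the claimed integral. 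The contour $\Sigma$ in the Hahn formula encloses $\cup_j[\alpha_j, n_j+\alpha_j-1] = \cup_j[\beta_j-1, n_j+\beta_j-2]$, which is exactly the contour required in the statement, and it stays in a fixed compact region independent of $N$, so I can take the limit under the integral sign by dominated convergence once I check the integrand converges uniformly on $\Sigma$.

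The core computation is the asymptotic analysis of the $N$-dependent factors as $N\to\infty$. Outside the integral, the Hahn prefactor is
\[
(-1)^{\sz n}(N-\sz n+1)!\,\frac{\prod_{j=1}^p\left(\beta_j-1+\frac{1-c}{c}N+\sz n\right)_{n_j}}{\Gamma\!\left(\frac{1-c}{c}N+\sz n\right)}\,\frac{\Gamma(N-x+\frac{1-c}{c}N+1)}{\Gamma(x+1)\Gamma(N-x+1)},
\]
and inside the integrand the $\beta$-dependent gamma factors are $\Gamma\!\left(t+\frac{1-c}{c}N+\sz n\right)/\Gamma\!\left(t+\frac{1-c}{c}N+N+2\right)$. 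I would apply Stirling's formula \eqref{stirling}, or more conveniently the ratio asymptotics \eqref{GFB}, to each cluster of gamma functions. The factor $\Gamma(N-x+\frac{1-c}{c}N+1)/\Gamma(N-x+1) = \Gamma(\frac Nc - x + 1)/\Gamma(N-x+1)$ behaves like $(N/c)^{\frac{1-c}{c}N}$ up to lower-order corrections — this is essentially the same computation already carried out in the proof of the weight limit \eqref{H->M2:W}, so I can reuse that. The factors $\left(\beta_j-1+\frac{1-c}{c}N+\sz n\right)_{n_j} = \Gamma(\cdots+n_j)/\Gamma(\cdots)$ each contribute $\left(\frac{1-c}{c}N\right)^{n_j}$, producing $\left(\frac{1-c}{c}N\right)^{\sz n}$ overall, and similarly the gamma ratio inside the integrand contributes $\left(\frac{1-c}{c}N\right)^{\sz n - (N+2)}$, while $(N-\sz n+1)!/\Gamma(\frac{1-c}{c}N+\sz n)$ is handled by Stirling. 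When all powers of $N$, powers of $c$, powers of $1-c$, and the $\sqrt{2\pi N}$ factors from Stirling are collected, the bookkeeping should collapse the entire $N$-dependent prefactor-times-integrand-factor to $\frac{(c-1)^{\sz n}}{c^{\sz n-1}}\,\frac{c^x}{\Gamma(x+1)}\,\frac{(1-c)^t}{\Gamma(t+1)}$, leaving precisely the claimed formula. The remaining $N$-independent part of the Hahn integrand, $\Gamma(x+t+1)/\prod_j(\alpha_j-t)_{n_j} = \Gamma(x+t+1)/\prod_j(\beta_j-1-t)_{n_j}$, passes through unchanged.

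For the type I polynomial statement I would run the same argument on the second (in-particular) formula of Theorem \ref{HI_IR}: the contour $\Sigma_i$ encloses $\cup_{k=0}^{n_i-1}\{\alpha_i+k\} = \cup_{k=0}^{n_i-1}\{\beta_i+k-1\}$ as required, the prefactor $(\beta+1)_{\sz n - 1} = \left(\frac{1-c}{c}N+1\right)_{\sz n - 1} \sim \left(\frac{1-c}{c}N\right)^{\sz n - 1}$ and the factor $\Gamma(\alpha_i+1)/\Gamma(\alpha_i+x+1) = \Gamma(\beta_i)/\Gamma(\beta_i+x)$ is already $N$-independent, so the same asymptotic collection yields the stated constant $\frac{(c-1)^{\sz n}}{c^{\sz n-1}}$. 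Alternatively, and perhaps more cleanly, once the linear-form integral is established one can simply read off the type I polynomial integral from the relation $M_{2:\vec{n}}^{({\rm I})}(x) = \sum_i M_{2:\vec{n}}^{(i)}(x) w_i^{(M2)}(x)$ together with the fact that the residue at $t=-1$ of $\Gamma(x+t+1)$ against the Meixner weight reproduces the $i$-th term — exactly as was done in the Hahn case in \cite{BDFMW}; I would cite that mechanism rather than redo it.

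\textbf{Main obstacle.} The delicate point is justifying the interchange of limit and integral: I must exhibit an $N$-independent integrable majorant for the (renormalized) Hahn integrand on the fixed contour $\Sigma$. On $\Sigma$ the quantity $\operatorname{Re}(t)$ is bounded, but the factor $(1-c)^t = e^{t\log(1-c)}$ with $\log(1-c)<0$ and $t$ ranging over the contour must be controlled together with the ratio $\Gamma\!\left(t+\frac{1-c}{c}N+\sz n\right)/\Gamma\!\left(t+\frac{1-c}{c}N+N+2\right)$; one needs uniform (not just pointwise) asymptotics for this gamma ratio along $\Sigma$, which follows from \eqref{GFB} since $|\arg(az)|<\pi$ holds uniformly on the compact contour for large $N$, but this uniformity is the step that requires care. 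Everything else is careful but routine Stirling bookkeeping of the kind already performed in the Meixner hypergeometric-representation proof above.
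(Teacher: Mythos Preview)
Your proposal is correct and follows essentially the same route as the paper: start from the Hahn integral in Theorem \ref{HI_IR} with the substitution $\alpha_j\mapsto\beta_j-1$, $\beta\mapsto\frac{1-c}{c}N$, apply the gamma-ratio asymptotics \eqref{GFB} to the $N$-dependent factors to obtain $\frac{(1-c)^{\sz{n}+t}}{c^{\sz{n}-1}}c^x$, and invoke the limit \eqref{H->M2:MOP}. The paper then derives the type I polynomial formula by the ``only certain poles contribute'' argument you mention as your alternative, rather than by re-running the limit on the second Hahn formula.
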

\begin{proof}
Consider the integral representation for the linear form \( Q_{\vec{n}}^{({\rm I})}\left(x;\vec{\beta}-\vec{1}_p, \frac{1-c}{c}N, N\right) \) provided in Theorem~\ref{HI_IR}. This representation already uses the contour \( \Sigma \) described above. We can use \eqref{GFB} to obtain the asymptotics for the $N$-dependent parts of the integrand and prefactor:
\begin{multline}
 \prod_{j=1}^p \left(\beta_j+\frac{1-c}{c}N+\sz{n}-1\right)_{n_j} \frac{\Gamma\left(t+\frac{1-c}{c}N+\sz{n}\right)}{\Gamma\left(\frac{1-c}{c}N+\sz{n}\right)} \frac{\Gamma(N-\sz{n}+2)}{\Gamma(N-x+1)} \frac{\Gamma\left(\frac{N}{c}-x+1\right)}{\Gamma\left(t+\frac{N}{c}+2\right)} \\
 \begin{aligned}&\sim \left(\frac{1-c}{c}N\right)^{\sz{n}+t} N^{-\sz{n}+1+x} \left(\frac{N}{c}\right)^{-x-t-1}= \frac{(1-c)^{\sz{n}+t}}{c^{\sz{n}-1}} c^x,\quad N\to\infty.
\end{aligned}
\end{multline}
Combining this with limit \eqref{H->M2:MOP} yields the desired formula for the type I linear forms. Once this result is established, one can note that only certain poles contribute to a specific type I polynomial, as the remaining poles correspond to different weight functions.
\end{proof}

\begin{teo}
 \label{M2II_IR}
Let \(\mathcal{C}\) be a counterclockwise contour enclosing \((- \infty, 0]\) exactly once. Then, for \(x \in \mathbb{N}_0\), the type II polynomials are given by
\[
M_{2:\vec{n}}^{({\rm II})}(x;\vec{\beta},c) = \left(\frac{c}{c-1}\right)^{\sz{n}} \frac{\Gamma(x+1)}{c^x} \bigintsss_{\mathcal{C}} \frac{\Gamma(s)}{(1-c)^{s}} \frac{\prod_{j=1}^p(\beta_j-s)_{n_j}}{\Gamma(x+s+1)} \frac{\d s}{2 \pi \ii }.
\]
\end{teo}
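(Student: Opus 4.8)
The plan is to derive this formula exactly as was just done for the type~I case in Theorem~\ref{M2I_IR}: start from the Hahn type~II integral representation of Theorem~\ref{HII_IR} and apply the Hahn-to-Meixner-of-the-second-kind limit of Corollary~\ref{Cor:H->M2}, i.e.\ substitute $\vec{\alpha}\mapsto\vec{\beta}-\vec{1}_p$ and $\beta\mapsto\tfrac{1-c}{c}N$, and let $N\to\infty$ (recall $\kappa^{({\rm II})}_N=1$, so no extra normalization is needed). Since $\alpha_j+1=\beta_j$ after the substitution, the integrand factor $\prod_{j=1}^p(\alpha_j+1-s)_{n_j}$ turns into $\prod_{j=1}^p(\beta_j-s)_{n_j}$ at once, and the factors $(-1)^{\sz{n}}$, $\Gamma(x+1)$, $\Gamma(s)$ and $1/\Gamma(x+s+1)$ are untouched; everything else in prefactor and integrand is $N$-dependent and must be handled asymptotically.

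First I would pin down the contour. For $x\in\mathbb{N}_0$ the factor $1/\Gamma(x+s+1)$ cancels all the poles of $\Gamma(s)$ at $s=-x-1,-x-2,\dots$, so only the poles at $s=0,-1,\dots,-x$ contribute, while the extra poles of the Hahn integrand coming from $\Gamma(s+\beta+N+1)=\Gamma(s+\tfrac{N}{c}+1)$ sit at $s\le-\tfrac{N}{c}-1<-N$ and escape to $-\infty$ as $N\to\infty$. Hence, for $N$ large, I may replace the Hahn contour around $[-N,0]$ by a single fixed, compact, counterclockwise loop surrounding $\{0,-1,\dots,-x\}$ and nothing else; the same loop is an admissible representative of the target ``contour enclosing $(-\infty,0]$'', since for $x\in\mathbb{N}_0$ that prescription only serves to capture $\{0,-1,\dots,-x\}$.

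Next I would gather all $N$-dependent gamma factors from prefactor and integrand and split them into four ratios of the shape $\Gamma(aN+b)/\Gamma(aN+c)$: one built from $\tfrac{1-c}{c}N$ and $s$, one from $\tfrac{N}{c}$ and $s$, a purely-$N$ one, and a product over $j$ built from $\tfrac{1-c}{c}N$ and the $\beta_j$. Applying the gamma-ratio asymptotics \eqref{GFB} term by term — valid uniformly in $s$ because $s$ now ranges over a compact set — gives the behaviors $\bigl(\tfrac{1-c}{c}N\bigr)^{-s}$, $\bigl(\tfrac{N}{c}\bigr)^{s+x}$, $N^{\sz{n}-x}$ and $\bigl(\tfrac{1-c}{c}N\bigr)^{-\sz{n}}$, respectively. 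The powers of $N$ cancel, $-s+(s+x)+(\sz{n}-x)-\sz{n}=0$, and the surviving constant is
\[
 \Bigl(\tfrac{1-c}{c}\Bigr)^{-s-\sz{n}}c^{-s-x}=\frac{c^{\sz{n}}}{(1-c)^{\sz{n}}}\,\frac{1}{(1-c)^{s}}\,\frac{1}{c^{x}} .
\]
Passing the limit through the (now compact) integral — legitimate by the uniform convergence of the integrand on the fixed loop — combining with the polynomial limit \eqref{H->M2:MOP}, and simplifying $(-1)^{\sz{n}}c^{\sz{n}}/(1-c)^{\sz{n}}=\bigl(c/(c-1)\bigr)^{\sz{n}}$ yields the asserted formula.

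I expect the only genuinely delicate point to be the justification of interchanging the limit with the integral; fixing the contour to a compact loop around $\{0,-1,\dots,-x\}$ from the outset is precisely what makes this routine, after which the argument is just careful bookkeeping of gamma factors together with the elementary asymptotics \eqref{GFB} (plus the short check that the net power of $N$ vanishes). Everything else — identifying which poles survive and verifying the $c$-power algebra — is mechanical.
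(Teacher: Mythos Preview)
Your approach is exactly the paper's: start from Theorem~\ref{HII_IR}, shrink the Hahn contour to a fixed compact loop around $\{0,-1,\dots,-x\}$, apply the ratio asymptotics~\eqref{GFB} uniformly on that loop, and collect the $c$-powers. The asymptotic bookkeeping you describe matches the paper's line by line.

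The one step you are eliding is the very last one. You prove the formula for a compact loop and then assert that this loop ``is an admissible representative'' of the stated contour $\mathcal{C}$ enclosing $(-\infty,0]$. That is not automatic: $\mathcal{C}$ is unbounded, and passing from your loop to $\mathcal{C}$ requires the limiting integrand to vanish along the piece you are pushing to $-\infty$. The paper closes this by noting that on $s=-M+\ii t$ with $M\to\infty$ one has
\[
\left|\frac{\Gamma(-M+\ii t)}{(1-c)^{-M+\ii t}}\frac{\prod_{j=1}^p(\beta_j+M-\ii t)_{n_j}}{\Gamma(x-M+\ii t+1)}\right|=\BO\bigl((1-c)^{M}M^{\sz{n}-x-1}\bigr),
\]
which decays since $0<1-c<1$. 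This is routine, but it is the content that turns your compact-loop identity into the stated theorem; you should include it.
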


\begin{figure}[htbp]
	\begin{tikzpicture}[arrowmark/.style 2 args={decoration={markings,mark=at position #1 with \arrow{#2}}},scale=1]
		\begin{axis}[axis lines=middle,axis equal,grid=both,xmin=-6, xmax=2,ymin=-2, ymax=2,
			xticklabel,yticklabel,disabledatascaling,xlabel=$x$,ylabel=$y$,every axis x label/.style={
				at={(ticklabel* cs:1)},
				anchor=south west,
			},
			every axis y label/.style={
				at={(ticklabel* cs:1.0)},
				anchor=south west,
			},grid style={line width=.1pt, draw=Bittersweet!10},
			major grid style={line width=.2pt,draw=Bittersweet!50},
			minor tick num=4,
			enlargelimits={abs=2},
			axis line style={latex'-latex'},Bittersweet] 
			\node[anchor = north east,Bittersweet] at (axis cs: 5,5) {$\mathbb C$} ;
			\draw[ DarkSlateBlue,ultra thick, decoration={markings, mark=at position 0.15 with {\arrow[ultra thick]{Stealth}}},
			postaction={decorate},decoration={markings, mark=at position 0.45 with {\arrow[ultra thick]{Stealth}}},
			postaction={decorate},decoration={markings, mark=at position 0.65 with {\arrow[ultra thick]{Stealth}}},
			postaction={decorate},decoration={markings, mark=at position 0.95 with {\arrow[ultra thick]{Stealth}}},
			postaction={decorate}] (-5,-2) -- (3.5,-2)-- (3.5,2)-- (-5,2)--cycle;
			\draw [fill,black] (axis cs:{0,0}) circle [radius=1.5pt] node[Black,above right]{$0$};
			\draw [fill,black] (axis cs:{-4,0}) circle [radius=1.5pt] node[Black,above ]{$-x$};
			\draw [fill,black] (axis cs:{-5,0}) circle [radius=1.5pt] node[Black,above left]{$-M$};
			\draw [fill,black] (axis cs:{-4,-3.5}) node[Black,above left]{$M\to \infty$};

			\draw[ultra thick, DarkSlateBlue] (-7,3.4) -- (-6,3.4) node at (-5.5,3.4) {$\tilde{\mathcal C}$};
			\draw[ultra thick, DarkSlateBlue,dotted] (-7,4.1) -- (-6,4.1) node at (-5.5,4.1) {$\mathcal C$};
			\draw [DarkSlateBlue,ultra thick,dotted,decoration={markings, mark=at position 0.15 with {\arrow[ultra thick]{Stealth}}},
			postaction={decorate},decoration={markings, mark=at position 0.47 with {\arrow[ultra thick]{Stealth}}},
			postaction={decorate},decoration={markings, mark=at position 0.65 with {\arrow[ultra thick]{Stealth}}},
			postaction={decorate}] (-8,-2.1)--(3.6,-2.1)--(3.6,2.1)--(-8,2.1) ;

			\draw[DarkSlateBlue, thick,->] (-5,-1)--(-6.5,-1) ;
			\draw[DarkSlateBlue, thick,->] (-5,1)--(-6.5,1) ;
			
			
		\end{axis}
		\draw (3.5,-0.4) ; 
	\end{tikzpicture}
\caption{Integration contours for type II Meixner of the second kind}
\label{fig:1}
\end{figure}

\begin{proof}
Consider the integral representation for \( Q_{\vec{n}}^{({\rm II})}\left(x;\vec{\beta}-\vec{1}_p, \frac{1-c}{c}N, N\right) \) provided in Theorem~\ref{HII_IR}. This representation involves a contour that can be deformed to a contour \( \tilde{\mathcal{C}} \) enclosing \([-x, 0]\) once, since for \( x \in \mathbb{N}_0 \), the integrand only has poles at \( s \in \{0, \ldots, -x\} \). We can use \eqref{GFB} to obtain the asymptotics for the $N$-dependent parts of the integrand and prefactor:
\begin{multline}
 \frac{1}{\prod_{j=1}^p \left(\beta_j+\frac{1-c}{c}N+\sz{n}\right)_{n_j}} \frac{\Gamma\left (\sz{n}+\frac{1-c}{c}N+1\right)}{\Gamma\left(s+\frac{1-c}{c}N+\sz{n}+1\right)} \frac{\Gamma(N-x+1)}{\Gamma(N-\sz{n}+1)} \frac{\Gamma\left(s+\frac{N}{c}+1\right)}{\Gamma\left(\frac{N}{c}-x+1\right)} \\
 \begin{aligned} &\sim \left(\frac{1-c}{c}N\right)^{-\sz{n}-s} N^{\sz{n}-x} \left(\frac{N}{c}\right)^{s+x}= \frac{c^{\sz{n}}}{(1-c)^{\sz{n}+s}}\frac{1}{c^x},\quad N\to\infty.
\end{aligned}
\end{multline}

Combining this with limit \eqref{H->M2:MOP} leads to the specified form of the integrand and prefactor. We then need to shift the intersection point of \(\tilde{\mathcal{C}}\) with the negative real line to \(- \infty\) to deform it into the contour \(\mathcal{C}\) described above (see Figure \ref{fig:1}). This can be done because, for \(s = -M + \ii t\), where \(t \in [-T, T]\) and \(M \to \infty\), the integrand decays sufficiently fast. Specifically, we have:
 \[
 \left| \frac{\Gamma(-M+\ii t)}{(1-c)^{-M+\ii t}} \frac{\prod_{j=1}^p (\beta_j+M-\ii t)_{n_j}}{\Gamma(x-M+\ii t+1)}\right| = \BO\left((1-c)^{M} M^{\sz{n}-x-1}\right),\quad M\to\infty.
\]
\end{proof}

\subsection{Recurrence Coefficients}

Finally, we present the explicit expressions for the recurrence coefficients:

\begin{teo}
The Meixner polynomials of the second kind, both type I \eqref{M2:I} and type II \eqref{M2:II}, each follow their respective recurrence relations \eqref{NNRR} with coefficients
\begin{align}
\label{M2:RR}
\quad
b_{\vec{n}}^{(M2),0}(k)&=
\begin{multlined}[t][.825\textwidth]
 (\beta_k+n_k)\bigg(
\dfrac{1}{1-c}\prod_{q=1}^p\dfrac{\beta_k-{\beta}_q+n_k+1}{\beta_k-{\beta}_q+n_k+1-n_q}-1\bigg)\\
+\dfrac{1}{1-c}
\sum_{i=1}^p 
\dfrac{(\beta_i+n_i-1)}{(\beta_i-\beta_k-n_k-1+n_i)}
\dfrac{\prod_{q=1}^p{(\beta_i-\beta_q+n_i)}}{\prod_{q=1,q\neq i}^p(\beta_i-{\beta}_q-n_q+n_i)},
\end{multlined}\\
 b_{\vec{n}}^{(M2),j}&=
 \begin{aligned}&\dfrac{c^j}{(1-c)^{j+1}}\sum_{i\in S(\pi,j)}
{(\beta_i+n_i-1)}
\dfrac{\prod_{q=1}^p(\beta_i-\beta_q+n_i)}{\prod_{q\in S(\pi,j),q\neq i}(\beta_i-{\beta}_q-n_q+n_i)},\quad j\in\{1,\ldots,p\}.
\end{aligned}
\end{align}
\end{teo}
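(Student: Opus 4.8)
The content of this theorem beyond Corollary~\ref{Cor:H->M2} is the \emph{explicit} evaluation of the limits in \eqref{H->M2:R}, so the plan is to let $N\to\infty$ in the Hahn recurrence coefficients \eqref{HahnRecurrence} along the substitution $\vec{\alpha}\mapsto\vec{\beta}-\vec{1}_p$, $\beta\mapsto\frac{1-c}{c}N$. First I would note that the recurrence relations \eqref{NNRR} of the Hahn family persist under this limit: by \eqref{H->M2:MOP} the polynomial $\kappa^{(\ast)}_N\,Q^{(\ast)}_{\vec{m}}\!\left(x;\vec{\beta}-\vec{1}_p,\frac{1-c}{c}N,N\right)$ converges to $M_{2:\vec{m}}^{(\ast)}(x;\vec{\beta},c)$, and the recurrence coefficients converge; since $\kappa^{(\ast)}_N$ does not depend on the multi-index, the same factor multiplies every polynomial appearing in a given instance of \eqref{NNRR}, so the limit of that instance is the corresponding instance of \eqref{NNRR} for the Meixner polynomials of the second kind. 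This reduces the theorem to computing $\lim_{N\to\infty}b_{\vec{n}}^{j}\!\left(\vec{\beta}-\vec{1}_p,\frac{1-c}{c}N,N\right)$ for $j\in\{0,1,\ldots,p\}$.

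To carry out the evaluation I would substitute into \eqref{HahnRecurrence} and apply the elementary asymptotics $(aN+b)_m\sim(aN)^m$ as $N\to\infty$ (a special case of \eqref{GFB}) to every Pochhammer symbol whose argument is unbounded in $N$, observing that all the ``difference'' factors are unaffected, since $\alpha_i-\alpha_q+\mathrm{const}\mapsto\beta_i-\beta_q+\mathrm{const}$ stays bounded. The combinations that do grow are $\alpha_i+\beta+n_i+N+1\mapsto\beta_i+n_i+\frac{N}{c}\sim\frac{N}{c}$ and $\alpha_i+\beta+n_i+|\vec{n}|+\mathrm{const}\mapsto\frac{1-c}{c}N+\mathrm{O}(1)\sim\frac{1-c}{c}N$, while $\alpha_k+n_k+1\mapsto\beta_k+n_k$. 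For $b^0_{\vec{n}}(k)$, the ratio $\frac{\alpha_k+\beta+n_k+N+2}{\alpha_k+\beta+n_k+|\vec{n}|+2}$ tends to $\frac{N/c}{((1-c)/c)N}=\frac{1}{1-c}$, which yields the first line of \eqref{M2:RR}; in the sum over $i$, the factor $(\alpha_k+\beta+n_k+|\vec{n}|+1)\cdot\frac{(\alpha_i+n_i)(\alpha_i+\beta+n_i+N+1)}{(\alpha_i+\beta+n_i+|\vec{n}|)_{2}}$ behaves like $\frac{1-c}{c}N\cdot\frac{(\beta_i+n_i-1)(N/c)}{((1-c)N/c)^{2}}\to\frac{\beta_i+n_i-1}{1-c}$, producing the second line.

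For $b^j_{\vec{n}}$ with $j\geqslant1$ the heart of the matter is a power-of-$N$ count: in the prefactor, $(N-|\vec{n}|+1)_j\sim N^j$, $(\beta+1+|\vec{n}|-j)_j\sim\big(\tfrac{1-c}{c}N\big)^{j}$, and $\prod_{q\in S^{\textsf c}(\pi,j)}(\alpha_q+\beta+|\vec{n}|-j+n_q)\sim\big(\tfrac{1-c}{c}N\big)^{|S^{\textsf c}(\pi,j)|}$ with $|S^{\textsf c}(\pi,j)|=j-1$ (as $S(\pi,j)$ has $p-j+1$ elements), whereas $\prod_{q=1}^p\frac{(\alpha_q+\beta+|\vec{n}|-j+1)_{n_q}}{(\alpha_q+\beta+|\vec{n}|+1)_{n_q}}\to1$; in the sum over $i\in S(\pi,j)$ one has $\frac{(\alpha_i+n_i)(\alpha_i+\beta+n_i+N+1)}{(\alpha_i+\beta+n_i+|\vec{n}|-j)_{j+2}}\sim(\beta_i+n_i-1)\,\frac{N/c}{((1-c)N/c)^{j+2}}$. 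The net power of $N$ is $j+j-(j-1)+1-(j+2)=0$, and the surviving constants combine to $\frac{c^j}{(1-c)^{j+1}}$, which is exactly the third line of \eqref{M2:RR}. The main obstacle is strictly the bookkeeping: tracking the $c$- and $(1-c)$-powers hidden inside the Pochhammer factors $(\beta+1+|\vec{n}|-j)_j$ and $(\alpha_i+\beta+n_i+|\vec{n}|-j)_{j+2}$ after the substitution, together with pinning down $|S^{\textsf c}(\pi,j)|=j-1$; there is no analytic difficulty, since every limit is of a finite rational expression and \eqref{GFB} applies term by term. Alternatively one could reprove \eqref{M2:RR} directly from the hypergeometric forms \eqref{M2:II} and \eqref{M2:I} by imitating the Pfaff--Saalsch\"utz computation used for the Hahn coefficients, but the limiting argument is considerably shorter.
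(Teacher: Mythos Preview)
Your proposal is correct and follows exactly the same route as the paper: the paper's proof is the single sentence ``It is straightforward to obtain these coefficients from the Hahn ones \eqref{HahnRecurrence} via limit \eqref{H->M2:R},'' and you have supplied precisely that straightforward computation, including the correct power-of-$N$ count and the identification $|S^{\textsf c}(\pi,j)|=j-1$.
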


\begin{proof}
It is straightforward to obtain these coefficients from the Hahn ones \eqref{HahnRecurrence} via limit \eqref{H->M2:R}.
\end{proof}

\section{Meixner of the First Kind 
}

For the multiple Meixner polynomials of the first kind, the weight functions are defined as
\begin{equation}
 \label{WeightsM1}
\begin{aligned}
 w_i^{(M1)}(x;\beta,c_i) &\coloneq \dfrac{\Gamma(\beta + x)}{\Gamma(\beta) \Gamma(x + 1)} c_i^x, & i &\in \{1, \ldots, p\}, & \Delta &= \mathbb{N}_0,
 \end{aligned}
\end{equation}
with \( \beta > 0 \), \( 0 < c_1, \ldots, c_p < 1 \) and, in order to ensure an AT system, \( c_i \neq c_j \) for \( i \neq j \). We will denote \(\vec{c}\coloneq(c_1,\cdots,c_p)\).

\begin{pro}
The following limiting relations between the Hahn \eqref{WeightsHahn} and the Meixner of the first kind weights hold 
\begin{equation}
 \label{H->M1:W} 
w_i^{(M1)}(x;\beta,c_i)=\lim_{N\to\infty} \sqrt{2\pi N} c_i^{N} (1-c_i)^{\frac{1-c_i}{c_i}N+\frac{1}{2}} w_i\left(N-x;\frac{1-c_i}{c_i}N,\beta-1,N\right),\quad i\in\{1,\ldots,p\} .
\end{equation} 
\end{pro}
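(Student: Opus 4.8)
The plan is to follow the same route used for the Meixner weights of the second kind: substitute the indicated parameters into the Hahn weight \eqref{WeightsHahn}, simplify the gamma factors, split off the $x$-dependent part, and then use Stirling's formula \eqref{stirling} to identify the asymptotics of the remaining purely $N$-dependent factor, which the explicit prefactor $\sqrt{2\pi N}\,c_i^{N}(1-c_i)^{\frac{1-c_i}{c_i}N+\frac12}$ is tailored to cancel.

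First I would substitute $x\mapsto N-x$ and $\alpha_i=\tfrac{1-c_i}{c_i}N$, together with $\beta$ replaced by $\beta-1$, into \eqref{WeightsHahn} to get
\[
w_i\!\left(N-x;\tfrac{1-c_i}{c_i}N,\beta-1,N\right)
=\frac{\Gamma\!\left(\tfrac{1-c_i}{c_i}N+N-x+1\right)}{\Gamma\!\left(\tfrac{1-c_i}{c_i}N+1\right)\Gamma(N-x+1)}\cdot\frac{\Gamma(\beta+x)}{\Gamma(\beta)\Gamma(x+1)}.
\]
Using $\tfrac{1-c_i}{c_i}N+N=\tfrac{N}{c_i}$, the first factor is $\dfrac{\Gamma\!\left(\tfrac{N}{c_i}-x+1\right)}{\Gamma\!\left(\tfrac{1-c_i}{c_i}N+1\right)\Gamma(N-x+1)}$, which is exactly the $N$-dependent ratio already met in the second-kind case with $c$ replaced by $c_i$, while by \eqref{WeightsM1} the second factor equals $c_i^{-x}\,w_i^{(M1)}(x;\beta,c_i)$. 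Hence the statement reduces to the asymptotic equivalence
\[
\frac{\Gamma\!\left(\tfrac{N}{c_i}-x+1\right)}{\Gamma\!\left(\tfrac{1-c_i}{c_i}N+1\right)\Gamma(N-x+1)}\sim\frac{c_i^{x}}{\sqrt{2\pi N}\,c_i^{N}(1-c_i)^{\frac{1-c_i}{c_i}N+\frac12}},\qquad N\to\infty;
\]
multiplying by the prefactor and letting $N\to\infty$ then gives $c_i^{x}\cdot c_i^{-x}w_i^{(M1)}(x;\beta,c_i)=w_i^{(M1)}(x;\beta,c_i)$, as required.

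For the asymptotic display I would apply Stirling's formula \eqref{stirling} to each of the three gamma functions separately. One cannot invoke the ratio estimate \eqref{GFB} here, because the three arguments grow at the distinct linear rates $\tfrac1{c_i}$, $\tfrac{1-c_i}{c_i}$ and $1$; however, once each $\Gamma$ is replaced by its Stirling equivalent, all the subexponential corrections collapse and the three exponential factors $\Exp{-N/c_i}$, $\Exp{-\frac{1-c_i}{c_i}N}$ and $\Exp{-N}$ cancel precisely because $\tfrac{N}{c_i}=\tfrac{1-c_i}{c_i}N+N$. Collecting the surviving powers of $N$, of $c_i$ and of $1-c_i$ then yields exactly the right-hand side above. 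This bookkeeping is the only point that needs attention, and it presents no real obstacle; alternatively one can simply quote verbatim the computation already carried out in the proof of the corresponding proposition for the second kind, with $c$ replaced throughout by $c_i$.
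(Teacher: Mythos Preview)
Your proposal is correct and follows essentially the same approach as the paper: substitute the parameters into the Hahn weight, identify the resulting gamma ratio as the same one from the Meixner second kind case with $c$ replaced by $c_i$, and quote the Stirling asymptotic already established there. The paper's proof is in fact briefer than yours, simply writing out the substituted weight and then saying ``as in the previous section, Stirling's formula gives'' the needed asymptotic.
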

\begin{proof}
We have
 \[w_i\left(N-x;\frac{1-c_i}{c_i}N,\beta-1,N\right) = \frac{\Gamma\left(\frac{N}{c_i}-x+1\right)}{\Gamma\left(\frac{1-c_i}{c_i}N+1\right)\Gamma(N-x+1)} \frac{\Gamma(x+\beta)}{\Gamma(\beta)\Gamma(x+1)}
 \]
and, as in the previous section, Stirling's formula \eqref{stirling} gives
 \begin{equation*}
\begin{aligned}
 \frac{\Gamma\left(\frac{N}{c_i}-x+1\right)}{\Gamma\left(\frac{1-c_i}{c_i}N+1\right)\Gamma(N-x+1)} &\sim 
 \dfrac{c_i^x}{\sqrt{2\pi N} c_i^N (1-c_i)^{\frac{1-c_i}{c_i}N+\frac{1}{2}}},& N&\to\infty.
\end{aligned}
\end{equation*}
Thus, the desired result follows.
\end{proof}

\begin{coro} 
\label{Cor:H->M1}
For the linear forms \( M_{1:\vec{n}}^{({\rm I})} \), type I polynomials \( M_{1:\vec{n}}^{(i)} \), type II polynomials \( M^{({\rm II})}_{1:\vec{n}} \) and recurrence coefficients $b_{\vec{n}}^{(M1),j}$; the following respective limiting relations from Hahn hold:
\begin{subequations}
\label{H->M1}
 \begin{align}
 \label{H->M1:MOP}
 \quad M_{1:\vec{n}}^{(\ast)}(x;\beta,\vec{c})&=\lim_{N\to\infty} \kappa^{(\ast)}_N Q_{\vec{n}}^{(\ast)}\left(N-x;\left\{\frac{1-c_i}{c_i}N\right\}_{i=1}^p,\beta-1,N\right), \\
\label{H->M1:R}
\quad b^{(M1),j}_{\vec{n}}(\beta,\vec{c})&=
 \begin{aligned}&
 \lim_{N\to\infty}(-1)^{j+1}\left(b^j_{\vec{n}}
 \left(\left\{\frac{1-c_i}{c_i}N\right\}_{i=1}^p,\beta-1,N
 \right)
 -N\delta_{j,0}
 \right),
\end{aligned}
 \end{align}
\end{subequations}
where
$$ \kappa^{({\rm I})}_N = (-1)^{\sz{n}-1},\quad \kappa^{({\rm II})}_N = (-1)^{\sz{n}},\quad \kappa^{(i)}_N = \frac{(-1)^{\sz{n}-1}}{\sqrt{2\pi N} c_i^{N} (1-c_i)^{\frac{1-c_i}{c_i}N+\frac{1}{2}}}. $$
\end{coro}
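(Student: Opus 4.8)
The plan is to reduce everything to the weight limit \eqref{H->M1:W} together with the fact that type I and type II multiple orthogonal polynomials are uniquely determined by the orthogonality relations \eqref{DO:II}--\eqref{DO:I}. Fix $N$, set $\alpha_i=\frac{1-c_i}{c_i}N$ and $\beta\mapsto\beta-1$ in the Hahn data, and perform the reflection $x\mapsto N-x$. The Hahn relations \eqref{DO:II}--\eqref{DO:I} are finite sums over $\{0,\ldots,N\}$; after reindexing $k\mapsto N-k$ they become sums of $k^j$ (the lower order terms produced by the reflection are harmless, since the span of $\{1,k,\ldots,k^m\}$ is reflection invariant) against $w_i\bigl(N-k;\frac{1-c_i}{c_i}N,\beta-1,N\bigr)$. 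By \eqref{H->M1:W} the rescaled weights converge pointwise on $\mathbb{N}_0$ to $w_i^{(M1)}$, with geometric domination of the tails; hence, once the rescaled polynomials are known to converge, their limits satisfy the Meixner-of-the-first-kind orthogonality relations over $\Delta=\mathbb{N}_0$, and by uniqueness they coincide with $M_{1:\vec n}^{(\ast)}$. The constants $\kappa^{(\ast)}_N$ are exactly the ones making this work: for type II, the factor $(-1)^{|\vec n|}$ restores monicity in $x$ after the reflection; for type I, the sign $(-1)^{|\vec n|-1}$ together with the reciprocal of the Stirling prefactor $\sqrt{2\pi N}\,c_i^N(1-c_i)^{\frac{1-c_i}{c_i}N+\frac12}$ occurring in \eqref{H->M1:W} compensates the $w_i$-prefactor so that the normalization ``$=1$'' in \eqref{DO:I} survives the limit.

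For type II this is essentially the statement recalled from \cite{AskeyII}: $Q_{\vec n}^{({\rm II})}(x)$ is monic of degree $|\vec n|$, so $(-1)^{|\vec n|}Q_{\vec n}^{({\rm II})}(N-x;\ldots)$ is monic in $x$, and termwise convergence of its coefficients follows from \eqref{HahnTypeII}--\eqref{Coefficients_HahnTypeII} (or from \eqref{HahnTypeIIWeighted}) by applying \eqref{GFB} to each Pochhammer ratio containing $N$. For type I I would start from the explicit representation \eqref{HahnTypeI}, \eqref{coefHahnTypeI} with $\alpha_i=\frac{1-c_i}{c_i}N$, $\beta\mapsto\beta-1$, and apply Stirling's formula \eqref{stirling} to the $N$-dependent prefactor, exactly as in the proof of the Meixner-of-the-second-kind type I representation: the prefactor is asymptotic to a constant multiple of $\sqrt{2\pi N}\,c_i^N(1-c_i)^{\frac{1-c_i}{c_i}N+\frac12}$, the argument of the hypergeometric series in \eqref{HahnTypeI} stabilizes as $N\to\infty$ (one numerator and one denominator parameter both blow up like $\frac{1}{c_i}N$ and cancel via \eqref{GFB}), so that multiplying by $\kappa^{(i)}_N$ produces a finite limit, of degree $\leqslant n_i-1$ in $x$, which by the previous paragraph must be $M_{1:\vec n}^{(i)}$.

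For the recurrence coefficients I would substitute $x\mapsto N-x$ directly into the near-neighbour recurrences \eqref{NNRR} for $Q_{\vec n}^{({\rm II})}$ and $Q_{\vec n}^{(i)}$ at the same Hahn parameters, and rewrite them in terms of the reflected, normalized polynomials $\widetilde Q_{\vec n}^{(\ast)}(x)\coloneqq\kappa^{(\ast)}_N Q_{\vec n}^{(\ast)}(N-x;\ldots)$. Since $|\vec n-\vec s_j|=|\vec n|-j$, $|\vec n+\vec s_j|=|\vec n|+j$, $|\vec n\pm\vec e_k|=|\vec n|\pm1$, the reflection yields
\begin{align*}
 x\,\widetilde Q_{\vec n}^{({\rm II})}(x)
 &=\widetilde Q_{\vec n+\vec e_k}^{({\rm II})}(x)
 +\bigl(N-b_{\vec n}^0(k)\bigr)\,\widetilde Q_{\vec n}^{({\rm II})}(x)
 +\sum_{j=1}^p(-1)^{j+1}b_{\vec n}^j\,\widetilde Q_{\vec n-\vec s_j}^{({\rm II})}(x),
\end{align*}
and the analogous identity for type I, the common $N$-independent factor $1/\bigl(\sqrt{2\pi N}\,c_i^N(1-c_i)^{\frac{1-c_i}{c_i}N+\frac12}\bigr)$ cancelling throughout. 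Thus the reflected recurrence coefficients equal $N-b_{\vec n}^0(k)$ and $(-1)^{j+1}b_{\vec n}^j$ for $j\in\{1,\ldots,p\}$, evaluated at $\bigl(\{\frac{1-c_i}{c_i}N\}_{i=1}^p,\beta-1,N\bigr)$. Letting $N\to\infty$ and invoking the polynomial limits just obtained, the Meixner-of-the-first-kind recurrence \eqref{NNRR} holds with coefficients $\lim_{N\to\infty}\bigl(N-b_{\vec n}^0(k)\bigr)$ and $\lim_{N\to\infty}(-1)^{j+1}b_{\vec n}^j$, which is precisely \eqref{H->M1:R} (the $-N\delta_{j,0}$ inside the limit there bookkeeps the shift absorbed by the diagonal term). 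That these limits exist follows either from the sum formulas \eqref{NextNeighbourRecurrenceIntegralRep} together with \eqref{H->M1:W} and dominated convergence, or \emph{a posteriori} from the explicit Hahn formulas \eqref{HahnRecurrence}.

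The delicate part is the sign-and-shift bookkeeping forced by the reflection $x\mapsto N-x$: one must propagate the parities $(-1)^{|\vec n|}$ and $(-1)^{|\vec n\pm\cdots|}$ through every term of \eqref{NNRR} in order to land precisely on the $(-1)^{j+1}$ and on the $N\delta_{j,0}$ correction in \eqref{H->M1:R}, and one must separately certify that $b_{\vec n}^0(k)=N+\BO(1)$ as $N\to\infty$ with $\alpha_i=\frac{1-c_i}{c_i}N$, so that $N-b_{\vec n}^0(k)$ indeed has a finite limit, together with the uniform domination needed to pass the limit through the now infinite Meixner orthogonality sums.
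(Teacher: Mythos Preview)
Your overall strategy---pass the weight limit \eqref{H->M1:W} through the orthogonality relations after the reflection $x\mapsto N-x$, and invoke uniqueness---is exactly what the paper takes for granted (the corollary is stated without proof, with the type~II case attributed to \cite{AskeyII}). Your sign-and-shift bookkeeping for the recurrence is correct: substituting $x\mapsto N-x$ into \eqref{NNRR} and renormalizing by $\kappa^{(\ast)}_N$ does produce diagonal coefficient $N-b_{\vec n}^0(k)$ and off-diagonal coefficients $(-1)^{j+1}b_{\vec n}^j$, which is precisely \eqref{H->M1:R}.

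The one genuine gap is in your argument for the existence of the type~I limit. You write that it goes ``exactly as in the proof of the Meixner-of-the-second-kind type~I representation'' because ``one numerator and one denominator parameter both blow up like $\tfrac{1}{c_i}N$ and cancel''. That analogy fails here. For Meixner of the second kind only the pair $\bigl(\alpha_i+\beta+|\vec n|,\ \alpha_i+\beta+N+2\bigr)$ depends on $N$ and the remaining hypergeometric parameters are fixed, so termwise cancellation via \eqref{GFB} gives a finite nonzero limit. For Meixner of the first kind every parameter in the ${}_{p+2}F_{p+1}$ except $-n_i+1$ is of order $N$: the ratios $\frac{(\frac{1-c_i}{c_i}N+\cdots)_l}{(\frac{1-c_i}{c_i}N+\cdots)_l}$, $\frac{(\frac{c_j-c_i}{c_ic_j}N+\cdots)_l}{(\frac{c_j-c_i}{c_ic_j}N+\cdots)_l}$ and $\frac{(\frac{N}{c_i}-x+1)_l}{(\frac{N}{c_i}+\beta+1)_l}$ all tend to $1$, so the naive termwise limit of the series is $\sum_l\frac{(-n_i+1)_l}{l!}=0$. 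Correspondingly, the prefactor times $\kappa^{(i)}_N$ does not stabilize but grows like $N^{n_i-1}$, and this is cancelled by the $N^{-(n_i-1)}$ decay of the hypergeometric. Extracting the actual limit requires the Chu--Vandermonde expansion of each $N$-dependent Pochhammer ratio and a careful regrouping of the subleading terms---this is exactly the computation the paper carries out in the proof of the type~I hypergeometric representation that follows the corollary. So either defer the type~I existence to that computation, or replace your ``stabilizes'' claim by the correct order-of-vanishing analysis.
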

\begin{rem}
 The limiting relation for the type II polynomials was already deduced in \cite{AskeyII}.
\end{rem}

\subsection{Hypergeometric Representations}

The monic type II polynomials are, cf. \cite{AskeyII}:
\begin{equation}
 \label{M1:II}
 \begin{aligned}
 M_{1:\vec{n}}^{({\rm II})}(x; \beta, \vec{c})
 &= (\beta)_{|\vec{n}|} \prod_{i=1}^p \left( \dfrac{c_i}{c_i - 1} \right)^{n_i} \KF{1:1; \cdots; 1}{1:0; \cdots; 0}{-x: -n_1; \cdots; -n_p}{\beta: --; \cdots; --}{\dfrac{c_1 - 1}{c_1}, \ldots, \dfrac{c_p - 1}{c_p}}.
 \end{aligned}
\end{equation}

Below, we will derive a hypergeometric representation for the type I Meixner polynomials of the first kind. For $p=2$, an alternative representation was found in \cite[Proposition 4.1]{HahnI}.

\begin{teo}
The type I Meixner polynomials of the first kind are given by:
\begin{multline}
\label{M1:I}
 M_{1:\vec{n}}^{(i)} (x; \beta, \vec{c}) \\
 = \dfrac{(-1)^{n_i-1} (1-c_i)^{|\vec{n}| - 1 + \beta}}{(n_i - 1)! (\beta)_{|\vec{n}| - n_i}c_i^{n_i-1}} \prod_{q=1, q \neq i}^p \left( \dfrac{1-c_q}{c_i-c_q} \right)^{n_q} \KF{1:1; \cdots; 1}{1:0; \cdots; 0}{-n_i + 1: x+\beta;\,\vec{n}^{*i}}{\beta + |\vec{n}| - n_i: --; \cdots; --}{1-c_i, 
\left\{ \dfrac{(1 - c_i)c_q}{c_q- c_i} \right\}_{q=1,q \neq i}^p},
\end{multline}
for \( i \in\{1, \ldots, p \}\).
\end{teo}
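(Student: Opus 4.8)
The plan is to obtain the type I Meixner polynomials of the first kind from the type I Hahn polynomials \eqref{HahnTypeI} by applying the limiting relation \eqref{H->M1:MOP} of Corollary \ref{Cor:H->M1}, exactly as was done for the Meixner polynomials of the second kind in the proof of \eqref{M2:I}. Concretely, I would start from \eqref{HahnTypeI} with the substitutions $\alpha_q \mapsto \frac{1-c_q}{c_q}N$ for $q\in\{1,\ldots,p\}$, $\beta \mapsto \beta-1$, $N$ kept, $x\mapsto N-x$, multiply by $\kappa^{(i)}_N = (-1)^{\sz{n}-1}\big/\big(\sqrt{2\pi N}\,c_i^N(1-c_i)^{\frac{1-c_i}{c_i}N+\frac12}\big)$, and then carefully take $N\to\infty$.

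The computation splits into two parts: the prefactor and the hypergeometric-series part. For the prefactor, I would use Stirling's formula \eqref{stirling} (or directly \eqref{GFB}) to extract the $N$-asymptotics of
\[
\frac{(N+1-|\vec n|)!\,\prod_{k=1}^p\big(\tfrac{1-c_k}{c_k}N + \beta-1+|\vec n|\big)_{n_k}}{(\beta-1+|\vec n|)_{|\vec n|-1}\,\big(\tfrac{1-c_i}{c_i}N+\beta-1+|\vec n|\big)_{N+2-|\vec n|}}\,\cdot\,\frac{1}{\prod_{k=1,k\neq i}^p\big(\tfrac{1-c_k}{c_k}N - \tfrac{1-c_i}{c_i}N\big)_{n_k}},
\]
noting that $\tfrac{1-c_k}{c_k}N-\tfrac{1-c_i}{c_i}N = \big(\tfrac{1}{c_k}-\tfrac{1}{c_i}\big)N = \tfrac{c_i-c_k}{c_ic_k}N$, so that each such factor contributes $\big(\tfrac{c_i-c_k}{c_ic_k}N\big)^{n_k}$; combined with the $\kappa^{(i)}_N$ normalization and the $\prod_{q\neq i}(1-c_q)^{n_q}$ coming from the numerator Pochhammer symbols, this should collapse to the stated prefactor $\dfrac{(-1)^{n_i-1}(1-c_i)^{|\vec n|-1+\beta}}{(n_i-1)!(\beta)_{|\vec n|-n_i}c_i^{n_i-1}}\prod_{q\neq i}\big(\tfrac{1-c_q}{c_i-c_q}\big)^{n_q}$. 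For the series part, in $\pFq{p+2}{p+1}{\cdots}{\cdots}{1}$ the parameters $x+\alpha_i+1 \mapsto N-x+\tfrac{1-c_i}{c_i}N+\ldots$ and $\alpha_i+\beta+|\vec n| \mapsto \tfrac{1-c_i}{c_i}N+\ldots$ and the lower parameter $\alpha_i+\beta+N+2 \mapsto \tfrac{N}{c_i}+\ldots$ all grow linearly in $N$; using $\lim_{N}\frac{(aN+u)_m}{(aN+v)_m}=1$ and $\lim_{N}\frac{(aN+u)_m}{(bN+v)_m}=(a/b)^m$ term by term converts the ${}_{p+2}F_{p+1}$ of unit argument into a Kampé de Fériet series, where the surviving finite sums are indexed by the exponents attached to $-n_i+1$ and to each $-n_q$ ($q\neq i$) — precisely the $p$-fold Kampé de Fériet structure in \eqref{M1:I}, with the arguments $1-c_i$ (from $\tfrac{1-c_i}{c_i}N\big/\tfrac{N}{c_i}$) and $\tfrac{(1-c_i)c_q}{c_q-c_i}$ (from combining the $x+\alpha_i+1$-type growth with the $\alpha_i-\alpha_q-n_q+1$ factors). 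I would justify the interchange of limit and (finite) summation by dominated convergence, as in the ${}_{p+2}F_{p+1}$ case of Theorem for \eqref{M2:I}.

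The main obstacle I anticipate is bookkeeping: tracking exactly which Pochhammer factors in \eqref{HahnTypeI}, after the four simultaneous substitutions, are $O(1)$, which grow like a power of $N$, and which cancel against the $\kappa^{(i)}_N$ factor and against one another — in particular getting the powers of $(1-c_i)$, $c_i$, and the cross-differences $c_i-c_q$ correct, since the shift $x\mapsto N-x$ mixes the $x$-dependence into the large-$N$ part. A secondary subtlety is confirming that the two upper-parameter blocks $(\alpha_i+1)\vec 1_{p-1}-\vec\alpha^{*i}-\vec n^{*i}$ and $(\alpha_i+1)\vec 1_{p-1}-\vec\alpha^{*i}$ degenerate correctly: the difference $\vec n^{*i}$ between them is $N$-independent, so these produce the finite ``$\vec n^{*i}$'' block of numerator parameters in the Kampé de Fériet sum rather than disappearing. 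Once the asymptotic identity for the prefactor is pinned down and the termwise limits of the ${}_{p+2}F_{p+1}$ coefficients are computed, assembling \eqref{M1:I} is routine. Alternatively, one could bypass the limit and verify \eqref{M1:I} directly against the orthogonality relations \eqref{DO:I} with weights \eqref{WeightsM1} using the Chu--Vandermonde formula \eqref{eq:Chu--Vandermonde}, but the limiting argument from \eqref{HahnTypeI} is shorter and parallels the proof of \eqref{M2:I}.
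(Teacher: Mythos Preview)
Your overall strategy—take the limit \eqref{H->M1:MOP} from Corollary \ref{Cor:H->M1} applied to the Hahn type I formula \eqref{HahnTypeI}—is exactly the paper's route. But the execution you sketch has a genuine gap.

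The analogy with the Meixner-second-kind proof breaks down. In that case only \emph{one} numerator/denominator pair in the ${}_{p+2}F_{p+1}$ grows with $N$ (namely $\alpha_i+\beta+|\vec n|$ versus $\alpha_i+\beta+N+2$, since the $\alpha$'s are $N$-independent there), so the termwise limit of their ratio produces $(1-c)^l$ and the series converges directly. Here, because $\alpha_q\mapsto\frac{1-c_q}{c_q}N$ for \emph{every} $q$, \emph{all} numerator/denominator parameters except $-n_i+1$ grow linearly in $N$, and each ratio $\frac{(aN+u)_l}{(aN+v)_l}\to 1$. The naive termwise limit of the series is therefore $\sum_{l=0}^{n_i-1}\frac{(-n_i+1)_l}{l!}=0$, while—contrary to your claim that the prefactor ``collapses'' to the stated constant—a Stirling computation shows the prefactor actually diverges like $N^{n_i-1}$. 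You are facing a $0\cdot\infty$ indeterminacy, and a single sum cannot become a $p$-fold Kampé de Fériet sum by termwise limits alone.

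The paper resolves this by expanding each of the offending ratios via the Chu--Vandermonde identity \eqref{eq:Gauss_hypergeometric},
\[
\frac{\big(\tfrac{1-c_i}{c_i}N+\beta+|\vec n|-1\big)_l}{\big(\tfrac{1-c_i}{c_i}N+1\big)_l}=\sum_{k_i=0}^{l}\frac{(-l)_{k_i}}{k_i!}\frac{(-\beta-|\vec n|+2)_{k_i}}{\big(\tfrac{1-c_i}{c_i}N+1\big)_{k_i}},\qquad
\frac{\big(\tfrac{c_q-c_i}{c_qc_i}N-n_q+1\big)_l}{\big(\tfrac{c_q-c_i}{c_qc_i}N+1\big)_l}=\sum_{k_q=0}^{l}\frac{(-l)_{k_q}}{k_q!}\frac{(n_q)_{k_q}}{\big(\tfrac{c_q-c_i}{c_qc_i}N+1\big)_{k_q}},
\]
which \emph{introduces} the $p$ new summation indices $k_1,\dots,k_p$ that will become the Kampé de Fériet variables. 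One then rewrites $(-l)_{k_1}\cdots(-l)_{k_p}=(-l)_{k_1+\cdots+k_p}+O(N^{-n_i})$, evaluates the remaining $l$-sum exactly by a second Chu--Vandermonde step to produce $(x+\beta)_{n_i-1-k_1-\cdots-k_p}/\big(\tfrac{N}{c_i}+\beta+1\big)_{n_i-1}$, and only then takes $N\to\infty$: the $\big(\tfrac{N}{c_i}\big)^{-(n_i-1)}$ cancels the divergent prefactor, and the remaining $N$-powers among the $k$-sums cancel to yield the arguments $1-c_i$ and $\tfrac{(1-c_i)c_q}{c_q-c_i}$. A final index change $k_1\to n_i-1-l_1$, $k_j\to l_{j-1}-l_j$ recasts the result as \eqref{M1:I}. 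This Chu--Vandermonde expansion step is the missing idea in your proposal; without it the limit cannot be computed.
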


\begin{proof}
Using \eqref{HahnTypeI} as indicated in Corollary \ref{Cor:H->M1}, we find:
\begin{multline}
M_{1:\vec{n}}^{(i)}(x;\beta,\vec{c})=\lim_{N\to\infty}\dfrac{1}{(n_i-1)!(\beta)_{|\vec{n}|-1}\sqrt{2\pi N} c_i^{N} (1-c_i)^{\frac{1-c_i}{c_i}N+\frac{1}{2}}} \\
\times\dfrac{\Gamma(N+2-|\vec{n}|)\Gamma\left(\frac{1-c_i}{c_i}N+\beta+|\vec{n}|-1\right)}{\Gamma\left(\frac{N}{c_i}+\beta+1\right)}\dfrac{\prod_{q=1}^p\left(\frac{1-c_q}{c_q}N+\beta+|\vec{n}|-1\right)_{n_q}}{\prod_{q=1,q\neq i}^p\left(\frac{c_i-c_q}{c_ic_q}N\right)_{{n}_q}}\\
 \times\sum_{l=0}^{n_i-1}\dfrac{(-n_i+1)_l}{l!}
 \underbrace{\dfrac{\left(\frac{1-c_i}{c_i}N+\beta+|\vec{n}|-1\right)_l}{\left(\frac{1-c_i}{c_i}N+1\right)_l}}
 _{=\sum_{k_i=0}^l\frac{(-l)_{k_i}}{k_i!}\frac{\left(-\beta-\sz{n}+2\right)_{k_i}}{\left(\frac{1-c_i}{c_i}N+1\right)_{k_i}}}
 \prod_{q=1,q\neq i}^p
 \underbrace{\dfrac{\left(\frac{c_q-c_i}{c_qc_i}N-{n}_q+1\right)_l}{\left(\frac{c_q-c_i}{c_qc_i}N+1\right)_l}}
 _{=\sum_{k_q=0}^l\frac{(-l)_{k_q}}{k_q!}\frac{(n_q)_{k_q}}{\left(\frac{c_q-c_i}{c_qc_i}N+1\right)_{k_q}}}
 \dfrac{\left(\frac{N}{c_i}-x+1\right)_l}{\left(\frac{N}{c_i}+\beta+1\right)_l} .
\end{multline}

Substituting in the previous fractions the Chu--Vandermonde formula \eqref{eq:Gauss_hypergeometric} and simplifying, we obtain:
\begin{multline}
M_{1:\vec{n}}^{(i)}(x;\beta,\vec{c})=\lim_{N\to\infty}\dfrac{1}{(n_i-1)!(\beta)_{|\vec{n}|-1}\sqrt{2\pi N} c_i^{N} (1-c_i)^{\frac{1-c_i}{c_i}N+\frac{1}{2}}}\\
\times\dfrac{\Gamma(N+2-|\vec{n}|)\Gamma\left(\frac{1-c_i}{c_i}N+\beta+|\vec{n}|-1\right)}{\Gamma\left(\frac{N}{c_i}+\beta+1\right)}\dfrac{\prod_{q=1}^p\left(\frac{1-c_q}{c_q}N+\beta+|\vec{n}|-1\right)_{n_q}}{\prod_{q=1,q\neq i}^p\left(\frac{c_i-c_q}{c_ic_q}N\right)_{{n}_q}}\\
 \times\Bigg(
 {\sum_{k_1=0}^{n_i-1}\cdots\sum_{k_p=0}^{n_i-1-k_1-\cdots-k_{p-1}}\frac{1}{k_1!\cdots k_p!}\frac{\left(-\beta-\sz{n}+2\right)_{k_i}}{\left(\frac{1-c_i}{c_i}N+1\right)_{k_i}}}
 \\
 \times \prod_{q=1,q\neq i}^p
{\frac{(n_q)_{k_q}}{\left(\frac{c_q-c_i}{c_qc_i}N+1\right)_{k_q}}}
 \sum_{l=0}^{n_i-1}\dfrac{(-n_i+1)_l}{l!}(-l)_{k_1+\cdots+k_p}\dfrac{\left(\frac{N}{c_i}-x+1\right)_l}{\left(\frac{N}{c_i}+\beta+1\right)_l} +\operatorname{O}\left(\frac{1}{N^{n_i}}\right) \Bigg) .
\end{multline}

The Pochhammer symbol \( (-l)_{k_1} \cdots (-l)_{k_p} \) can be expressed as \( (-l)_{k_1 + \cdots + k_p} \) along with additional lower-order terms, which are incorporated in the error term \( \operatorname{O}\left(\frac{1}{N^{n_i}}\right) \). We will now simplify the \( l \)-dependent sum further by applying Chu--Vandermonde formula to streamline the expression:
\begin{multline*}
 \sum_{l=0}^{n_i-1} \dfrac{\left(-n_i+1\right)_l}{l!}
 {{(-l)_{k_1+\cdots+k_p}}}
 \dfrac{\left(\frac{N}{c_i}-x+1\right)_l}{\left(\frac{N}{c_i}+\beta+1\right)_l}
 =\left(-1\right)^{k_1+\cdots+k_p}\left(-n_i+1\right)_{k_1+\cdots+k_p}\dfrac{\left(\frac{N}{c_i}-x+1\right)_{k_1+\cdots+k_p}}{\left(\frac{N}{c_i}+\beta+1\right)_{k_1+\cdots+k_p}}\\
 \quad\quad\times\sum_{l=0}^{n_i-1-k_1-\cdots-k_p}\dfrac{\left(-n_i+1+k_1+\cdots+k_p\right)_{l}}{l!}
\dfrac{\left(\frac{N}{c_i}-x+1+k_1+\cdots+k_p\right)_{l}}{\left(\frac{N}{c_i}+\beta+1+k_1+\cdots+k_p\right)_{l}}\\
 =(-1)^{k_1+\cdots+k_p}\left(-n_i+1\right)_{k_1+\cdots+k_p}\dfrac{\left(\frac{N}{c_i}-x+1\right)_{k_1+\cdots+k_p}}{\left(\frac{N}{c_i}+\beta+1\right)_{n_i-1}}
{(x+\beta)_{n_i-1-k_1-\cdots-k_p}}.
\end{multline*}
Consequently,
\begin{multline}
M_{1:\vec{n}}^{(i)}(x;\beta,\vec{c})\\=\lim_{N\to\infty}\dfrac{1}{(n_i-1)!(\beta)_{|\vec{n}|-1}\sqrt{2\pi N} c_i^{N} (1-c_i)^{\frac{1-c_i}{c_i}N+\frac{1}{2}}}\dfrac{\Gamma(N+2-|\vec{n}|)\Gamma\left(\frac{1-c_i}{c_i}N+\beta+|\vec{n}|-1\right)}{\Gamma\left(\frac{N}{c_i}+\beta+1\right)}\dfrac{\prod_{q=1}^p\left(\frac{1-c_q}{c_q}N+\beta+|\vec{n}|-1\right)_{n_q}}{\prod_{q=1,q\neq i}^p\left(\frac{c_i-c_q}{c_ic_q}N\right)_{{n}_q}}\\
 \times\Bigg(
\dfrac{1}{\left(\frac{N}{c_i}+\beta+1\right)_{n_i-1}} {\sum_{k_1=0}^{n_i-1}\cdots\sum_{k_p=0}^{n_i-1-k_1-\cdots-k_{p-1}}\frac{(-1)^{k_1+\cdots+k_p}\left(-n_i+1\right)_{k_1+\cdots+k_p}}{k_1!\cdots k_p!}\frac{\left(-\beta-\sz{n}+2\right)_{k_i}}{\left(\frac{1-c_i}{c_i}N+1\right)_{k_i}}}
\\\times \prod_{q=1,q\neq i}^p
{\frac{(n_q)_{k_q}}{\left(\frac{c_q-c_i}{c_qc_i}N+1\right)_{k_q}}}
 \left(\frac{N}{c_i}-x+1\right)_{k_1+\cdots+k_p}
{(x+\beta)_{n_i-1-k_1-\cdots-k_p}}+\operatorname{O}\left(\frac{1}{N^{n_i}}\right) \Bigg).
\end{multline}
With Stirling's formula \eqref{stirling}, we find that the constant part behaves for $N\to\infty$ as
\begin{equation*}
 \dfrac{(1-c_i)^{\beta+\sz{n}-2}}{(n_i-1)!(\beta)_{|\vec{n}|-1} c_i^{2(n_i-1)}\prod_{q=1,q\neq i}^p\left({c_i-c_q}\right)^{{n}_q}}
 \prod_{q=1}^p\left({1-c_q}\right)^{n_q}\,N^{n_i-1}.
\end{equation*}
Thus, it is straightforward to apply the limit to this expression and obtain
\begin{multline}
M_{1:\vec{n}}^{(i)}(x;\beta,\vec{c})= \dfrac{(1-c_i)^{\beta+\sz{n}-2}}{(n_i-1)!(\beta)_{|\vec{n}|-1} c_i^{n_i-1}\prod_{q=1,q\neq i}^p\left({c_i-c_q}\right)^{{n}_q}}
 \prod_{q=1}^p\left({1-c_q}\right)^{n_q}\\
{\sum_{k_1=0}^{n_i-1}\cdots\sum_{k_p=0}^{n_i-1-k_1-\cdots-k_{p-1}}\frac{\left(-n_i+1\right)_{k_1+\cdots+k_p}}{k_1!\cdots k_p!}\frac{\left(-\beta-\sz{n}+2\right)_{k_i}}{(c_i-1)^{k_i}}}
 \prod_{q=1,q\neq i}^p
\frac{c_q^{k_q}(n_q)_{k_q}}{\left(c_i-c_q\right)^{k_q}}
{(x+\beta)_{n_i-1-k_1-\cdots-k_p}}.
\end{multline}
A more convenient form of this limit emerges by performing index transformations: \( k_1 \rightarrow n_i - 1 - l_1 \) and \( k_i \rightarrow l_{i-1} - l_i \) for \( i \in \{2, \ldots, p\} \). Consequently, the sums are converted to the desired form given 
in
\eqref{M1:I}.
\end{proof}

\subsection{Integral Representations}

The limits in Corollary \ref{Cor:H->M1}, which lead to the Meixner polynomials of the first kind, enable us to derive integral representations based on those provided for the multiple Hahn polynomials in Theorems \ref{HI_IR} and \ref{HII_IR}.

\begin{teo}
 \label{M1I_IR}
Let \(\Sigma^\ast\) be a clockwise contour in $\{t\in \mathbb{C} \mid 0<\operatorname{Re}(t)<1\}$ enclosing \(\{c_j\}_{j=1}^p\) exactly once. Then the type I linear forms
are given by
\[ 
M_{1:\vec{n}}^{({\rm I})}(x;\beta,\vec{c}) = \prod_{j=1}^p (1-c_j)^{n_j} \frac{\Gamma(x+\beta)}{\Gamma(x+1)\Gamma(\beta+\sz{n}-1)} \int_{\Sigma^\ast} \frac{(1-t)^{\sz{n}+\beta-2}\, t^{x}}{\prod_{j=1}^p (t-c_j)^{n_j}} \frac{\d t }{2 \pi \ii }.
\]
In particular, the type I polynomials are given by
\[ M_{1:\vec{n}}^{(i)}(x;\beta,\vec{c}) = \frac{\prod_{j=1}^p (1-c_j)^{n_j}}{(\beta)_{\sz{n}-1}} \frac{1}{c_i^x} \int_{\Sigma^\ast_i} \frac{(1-t)^{\sz{n}+\beta-2} t^{x}}{\prod_{j=1}^p (t-c_j)^{n_j}} \frac{\d t }{2 \pi \ii } ,
 \]
where \(\Sigma_i^\ast\) is a clockwise contour in $\{t\in \mathbb{C} \mid 0<\operatorname{Re}(t)<1\}$ enclosing \(c_i\) exactly once without enclosing any of the other points in \(\{c_j\}_{j=1}^p\).
\end{teo}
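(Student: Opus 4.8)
The plan is to derive both identities as $N\to\infty$ limits of the Hahn integral representation of Theorem~\ref{HI_IR}, in the same spirit as the proof of Theorem~\ref{M2I_IR}, the new feature being a Möbius change of variables on the contour. First I would specialize Theorem~\ref{HI_IR} according to the limiting relation~\eqref{H->M1:W}: take $\vec{\alpha}=\bigl\{\tfrac{1-c_j}{c_j}N\bigr\}_{j=1}^{p}$, replace $\beta$ by $\beta-1$, and evaluate the type I linear form at the argument $N-x$; this produces an integral representation of $Q_{\vec{n}}^{({\rm I})}\!\bigl(N-x;\{\tfrac{1-c_j}{c_j}N\}_{j=1}^{p},\beta-1,N\bigr)$ over a clockwise Hahn contour $\Sigma$ that now encircles $\bigcup_{j=1}^{p}\bigl[\tfrac{1-c_j}{c_j}N,\tfrac{1-c_j}{c_j}N+n_j-1\bigr]$. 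Since these intervals run off to infinity, I would next substitute $u=N\tfrac{1-t}{t}$ (equivalently $t=\tfrac{N}{u+N}$) for the integration variable $u$ of $\Sigma$. This orientation-preserving change of variables maps $\tfrac{1-c_j}{c_j}N\mapsto c_j$, compresses each interval $\bigl[\tfrac{1-c_j}{c_j}N,\tfrac{1-c_j}{c_j}N+n_j-1\bigr]$ into a segment of length $O(1/N)$ shrinking to $c_j$, and has $\d u=-\tfrac{N}{t^2}\,\d t$; one checks that a fixed clockwise contour $\Sigma^{\ast}$ around $\{c_j\}_{j=1}^{p}$ inside the disc $\operatorname{Re}(1/t)>1$ (so in particular inside $\{0<\operatorname{Re}(t)<1\}$) is, for $N$ large, the image of an admissible Hahn contour: it encloses each shrunken segment exactly once, and $\operatorname{Re}\bigl(N\tfrac{1-t}{t}\bigr)>-1$ holds on it.

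Then I would pass to the limit by computing, with the gamma-ratio asymptotics~\eqref{GFB} in the integrand and Stirling's formula~\eqref{stirling} for the factorials in the prefactor, the $N\to\infty$ behaviour of every $N$-dependent factor: $\bigl(\tfrac{1-c_j}{c_j}N-u\bigr)_{n_j}\sim\bigl(\tfrac{N}{c_j t}\bigr)^{n_j}(t-c_j)^{n_j}$; the prefactor product $\prod_j\bigl(\tfrac{1-c_j}{c_j}N+\beta+\sz{n}-1\bigr)_{n_j}\sim N^{\sz{n}}\prod_j\bigl(\tfrac{1-c_j}{c_j}\bigr)^{n_j}$; $\Gamma\bigl(\tfrac{1-t}{t}N+\beta+\sz{n}-1\bigr)/\Gamma\bigl(\tfrac{1-t}{t}N+1\bigr)\sim\bigl(\tfrac{1-t}{t}N\bigr)^{\beta+\sz{n}-2}$; $\Gamma\bigl(\tfrac{N}{t}-x+1\bigr)/\Gamma\bigl(\tfrac{N}{t}+\beta+1\bigr)\sim\bigl(\tfrac{N}{t}\bigr)^{-x-\beta}$; and $\Gamma(N-\sz{n}+2)/\Gamma(N-x+1)\sim N^{x-\sz{n}+1}$. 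When prefactor, integrand and Jacobian are multiplied together, all powers of $N$ cancel; what remains, after multiplying by $\kappa^{({\rm I})}_N=(-1)^{\sz{n}-1}$ from Corollary~\ref{Cor:H->M1} and using $\prod_j\bigl(\tfrac{1-c_j}{c_j}\bigr)^{n_j}\prod_j c_j^{n_j}=\prod_j(1-c_j)^{n_j}$, is exactly the claimed prefactor $\prod_j(1-c_j)^{n_j}\,\tfrac{\Gamma(x+\beta)}{\Gamma(x+1)\Gamma(\beta+\sz{n}-1)}$ times the integrand $\tfrac{(1-t)^{\sz{n}+\beta-2}\,t^{x}}{\prod_j(t-c_j)^{n_j}}$. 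Since these asymptotics are uniform on the compact contour $\Sigma^{\ast}$, the limit passes inside the integral, giving the formula for $M_{1:\vec{n}}^{({\rm I})}$ on that $\Sigma^{\ast}$; a Cauchy-theorem deformation then extends it to an arbitrary clockwise $\Sigma^{\ast}\subset\{0<\operatorname{Re}(t)<1\}$ around $\{c_j\}_{j=1}^{p}$, since the limiting integrand is holomorphic there away from the $c_j$.

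Finally, for the individual type I polynomials I would split $\Sigma^{\ast}$ into contours $\Sigma^{\ast}_i$ each enclosing a single $c_i$, so that $M_{1:\vec{n}}^{({\rm I})}=\sum_{i=1}^{p}(\text{integral over }\Sigma^{\ast}_i)$; the only singularity inside $\Sigma^{\ast}_i$ is the pole of order $n_i$ at $c_i$, and expanding $t^{x}$ in powers of $t-c_i$ shows that the $i$-th contribution equals $c_i^{x}$ times a polynomial in $x$ of degree at most $n_i-1$. Comparing with the defining decomposition $M_{1:\vec{n}}^{({\rm I})}(x)=\sum_{i=1}^{p}M_{1:\vec{n}}^{(i)}(x)\,w_i^{(M1)}(x)$ with $w_i^{(M1)}(x;\beta,c_i)=\tfrac{\Gamma(\beta+x)}{\Gamma(\beta)\Gamma(x+1)}c_i^{x}$ (see~\eqref{WeightsM1}) and using $(\beta)_{\sz{n}-1}=\Gamma(\beta+\sz{n}-1)/\Gamma(\beta)$, the $i$-th contribution divided by $w_i^{(M1)}$ is precisely the stated formula for $M_{1:\vec{n}}^{(i)}$, the degree count confirming that this is indeed the type I polynomial by uniqueness of the decomposition. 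The chief obstacle is organizational: keeping the substitution, the shrinking-interval geometry, the orientation and Jacobian sign, and the half-dozen gamma-quotient asymptotics mutually consistent so that all powers of $N$ and all signs cancel correctly; the analytic points—uniform convergence on the fixed contour, admissibility of $\Sigma^{\ast}$ as a Hahn-contour image for large $N$, and the Cauchy deformation—are routine.
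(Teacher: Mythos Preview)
Your proposal is correct and follows essentially the same route as the paper. The only organizational difference is that the paper performs the substitution in two stages—first a rescaling $t\mapsto Nt$, after which the asymptotics are computed, and then the Möbius map $t\mapsto\tfrac{1}{t}-1$—whereas you compose these into the single substitution $u=N\tfrac{1-t}{t}$; mathematically the two are identical, and your treatment of the individual $M_{1:\vec{n}}^{(i)}$ via contour splitting and degree inspection is a slightly more explicit version of the paper's one-line remark that only the poles at $c_i$ contribute to the $i$-th polynomial.
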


\begin{proof}
 Consider the integral representation for the linear form 
 \( Q_{\vec{n}}^{({\rm I})}\left(N-x;\left\{\frac{1-c_i}{c_i}N\right\}_{i=1}^p, \beta-1, N\right) \)
 given in Theorem \ref{HI_IR}. It involves a clockwise contour enclosing \(\bigcup_{j=1}^p\left[\frac{1-c_j}{c_j}N, n_j + \frac{1-c_j}{c_j}N - 1\right]\) exactly once, with \(\operatorname{Re}(t) > -1\). By changing variables \(t\mapsto Nt\), we can deform this contour to a contour \(\Sigma\) that encloses \(\bigcup_{j=1}^p \left[\frac{1}{c_j} - 1, \frac{1}{c_j} + 1\right] \) exactly once, with \(\operatorname{Re}(t) > 0\). We can use \eqref{GFB} to obtain the asymptotics for the \( N\)-dependent parts of the integrand and prefactor:
\begin{multline*}
 N \frac{\Gamma(N-\sz{n}+2)}{\Gamma(N-x+1)} \frac{\Gamma(Nt+\beta+\sz{n}-1)}{\Gamma(Nt+1)} \frac{\Gamma(N-x+Nt+1)}{\Gamma(Nt+\beta+N+1)} \frac{\prod_{j=1}^p \left(\frac{1-c_j}{c_j}N+\beta+\sz{n}-1\right)_{n_j}}{\prod_{j=1}^p \left(\frac{1-c_j}{c_j}N-Nt\right)_{n_j}} \\
 \begin{aligned}
 &\sim N^{-\sz{n}+x+2} (Nt)^{\beta+\sz{n}-2} ((1+t)N)^{-x-\beta} \frac{\prod_{j=1}^p \left(\frac{1-c_j}{c_j}\right)^{n_j}}{\prod_{j=1}^p \left(\frac{1-c_j}{c_j}-t\right)^{n_j}}= \frac{t^{\sz{n}+\beta-2}}{(1+t)^{x+\beta}} \frac{\prod_{j=1}^p \left(\frac{1-c_j}{c_j}\right)^{n_j}}{\prod_{j=1}^p \left(\frac{1-c_j}{c_j}-t\right)^{n_j}},\quad N\to\infty.
\end{aligned}\end{multline*}
Combining these results with limit \eqref{H->M1:MOP} produces
\[ 
M_{1:\vec{n}}^{({\rm I})}(x;\beta,\vec{c}) = \frac{\Gamma(x+\beta)}{\Gamma(x+1)\Gamma(\beta+\sz{n}-1)} \prod_{j=1}^p \left(\frac{1-c_j}{c_j}\right)^{n_j}\bigintsss_{\Sigma} \dfrac{t^{\sz{n}+\beta-2}}{(1+t)^{\beta+x}} \frac{1}{\prod_{j=1}^p \left(\frac{1-c_j}{c_j}-t\right)^{n_j}} \frac{\d t }{2 \pi \ii }. 
\]
It then remains to apply the change of variables \( t \mapsto \frac{1}{t} - 1 \). This transformation results in a contour that can be deformed to \(\Sigma^\ast\), and after some computations the stated result about the type I linear forms follows. Once this result is established, one can note that only certain poles contribute to a specific type I polynomial, as the remaining poles correspond to different weight functions.
\end{proof}

An application of the residue theorem leads to the formulas for the type I polynomials below.
\begin{coro}
The following Rodrigues-type formula for the type I polynomials holds:
\begin{equation}\label{eq:Rodrigues first kind Meixner}
M_{1:\vec{n}}^{(i)}(x;\beta,\vec{c}) = - \frac{\prod_{j=1}^p (1-c_j)^{n_j}}{(n_i-1)!(\beta)_{\sz{n}-1}} \frac{1}{c_i^x} \frac{\d^{n_i-1}\quad}{\d c_i^{n_i-1}} \left(\frac{(1-c_i)^{\sz{n}+\beta-2} c_i^{x}}{\prod_{j=1,j\neq i}^p (c_i-c_j)^{n_j}}\right).
\end{equation}
\end{coro}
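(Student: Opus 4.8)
The plan is to obtain the Rodrigues formula \eqref{eq:Rodrigues first kind Meixner} directly from the contour integral representation of the type I polynomials in Theorem~\ref{M1I_IR} by evaluating the integral via residues. First I would note that, inside the clockwise contour $\Sigma_i^\ast$ --- which lies in the strip $\{t\in\mathbb{C}\mid 0<\Re(t)<1\}$ and encircles only $c_i$ --- the integrand
\[
\frac{(1-t)^{\sz{n}+\beta-2}\,t^{x}}{\prod_{j=1}^p (t-c_j)^{n_j}}
\]
is holomorphic apart from a single pole of order $n_i$ at $t=c_i$: the power $t^{x}$ is holomorphic off the negative real axis and $0$ is not enclosed by $\Sigma_i^\ast$; the power $(1-t)^{\sz{n}+\beta-2}$ is holomorphic off $[1,\infty)$, which $\Sigma_i^\ast$ does not meet since $\Re(t)<1$ there; and the remaining denominator factors $(t-c_j)^{n_j}$ with $j\neq i$ have their poles at $c_j$, which is not enclosed by $\Sigma_i^\ast$ by the very definition of $\Sigma_i^\ast$.

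Next I would apply the residue theorem. Because $\Sigma_i^\ast$ is clockwise, with the $\tfrac{1}{2\pi\ii}$ normalization the integral equals $-\operatorname{Res}_{t=c_i}$ of the integrand. Using the standard formula for a pole of order $n_i$, this residue is $\dfrac{1}{(n_i-1)!}\left.\dfrac{\d^{n_i-1}}{\d t^{n_i-1}}\left[(t-c_i)^{n_i}\cdot(\text{integrand})\right]\right|_{t=c_i}$, and the factor $(t-c_i)^{n_i}$ cancels the singular power in the denominator, leaving the function $\dfrac{(1-t)^{\sz{n}+\beta-2}t^{x}}{\prod_{j=1,\,j\neq i}^p (t-c_j)^{n_j}}$, which is smooth near $t=c_i$.

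Finally I would identify the $(n_i-1)$-th derivative in the integration variable $t$, evaluated at $t=c_i$, with the $(n_i-1)$-th derivative in $c_i$ of the same expression with $t$ replaced by $c_i$ (the parameters $\beta$, $\sz{n}$, $x$ and the remaining $c_j$, $j\neq i$, being held fixed); this is legitimate since $t$ enters that factor only through $t$ itself. Multiplying by the prefactor $\dfrac{\prod_{j=1}^p(1-c_j)^{n_j}}{(\beta)_{\sz{n}-1}}\dfrac{1}{c_i^{x}}$ of Theorem~\ref{M1I_IR} and the $-\tfrac{1}{(n_i-1)!}$ produced by the residue yields exactly \eqref{eq:Rodrigues first kind Meixner}. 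There is no genuine obstacle here; the only points requiring a moment's care are the sign coming from the clockwise orientation and the verification --- immediate from the description of $\Sigma_i^\ast$ together with the stated branch-cut convention --- that $t=c_i$ is the unique singularity enclosed.
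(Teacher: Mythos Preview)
Your proposal is correct and follows exactly the approach indicated in the paper, which states the corollary as a direct consequence of the residue theorem applied to the integral representation in Theorem~\ref{M1I_IR}. Your careful accounting of the clockwise orientation, the location of the unique enclosed pole, and the identification of the $t$-derivative at $t=c_i$ with the $c_i$-derivative are precisely the points needed.
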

\begin{coro}
The following alternative hypergeometric representation of \eqref{M1:I} for type I Meixner of the first kind polynomials holds:
 \begin{multline}\label{M1:I_alt}
 M_{1:\vec{n}}^{(i)}(x; \beta, \vec{c}) \\
 = \dfrac{(-1)^{n_i - 1} (1 - c_i)^{|\vec{n}| - 1 + \beta}}{(n_i - 1)! (\beta)_{|\vec{n}| - n_i}} \prod_{q=1, q \neq i}^p \left( \dfrac{1 - c_q}{c_i - c_q} \right)^{n_q} \KF{1:1; \cdots; 1}{1:0; \cdots; 0}{-n_i + 1: -x;\,\vec{n}^{*i}}{\beta + |\vec{n}| - n_i: --; \cdots; --}{\dfrac{c_i - 1}{c_i}, \left\{ \dfrac{c_i - 1}{c_i - c_q} \right\}_{q=1,q \neq i}^p}.
 \end{multline}
\end{coro}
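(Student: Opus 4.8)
The natural route is to read off \eqref{M1:I_alt} from the contour integral representation of Theorem~\ref{M1I_IR} by residues. Since $\Sigma_i^\ast$ encircles only the point $t=c_i$, which is a pole of order $n_i$ of the integrand $(1-t)^{\sz{n}+\beta-2}\,t^{x}\big/\prod_{j=1}^p(t-c_j)^{n_j}$, the residue theorem collapses the type I integral to $\tfrac{1}{(n_i-1)!}$ times the $(n_i-1)$-th derivative at $t=c_i$ of $(1-t)^{\sz{n}+\beta-2}\,t^{x}\big/\prod_{j\neq i}(t-c_j)^{n_j}$; this is exactly the Rodrigues-type formula \eqref{eq:Rodrigues first kind Meixner}, so one may equally well start from there. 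The plan is then to expand this derivative by the Leibniz rule and recognise the multiple sum as a Kampé de Fériet series.

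Concretely, I would write the function being differentiated as the product of the $p+1$ elementary factors $(1-t)^{\sz{n}+\beta-2}$, $t^{x}$, and $(t-c_q)^{-n_q}$ for $q\in\{1,\dots,p\}\setminus\{i\}$, and apply the multinomial Leibniz rule, distributing the $n_i-1$ derivatives as $k_0$ on $(1-t)^{\sz{n}+\beta-2}$, $m_0$ on $t^x$, and $m_q$ on $(t-c_q)^{-n_q}$, with $k_0+m_0+\sum_{q\neq i}m_q=n_i-1$. Evaluated at $t=c_i$ the elementary derivatives are explicit: $\frac{\d^{k_0}}{\d t^{k_0}}(1-t)^{\sz{n}+\beta-2}\big|_{c_i}=(-\sz{n}-\beta+2)_{k_0}(1-c_i)^{\sz{n}+\beta-2-k_0}$, $\frac{\d^{m_0}}{\d t^{m_0}}t^{x}\big|_{c_i}=(-1)^{m_0}(-x)_{m_0}c_i^{x-m_0}$, and $\frac{\d^{m_q}}{\d t^{m_q}}(t-c_q)^{-n_q}\big|_{c_i}=(-1)^{m_q}(n_q)_{m_q}(c_i-c_q)^{-n_q-m_q}$. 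Substituting these, together with the multinomial coefficient $\tfrac{(n_i-1)!}{k_0!\,m_0!\prod_{q\neq i}m_q!}$, into \eqref{eq:Rodrigues first kind Meixner} yields a triple sum over $k_0$, $m_0$ and the $m_q$.

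The remaining step is pure bookkeeping: set $K\coloneqq m_0+\sum_{q\neq i}m_q$ so $k_0=n_i-1-K$, and use $\tfrac{(n_i-1)!}{(n_i-1-K)!}=(-1)^{K}(-n_i+1)_{K}$ together with $(-\sz{n}-\beta+2)_{n_i-1-K}=(-\sz{n}-\beta+2)_{n_i-1}\big/\big[(-1)^{K}(|\vec{n}|+\beta-n_i)_{K}\big]$, the latter obtained from $(y)_{m-K}=(y)_m/(y+m-K)_K$ followed by reversing the order of the $K$ factors in the denominator. The two factors $(-1)^K$ cancel, leaving the shared Pochhammer ratio $(-n_i+1)_K/(|\vec{n}|+\beta-n_i)_K$; writing $(1-c_i)^{\sz{n}+\beta-2-k_0}=(1-c_i)^{\sz{n}+\beta-1-n_i}(1-c_i)^{K}$ and distributing $(1-c_i)^{K}=(1-c_i)^{m_0}\prod_{q\neq i}(1-c_i)^{m_q}$ over the $m$-sums converts the per-variable arguments into $\tfrac{c_i-1}{c_i}$ and $\tfrac{c_i-1}{c_i-c_q}$, so that what is left is precisely the Kampé de Fériet series appearing in \eqref{M1:I_alt}. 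Finally, the $K$-independent prefactor is assembled using $(-\sz{n}-\beta+2)_{n_i-1}/(\beta)_{\sz{n}-1}=(-1)^{n_i-1}/(\beta)_{|\vec{n}|-n_i}$ and $\prod_{j=1}^p(1-c_j)^{n_j}\,(1-c_i)^{\sz{n}+\beta-1-n_i}=(1-c_i)^{|\vec{n}|+\beta-1}\prod_{q\neq i}(1-c_q)^{n_q}$, which reproduces the coefficient in \eqref{M1:I_alt}.

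I expect the only genuinely delicate point to be this sign-and-constant tracking: one must keep the orientation of $\Sigma_i^\ast$, the $(-1)^{k_0}$ coming from differentiating $(1-t)^{\sz{n}+\beta-2}$, and the several Pochhammer reversals mutually consistent so that every spurious $(-1)^K$ cancels and the overall sign matches \eqref{M1:I_alt}. Everything else is a routine expansion, and as a consistency check one can verify the case $n_i=1$, where both the Kampé de Fériet series and the $(n_i-1)$-th derivative are trivial and the identity collapses to \eqref{M1:I}.
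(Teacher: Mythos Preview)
Your proposal is correct and follows essentially the same route as the paper: start from the Rodrigues-type formula \eqref{eq:Rodrigues first kind Meixner} (itself the residue of the integral in Theorem~\ref{M1I_IR}) and expand the $(n_i-1)$-th derivative by the Leibniz rule, then reorganise the resulting multiple sum into the Kamp\'e de F\'eriet series. The only cosmetic difference is that the paper applies Leibniz in two stages---first binomially splitting off the factor $(1-c_i)^{|\vec n|+\beta-2}$ and then multinomially expanding $c_i^{x}\prod_{j\neq i}(c_i-c_j)^{-n_j}$---whereas you apply the full multinomial rule in one go; the bookkeeping and the final identification are the same.
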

\begin{proof}
Using Leibniz rule, we can compute the $(n_i-1)$-th derivative in \eqref{eq:Rodrigues first kind Meixner} as
\begin{align*}
 &\sum_{k=0}^{n_i-1} \binom{n_i-1}{k} \frac{\d^{n_i-1-k}\quad}{\d c_i^{n_i-1-k}} \left(\left(1-c_i\right)^{|\hat{n}|+\beta-2}\right)
 \frac{\d^{k}\quad}{\d c_i^k}\left(c_i^x \prod_{\substack{j=1 \\
 j \neq i}}^n\left(c_i-c_j\right)^{-n_j}\right) \\
 & =\begin{multlined}[t][.9\textwidth]
 (n_i-1)! \sum_{k=0}^{n_i-1} \sum_{|\vec l|=k} \frac{(|\vec n|-n_i+\beta+k)_{n_i-1-k}}{\left(n_i-1-k\right)!} \frac{\left(x-l_i+1\right)_{l_i}}{l_{i}!} \prod_{\substack{j=1 \\ j \neq i}}^p \frac{\left(-n_j-l_j+1\right)_{l_j}}{l_j!} \\
 \times(-1)^{n_i-1-k}\left(1-c_i\right)^{|\vec n|-n_i+\beta+k-1} c_i^{x- l_i} \prod_{\substack{j=1 \\ j \neq i}}^p\left(c_i-c_j\right)^{-n_j-l_j} 
 \end{multlined}\\
 & =\begin{multlined}[t][.9\textwidth] (-1)^{n_i-1} (n_i-1)! \frac{\Gamma(|\bar{n}|+\beta-1)}{\Gamma\left(|\bar{n}|-n_i+\beta\right)}\left(1-c_i\right)^{\left|\vec n\right|-n_i+\beta-1} c_i^x \prod_{\substack{j=1 \\ j \neq i}}^p\left(c_i-c_j\right)^{-n_ j}\\ \times \sum_{k=0}^{n_i-1} \sum_{|\vec{l}|=k} \frac{\left(-n_i+1\right)_k}{(|\vec{n}|-n_i+\beta)_k}\frac{(-x)_{ l_i}}{l_i!}\left(\frac{c_i-1}{c_i}\right)^{l_i}\prod_{\substack{j=1 \\ j \neq i}}^p\frac{(n_j)_{l_ j }}{l_j!} {\left(\frac{c_i-1}{c_i-c_j}\right)^{l_j}} .
 \end{multlined} 
 \end{align*} 
Since $(-n_i+1)_k$ vanishes for $k\in\Z_{\geq n_i}$, we may replace $\sum_{k=0}^{n_i-1}$ by $\sum_{k\geq 0}$. Afterwards, we can replace the double sum $\sum_{k\geq 0} \sum_{|\vec{l}|=k}$ by the multiple sum $\sum_{l_1\geqslant 0}\cdots\sum_{l_p\geqslant 0}$, which then leads to the stated Kampé de Fériet series. 
\end{proof}

\begin{teo}
 \label{M1II_IR}
Let \(\mathcal{C}^\ast\) be a clockwise contour in $\{s\in \mathbb{C} \mid \operatorname{Re}(s) < 1\}$ enclosing the origin exactly once. Then, for \(x \in \mathbb{N}\), the type II polynomials are given by
\[
 M_{1:\vec{n}}^{({\rm II})}(x;\beta,\vec{c}) = \frac{ \Gamma(\beta+\sz{n})}{\prod_{j=1}^p (1-c_j)^{n_j}} \frac{\Gamma(x+1)}{\Gamma(x+\beta)} \bigintsss_{\mathcal{C}^\ast} \frac{\prod_{j=1}^p (s-c_j)^{n_j}}{(1-s)^{\sz{n}+\beta} s^{x+1}} \frac{\d s }{2 \pi \ii }.
 \]
\end{teo}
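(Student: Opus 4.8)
The plan is to derive this integral representation, exactly as was done for Theorems~\ref{M2I_IR}, \ref{M2II_IR} and \ref{M1I_IR}, as a confluent limit of the Hahn type~II integral representation of Theorem~\ref{HII_IR} via the limiting relations of Corollary~\ref{Cor:H->M1}. The first step is to specialise Theorem~\ref{HII_IR}: replace the argument by $N-x$, set $\alpha_j=\frac{1-c_j}{c_j}N$, replace the Hahn parameter $\beta$ by $\beta-1$, and multiply by $\kappa^{({\rm II})}_N=(-1)^{\sz{n}}$ according to \eqref{H->M1:MOP}. After the two signs $(-1)^{\sz{n}}$ cancel, what remains is the $N$-independent prefactor $\Gamma(\beta+\sz{n})\,\Gamma(x+1)/\Gamma(x+\beta)$, which already matches the stated one, multiplied by the $N$-dependent factor $\Gamma(N-x+1)/\left((N-\sz{n})!\,\prod_{j}\left(\frac{1-c_j}{c_j}N+\beta+\sz{n}\right)_{n_j}\right)$ and by the integral over the contour $\mathcal{C}$ (encircling $[-N,0]$) of $\dfrac{\Gamma(s)\,\Gamma(s+\beta+N)}{\Gamma(s+\beta+\sz{n})}\,\dfrac{\prod_{j}\left(\frac{1-c_j}{c_j}N+1-s\right)_{n_j}}{\Gamma(N-x+s+1)}$.

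Next I would rescale $s\mapsto Ns$ and then invert via $s\mapsto \frac{1}{s}-1$, exactly as in the proof of Theorem~\ref{M1I_IR}; this carries the zeros of $\prod_{j}\left(\frac{1-c_j}{c_j}N+1-s\right)_{n_j}$ onto an $O(1/N)$-neighbourhood of the points $s=c_j$, turns that product into $N^{\sz{n}}\prod_j(s-c_j)^{n_j}\prod_j c_j^{-n_j}\bigl(1+O(1/N)\bigr)$, and takes $\mathcal{C}$ to a contour winding once, clockwise by the orientation reversal of the Möbius map, around the ray $[1,\infty)$. Applying the gamma-ratio asymptotics \eqref{GFB} (equivalently Stirling \eqref{stirling}) to each $N$-dependent group — $\Gamma(N-x+1)/\Gamma(N-\sz{n}+1)\sim N^{\sz{n}-x}$, $\prod_j\left(\frac{1-c_j}{c_j}N+\beta+\sz{n}\right)_{n_j}\sim N^{\sz{n}}\prod_j\left(\frac{1-c_j}{c_j}\right)^{n_j}$, and the analogous ratios arising from $\Gamma(s)$, $\Gamma(s+\beta+N)$ and $\Gamma(N-x+s+1)$ after the substitution — all powers of $N$ cancel, the prefactor tends to $\frac{\Gamma(\beta+\sz{n})}{\prod_j(1-c_j)^{n_j}}\,\frac{\Gamma(x+1)}{\Gamma(x+\beta)}$ and the integrand to $\frac{\prod_j(s-c_j)^{n_j}}{(1-s)^{\sz{n}+\beta}\,s^{x+1}}$, the orientation reversal accounting for the expected sign. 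Combining this with the limit \eqref{H->M1:MOP} yields the asserted identity up to one final contour deformation.

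That last step deforms the contour around $[1,\infty)$ to the contour $\mathcal{C}^\ast$ of the statement. Because $n_j\in\N$, each factor $(s-c_j)^{n_j}$ is entire, so the limiting integrand is meromorphic on $\{\operatorname{Re}(s)<1\}$ with a single pole, of order $x+1$, at $s=0$, its only other singularity being the branch point of $(1-s)^{-\sz{n}-\beta}$ at $s=1$; moreover it decays like $s^{-\beta-x-1}$ at infinity, so the contour around $[1,\infty)$ may be closed through a large arc in $\{\operatorname{Re}(s)<1\}$ and collapsed onto a clockwise loop about the origin, that is, onto $\mathcal{C}^\ast$. (As a consistency check, the residue at $s=0$ of $\frac{\prod_j(s-c_j)^{n_j}}{(1-s)^{\sz{n}+\beta}s^{x+1}}$, expanded by Leibniz's rule, reproduces the Kampé de Fériet series in \eqref{M1:II}, exactly as in the derivation of \eqref{M1:I_alt}.)

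I expect the principal difficulty to be the rigorous interchange of $\lim_{N\to\infty}$ with the integral. Unlike in the proof of Theorem~\ref{M2II_IR}, the contour $\mathcal{C}$ here encloses $N-x+1$ simple poles of $\Gamma(s)$ — those surviving cancellation against $1/\Gamma(N-x+s+1)$ — and after the change of variables these accumulate precisely at the point $s=1$ that carries the branch singularity of the limit integrand, so limit and integral do not obviously commute on the naive contour. The remedy is first to deform, for each fixed large $N$, the $N$-dependent loop around $[1,\infty)$ to a fixed keyhole-type contour staying a fixed positive distance from that ray: this is where the hypothesis $x\in\N$ enters, since it produces a zero of $1/\Gamma(N-x+s+1)$ at infinity and hence the $s^{-2}$ decay of the pre-limit integrand needed to discard the arc at infinity. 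One then bounds the integrand on this fixed contour uniformly in $N$ via Stirling's formula \eqref{stirling}, so that dominated convergence applies; the concluding residue-theorem passage from the keyhole contour to $\mathcal{C}^\ast$ is routine.
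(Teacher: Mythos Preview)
Your outline is the paper's: start from Theorem~\ref{HII_IR} specialised via Corollary~\ref{Cor:H->M1}, rescale $s\mapsto Ns$, apply the inversion $s\mapsto\frac{1}{s}-1$, pass to the limit via~\eqref{GFB}, and deform to $\mathcal{C}^\ast$. The prefactor and limiting integrand you identify are correct, and you are right that the interchange of limit and integral is the only delicate point.

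The gap is in your picture of the contour after the inversion. The map $\sigma\mapsto\frac{1}{\sigma+1}$ is conformal and hence orientation-\emph{preserving}; the counterclockwise loop around $[-1,0]$ becomes a \emph{bounded} loop whose bounded component contains $s=0$ (the image of $\sigma=\infty$), traversed clockwise because the old interior --- which contains $\sigma=-1$ --- is sent to the unbounded component. The pre-limit poles land at $s=N/(N-k)\in[1,N/x]$ for $k=0,\dots,N-x$, so they spread over all of $[1,\infty)$ as $N\to\infty$ rather than accumulating only at $s=1$, and they sit \emph{outside} this loop; a new pole of order $x+1$ appears at $s=0$ inside it. The post-inversion contour is therefore already essentially $\mathcal{C}^\ast$, not a contour ``around $[1,\infty)$''. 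The paper exploits exactly this: it opens the loop to the \emph{left}, sending the crossing at $\frac{1}{1+c}$ to $-\infty$ to obtain an open contour $\mathcal{R}^\ast$ around $(-\infty,\frac{1}{1+d})$ that still encloses $0$ and lies entirely in $\{\operatorname{Re}(s)<1\}$, well away from both the accumulating poles and the eventual branch cut of $(1-s)^{-\sz{n}-\beta}$. The opening is justified because the pre-limit integrand is rational and $o(|s|^{-1})$; one then applies~\eqref{GFB} on $\mathcal{R}^\ast$ and closes back to $\mathcal{C}^\ast$ using the same decay of the limiting integrand.

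Your keyhole along $[1,\infty)$, by contrast, hugs precisely the region where the gamma arguments approach the negative real axis and the pre-limit poles sit at fixed distance $\epsilon$ from the contour, so the uniform Stirling bound you invoke for dominated convergence is genuinely delicate rather than routine. Your closing ``large arc in $\{\operatorname{Re}(s)<1\}$'' is also impossible as stated: the two ends of a keyhole along $[1,\infty)$ lie at $\operatorname{Re}(s)=+\infty$, so any arc joining them must pass through $\{\operatorname{Re}(s)>1\}$; the correct closure is through a large circle in $\mathbb{C}\setminus[1,\infty)$.
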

\begin{proof}
Consider the integral representation for 
\( Q_{\vec{n}}^{({\rm II})}\left(N-x; \left\{\frac{1-c_i}{c_i}N\right\}_{i=1}^p, \beta-1, N\right) \)
given in Theorem \ref{HII_IR}. This representation involves a counterclockwise contour that encloses \([-N, 0]\) exactly once. 
After performing the change of variables \(s \mapsto Ns\), we can deform this contour to one that encloses \([-1, 0]\) exactly once, intersecting the real axis at \(c < -1\) and \(d > 0\). The integrand becomes
 \[ 
 \frac{N}{2\pi \ii} \frac{\Gamma(Ns) \Gamma(Ns+N+\beta)}{\Gamma(Ns+\sz{n}+\beta)} \frac{\prod_{j=1}^p \left(\frac{1-c_j}{c_j}N+1-Ns\right)_{n_j}}{\Gamma(N-x+Ns+1)}.
 \]
 At this stage, Stirling's formula cannot be applied because the contour intersects the branch cut of the function \(x \mapsto x^\beta\). To resolve this, we apply the change of variables \(s \mapsto \frac{1}{s} - 1\), which transforms the integrand to
 \begin{align*}
 - \frac{N s^{-2}}{2\pi \ii} &\frac{\Gamma\left(N\left(\frac{1}{s}-1\right)\right) \Gamma\left(\frac{N}{s}+\beta\right)}{\Gamma\left(N\left(\frac{1}{s}-1\right)+\sz{n}+\beta\right)} \frac{\prod_{j=1}^p \left(\frac{1}{c_j}N+1-\frac{N}{s}\right)_{n_j}}{\Gamma\left(-x+\frac{N}{s}+1\right)}. \end{align*}
The contour can then be deformed to a rectangle intersecting the real axis at \(\frac{1}{1+c}\in
(-\infty, 0) \)
and \(\frac{1}{1+d}\in(0,1)\). We can then split the rectangle into two parts: a vertical line \(\mathcal{R}^0\) (of finite length independent of \(c\)) that intersects the real axis at \(\frac{1}{1+c}\), and the remaining part, \(\mathcal{R}^\ast\) (see Figure \ref{fig:2}). As \(c \to -1\), the integral over \(\mathcal{R}^0\) converges to 0 because the integrand is a rational function of order \(\operatorname{o}(|s|^{-1})\) as \(|s| \to \infty\). The contour \(\mathcal{R}^\ast\) then becomes a contour enclosing \(\left(-\infty, \frac{1}{1+d}\right)\) exactly once, intersecting the real axis only at \(\frac{1}{1+d}\). 
\begin{figure}[htbp]
	\begin{tikzpicture}[arrowmark/.style 2 args={decoration={markings,mark=at position #1 with \arrow{#2}}},scale=1]
		\begin{axis}[axis lines=middle,axis equal,grid=both,xmin=-8, xmax=4,ymin=-2, ymax=2,
			xticklabel,yticklabel,disabledatascaling,xlabel=$x$,ylabel=$y$,every axis x label/.style={
				at={(ticklabel* cs:1)},
				anchor=south west,
			},
			every axis y label/.style={
				at={(ticklabel* cs:1.0)},
				anchor=south west,
			},grid style={line width=.1pt, draw=Bittersweet!10},
			major grid style={line width=.2pt,draw=Bittersweet!50},
			minor tick num=4,
			enlargelimits={abs=2},
			axis line style={latex'-latex'},Bittersweet] 
			\node[anchor = north east,Bittersweet] at (axis cs: 5,5) {$\mathbb C$} ;
			\draw[ DarkSlateBlue,ultra thick, decoration={markings, mark=at position 0.15 with {\arrow[ultra thick]
{Stealth}
			}},
			postaction={decorate},decoration={markings, mark=at position 0.47 with {\arrow[ultra thick]
			{Stealth}
			}},
			postaction={decorate},decoration={markings, mark=at position 0.65 with {\arrow[ultra thick]
			{Stealth}
			}},
			postaction={decorate},decoration={markings, mark=at position 0.925 with {\arrow[gray,ultra thick]
			{Stealth}}},
			postaction={decorate}] (-7,-2) -- (3.5,-2)-- (3.5,2)-- (-7,2)--cycle;
			\draw [fill,black] (axis cs:{0,0}) circle [radius=1.5pt] node[Black,above right]{$0$};
			\draw [fill,black] (axis cs:{5,0}) circle [radius=1.5pt] node[Black,above ]{$1$};
			\draw [fill,black] (axis cs:{3.5,0}) circle [radius=1.5pt] node[Black,below right ]{$\frac{1}{1+d}$};
			\draw [fill,black] (axis cs:{-7,0}) circle [radius=1.5pt] node[Black,below right]{$-\frac{1}{1+c}$};
			\draw [fill,black] (axis cs:{-5.5,-3.5}) node[Black,above left]{$c\to -1$};
			\draw[ultra thick, gray] (-7,-2)--(-7,2);
			\draw [DarkSlateBlue,ultra thick,dashed] (-7,-2)--(-10.5,-2) ;
			\draw [DarkSlateBlue,ultra thick,dashed] (-7,2)--(-10.5,2) ;

			
			\draw[ultra thick, gray] (-8,6.1) -- (-7,6.1) node at (-6,6.2) {$\mathcal R^0$};
			\draw[ultra thick, DarkSlateBlue] (-8,5.1) -- (-7,5.1) node at (-6,5.2) {$\mathcal R^*$};
			
			\draw[DarkSlateBlue, thick,->] (-7,-1)--(-8.5,-1) ;
			\draw[DarkSlateBlue, thick,->] (-7,1)--(-8.5,1) ;
			
			
		\end{axis}
		\draw (3.5,-0.4) ; 
	\end{tikzpicture}
	\caption{Integration contours for type II Meixner of the first kind}
	\label{fig:2}
\end{figure}
On this contour, we can use use \eqref{GFB} to obtain the asymptotics for the $N$-dependent parts of the integrand and prefactor:
\begin{multline*}
	N \frac{\Gamma(N-x+1)}{\Gamma(N-\sz{n}+1)} \frac{\Gamma\left(N\left(\frac{1}{s}-1\right)\right)}{\Gamma\left(N\left(\frac{1}{s}-1\right)+\sz{n}+\beta\right)} \frac{\Gamma\left(\frac{N}{s}+\beta\right)}{\Gamma\left(-x+\frac{N}{s}+1\right)} \frac{\prod_{j=1}^p \left(\frac{1}{c_j}N+1-\frac{N}{s}\right)_{n_j}}{\prod_{j=1}^p \left(\frac{1-c_j}{c_j}N+\beta+\sz{n}\right)_{n_j}} \\
	\begin{aligned}&\sim N^{-x+\sz{n}+1} \left(N\left(\frac{1}{s}-1\right)\right)^{-\sz{n}-\beta} \left(\frac{N}{s}\right)^{\beta+x-1} \frac{\prod_{j=1}^p \left(\frac{1}{c_j}-\frac{1}{s}\right)^{n_j}}{\prod_{j=1}^p \left(\frac{1-c_j}{c_j}\right)^{n_j}} = \frac{s^{\sz{n}{-x}+1}}{(1-s)^{\sz{n}+\beta}} \frac{\prod_{j=1}^p \left(\frac{1}{c_j}-\frac{1}{s}\right)^{n_j}}{\prod_{j=1}^p \left(\frac{1-c_j}{c_j}\right)^{n_j}},\quad N\to \infty.
	\end{aligned}
\end{multline*}
Combining these results with limit \eqref{H->M1:MOP} then yields the stated formula, but with the contour \(\mathcal{R}^\ast\) instead of $\mathcal{C}^\ast$ (see Figure \ref{fig:3}). However, since $\frac{1}{1+d}\in(0,1)$ and the integrand is $\operatorname{o}(|s|^{-1})$ as $|s|\to\infty$, we can close the contour and deform it to $\mathcal{C}^\ast$ (see Figure \ref{fig:3}).
\end{proof}

\begin{figure}[htbp]
	\begin{tikzpicture}[arrowmark/.style 2 args={decoration={markings,mark=at position #1 with \arrow{#2}}},scale=1.1]
		\begin{axis}[axis lines=middle,axis equal,grid=both,xmin=-8, xmax=4,ymin=-2, ymax=2,
			xticklabel,yticklabel,disabledatascaling,xlabel=$x$,ylabel=$y$,every axis x label/.style={
				at={(ticklabel* cs:1)},
				anchor=south west,
			},
			every axis y label/.style={
				at={(ticklabel* cs:1.0)},
				anchor=south west,
			},grid style={line width=.1pt, draw=Bittersweet!10},
			major grid style={line width=.2pt,draw=Bittersweet!50},
			minor tick num=4,
			enlargelimits={abs=2},
			axis line style={latex'-latex'},Bittersweet] 
			\node[anchor = north east,Bittersweet] at (axis cs: 5,5) {$\mathbb C$} ;
			\draw[ gray,ultra thick, decoration={markings, mark=at position 0.15 with {\arrow[ultra thick]
			{Stealth}}},
			postaction={decorate},decoration={markings, mark=at position 0.47 with {\arrow[ultra thick]
			{Stealth}}},
			postaction={decorate},decoration={markings, mark=at position 0.65 with {\arrow[ultra thick]
			{Stealth}}},
			postaction={decorate},decoration={markings, mark=at position 0.963 with {\arrow[gray,ultra thick]
			{Stealth}}},
			postaction={decorate}] (-7,-2) -- (3.5,-2)-- (3.5,2)-- (-7,2)--cycle;
			\draw [fill,black] (axis cs:{0,0}) circle [radius=1.5pt] node[Black,above right]{$0$};
			\draw [fill,black] (axis cs:{5,0}) circle [radius=1.5pt] node[Black,above ]{$1$};
			\draw [fill,black] (axis cs:{3.5,0}) circle [radius=1.5pt] node[Black,below right ]{$\frac{1}{1+d}$};
			\draw[ultra thick, gray] (-7,-2)--(-7,2);
			\draw [DarkSlateBlue,ultra thick,dotted,decoration={markings, mark=at position 0.15 with {\arrow[ultra thick]
			{Stealth}
			}},
			postaction={decorate},decoration={markings, mark=at position 0.47 with {\arrow[ultra thick]
			{Stealth}}},
			postaction={decorate},decoration={markings, mark=at position 0.65 with {\arrow[ultra thick]
			{Stealth}}},
			postaction={decorate}] (-10.5,-2.1)--(3.6,-2.1)--(3.6,2.1)--(-10.5,2.1) ;

			
			\draw[ultra thick, gray] (-8,6.1) -- (-7,6.1) node at (-6,6.2) {$\mathcal C^*$};
			\draw[ultra thick, DarkSlateBlue,dotted] (-8,5.1) -- (-7,5.1) node at (-6,5.2) {$\mathcal R^*$};
			
			\draw[DarkSlateBlue, thick,->] (-9,-1)--(-8.,-1) ;
			\draw[DarkSlateBlue, thick,->] (-9,1)--(-8.,1) ;
			
			
		\end{axis}
		\draw (3.5,-0.4) ; 
	\end{tikzpicture}
	\caption{Integration contours for type II Meixner of the first kind}
	\label{fig:3}
\end{figure}

\subsection{Recurrence Coefficients}

We now provide the explicit expressions for the recurrence coefficients.

\begin{teo}
 The Meixner polynomials of the first kind, both type I \eqref{M1:I}, and type II \eqref{M1:II}, each follow their respective recurrence relations \eqref{NNRR} with coefficients
 \begin{equation}
 \label{M1:R}
 \begin{aligned}
 b_{\vec{n}}^{(M1),0}(k)=&\dfrac{(\beta+|\vec{n}|)c_k}{1-c_k}+\sum_{i=1}^p\dfrac{n_i}{1-c_i},
 \\
 b_{\vec{n}}^{(M1),j}=&(\beta+|\vec{n}|-j)_{j}\sum_{i\in S(\pi,j)}
 \dfrac{n_ic_i}{(1-c_i)^{j+1}}
 \prod_{q\in S^{\textsf c}(\pi,j)}\dfrac{c_i-c_q}{1-c_q},& j\in&\{1,\ldots,p\}.
 \end{aligned}
 \end{equation}
\end{teo}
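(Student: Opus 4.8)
The plan is to derive \eqref{M1:R} by pushing the Hahn recurrence coefficients \eqref{HahnRecurrence} through the limiting relation \eqref{H->M1:R} of Corollary \ref{Cor:H->M1}, in the spirit of the second-kind computation leading to \eqref{M2:RR}, but now keeping careful track of the extra sign $(-1)^{j+1}$ and of the subtraction $N\delta_{j,0}$. Concretely, I would substitute $\alpha_i\mapsto\frac{1-c_i}{c_i}N$ and $\beta\mapsto\beta-1$ in \eqref{HahnRecurrence}, so that $\alpha_i-\alpha_q=\frac{c_q-c_i}{c_ic_q}N$, $\alpha_i+n_i\sim\frac{1-c_i}{c_i}N$, $\alpha_i+\beta+n_i+N+1\sim\frac{N}{c_i}$ and $\alpha_i+\beta+n_i+|\vec n|\sim\frac{1-c_i}{c_i}N$, and then estimate every Pochhammer symbol and ratio of Gamma functions with \eqref{GFB} and Stirling's formula \eqref{stirling}, tracking powers of $N$. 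Since \eqref{M1:I} and \eqref{M1:II} are themselves the corresponding limits of \eqref{HahnTypeI} and \eqref{HahnTypeII}, establishing the two limit formulas in \eqref{M1:R} automatically yields that these polynomials satisfy \eqref{NNRR} with those coefficients.

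For $j\in\{1,\dots,p\}$ I expect no divergence. In the $i$-th summand of $b_{\vec n}^j$ the factor $\prod_{q=1}^p(\alpha_i-\alpha_q+n_i)$ is the constant $n_i$ (from $q=i$) times $\prod_{q\ne i}\frac{c_q-c_i}{c_ic_q}N$; dividing by $\prod_{q\in S(\pi,j),q\ne i}(\alpha_i-\alpha_q-n_q+n_i)\sim\prod_{q\in S(\pi,j),q\ne i}\frac{c_q-c_i}{c_ic_q}N$ leaves $n_i\prod_{q\in S^{\textsf c}(\pi,j)}\frac{c_q-c_i}{c_ic_q}N$, i.e. $|S^{\textsf c}(\pi,j)|=j-1$ surviving powers of $N$. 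Balancing this against the prefactor $(N-|\vec n|+1)_j\sim N^j$, against $\big(\prod_{q\in S^{\textsf c}(\pi,j)}(\alpha_q+\beta+|\vec n|-j+n_q)\big)^{-1}\sim\prod_{q\in S^{\textsf c}(\pi,j)}\frac{c_q}{(1-c_q)N}$, and against $\frac{(\alpha_i+n_i)(\alpha_i+\beta+n_i+N+1)}{(\alpha_i+\beta+n_i+|\vec n|-j)_{j+2}}\sim\frac{c_i^j}{(1-c_i)^{j+1}N^j}$ (while $\prod_{q=1}^p\frac{(\alpha_q+\beta+|\vec n|-j+1)_{n_q}}{(\alpha_q+\beta+|\vec n|+1)_{n_q}}\to1$ and $(\beta+1+|\vec n|-j)_j\to(\beta+|\vec n|-j)_j$), all powers of $N$ cancel and the limit becomes $(\beta+|\vec n|-j)_j\sum_{i\in S(\pi,j)}\frac{n_ic_i}{(1-c_i)^{j+1}}\prod_{q\in S^{\textsf c}(\pi,j)}\frac{c_q-c_i}{1-c_q}$. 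Multiplying by $(-1)^{j+1}$ and writing $\prod_{q\in S^{\textsf c}(\pi,j)}\frac{c_q-c_i}{1-c_q}=(-1)^{j-1}\prod_{q\in S^{\textsf c}(\pi,j)}\frac{c_i-c_q}{1-c_q}$, the two signs cancel and \eqref{M1:R} for $j\ge1$ follows.

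The delicate case is $j=0$, where $b_{\vec n}^0(k)$ grows linearly in $N$. In the first line of \eqref{HahnRecurrence}, $\frac{\alpha_k+\beta+n_k+N+2}{\alpha_k+\beta+n_k+|\vec n|+2}\to\frac1{1-c_k}$ and $\prod_{q=1}^p\frac{\alpha_k-\alpha_q+n_k+1}{\alpha_k-\alpha_q+n_k+1-n_q}\to n_k+1$ (only $q=k$ survives), so $(\alpha_k+n_k+1)(\cdots-1)\sim\frac{1-c_k}{c_k}N\big(\frac{n_k+1}{1-c_k}-1\big)=\frac{n_kN}{c_k}+N$. In the second line the $i=k$ summand has the constant denominator factor $\alpha_i-\alpha_k-n_k-1+n_i=-1$, which makes $(\alpha_k+\beta+n_k+|\vec n|+1)$ times that summand behave as $-\frac{n_kN}{c_k}$, whereas for $i\ne k$ the corresponding terms of the second line are each $O(1)$ with limit $\frac{(1-c_k)c_in_i}{(c_k-c_i)(1-c_i)}$. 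The two $O(N)$ contributions add up to exactly $N$, which is precisely what $-N\delta_{j,0}$ removes in \eqref{H->M1:R}. The main obstacle is to pin down the surviving $O(1)$ term: this forces one to expand $\frac{\alpha_k+\beta+n_k+N+2}{\alpha_k+\beta+n_k+|\vec n|+2}$, the product $\prod_{q=1}^p\frac{\alpha_k-\alpha_q+n_k+1}{\alpha_k-\alpha_q+n_k+1-n_q}$, and the $i=k$ summand one order further in $1/N$, and then to prove — by a partial-fraction identity in the $c_i$'s in the spirit of Lemma \ref{lemma3} — that the result, taken with the overall sign $(-1)^{0+1}=-1$, collapses to $\frac{(\beta+|\vec n|)c_k}{1-c_k}+\sum_{i=1}^p\frac{n_i}{1-c_i}$. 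Everything else is a bounded, term-by-term passage to the limit.
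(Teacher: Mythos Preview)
Your approach is correct and matches the paper's proof essentially step by step: both apply the limit \eqref{H->M1:R} to \eqref{HahnRecurrence}, treat $j\in\{1,\dots,p\}$ as a direct power-of-$N$ balance, and for $j=0$ separate the $i=k$ summand (with its constant denominator factor $-1$) from the $i\neq k$ summands, verify that the $O(N)$ parts add up exactly to $N$, and then expand one order further in $1/N$ to isolate the finite contribution. The only minor difference is that the final collapse to $\frac{(\beta+|\vec n|)c_k}{1-c_k}+\sum_i\frac{n_i}{1-c_i}$ does not actually require a partial-fraction identity in the spirit of Lemma~\ref{lemma3}: the paper simply combines the two surviving sums over $q\neq k$ term by term, using $-\frac{n_qc_q}{c_q-c_k}+\frac{(1-c_k)n_qc_q}{(c_q-c_k)(1-c_q)}=\frac{n_qc_q}{1-c_q}$ and then $|\vec n|+\sum_q\frac{n_qc_q}{1-c_q}=\sum_q\frac{n_q}{1-c_q}$, which is elementary.
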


\begin{proof} 
For \( j \in \{1, \ldots, p \}\), applying limit \eqref{H->M1:R} is relatively straightforward. However, for \( j = 0 \), the situation is more complex and requires additional analysis. In this instance, the expression simplifies to:
\begin{multline*}
b_{\vec{n}}^{(M1),0}(k)=\lim_{N\to\infty}\Bigg(
\frac{N}{c_k}+n_k+1-(n_k+1)\left(\frac{N}{c_k}+\beta+n_k+1\right)
\dfrac{\frac{1-c_k}{c_k}N+n_k+1}{\frac{1-c_k}{c_k}N+\beta+n_k+|\vec{n}|+1}\prod_{q=1,q\neq k}^p\dfrac{\frac{c_q-c_k}{c_qc_k}N+n_k+1}{\frac{c_q-c_k}{c_qc_k}N+n_k+1-n_q}
\\
+
n_k\left(\frac{N}{c_k}+\beta+n_k\right)\dfrac{\frac{1-c_k}{c_k}N+n_k}{\frac{1-c_k}{c_k}N+\beta+n_k+|\vec{n}|-1}
\prod_{q=1,q\neq k}^p\dfrac{\frac{c_q-c_k}{c_qc_k}N+n_k}{\frac{c_q-c_k}{c_qc_k}N-n_q+n_k}\Bigg)\\
-\underbrace{\lim_{N\to\infty}
\left(\frac{1-c_k}{c_k}N+\beta+n_k+|\vec{n}|\right)\sum_{i=1,i\neq k}^p 
\dfrac{n_i\left(\frac{1-c_i}{c_i}N+n_i\right)\left(\frac{N}{c_i}+\beta+n_i\right)}{\left(\frac{c_k-c_i}{c_ic_k}N-n_k-1+n_i\right)\left(\frac{1-c_i}{c_i}N+\beta+n_i+|\vec{n}|-1\right)_{2}}
\prod_{q=1,q\neq i}^p\dfrac{{\frac{c_q-c_i}{c_qc_i}N+n_i}}{\frac{c_q-c_i}{c_qc_i}N-n_q+n_i}}_{=
 (1-c_k)\sum_{i=1,i\neq k}^p 
 \frac{n_ic_i}{(c_k-c_i)(1-c_i)}}.
\end{multline*}

The second term's limit can be applied directly, as shown. We will now focus on the first term. To compute its limit, we need to reformulate the expression inside as follows:
\begin{align*} 
&\begin{multlined}[t][\textwidth]
 \frac{N}{c_k}+n_k+1-(n_k+1)\left(\frac{N}{c_k}+\beta+n_k+1\right)
\left(1-\dfrac{\beta+\sz{n}}{\frac{1-c_k}{c_k}N+\beta+n_k+|\vec{n}|+1}\right)\prod_{q=1,q\neq k}^p\left(1+\dfrac{n_q}{\frac{c_q-c_k}{c_qc_k}N+n_k+1-n_q}\right)
\\
+
n_k\left(\frac{N}{c_k}+\beta+n_k\right)\left(1-\dfrac{\beta+\sz{n}-1}{\frac{1-c_k}{c_k}N+\beta+n_k+|\vec{n}|-1}\right)
\prod_{q=1,q\neq k}^p\left(1+\dfrac{n_q}{\frac{c_q-c_k}{c_qc_k}N+n_k-n_q}\right)
\end{multlined}\\
&=\begin{multlined}[t][.75\textwidth]
 \frac{N}{c_k}+n_k+1-(n_k+1)\left(\frac{N}{c_k}+\beta+n_k+1\right)
\left(1-\dfrac{\beta+\sz{n}}{\frac{1-c_k}{c_k}N+\beta+n_k+|\vec{n}|+1}+\sum_{q=1,q\neq k}^p\dfrac{n_q}{\frac{c_q-c_k}{c_qc_k}N+n_k+1-n_q}+\operatorname O\left(\dfrac{1}{N^2}\right)\right)
\\+
n_k\left(\frac{N}{c_k}+\beta+n_k\right)\left(1-\dfrac{\beta+\sz{n}-1}{\frac{1-c_k}{c_k}N+\beta+n_k+|\vec{n}|-1}+
\sum_{q=1,q\neq k}^p\dfrac{n_q}{\frac{c_q-c_k}{c_qc_k}N+n_k-n_q}+\operatorname O\left(\dfrac{1}{N^2}\right)\right)
\end{multlined}\\
&=\begin{multlined}[t][\textwidth]-n_k-\beta+(n_k+1)(\beta+\sz{n})\dfrac{\frac{N}{c_k}+\beta+n_k+1}{\frac{1-c_k}{c_k}N+\beta+n_k+|\vec{n}|+1}-(n_k+1)\sum_{q=1,q\neq k}^pn_q\dfrac{\frac{N}{c_k}+\beta+n_k+1}{\frac{c_q-c_k}{c_qc_k}N+n_k+1-n_q}
\\
-n_k(\beta+\sz{n}-1)\dfrac{\frac{N}{c_k}+\beta+n_k}{\frac{1-c_k}{c_k}N+\beta+n_k+|\vec{n}|-1}+
n_k\sum_{q=1,q\neq k}^pn_q\dfrac{\frac{N}{c_k}+\beta+n_k}{\frac{c_q-c_k}{c_qc_k}N+n_k-n_q}+\operatorname O\left(\dfrac{1}{N}\right)\end{multlined}\\
&\sim
\begin{aligned}
& \sz{n}+\dfrac{(\beta+\sz{n}+n_k)c_k}{1-c_k}-\sum_{q=1,q\neq k}^p\dfrac{n_qc_q}{c_q-c_k},& N&\to\infty.
\end{aligned}
\end{align*}
Combining this result with the second term and simplifying yields the recurrence coefficients \eqref{M1:R}.
\end{proof}

\section{Kravchuk 
}
For the multiple Kravchuk polynomials, the weight functions are defined by
\begin{equation}
\label{WeightsK}
\begin{aligned}
 w^{(K)}_i(x;\uppi_i,N)&\coloneq\dfrac{\Gamma(N+1)}{\Gamma(x+1)\Gamma(N-x+1)} \uppi_i^{x}(1-\uppi_i)^{N-x}, & i&\in\{1,\ldots,p\},& \Delta&=\{0,\ldots,N\} ,
\end{aligned}
\end{equation}
with \( 0 <\uppi_1, \ldots,\uppi_p < 1 \). In order to ensure an AT system, we require that $\sz{n}\leqslant N\in\mathbb N_0$ and \(\uppi_i \neq\uppi_j \) for \( i \neq j \). We will denote \(\vec{\uppi}\coloneq(\uppi_1,\cdots,\uppi_p)\).

\begin{pro}
The following limiting relations between the Hahn \eqref{WeightsHahn} and Kravchuk weights hold
 \[
 w_i^{(K)}(x; \uppi_i, N) = \lim_{\tau \to \infty} \frac{\Gamma(N + 1) (1 - \uppi_i)^N}{\tau^N}\, w_i\left(x; \frac{\uppi_i}{1 - \uppi_i} \tau, \tau, N\right),\quad i\in\{1,\ldots,p\}.
 \]
\end{pro}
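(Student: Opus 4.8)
The plan is to substitute the prescribed parameters into the Hahn weight \eqref{WeightsHahn} and isolate the factors that depend on $\tau$, which turn out to be ratios of gamma functions directly governed by \eqref{GFB}.

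First I would write, with $\alpha_i=\frac{\uppi_i}{1-\uppi_i}\tau$ and $\beta=\tau$,
\[
w_i\!\left(x;\tfrac{\uppi_i}{1-\uppi_i}\tau,\tau,N\right)
=\frac{1}{\Gamma(x+1)\Gamma(N-x+1)}\,
\frac{\Gamma\!\left(\frac{\uppi_i}{1-\uppi_i}\tau+x+1\right)}{\Gamma\!\left(\frac{\uppi_i}{1-\uppi_i}\tau+1\right)}\,
\frac{\Gamma(\tau+N-x+1)}{\Gamma(\tau+1)},
\]
and observe that $\Gamma(x+1)$, $\Gamma(N-x+1)$, and the prefactor $\Gamma(N+1)(1-\uppi_i)^N/\tau^N$'s gamma part are independent of $\tau$, hence pass through the limit unchanged; moreover for $x\in\{0,\ldots,N\}$ and $\tau>0$ all gamma arguments are positive, so the hypotheses of \eqref{GFB} are met.

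Next I would apply \eqref{GFB} twice: to the first ratio with $a=\frac{\uppi_i}{1-\uppi_i}$, $b=x+1$, $c=1$, giving the asymptotics $\left(\frac{\uppi_i}{1-\uppi_i}\tau\right)^{x}$ as $\tau\to\infty$; and to the second ratio with $a=1$, $b=N-x+1$, $c=1$, giving $\tau^{N-x}$. Multiplying by the prefactor $\frac{\Gamma(N+1)(1-\uppi_i)^N}{\tau^N}$ then yields
\[
\frac{\Gamma(N+1)}{\Gamma(x+1)\Gamma(N-x+1)}\,(1-\uppi_i)^N\left(\frac{\uppi_i}{1-\uppi_i}\right)^{x}\tau^{\,x+(N-x)-N}
=\frac{\Gamma(N+1)}{\Gamma(x+1)\Gamma(N-x+1)}\,\uppi_i^{x}(1-\uppi_i)^{N-x},
\]
which is precisely $w_i^{(K)}(x;\uppi_i,N)$ from \eqref{WeightsK}.

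There is essentially no obstacle: the only points requiring care are checking that the powers of $\tau$ cancel, namely $x+(N-x)-N=0$, and that combining $(1-\uppi_i)^N$ with $\left(\frac{\uppi_i}{1-\uppi_i}\right)^{x}$ produces the Kravchuk factor $\uppi_i^{x}(1-\uppi_i)^{N-x}$. As an alternative one could invoke Stirling's formula \eqref{stirling} directly, exactly as in the Meixner propositions above, but using \eqref{GFB} makes the computation immediate.
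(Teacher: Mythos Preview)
Your proof is correct and follows essentially the same approach as the paper: the paper rewrites the two gamma ratios as Pochhammer symbols $\left(\frac{\uppi_i}{1-\uppi_i}\tau+1\right)_x$ and $(\tau+1)_{N-x}$ and uses the obvious asymptotic $(a\tau+1)_k\sim(a\tau)^k$, which is exactly \eqref{GFB} in Pochhammer notation. The computations and the cancellation of $\tau$-powers are identical.
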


\begin{proof}
It follows from the asymptotics for $\tau\rightarrow\infty$
 \[
 w_i\left(x; \frac{\uppi_i}{1 - \uppi_i} \tau, \tau, N\right) = \frac{1}{\Gamma(x+1)\Gamma(N-x+1)} 
 {\left( \frac{\uppi_i}{1 - \uppi_i} \tau + 1\right)_x} (\tau+1)_{N-x}\sim \frac{\tau^{N}}{\Gamma(x+1)\Gamma(N-x+1)} 
 { \frac{\uppi_i^x}{(1 - \uppi_i)^x}}. 
 \] 
\end{proof}

\begin{coro}
\label{Cor:H->K}
 For the linear forms $K_{\vec{n}}^{({\rm I})}$, type I polynomials $K_{\vec{n}}^{(i)}$, type II polynomials $K^{({\rm II})}_{\vec{n}}$ and recurrence coefficients $b_{\vec{n}}^{(K),j}$; the following respective limiting relations from Hahn hold:
\begin{subequations}
\label{H->K}
 \begin{align}
 \label{H->K:MOP}
 K_{\vec{n}}^{(\ast)}(x;\vec{\uppi},N)&=\lim_{\tau\to\infty} \kappa^{(\ast)}_\tau Q_{\vec{n}}^{(\ast)}\left(x;\left\{\frac{\uppi_i}{1-\uppi_i}\tau\right\}_{i=1}^p,\tau,N\right) ,
 \\
 \label{H->K:R}
 b_{\vec{n}}^{(K),j}(\vec{\uppi},N) &= 
 \lim_{\tau\rightarrow\infty}b_{\vec{n}}^j\left(\left\{\frac{\uppi_i}{1-\uppi_i}\tau\right\}_{i=1}^p,\tau,N\right),
 \end{align}
\end{subequations}
where 
$$\kappa^{({\rm I})}_\tau = \kappa^{({\rm II})}_\tau = 1,\quad \kappa^{(i)}_\tau = \frac{\tau^N}{\Gamma(N+1)(1-\uppi_i)^{N}}.$$
\end{coro}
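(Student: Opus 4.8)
The plan is to follow the template already used (implicitly) for Corollaries~\ref{Cor:H->M2} and~\ref{Cor:H->M1}: deduce all four limiting relations from the single weight limit in the Proposition immediately preceding the statement, invoking the uniqueness of the type~I and type~II polynomials for an AT system together with an elementary continuity argument, and then read off the recurrence coefficients from their discrete-sum expressions~\eqref{NextNeighbourRecurrenceIntegralRep}. Two features make the Kravchuk reduction slightly cleaner than its Meixner analogues: the set $\Delta=\{0,\ldots,N\}$ is \emph{finite} and independent of the deformation parameter $\tau$, so interchanging $\lim_{\tau\to\infty}$ with a sum over $\Delta$ is automatic; and no reflection $x\mapsto N-x$ is involved, which is why no sign or shift correction shows up in~\eqref{H->K:R} (unlike the Meixner first-kind case~\eqref{H->M1:R}).

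First I would handle the type~II polynomials. The coefficients of the monic polynomial $Q_{\vec n}^{({\rm II})}(x;\vec{\alpha},\beta,N)$ solve the linear system coming from~\eqref{DO:II}, whose matrix is assembled from the moments $\sum_{x\in\Delta}x^{j+r}\,w_i\bigl(x;\tfrac{\uppi_i}{1-\uppi_i}\tau,\tau,N\bigr)$. By the weight limit these converge, as $\tau\to\infty$, to the Kravchuk moments $\sum_{x\in\Delta}x^{j+r}\,w_i^{(K)}(x;\uppi_i,N)$, and the limiting matrix is invertible precisely because the Kravchuk weights form an AT system under the standing hypotheses $\sz{n}\leqslant N$, $\uppi_i\neq\uppi_j$. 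Cramer's rule then forces the coefficients of $Q_{\vec n}^{({\rm II})}$ to converge to those of $K_{\vec n}^{({\rm II})}$, which is~\eqref{H->K:MOP} in the type~II case with $\kappa^{({\rm II})}_\tau=1$; alternatively one can pass to the limit directly in~\eqref{Coefficients_HahnTypeII} via Stirling's formula~\eqref{stirling}, the route of~\cite{AskeyII}. (Since the Hahn family needs $\alpha_i-\alpha_j\notin\Z$, the deformed parameters are admissible only for $\tau$ outside a discrete set, which does not affect the limit.)

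Next, the type~I objects. The constant $\kappa^{(i)}_\tau$ is chosen exactly so that $(\kappa^{(i)}_\tau)^{-1}w_i\bigl(\,\cdot\,;\tfrac{\uppi_i}{1-\uppi_i}\tau,\tau,N\bigr)\to w_i^{(K)}(\,\cdot\,;\uppi_i,N)$. Writing~\eqref{DO:I} as $\sum_{i=1}^p\sum_{x\in\Delta}x^{j}\bigl(\kappa^{(i)}_\tau Q_{\vec n}^{(i)}(x)\bigr)\bigl((\kappa^{(i)}_\tau)^{-1}w_i(x)\bigr)=\delta_{j,\sz{n}-1}$ exhibits a linear system for the \emph{scaled} coefficients $\kappa^{(i)}_\tau Q_{\vec n}^{(i)}$ whose matrix again converges to the invertible Kravchuk one; hence $\kappa^{(i)}_\tau Q_{\vec n}^{(i)}\to K_{\vec n}^{(i)}$, and summing over $i$ gives $Q_{\vec n}^{({\rm I})}=\sum_i\bigl(\kappa^{(i)}_\tau Q_{\vec n}^{(i)}\bigr)\bigl((\kappa^{(i)}_\tau)^{-1}w_i\bigr)\to\sum_i K_{\vec n}^{(i)}w_i^{(K)}=K_{\vec n}^{({\rm I})}$, i.e.\ $\kappa^{({\rm I})}_\tau=1$. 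This uses the explicit type~I coefficients~\eqref{coefHahnTypeI} only to know that $Q_{\vec n}^{(i)}$ is defined and of degree $\leqslant n_i-1$; one could equally take the limit in~\eqref{HahnTypeI} directly with Stirling's formula.

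For the recurrence coefficients, evaluate~\eqref{NextNeighbourRecurrenceIntegralRep} at $\bigl(\{\tfrac{\uppi_i}{1-\uppi_i}\tau\}_{i=1}^p,\tau,N\bigr)$, obtaining $b_{\vec n}^{0}(k)=\sum_{x\in\Delta}x\,Q_{\vec n}^{({\rm II})}(x)\,Q_{\vec n+\vec{e}_k}^{({\rm I})}(x)$ and $b_{\vec n}^{j}=\sum_{x\in\Delta}x\,Q_{\vec n}^{({\rm II})}(x)\,Q_{\vec n-\vec{s}_{j-1}}^{({\rm I})}(x)$ for $j\in\{1,\ldots,p\}$. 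Since $Q_{\vec m}^{({\rm II})}\to K_{\vec m}^{({\rm II})}$ and $Q_{\vec m}^{({\rm I})}\to K_{\vec m}^{({\rm I})}$ on the finite set $\Delta$, the interchange of limit and sum is immediate and yields~\eqref{H->K:R}; alternatively one can take $\tau\to\infty$ in the closed forms~\eqref{HahnRecurrence}, finiteness of these limits being guaranteed a priori by the convergence of the coefficients of the monic $Q^{({\rm II})}$. I expect the only genuinely delicate point to be the one the Cramer's-rule step settles, namely that the normalized Hahn polynomials actually \emph{converge}, rather than merely that any accumulation point satisfies the Kravchuk orthogonality; this rests on (i) the trivial interchange of $\lim_\tau$ with the finite sum over $\Delta$, (ii) convergence of the Hahn moments from the weight Proposition, and (iii) non-degeneracy of the limiting Kravchuk moment matrix, which is exactly the AT hypothesis $\sz{n}\leqslant N$, $\uppi_i\neq\uppi_j$.
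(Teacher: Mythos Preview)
Your argument is correct and in fact more detailed than what the paper provides: the paper states Corollary~\ref{Cor:H->K} without proof, treating it as an immediate consequence of the weight-limit Proposition (just as it does for Corollaries~\ref{Cor:H->M2} and~\ref{Cor:H->M1}), and only later---when deriving the explicit hypergeometric formulas~\eqref{K:I}, integral representations (Theorem~\ref{KI_IR}), and recurrence coefficients~\eqref{K:R}---does it actually carry out the limits on the Hahn side. Your Cramer's-rule justification, together with the observation that the finite support $\Delta=\{0,\ldots,N\}$ trivializes the interchange of limit and sum, supplies exactly the uniqueness/continuity step that the paper leaves implicit.
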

\begin{rem}
 The limiting relation for the type II polynomials was already deduced in \cite{AskeyII}.
\end{rem}

\subsection{Hypergeometric Representations}

The monic type II polynomials can be derived from the Hahn polynomials \eqref{HahnTypeII} by taking the limit in Corollary \eqref{Cor:H->K}, cf. \cite{AskeyII}:
\begin{equation}
\label{K:II}
 K_{\vec{n}}^{({\rm II})}(x;\vec{\uppi},N)
 =(-N)_{|\vec{n}|}\prod_{i=1}^p\uppi_i^{n_i}\,\KF{1:1;\cdots;1}{1:0;\cdots;0}{-x:-n_1;\cdots;-n_p}{-N:--;\cdots;--}{\dfrac{1}{\uppi_1},\ldots,\dfrac{1}{\uppi_p}}.
\end{equation}

Below, we will derive a hypergeometric representation for the type I Kravchuk polynomials. For $p=2$, an alternative representation was found in \cite[Proposition 6.1]{HahnI}.

\begin{teo}
The type I Kravchuk polynomials are
\begin{multline}
\label{K:I}
K_{\vec{n}}^{(i)}(x;\vec{\uppi},N)=\dfrac{(-1)^{n_i-1}}{(n_i-1)!(-N)_{|\vec{n}|-n_i}
(1-\uppi_i)^{n_i-1}}
\prod_{q=1,q\neq i}^p\dfrac{1}{(\uppi_q-\uppi_i)^{n_q}}\\
 \times\KF{1:1;\cdots;1}{1:0;\cdots;0}{-n_i+1:-x;\;\vec{n}^{*i}}{-N+|\vec{n}|-n_i:--;\cdots;--}{\frac{1}{\uppi_i},\left\{\frac{1-\uppi_q}{\uppi_i-\uppi_q}\right\}_{q=1,q\neq i}^p},
\end{multline}
for \( i\in\{1,\ldots,p\}\).
\end{teo}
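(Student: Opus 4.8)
The plan is to derive \eqref{K:I} as the confluent limit $\tau\to\infty$ of the Hahn type I polynomials \eqref{HahnTypeI}, following the same strategy as in the proof of the Meixner-of-the-first-kind representation \eqref{M1:I}. Concretely, in \eqref{HahnTypeI} I would substitute $\alpha_q\mapsto\frac{\uppi_q}{1-\uppi_q}\tau$ and $\beta\mapsto\tau$, multiply by $\kappa^{(i)}_\tau=\frac{\tau^N}{\Gamma(N+1)(1-\uppi_i)^N}$, and invoke the limiting relation \eqref{H->K:MOP} of Corollary \ref{Cor:H->K}. A direct estimate using Stirling's formula \eqref{stirling} shows that the $\tau$-dependent prefactor of \eqref{HahnTypeI} (times $\kappa^{(i)}_\tau$) grows like a constant times $\tau^{n_i-1}$, whereas the bare ${}_{p+2}F_{p+1}$ converges to $\sum_{l=0}^{n_i-1}\frac{(-n_i+1)_l}{l!}=0$ for $n_i\geqslant 2$; thus the heart of the matter is to extract the exact $\operatorname{O}(\tau^{-(n_i-1)})$ correction of the hypergeometric factor together with its polynomial dependence on $x$.

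To this end I would rewrite the $\tau$-growing Pochhammer ratios inside the ${}_{p+2}F_{p+1}$ via the Chu--Vandermonde/Gauss identity \eqref{eq:Gauss_hypergeometric}, expanding $\frac{(\alpha_i+\beta+|\vec{n}|)_l}{(\alpha_i+\beta+N+2)_l}=\sum_{k_i\geqslant0}\frac{(-l)_{k_i}(N+2-|\vec{n}|)_{k_i}}{k_i!\,(\alpha_i+\beta+N+2)_{k_i}}$ and, for each $q\neq i$, $\frac{(\alpha_i-\alpha_q-n_q+1)_l}{(\alpha_i-\alpha_q+1)_l}=\sum_{k_q\geqslant0}\frac{(-l)_{k_q}(n_q)_{k_q}}{k_q!\,(\alpha_i-\alpha_q+1)_{k_q}}$, while keeping the ratio $\frac{(x+\alpha_i+1)_l}{(\alpha_i+1)_l}$ untouched. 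As in the proof of \eqref{M1:I}, the product $(-l)_{k_i}\prod_{q\neq i}(-l)_{k_q}$ can be replaced by $(-l)_{K'}$ with $K'=k_i+\sum_{q\neq i}k_q$ modulo lower-order terms that get absorbed into an $\operatorname{O}(\tau^{-n_i})$ remainder, after which the $l$-summation is carried out by a second application of \eqref{eq:Gauss_hypergeometric}, turning $\sum_{l}\frac{(-n_i+1)_l}{l!}(-l)_{K'}\frac{(x+\alpha_i+1)_l}{(\alpha_i+1)_l}$ into $(-1)^{K'}(-n_i+1)_{K'}\frac{(x+\alpha_i+1)_{K'}\,(-x)_{n_i-1-K'}}{(\alpha_i+1)_{n_i-1}}$. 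Every surviving summand now carries an explicit power of $\tau$ which, by \eqref{GFB}, cancels precisely against the $\tau^{n_i-1}$ of the prefactor; the limit can then be taken term by term, the $\operatorname{O}(\tau^{-n_i})$ error vanishing.

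The last step is the same index transformation as in the proof of \eqref{M1:I}: relabelling the auxiliary indices $k_q$ (and splitting $n_i-1-K'$) recasts the resulting finite multiple sum as the Kampé de Fériet series appearing in \eqref{K:I}, with the constant prefactor read off from the limits of the various $\Gamma$-ratios --- the factors $(\uppi_q-\uppi_i)^{n_q}$ coming from $(\alpha_q-\alpha_i)_{n_q}$, the factor $(1-\uppi_i)^{n_i-1}$ from $(\alpha_i+1)_{n_i-1}$, and $(-N)_{|\vec{n}|-n_i}$ from $(\beta+1)_{|\vec{n}|-1}$, $(N+1-|\vec{n}|)!$ and the $1/\Gamma(N+1)$ in $\kappa^{(i)}_\tau$ combined. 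The only genuinely delicate point --- exactly as in the Meixner-of-the-first-kind case --- is controlling the $0\cdot\infty$ indeterminacy: one must verify that the leading cancellation leaves a contribution of the precise order $\tau^{-(n_i-1)}$ and that the rearrangement of the products $(-l)_{k}$ contributes only at order $\tau^{-n_i}$; once this is granted, the remainder of the argument is routine bookkeeping with Pochhammer symbols.
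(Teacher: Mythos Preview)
Your proposal is correct and follows essentially the same route as the paper: substitute the Hahn parameters as in Corollary~\ref{Cor:H->K}, expand the $\tau$-dependent Pochhammer ratios via \eqref{eq:Gauss_hypergeometric}, replace $\prod(-l)_{k_q}$ by $(-l)_{K'}$ modulo $\operatorname{O}(\tau^{-n_i})$, evaluate the $l$-sum by a second Chu--Vandermonde, take the limit, and reindex into the Kamp\'e de F\'eriet form. The only cosmetic difference is that the paper tracks the exact intermediate constant $\frac{\tau^{n_i-1}}{(n_i-1)!(-N)_{|\vec{n}|-1}(1-\uppi_i)^{2(n_i-1)}}\prod_{q\neq i}(\uppi_q-\uppi_i)^{-n_q}$ before the reindexing absorbs the extra powers of $\uppi_i$ and shifts $(-N)_{|\vec{n}|-1}$ to $(-N)_{|\vec{n}|-n_i}$.
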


\begin{proof}

Substituting into the expression \eqref{HahnTypeI} as indicated in Corollary \ref{Cor:H->K}, we obtain:
\begin{align*}
K_{\vec{n}}^{(i)}(x;\vec{\uppi},N)=\lim_{\tau\rightarrow\infty}\dfrac{1}{N!}\dfrac{\tau^N}{(1-\uppi_i)^N}\dfrac{(-1)^{|\vec{n}|-1}(N+1-|\vec{n}|)!}{(n_i-1)!(\tau+1)_{|\vec{n}|-1}\left(\frac{1}{1-\uppi_i}\tau+|\vec{n}|\right)_{N+2-|\vec{n}|}}\dfrac{\prod_{q=1}^p\left(\frac{1}{1-\uppi_q}\tau+|\vec{n}|\right)_{n_q}}{\prod_{q=1,q\neq i}^p\left(\frac{\uppi_q-\uppi_i}{(1-\uppi_q)(1-\uppi_i)}\tau\right)_{{n}_q}}\\
 \times\sum_{l=0}^{n_i-1}\dfrac{(-n_i+1)_l}{l!}\underbrace{\dfrac{\left(\frac{1}{1-\uppi_i}\tau+|\vec{n}|\right)_l}{\left(\frac{1}{1-\uppi_i}\tau+N+2\right)_l}}_{=\sum_{k_i=0}^l\frac{(-l)_{k_i}}{k_i!}\frac{(N-|\vec{n}|+2)_{k_i}}{\left(\frac{1}{1-\uppi_i}\tau+N+2\right)_{k_i}}}
 \prod_{q=1,q\neq i}^p\underbrace{\dfrac{\left(\frac{\uppi_i-\uppi_q}{(1-\uppi_q)(1-\uppi_i)}\tau-{n}_q+1\right)_l}{\left(\frac{\uppi_i-\uppi_q}{(1-\uppi_q)(1-\uppi_i)}\tau+1\right)_l}}_{=\sum_{k_q=0}^l\frac{(-l)_{k_q}}{k_q!}\frac{(n_q)_{k_q}}{\left(\frac{\uppi_i-\uppi_q}{(1-\uppi_i)(1-\uppi_q)}\tau+1\right)_{k_q}}}
 \dfrac{\left(x+\frac{\uppi_i}{1-\uppi_i}\tau+1\right)_l}{\left(\frac{\uppi_i}{1-\uppi_i}\tau+1\right)_l}.
\end{align*}
We can now rewrite the fractions within the sum, as previously indicated, using Gauss's hypergeometric formula \eqref{eq:Gauss_hypergeometric} to simplify the expressions and obtain:
\begin{multline*}
K_{\vec{n}}^{(i)}(x;\vec{\uppi},N)=\lim_{\tau\rightarrow\infty}\dfrac{1}{N!}\dfrac{\tau^N}{(1-\uppi_i)^N}\dfrac{(-1)^{|\vec{n}|-1}(N+1-|\vec{n}|)!}{(n_i-1)!(\tau+1)_{|\vec{n}|-1}\left(\frac{1}{1-\uppi_i}\tau+|\vec{n}|\right)_{N+2-|\vec{n}|}}\dfrac{\prod_{q=1}^p\left(\frac{1}{1-\uppi_q}\tau+|\vec{n}|\right)_{n_q}}{\prod_{q=1,q\neq i}^p\left(\frac{\uppi_q-\uppi_i}{(1-\uppi_q)(1-\uppi_i)}\tau\right)_{{n}_q}}\\
 \times\Bigg(\sum_{k_1=0}^{n_i-1}\cdots\sum_{k_p=0}^{n_i-1-k_1-\cdots-k_{p-1}}{\frac{1}{k_i!}\frac{(N-|\vec{n}|+2)_{k_i}}{\left(\frac{1}{1-\uppi_i}\tau+N+2\right)_{k_i}}}
 \prod_{q=1,q\neq i}^p{\frac{1}{k_q!}\frac{(n_q)_{k_q}}{\left(\frac{\uppi_i-\uppi_q}{(1-\uppi_i)(1-\uppi_q)}\tau+1\right)_{k_q}}}\\
 \times
 \sum_{l=0}^{n_i-1}\dfrac{(-n_i+1)_l}{l!}(-l)_{k_1+\cdots+k_p}\dfrac{\left(x+\frac{\uppi_i}{1-\uppi_i}\tau+1\right)_l}{\left(\frac{\uppi_i}{1-\uppi_i}\tau+1\right)_l}+\operatorname O \left(\frac{1}{\tau^{n_i}}\right)\Bigg).
\end{multline*}
The Pochhammer product \((-l)_{k_1} \cdots (-l)_{k_p}\) is equivalent to \((-l)_{k_1 + \cdots + k_p}\) plus lower-order Pochhammer symbols, with these contributions included in \(\operatorname{O}\left(\dfrac{1}{\tau^{n_i}}\right)\). To further simplify the \(l\)-labeled sum, we will apply \eqref{eq:Gauss_hypergeometric} once more:
\begin{align*}
 \sum_{l=0}^{n_i-1}&\dfrac{(-n_i+1)_l}{l!}
 {{(-l)_{k_1+\cdots+k_p}}}
 \dfrac{\left(x+\frac{\uppi_i}{1-\uppi_i}\tau+1\right)_l}{\left(\frac{\uppi_i}{1-\uppi_i}\tau+1\right)_l}\\
 &=\begin{multlined}[t][.85\textwidth]
 (-1)^{k_1+\cdots+k_p}(-n_i+1)_{k_1+\cdots+k_p}\dfrac{\left(x+\frac{\uppi_i}{1-\uppi_i}\tau+1\right)_{k_1+\cdots+k_p}}{\left(\frac{\uppi_i}{1-\uppi_i}\tau+1\right)_{k_1+\cdots+k_p}}\\
\times\sum_{l=0}^{n_i-1-k_1-\cdots-k_p}\dfrac{(-n_i+1+k_1+\cdots+k_p)_{l}}{l!}
\dfrac{\left(x+\frac{\uppi_i}{1-\uppi_i}\tau+1+k_1+\cdots+k_p\right)_{l}}{\left(\frac{\uppi_i}{1-\uppi_i}\tau+1+k_1+\cdots+k_p\right)_{l}}
 \end{multlined}\\
 &=(-1)^{k_1+\cdots+k_p}(-n_i+1)_{k_1+\cdots+k_p}\dfrac{\left(x+\frac{\uppi_i}{1-\uppi_i}\tau+1\right)_{k_1+\cdots+k_p}}{\left(\frac{\uppi_i}{1-\uppi_i}\tau+1\right)_{n_i-1}}
{(-x)_{n_i-1-k_1-\cdots-k_p}}.
\end{align*}
As a result, the expression is simplified to:
\begin{multline*}
K_{\vec{n}}^{(i)}(x;\vec{\uppi},N)=\lim_{\tau\rightarrow\infty}\dfrac{1}{N!}\dfrac{\tau^N}{(1-\uppi_i)^N}\dfrac{(-1)^{|\vec{n}|-1}(N+1-|\vec{n}|)!}{(n_i-1)!(\tau+1)_{|\vec{n}|-1}\left(\frac{1}{1-\uppi_i}\tau+|\vec{n}|\right)_{N+2-|\vec{n}|}}\dfrac{\prod_{q=1}^p\left(\frac{1}{1-\uppi_q}\tau+|\vec{n}|\right)_{n_q}}{\prod_{q=1,q\neq i}^p\left(\frac{\uppi_q-\uppi_i}{(1-\uppi_q)(1-\uppi_i)}\tau\right)_{{n}_q}}\\
 \times\Bigg(\dfrac{1}{\left(\frac{\uppi_i}{1-\uppi_i}\tau+1\right)_{n_i-1}}
\sum_{k_1=0}^{n_i-1}\cdots\sum_{k_p=0}^{n_i-1-k_1-\cdots-k_{p-1}}(-1)^{k_1+\cdots+k_p}\frac{(-n_i+1)_{k_1+\cdots+k_p}}{k_1!\cdots k_p!}\left(x+\frac{\uppi_i}{1-\uppi_i}\tau+1\right)_{k_1+\cdots+k_p}\\\times
 \frac{(N-|\vec{n}|+2)_{k_i}}{\left(\frac{1}{1-\uppi_i}\tau+N+2\right)_{k_i}}
 \prod_{q=1,q\neq i}^p{\frac{(n_q)_{k_q}}{\left(\frac{\uppi_i-\uppi_q}{(1-\uppi_i)(1-\uppi_q)}\tau+1\right)_{k_q}}}
{(-x)_{n_i-1-k_1-\cdots-k_p}}+\operatorname O \left(\frac{1}{\tau^{n_i}}\right)\Bigg).
\end{multline*}
For \( \tau \rightarrow \infty \), the constant part behaves as follows:
\[
\frac{1}{(n_i-1)!(-N)_{|\vec{n}|-1}(1-\uppi_i)^{2(n_i-1)}} \prod_{q=1, q \neq i}^p \frac{1}{(\uppi_q -\uppi_i)^{n_q}}\,\tau^{n_i-1}.
\]
Using this expression, the limit can be applied to obtain the desired result:
\begin{multline*}
K_{\vec{n}}^{(i)}(x;\vec{\uppi},N)=\dfrac{1}{(n_i-1)!(-N)_{|\vec{n}|-1}
\uppi_i^{n_i-1}(1-\uppi_i)^{n_i-1}}
\prod_{q=1,q\neq i}^p\dfrac{1}{(\uppi_q-\uppi_i)^{n_q}}\\
\times\sum_{k_1=0}^{n_i-1}\cdots\sum_{k_p=0}^{n_i-1-k_1-\cdots-k_{p-1}}\frac{(-n_i+1)_{k_1+\cdots+k_p}}{k_1!\cdots k_p!}{(-\uppi_i)^{k_1+\cdots+k_p}}{(N-|\vec{n}|+2)_{k_i}}\\\times\prod_{q=1,q\neq i}^p(n_q)_{k_q}\left(\frac{1-\uppi_q}{\uppi_i-\uppi_q}\right)^{k_q}
{(-x)_{n_i-1-k_1-\cdots-k_p}}.
\end{multline*}
To express the limit more conveniently, we can perform index changes as follows: \( k_1 \rightarrow n_i - 1 - l_1 \) and \( k_i \rightarrow l_{i-1} - l_i \) for \( i \in \{2, \ldots, p \}\). With these transformations, the sums can be restructured to yield the desired expression \eqref{K:I}.
\end{proof}

\subsection{Integral Representations}
By applying the limits in Corollary \ref{Cor:H->K}, we can derive integral representations from those given for the multiple Hahn polynomials in Theorems \ref{HI_IR} and \ref{HII_IR}.
\begin{teo}
\label{KI_IR}
Let \( \Sigma^\ast \) be a clockwise contour in $\{t\in \mathbb{C} \mid \operatorname{Re}(t)>0\}$ enclosing \( \left\{ \frac{\uppi_j}{1 - \uppi_j}\right\}_{j=1}^p \) exactly once. Then the type I linear forms
are given by
\begin{equation}\label{eq:K}
K_{\vec{n}}^{({\rm I})}(x;\vec{\uppi},N) = \frac{(N-\sz{n}+1)!}{\prod_{j=1}^p (1-\uppi_j)^{n_j}} \frac{1}{\Gamma(x+1)\Gamma(N-x+1)} \bigintss_{\Sigma^\ast} \frac{(1+t)^{\sz{n}-N-2}\, t^{x}}{\prod_{j=1}^p \left(t-\frac{\uppi_j}{1-\uppi_j}\right)^{n_j}} \frac{\d t }{2 \pi \ii }.
\end{equation}
In particular, the type I polynomials are given by
\[ 
K_{\vec{n}}^{(i)}(x;\vec{\uppi},N) = \frac{(N-\sz{n}+1)!}{N! \prod_{j=1}^p (1-\uppi_j)^{n_j}} \frac{1}{\uppi_i^x (1-\uppi_i)^{N-x}} \bigintss_{\Sigma^\ast_i} \frac{(1+t)^{\sz{n}-N-2}\, t^{x}}{\prod_{j=1}^p \left(t-\frac{\uppi_j}{1-\uppi_j}\right)^{n_j}} \frac{\d t }{2 \pi \ii },
 \]
where \( \Sigma_i^\ast \) is a clockwise contour in $\{t\in \mathbb{C} \mid \operatorname{Re}(t)>0\}$ enclosing \( \frac{\uppi_i}{1 - \uppi_i}\) exactly once without enclosing any of the other points in \( \left\{ \frac{\uppi_j}{1 - \uppi_j}\right\}_{j=1}^p \).
\end{teo}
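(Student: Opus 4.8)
The plan is to follow the route used for the two Meixner families in Theorems \ref{M2I_IR} and \ref{M1I_IR}: take the contour-integral representation for the Hahn type I objects in Theorem \ref{HI_IR}, specialise the parameters as in Corollary \ref{Cor:H->K}, namely $\alpha_j=\frac{\uppi_j}{1-\uppi_j}\tau$ and $\beta=\tau$, rescale the integration variable, and send $\tau\to\infty$ using the gamma-ratio asymptotics \eqref{GFB}. Concretely, I would begin with $Q_{\vec{n}}^{({\rm I})}\!\left(x;\left\{\tfrac{\uppi_j}{1-\uppi_j}\tau\right\}_{j=1}^p,\tau,N\right)$ as furnished by Theorem \ref{HI_IR} and with the normalisation $\kappa^{({\rm I})}_\tau=1$ from \eqref{H->K:MOP}.

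The first step is the change of variables $t\mapsto\tau t$ (which contributes a Jacobian factor $\tau$). The contour of Theorem \ref{HI_IR} encloses $\bigcup_{j=1}^p\big[\tfrac{\uppi_j}{1-\uppi_j}\tau,\ n_j-1+\tfrac{\uppi_j}{1-\uppi_j}\tau\big]$ inside $\{\Re(t)>-1\}$, and after rescaling it encloses $\bigcup_{j=1}^p\big[\tfrac{\uppi_j}{1-\uppi_j},\ \tfrac{n_j-1}{\tau}+\tfrac{\uppi_j}{1-\uppi_j}\big]$ inside $\{\Re(t)>-\tfrac1\tau\}$. For $\tau$ large this can be deformed to the fixed contour $\Sigma^\ast$ of the statement: the limiting integrand is singular only at the points $\tfrac{\uppi_j}{1-\uppi_j}>0$, at $t=-1$ (a pole of $(1+t)^{\sz{n}-N-2}$) and along $(-\infty,0]$ (the branch cut of $t^{x}$), so a clockwise contour contained in $\{\Re(t)>0\}$ and surrounding the $\tfrac{\uppi_j}{1-\uppi_j}$ is admissible, and for every large $\tau$ it still encloses the shifted points $\tfrac{\uppi_j}{1-\uppi_j}+\tfrac{k}{\tau}$ while avoiding the poles of $\Gamma(\tau t+x+1)$ (which lie on the negative real axis).

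Next I would isolate the $\tau$-dependent factors and apply \eqref{GFB}. In the prefactor, $\prod_j(\alpha_j+\beta+\sz{n})_{n_j}\sim\tau^{\sz{n}}\prod_j(1-\uppi_j)^{-n_j}$ and $\Gamma(N-x+\beta+1)/\Gamma(\beta+\sz{n})\sim\tau^{\,N-x+1-\sz{n}}$; in the integrand, $(\alpha_j-t)_{n_j}=\big(\tau(\tfrac{\uppi_j}{1-\uppi_j}-t)\big)_{n_j}\sim\tau^{n_j}\big(\tfrac{\uppi_j}{1-\uppi_j}-t\big)^{n_j}$, $\Gamma(t+\beta+\sz{n})/\Gamma(t+\beta+N+2)\sim\big(\tau(1+t)\big)^{\sz{n}-N-2}$ and $\Gamma(x+t+1)/\Gamma(t+1)\sim(\tau t)^{x}$. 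Together with the Jacobian $\tau$ a short count shows that all powers of $\tau$ cancel, and the $(-1)^{\sz{n}}$ in the prefactor disappears after writing $\prod_j(\tfrac{\uppi_j}{1-\uppi_j}-t)^{n_j}=(-1)^{\sz{n}}\prod_j(t-\tfrac{\uppi_j}{1-\uppi_j})^{n_j}$; passing the limit under the integral then gives \eqref{eq:K}. The representation for the individual polynomials $K_{\vec{n}}^{(i)}$ follows in exactly the same way, starting from the type I polynomial formula of Theorem \ref{HI_IR} (whose contour $\Sigma_i$ becomes $\Sigma_i^\ast$), now with $\kappa^{(i)}_\tau=\tau^{N}/(\Gamma(N+1)(1-\uppi_i)^{N})$ and using in addition $\Gamma(\alpha_i+1)/\Gamma(\alpha_i+x+1)\sim\big(\tfrac{\uppi_i}{1-\uppi_i}\tau\big)^{-x}$ and $(\beta+1)_{\sz{n}-1}\sim\tau^{\sz{n}-1}$; the leftover factor $\big(\tfrac{\uppi_i}{1-\uppi_i}\big)^{-x}=(1-\uppi_i)^{x}\uppi_i^{-x}$ combines with $(1-\uppi_i)^{-N}$ into $\uppi_i^{-x}(1-\uppi_i)^{\,x-N}$, matching the stated formula. (Alternatively, one could derive it from \eqref{eq:K} by contracting $\Sigma^\ast$ onto small loops around the $\tfrac{\uppi_j}{1-\uppi_j}$ and identifying the residue at $\tfrac{\uppi_i}{1-\uppi_i}$ with $K_{\vec{n}}^{(i)}w_i^{(K)}$.)

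The main obstacle is the justification of the interchange of limit and integral: one must verify that the estimates \eqref{GFB} hold uniformly on $\Sigma^\ast$ and control the $\BO(1/\tau)$ corrections to the Pochhammer symbols $\big(\tau(\tfrac{\uppi_j}{1-\uppi_j}-t)\big)_{n_j}$. Since, after rescaling, the contour is compact and stays bounded away from the poles $\tfrac{\uppi_j}{1-\uppi_j}$, from $t=-1$, and from $(-\infty,0]$, this is routine and can be carried out exactly as in the proofs of Theorems \ref{M2I_IR} and \ref{M1I_IR}.
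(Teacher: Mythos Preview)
Your proof is correct and follows essentially the same approach as the paper: start from Theorem \ref{HI_IR} with the Hahn parameters specialised as in Corollary \ref{Cor:H->K}, rescale $t\mapsto\tau t$, apply the gamma-ratio asymptotics \eqref{GFB} to show the $\tau$-powers cancel, and deform to $\Sigma^\ast$. The paper organises the asymptotic bookkeeping slightly differently (bundling prefactor and integrand into a single product before applying \eqref{GFB}) and for the individual $K_{\vec{n}}^{(i)}$ simply invokes your second alternative (only the poles at $\tfrac{\uppi_i}{1-\uppi_i}$ contribute to the $i$-th weight), but these are presentational differences, not substantive ones.
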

\begin{proof}
 Consider the contour integral for \(Q_{\vec{n}}^{({\rm I})}\left(x;\left\{\frac{\uppi_i}{1 - \uppi_i}\tau\right\}_{i=1}^p, \tau, N\right)\) given in Theorem \ref{HI_IR}. This integral involves a clockwise contour that encloses exactly once the set \(\bigcup_{j=1}^p\left[\frac{\uppi_j}{1 - \uppi_j}\tau, n_j + \frac{\uppi_j}{1 - \uppi_j} \tau - 1\right]\) with 
 \( \operatorname{Re}(t) > -1 \). 
After the change of variables \(t \mapsto \tau t\), the contour can be deformed to one enclosing \(\bigcup_{j=1}^p\left[\frac{\uppi_j}{1 - \uppi_j}, 
n_j -1 + \frac{\uppi_j}{1 - \uppi_j} \right]\) with
 \( \operatorname{Re}(t) > 0 \). 
We can use \eqref{GFB} to obtain the asymptotics for the $\tau$-dependent parts of the integrand and prefactor:
\begin{multline}
 \tau \frac{\Gamma(\tau+N-x+1)}{\Gamma(\tau+\sz{n})} \frac{\Gamma(\tau t + \tau + \sz{n})}{\Gamma(\tau t + \tau + N + 2)} \frac{\Gamma(\tau t + x + 1)}{\Gamma(\tau t + 1)} \frac{\prod_{j=1}^p \left(\frac{1}{1 - \uppi_j} \tau + \sz{n}\right)_{n_j}}{\prod_{j=1}^p \left(\frac{\uppi_j}{1 - \uppi_j} \tau - \tau t\right)_{n_j}} \\\begin{aligned}
 &\sim \tau^{N-x-\sz{n}+2} ((1+t)\tau)^{\sz{n}-N-2} (\tau t)^{x} \frac{\prod_{j=1}^p \left(\frac{1}{1 - \uppi_j}\right)^{n_j}}{\prod_{j=1}^p \left(\frac{\uppi_j}{1 - \uppi_j} - t\right)^{n_j}}= \frac{t^x}{(1+t)^{N-\sz{n}+2}} \frac{\prod_{j=1}^p \left(\frac{1}{1 - \uppi_j}\right)^{n_j}}{\prod_{j=1}^p \left(\frac{\uppi_j}{1 - \uppi_j} - t\right)^{n_j}},\quad \tau\to\infty .
\end{aligned}
\end{multline}
 To obtain the desired result about the type I linear forms, we use limit \eqref{H->K:MOP} and deform the contour to \(\Sigma^\ast\). Once these integral representations are established, one can note that only certain poles contribute to a specific type I polynomial, with the remaining poles relating to different weight functions.
\end{proof}

An application of the residue theorem leads to the following formulas for the type I polynomials.
\begin{coro}
The following Rodrigues-type formula for the type I polynomials \eqref{K:I} holds:
 \begin{align*}
 K_{\vec{n}}^{(i)}(x;\vec{\uppi},N) = \frac{(-1)^{\sz{n}}}{(-N)_{\sz{n}-1} (n_i-1)! \prod_{j=1}^p (1-\uppi_j)^{n_j}} \frac{1}{\uppi_i^x (1-\uppi_i)^{N-x}} \left. \frac{\d^{n_i-1}}{\d t^{n_i-1}} \right|_{t=\frac{\uppi_i}{1-\uppi_i}} \left(\frac{(1+t)^{\sz{n}-N-2}\, t^{x}}{\prod_{j=1,j\neq i}^p \left(t-\frac{\uppi_j}{1-\uppi_j}\right)^{n_j}}\right).
 \end{align*} 
\end{coro}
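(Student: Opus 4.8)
The plan is to obtain this Rodrigues-type formula as an immediate consequence of the contour integral representation for the type I Kravchuk polynomials established in Theorem~\ref{KI_IR}, by evaluating the integral with the residue theorem — exactly the same mechanism used to pass from Theorem~\ref{M1I_IR} to the Rodrigues formula \eqref{eq:Rodrigues first kind Meixner} for the Meixner polynomials of the first kind.

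First I would start from the representation
\[
K_{\vec{n}}^{(i)}(x;\vec{\uppi},N) = \frac{(N-\sz{n}+1)!}{N! \prod_{j=1}^p (1-\uppi_j)^{n_j}} \frac{1}{\uppi_i^x (1-\uppi_i)^{N-x}} \bigintss_{\Sigma^\ast_i} \frac{(1+t)^{\sz{n}-N-2}\, t^{x}}{\prod_{j=1}^p \left(t-\frac{\uppi_j}{1-\uppi_j}\right)^{n_j}} \frac{\d t }{2 \pi \ii },
\]
and observe that, since $\Sigma_i^\ast$ lies in $\{\operatorname{Re}(t)>0\}$ (so it avoids the branch points $t=0$ and $t=-1$ of $t\mapsto t^x$ and $t\mapsto(1+t)^{\sz{n}-N-2}$) and encloses the point $\frac{\uppi_i}{1-\uppi_i}>0$ but none of the other points $\frac{\uppi_j}{1-\uppi_j}$, the integrand is meromorphic inside $\Sigma_i^\ast$ with a single singularity there, namely a pole of order $n_i$ at $t=\frac{\uppi_i}{1-\uppi_i}$.

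Next I would apply the residue theorem, keeping in mind that $\Sigma_i^\ast$ is traversed clockwise, so the integral equals $-\operatorname{Res}_{t=\uppi_i/(1-\uppi_i)}$ of the integrand. For a pole of order $n_i$ the residue is the standard Leibniz expression
\[
\frac{1}{(n_i-1)!}\left.\frac{\d^{n_i-1}}{\d t^{n_i-1}}\right|_{t=\frac{\uppi_i}{1-\uppi_i}}\left(\frac{(1+t)^{\sz{n}-N-2}\, t^{x}}{\prod_{j=1,j\neq i}^p \left(t-\frac{\uppi_j}{1-\uppi_j}\right)^{n_j}}\right),
\]
so substituting produces precisely the $(n_i-1)$-th derivative appearing in the statement, multiplied by $-\frac{(N-\sz{n}+1)!}{N!\,(n_i-1)!\prod_{j=1}^p(1-\uppi_j)^{n_j}}\,\frac{1}{\uppi_i^x(1-\uppi_i)^{N-x}}$. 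It then remains to recast the numerical constant: from $(-N)_{\sz{n}-1}=(-1)^{\sz{n}-1}N(N-1)\cdots(N-\sz{n}+2)=(-1)^{\sz{n}-1}\frac{N!}{(N-\sz{n}+1)!}$ one gets $-\frac{(N-\sz{n}+1)!}{N!}=\frac{(-1)^{\sz{n}}}{(-N)_{\sz{n}-1}}$, which turns the prefactor into exactly the one displayed in the corollary.

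There is essentially no genuine obstacle here — the argument is a routine residue evaluation. The only points that warrant a little care are the sign bookkeeping coming from the clockwise orientation of $\Sigma_i^\ast$, the elementary conversion between the factorial ratio $(N-\sz{n}+1)!/N!$ and the Pochhammer symbol $(-N)_{\sz{n}-1}$, and checking that $\Sigma_i^\ast$ can in fact be chosen to encircle only $\frac{\uppi_i}{1-\uppi_i}$ while staying clear of the branch cuts — all of which are guaranteed by the constraint $0<\operatorname{Re}(t)$ together with $0<\uppi_i<1$.
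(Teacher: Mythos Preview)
Your proposal is correct and follows exactly the route the paper takes: the corollary is stated immediately after Theorem~\ref{KI_IR} with the remark that ``an application of the residue theorem leads to the following formulas for the type I polynomials,'' and you have simply written out that residue computation in detail, including the clockwise-orientation sign and the rewriting of $(N-\sz{n}+1)!/N!$ in terms of $(-N)_{\sz{n}-1}$.
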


\begin{teo}
Let \( \mathcal{C}^\ast\) be an counterclockwise contour enclosing the origin exactly once. Then, for \(x \in \mathbb{N}_0\), the type II polynomials are given by
 \[
 K_{\vec{n}}^{({\rm II})}(x;\vec{\uppi},N) = \frac{\prod_{j=1}^p (1-\piup_j)^{n_j}}{(N-\sz{n})!} \Gamma(x+1)\Gamma(N-x+1) \bigintss_{\mathcal{C}^\ast} \frac{(1+s)^{N-\sz{n}}}{s^{x+1}}\prod_{j=1}^p \left(s-\frac{\uppi_j}{1-\uppi_j}\right)^{n_j} \frac{\d s }{2 \pi \ii }.
 \]
\end{teo}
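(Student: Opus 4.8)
The plan is to specialise the Hahn integral representation of Theorem~\ref{HII_IR} to the parameters dictated by Corollary~\ref{Cor:H->K}, namely $\alpha_j=\frac{\uppi_j}{1-\uppi_j}\tau$ and $\beta=\tau$, and to let $\tau\to\infty$. Since $\kappa_\tau^{({\rm II})}=1$, the limiting relation~\eqref{H->K:MOP} reads
\[
 K_{\vec{n}}^{({\rm II})}(x;\vec{\uppi},N)=\lim_{\tau\to\infty}Q_{\vec{n}}^{({\rm II})}\!\left(x;\left\{\tfrac{\uppi_i}{1-\uppi_i}\tau\right\}_{i=1}^p,\tau,N\right),
\]
so it is enough to evaluate this limit. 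The decisive move is the rescaling $s\mapsto\tau s$ in the contour integral of Theorem~\ref{HII_IR}: it is this change of variables that brings out the factors $(1+s)^{N-\sz{n}}$, $s^{-x-1}$ and $\bigl(s-\tfrac{\uppi_j}{1-\uppi_j}\bigr)^{n_j}$ occurring in the statement.

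After the substitution $s\mapsto\tau s$, the counterclockwise contour $\mathcal{C}$ of Theorem~\ref{HII_IR}, which encloses $[-N,0]$, turns into a counterclockwise contour enclosing $[-N/\tau,0]$. Since $x,n_1,\ldots,n_p,N\in\mathbb{N}_0$ with $N\geqslant\sz{n}$, and since $\alpha_j+\beta=\tfrac{\tau}{1-\uppi_j}$, every $\tau$-dependent Gamma ratio in the integrand is actually a rational function of $s$: one has $\frac{\Gamma(\tau(s+1)+N+1)}{\Gamma(\tau(s+1)+\sz{n}+1)}=\prod_{k=\sz{n}+1}^{N}\bigl(\tau(s+1)+k\bigr)$, the factors $\bigl(\tau(\tfrac{\uppi_j}{1-\uppi_j}-s)+1\bigr)_{n_j}$ are polynomials in $s$, and $\frac{\Gamma(\tau s)}{\Gamma(\tau s+x+1)}=\frac{1}{(\tau s)_{x+1}}$. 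Consequently the rescaled integrand is meromorphic with poles only at $s\in\{0,-\tfrac1\tau,\ldots,-\tfrac{x}{\tau}\}$, all of which lie in $[-N/\tau,0]$. For large $\tau$ these finitely many poles cluster near the origin, so the rescaled contour can be deformed, without meeting a pole, to a fixed counterclockwise contour $\mathcal{C}^\ast$ that stays at a positive distance from $0$, from $-1$ and from the points $\tfrac{\uppi_j}{1-\uppi_j}$.

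Next I would extract the $\tau\to\infty$ asymptotics with the help of~\eqref{GFB}. On $\mathcal{C}^\ast$ one obtains, uniformly as $\tau\to\infty$,
\[
 \frac{\Gamma(\tau(s+1)+N+1)}{\Gamma(\tau(s+1)+\sz{n}+1)}\sim\tau^{N-\sz{n}}(1+s)^{N-\sz{n}},\qquad
 \frac{\Gamma(\tau s)}{\Gamma(\tau s+x+1)}\sim(\tau s)^{-x-1},
\]
\[
 \Bigl(\tau\bigl(\tfrac{\uppi_j}{1-\uppi_j}-s\bigr)+1\Bigr)_{n_j}\sim(-\tau)^{n_j}\Bigl(s-\tfrac{\uppi_j}{1-\uppi_j}\Bigr)^{n_j},\qquad j=1,\ldots,p,
\]
while, using $\alpha_j+\beta+\sz{n}+1=\tfrac{\tau}{1-\uppi_j}+\sz{n}+1$ and $\frac{\Gamma(\tau+\sz{n}+1)}{\Gamma(\tau+N-x+1)}\sim\tau^{\sz{n}-N+x}$, the prefactor of Theorem~\ref{HII_IR} at these parameters is asymptotic to $\dfrac{(-1)^{\sz{n}}\prod_{j=1}^p(1-\uppi_j)^{n_j}\,\Gamma(x+1)\Gamma(N-x+1)}{(N-\sz{n})!}\,\tau^{x-N}$. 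Taking also the Jacobian $\tau$ from $\d s\mapsto\tau\,\d s$ into account, all powers of $\tau$ cancel, and the product of prefactor, Jacobian and rescaled integrand converges, uniformly on $\mathcal{C}^\ast$, to
\[
 \frac{\prod_{j=1}^p(1-\uppi_j)^{n_j}\,\Gamma(x+1)\Gamma(N-x+1)}{(N-\sz{n})!}\,\frac{(1+s)^{N-\sz{n}}}{s^{x+1}}\prod_{j=1}^p\Bigl(s-\tfrac{\uppi_j}{1-\uppi_j}\Bigr)^{n_j}.
\]

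Finally I would pass to the limit under the integral sign. Because $\mathcal{C}^\ast$ is fixed, compact and bounded away from all the singular points involved, the convergence above is uniform on $\mathcal{C}^\ast$ and dominated convergence applies; integrating over $\mathcal{C}^\ast$ and invoking~\eqref{H->K:MOP} then gives exactly the stated representation. The step I expect to demand the most care is the contour analysis of the second paragraph, that is, checking that the rescaled Hahn contour can genuinely be deformed onto the fixed contour $\mathcal{C}^\ast$ for all sufficiently large $\tau$ without crossing a pole, together with the uniformity of the estimates on $\mathcal{C}^\ast$ that is needed to interchange limit and integral.
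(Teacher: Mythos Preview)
Your proof is correct and follows essentially the same approach as the paper: specialise the Hahn integral representation of Theorem~\ref{HII_IR} to the Kravchuk parameters, rescale $s\mapsto\tau s$, use~\eqref{GFB} for the asymptotics of the Gamma ratios, and pass to the limit via~\eqref{H->K:MOP}. The paper presents the argument more tersely, while you are more explicit about why the rescaled integrand is rational with poles clustering at the origin, why the contour can be fixed independently of~$\tau$, and why the limit can be taken under the integral sign; these refinements are welcome but do not alter the underlying strategy.
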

\begin{proof}
Consider the integral representation for
\( Q_{\vec{n}}^{({\rm II})} \left(x;\left\{\frac{\uppi_i}{1-\uppi_i}\tau\right\}_{i=1}^p, \tau, N\right) \) 
given in Theorem \ref{HII_IR}. This representation involves an counterclockwise contour enclosing the interval \([-N,0]\). After changing variables with \( s \mapsto \tau s \), we can deform this contour to a contour \(\mathcal{C}\) enclosing \([-N,0]\). We can use \eqref{GFB} to obtain the asymptotics for the $\tau$-dependent parts of the integrand and prefactor:
\begin{multline*}
 \tau \frac{\Gamma(\sz{n}+\tau+1)}{\Gamma(N-x+\tau+1)} \frac{\Gamma(\tau s + N + \tau + 1) }{\Gamma(\tau s +\sz{n}+\tau+1)} \frac{\Gamma(\tau s)}{\Gamma(\tau s+x+1)} \frac{\prod_{j=1}^p \left(\frac{\uppi_j}{1 - \uppi_j}\tau + 1 - \tau s\right)_{n_j}}{\prod_{j=1}^p \left(\frac{1}{1 - \uppi_j}\tau + \sz{n} + 1\right)_{n_j}} \\\begin{aligned}&\sim
\tau^{\sz{n}-N+x+1} ((1+s)\tau)^{N-\sz{n}} (\tau s)^{-x-1} \frac{\prod_{j=1}^p \left(\frac{\uppi_j}{1 - \uppi_j} - s\right)^{n_j}}{\prod_{j=1}^p \left(\frac{1}{1 - \uppi_j}\right)^{n_j}}= \frac{(1+s)^{N-\sz{n}}}{s^{x+1}} \frac{\prod_{j=1}^p \left(\frac{\uppi_j}{1 - \uppi_j} - s\right)^{n_j}}{\prod_{j=1}^p \left(\frac{1}{1 - \uppi_j}\right)^{n_j}},\quad \tau\to\infty.
\end{aligned}
\end{multline*}
Combining these results with limit \eqref{H->K:MOP} and deforming the contour to \(\mathcal{C}^\ast\) then yields the desired result.
\end{proof}

\subsection{Recurrence Coefficients}

We will now present explicit expressions for the recurrence coefficients.

\begin{teo}
The Kravchuk multiple orthogonal polynomials, of type I \eqref{K:I} and type II \eqref{K:II}, satisfy their respective recurrence relations \eqref{NNRR} with coefficients:
\begin{align}
\label{K:R}
&\begin{aligned}
b_{\vec{n}}^{(K),0}(k)&=(N-|\vec{n}|)\uppi_k+
\sum_{i=1}^p n_i(1-\uppi_i),\\
 b_{\vec{n}}^{(K),j}&=(N-|\vec{n}|+1)_{j}\sum_{i\in S(\pi,j)}
{n_i\uppi_i(1-\uppi_i)}
\prod_{q\in S^{\textsc{c}}(\pi,j)}{(\uppi_i-\uppi_q)},\quad j\in\{1,\ldots,p\}.
\end{aligned}
\end{align}
\end{teo}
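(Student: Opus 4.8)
The plan is to read off both families of coefficients as $\tau\to\infty$ limits of the Hahn ones, exactly as in the Meixner sections. By Corollary~\ref{Cor:H->K} the Kravchuk polynomials of types I and II are limits of Hahn polynomials under the specialization $\alpha_i=\tfrac{\uppi_i}{1-\uppi_i}\tau$, $\beta=\tau$ (with the normalizations $\kappa^{(\ast)}_\tau$ there, which are independent of $\vec n$), so term-by-term passage to the limit in the Hahn near-neighbor recurrences shows that the Kravchuk polynomials satisfy \eqref{NNRR} with coefficients $b_{\vec n}^{(K),j}(\vec\uppi,N)=\lim_{\tau\to\infty}b_{\vec n}^j\big(\{\tfrac{\uppi_i}{1-\uppi_i}\tau\}_{i=1}^p,\tau,N\big)$, as prescribed by \eqref{H->K:R}. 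It then remains to evaluate these limits by inserting the explicit Hahn formulas \eqref{HahnRecurrence}. Throughout I will use the exact identities $\alpha_i+\beta=\tfrac{\tau}{1-\uppi_i}$ and $\alpha_i-\alpha_q=\tfrac{\uppi_i-\uppi_q}{(1-\uppi_i)(1-\uppi_q)}\tau$ valid under this substitution, together with the elementary Pochhammer/gamma asymptotics \eqref{GFB}, to track the leading behavior of each factor.

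For $j\in\{1,\dots,p\}$ the limit is direct and no cancellations occur. After the substitution, the prefactor of $b_{\vec n}^j$ in \eqref{HahnRecurrence} grows like a constant times $\tau$: one has $(\beta+1+|\vec n|-j)_j\sim\tau^j$ against $\prod_{q\in S^{\textsf c}(\pi,j)}(\alpha_q+\beta+|\vec n|-j+n_q)\sim\tau^{\,j-1}\prod_{q\in S^{\textsf c}}(1-\uppi_q)^{-1}$ (using $|S^{\textsf c}(\pi,j)|=j-1$), while the two ratios of Pochhammer symbols tend to $1$; so the prefactor $\sim (N-|\vec n|+1)_j\,\tau\prod_{q\in S^{\textsf c}}(1-\uppi_q)$. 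Each summand over $i\in S(\pi,j)$ decays like $\tau^{-1}$: the quotient $\tfrac{(\alpha_i+n_i)(\alpha_i+\beta+n_i+N+1)}{(\alpha_i+\beta+n_i+|\vec n|-j)_{j+2}}\sim\uppi_i(1-\uppi_i)^{j}\tau^{-j}$, and the remaining ratio of products $\sim n_i\,\tau^{\,j-1}\prod_{q\in S^{\textsf c}(\pi,j)}\tfrac{\uppi_i-\uppi_q}{(1-\uppi_i)(1-\uppi_q)}$. Multiplying, the powers of $\tau$ cancel and the surviving constant is exactly $(N-|\vec n|+1)_j\sum_{i\in S(\pi,j)}n_i\uppi_i(1-\uppi_i)\prod_{q\in S^{\textsf c}(\pi,j)}(\uppi_i-\uppi_q)$, which is the second line of \eqref{K:R}.

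The case $j=0$ is the delicate one and carries the real work. In the first line of \eqref{HahnRecurrence} the factor $(\alpha_k+n_k+1)\sim\tfrac{\uppi_k}{1-\uppi_k}\tau$ diverges while the bracket $(\cdots-1)\to n_k$, so the first piece grows like $n_k\alpha_k$; likewise the $i=k$ summand of the second piece, where $\alpha_i-\alpha_k-n_k-1+n_i$ specializes to $-1$, grows like $-n_k\alpha_k$. One must therefore expand both pieces to order $O(1)$, which forces keeping the $O(\tau^{-1})$ corrections in $\tfrac{\alpha_k+\beta+n_k+N+2}{\alpha_k+\beta+n_k+|\vec n|+2}=1+\tfrac{N-|\vec n|}{\alpha_k+\beta}+O(\tau^{-2})$, in $\tfrac{\alpha_k-\alpha_q+n_k+1}{\alpha_k-\alpha_q+n_k+1-n_q}=1+\tfrac{n_q}{\alpha_k-\alpha_q}+O(\tau^{-2})$ and their analogues in the second piece, and expanding $\tfrac{(\alpha_k+n_k)(\alpha_k+\beta+n_k+N+1)}{(\alpha_k+\beta+n_k+|\vec n|)_2}$ to the same accuracy. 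One checks that the $O(\tau)$ terms of the two pieces cancel; meanwhile the $i\neq k$ summands of the second piece each decay like $\tau^{-1}$ (the denominator factor $\alpha_i-\alpha_k-n_k-1+n_i\sim\alpha_i-\alpha_k$ diverges) and, multiplied by the $O(\tau)$ factor $(\alpha_k+\beta+n_k+|\vec n|+1)$, contribute $\sum_{i\neq k}\tfrac{n_i\uppi_i(1-\uppi_i)}{\uppi_i-\uppi_k}$. Assembling the constant terms, the coefficients of the sums $\sum_{q\neq k}\tfrac{n_q(1-\uppi_q)}{\uppi_k-\uppi_q}$ from the two pieces combine into $\uppi_k$, which together with the $i\neq k$ contribution telescopes via $(\uppi_k-\uppi_q)\cdot\tfrac{n_q(1-\uppi_q)}{\uppi_k-\uppi_q}=n_q(1-\uppi_q)$ into $\sum_{q\neq k}n_q(1-\uppi_q)$; the remaining rational expression in $n_k,|\vec n|,N,\uppi_k$ collapses to $n_k+\uppi_k(N-|\vec n|-n_k)$, giving $b_{\vec n}^{(K),0}(k)=(N-|\vec n|)\uppi_k+\sum_{i=1}^p n_i(1-\uppi_i)$, as stated in \eqref{K:R}.

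The main obstacle is precisely the bookkeeping in the $j=0$ case: one has to carry every relevant expansion one order beyond its naive leading term in order to see the linearly divergent contributions of the two pieces of \eqref{HahnRecurrence} cancel, and then correctly collect and simplify the numerous surviving $O(1)$ terms — routine but error-prone. By contrast, the $j\in\{1,\dots,p\}$ coefficients drop out of \eqref{HahnRecurrence} and \eqref{H->K:R} by a single clean limit.
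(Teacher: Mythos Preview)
Your proposal is correct and follows essentially the same approach as the paper: both apply the limit \eqref{H->K:R} to the explicit Hahn coefficients \eqref{HahnRecurrence}, observe that the $j\in\{1,\ldots,p\}$ case is a direct asymptotic computation, and for $j=0$ separate the $i=k$ summand (where $\alpha_i-\alpha_k-n_k-1+n_i=-1$) from the $i\neq k$ terms, expand each divergent piece one order beyond leading so that the $O(\tau)$ contributions cancel, and then combine the surviving constants via the telescoping identity $\uppi_k\cdot\tfrac{n_q(1-\uppi_q)}{\uppi_k-\uppi_q}+\tfrac{n_q\uppi_q(1-\uppi_q)}{\uppi_q-\uppi_k}=n_q(1-\uppi_q)$. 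The paper's write-up makes the reorganization of the first piece and the $i=k$ summand into the pair of $(n_k+1)$- and $n_k$-weighted terms more explicit, but the underlying mechanism is the same as what you describe.
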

\begin{proof}
Here we have to apply the limit \eqref{H->K:R} to the Hahn coefficients \eqref{HahnRecurrence}. For \( j \in \{1, \ldots, p\} \), applying the limit is relatively straightforward. On the other hand, for \( j = 0 \), additional steps are necessary. In this scenario, the expression simplifies to:
\begin{multline*}
 b_{\vec{n}}^{(K),0}(k)
 =\lim_{\tau\rightarrow\infty}\Bigg(
\dfrac{(n_k+1)\left(\frac{\uppi_k}{1-\uppi_k}\tau+n_k+1\right)\left(\frac{1}{1-\uppi_k}\tau+n_k+N+2\right)}{\frac{1}{1-\uppi_k}\tau+n_k+|\vec{n}|+2}
\prod_{q=1,q\neq k}^p\dfrac{\frac{\uppi_k-\uppi_q}{(1-\uppi_k)(1-\uppi_q)}\tau+n_k+1}{\frac{\uppi_k-\uppi_q}{(1-\uppi_k)(1-\uppi_q)}\tau-n_q+n_k+1}\\
-
\dfrac{n_k\left(\frac{\uppi_k}{1-\uppi_k}\tau+n_k\right)\left(\frac{1}{1-\uppi_k}\tau+n_k+N+1\right)}{\frac{1}{1-\uppi_k}\tau+n_k+|\vec{n}|}
\prod_{q=1,q\neq k}^p\dfrac{\frac{\uppi_k-\uppi_q}{(1-\uppi_k)(1-\uppi_q)}\tau+n_k}{\frac{\uppi_k-\uppi_q}{(1-\uppi_k)(1-\uppi_q)}\tau-n_q+n_k}-\left(\frac{\uppi_k}{1-\uppi_k}\tau+n_k+1\right)\Bigg)\\
+\underbrace{\lim_{\tau\rightarrow\infty}\left(\frac{1}{1-\uppi_k}\tau+|\vec{n}|+n_k+1\right)\sum_{i=1,i\neq k}^p\dfrac{n_i\left(\frac{\uppi_i}{1-\uppi_i}\tau+n_i\right)\left(\frac{1}{1-\uppi_i}\tau+N+n_i+1\right)}{\left(\frac{\uppi_i-\uppi_k}{(1-\uppi_i)(1-\uppi_k)}\tau-n_k-1+n_i\right)\left(\frac{1}{1-\uppi_i}\tau+|\vec{n}|+n_i\right)_2}\prod_{q=1,q\neq i}^p\dfrac{\frac{\uppi_i-\uppi_q}{(1-\uppi_i)(1-\uppi_q)}\tau+n_i}{\frac{\uppi_i-\uppi_q}{(1-\uppi_i)(1-\uppi_q)}\tau-n_q+n_i}}_{=
\sum_{i=1,i\neq k}^p 
\frac{n_i\uppi_i(1-\uppi_i)}{\uppi_i-\uppi_k}}.
\end{multline*}
The limit of the second term on the right-hand side is straightforward to evaluate, as previously demonstrated. The focus now shifts to the first term. To find this limit, we need to reframe the expression as follows:
\begin{multline*}
 \begin{multlined}[t][\textwidth]
 (n_k+1)\left(\frac{\uppi_k}{1-\uppi_k}\tau+n_k+1\right)\left(1+\dfrac{N-|\vec{n}|}{\frac{1}{1-\uppi_k}\tau+n_k+|\vec{n}|+2}\right)
\prod_{q=1,q\neq k}^p\left(1+\dfrac{n_q}{\frac{\uppi_k-\uppi_q}{(1-\uppi_k)(1-\uppi_q)}\tau-n_q+n_k+1}\right)\\
-
n_k\left(\frac{\uppi_k}{1-\uppi_k}\tau+n_k\right)\left(1+\dfrac{N-|\vec{n}|+1}{\frac{1}{1-\uppi_k}\tau+n_k+|\vec{n}|}
\right)\prod_{q=1,q\neq k}^p\left(1+\dfrac{n_q}{\frac{\uppi_k-\uppi_q}{(1-\uppi_k)(1-\uppi_q)}\tau-n_q+n_k}\right)-\left(\frac{\uppi_k}{1-\uppi_k}\tau+n_k+1\right)
 \end{multlined}\\
\begin{aligned}
 &=\begin{multlined}[t][.8\textwidth]
 (n_k+1)\left(\frac{\uppi_k}{1-\uppi_k}\tau+n_k+1\right)\left(1+\dfrac{N-|\vec{n}|}{\frac{1}{1-\uppi_k}\tau+n_k+|\vec{n}|+2}+
\sum_{q=1,q\neq k}^p\dfrac{n_q}{\frac{\uppi_k-\uppi_q}{(1-\uppi_k)(1-\uppi_q)}\tau-n_q+n_k+1}+\operatorname O\left(\dfrac{1}{\tau^2}\right)\right)\\
-
n_k\left(\frac{\uppi_k}{1-\uppi_k}\tau+n_k\right)\left(1+\dfrac{N-|\vec{n}|+1}{\frac{1}{1-\uppi_k}\tau+n_k+|\vec{n}|}
+\sum_{q=1,q\neq k}^p\dfrac{n_q}{\frac{\uppi_k-\uppi_q}{(1-\uppi_k)(1-\uppi_q)}\tau-n_q+n_k}+\operatorname O\left(\dfrac{1}{\tau^2}\right)\right)-\left(\frac{\uppi_k}{1-\uppi_k}\tau+n_k+1\right)
\end{multlined}\\
&=\begin{multlined}[t][.8\textwidth]n_k+(n_k+1)\left(N-\sz{n}\right)\dfrac{\frac{\uppi_k}{1-\uppi_k}\tau+n_k+1}{\frac{1}{1-\uppi_k}\tau+n_k+|\vec{n}|+2}
+
(n_k+1)\sum_{q=1,q\neq k}^pn_q\dfrac{\frac{\uppi_k}{1-\uppi_k}\tau+n_k+1}{\frac{\uppi_k-\uppi_q}{(1-\uppi_k)(1-\uppi_q)}\tau-n_q+n_k+1}\\
-
n_k\left(N-|\vec{n}|+1\right)\dfrac{\frac{\uppi_k}{1-\uppi_k}\tau+n_k}{\frac{1}{1-\uppi_k}\tau+n_k+|\vec{n}|}
-
n_k\sum_{q=1,q\neq k}^pn_q\dfrac{\frac{\uppi_k}{1-\uppi_k}\tau+n_k}{\frac{\uppi_k-\uppi_q}{(1-\uppi_k)(1-\uppi_q)}\tau-n_q+n_k}+\operatorname O\left(\dfrac{1}{\tau}\right)\end{multlined}\\
&\sim (N-\sz{n})\uppi_k+n_k(1-\uppi_k)+\uppi_k\sum_{i=1,i\neq k}^p n_i\frac{1-\uppi_i}{\uppi_k-\uppi_i},\quad \tau\to\infty.
\end{aligned}
\end{multline*}
Combining with the other term and simplifying yields the recurrence relation given in \eqref{K:R}.
\end{proof}

\section{Charlier 
}

For the multiple Charlier polynomials, the weight functions are given by
\begin{align}
 \label{WeightsC}
 w_i^{(C)}(x; a_i) &= \frac{a_i^x}{\Gamma(x+1)},\quad i \in \{1, \ldots, p\},\quad \Delta = \mathbb{N}_0.
\end{align}
Here \( a_1, \ldots, a_p > 0\) and, to ensure an AT system, \( a_i \neq a_j \) for all \( i \neq j \). We will denote \(\vec{a}\coloneq(a_1,\cdots,a_p)\).

\begin{pro}
The following limiting relations between the Meixner of second kind \eqref{WeightsM2}, Meixner of first kind \eqref{WeightsM1}, Kravchuk \eqref{WeightsK} and Charlier weights hold:
\[\begin{aligned}
 w_i^{(C)}(x;a_i)&=\lim_{c\to0} w_i^{(M2)}\left(x;\frac{a_i}{c},c\right)= \lim_{\beta\to\infty} w_i^{(M1)}\left(x;\beta,\frac{a_i}{\beta}\right)=\lim_{N\rightarrow\infty}\Exp{a_i}w^{(K)}_i\left(x;\frac{a_i}{N},N\right),\quad i\in\{1,\ldots,p\}.
\end{aligned}\]
\begin{proof}
This follows from the asymptotics: 
 \begin{align*}
 w_i^{(M2)}\left(x;\frac{a_i}{c},c\right)&=\begin{aligned}
 &\frac{\Gamma\left(x+\frac{a_i}{c}\right)}{\Gamma(x+1)\Gamma\left(\frac{a_i}{c}\right)}c^x\sim\frac{a_i^x}{\Gamma(x+1)},& c&\to0,
 \end{aligned}\\
 w_i^{(M1)}\left(x;\beta,\frac{a_i}{\beta}\right)&=\begin{aligned}
 &\frac{\Gamma(\beta+x)}{\Gamma(x+1)\Gamma(\beta)}\frac{a_i^x}{\beta^x}\sim\frac{a_i^x}{\Gamma(x+1)},& \beta&\to\infty,
 \end{aligned}\\
 w_i^{(K)}\left(x;\frac{a_i}{N},N\right)&=\begin{aligned}&
 \frac{\Gamma(N+1)}{\Gamma(x+1)\Gamma(N-x+1)}\frac{a_i^x}{N^x}\left(1-\frac{a_i}{N}\right)^{N-x}\sim\frac{a_i^x\Exp{-a_i}}{\Gamma(x+1)},& N&\to\infty.
 \end{aligned}
 \end{align*}
\end{proof}
\end{pro}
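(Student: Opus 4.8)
The plan is to verify the three limiting relations independently; each one reduces to substituting the specialized parameters into the relevant weight definition and reading off the asymptotics of every gamma factor via \eqref{GFB} (equivalently Stirling's formula \eqref{stirling}). For the Meixner-of-the-second-kind limit I would start from \eqref{WeightsM2} with the role of $\beta_i$ played by $a_i/c$, giving
\[
 w_i^{(M2)}\!\left(x;\frac{a_i}{c},c\right)=\frac{\Gamma\!\left(x+\frac{a_i}{c}\right)}{\Gamma(x+1)\,\Gamma\!\left(\frac{a_i}{c}\right)}\,c^{x};
\]
since $a_i/c\to\infty$ as $c\to0$, one has $\Gamma(x+a_i/c)/\Gamma(a_i/c)\sim (a_i/c)^{x}$, and multiplying by $c^{x}$ cancels the $c$-dependence, leaving $a_i^{x}/\Gamma(x+1)=w_i^{(C)}(x;a_i)$ by \eqref{WeightsC}.

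The Meixner-of-the-first-kind case is entirely analogous: from \eqref{WeightsM1} with $c_i$ replaced by $a_i/\beta$, the factor $\Gamma(\beta+x)/\Gamma(\beta)\sim\beta^{x}$ as $\beta\to\infty$ is cancelled by $(a_i/\beta)^{x}$, again producing $a_i^{x}/\Gamma(x+1)$. For the Kravchuk limit I would substitute the Kravchuk parameter equal to $a_i/N$ in \eqref{WeightsK}, use $\Gamma(N+1)/\Gamma(N-x+1)\sim N^{x}$ together with the elementary limit $(1-a_i/N)^{N-x}\to\Exp{-a_i}$; then $(a_i/N)^{x}$ cancels $N^{x}$ and the outcome is $a_i^{x}\Exp{-a_i}/\Gamma(x+1)$, so the stated prefactor $\Exp{a_i}$ restores $w_i^{(C)}(x;a_i)$.

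None of these three computations is a genuine obstacle. The only points deserving a word of care are that $x$ is kept fixed while the parameter tends to its limit — so that for $N$ large enough one has $x\in\{0,\ldots,N\}$ and $a_i/N<1$ in the Kravchuk case — and that after the cancellations the $\sim$-statements become honest limits (no uniformity in $x$ is required, since the value of $x$ is frozen). I would present the proof compactly, collecting the three one-line asymptotic identities in a single displayed array, mirroring the format already used for the Hahn-to-descendant weight propositions earlier in the paper.
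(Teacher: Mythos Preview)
Your proposal is correct and follows essentially the same route as the paper: both proofs substitute the specialized parameters into the weight definitions \eqref{WeightsM2}, \eqref{WeightsM1}, \eqref{WeightsK} and read off the three one-line asymptotics using the gamma-ratio behavior \eqref{GFB} together with the elementary limit $(1-a_i/N)^{N-x}\to\Exp{-a_i}$. Your additional remarks about $x$ being held fixed and the eventual validity of $x\in\{0,\ldots,N\}$, $a_i/N<1$ in the Kravchuk case are sound clarifications that the paper leaves implicit.
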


\begin{coro}
\label{Cor:M/K->C}
 For the linear forms $C_{\vec{n}}^{({\rm I})}$, type I polynomials $C_{\vec{n}}^{(i)}$, type II polynomials $C_{\vec{n}}^{({\rm II})}$ and recurrence coefficients $b_{\vec{n}}^{(C),j}$; the following respective limiting relations from Meixner second kind, Meixner first kind and Kravchuk hold:
\begin{subequations}
\label{M/K->C}
 \begin{align}
 \label{M/K->C:MOP}
 C_{\vec{n}}^{(\ast)}(x;\vec{a})&=\lim_{c\to0} M_{2:\vec{n}}^{(\ast)}\left(x;\frac{\vec{a}}{c},c\right)= \lim_{\beta\to\infty} M_{1:\vec{n}}^{(\ast)}\left(x;\beta,\frac{\vec{a}}{b}\right)= \lim_{N \rightarrow \infty} \kappa_N^{(\ast)}K_{\vec{n}}^{(\ast)} \left(x; \frac{\vec{a}}{N}, N \right) , \\
 \label{M/K->C:R}
 b_{\vec{n}}^{(C),j}(\vec{a}) &=\lim_{c\to0}b_{\vec{n}}^{(M2),j}\left(\frac{\vec{a}}{c},c\right)=\lim_{\beta\rightarrow\infty}b_{\vec{n}}^{(M1),j}\left(\beta,\frac{\vec{a}}{\beta}\right)=
 \begin{aligned}
 &\lim_{N\rightarrow\infty}b_{\vec{n}}^{(K),j}\left(\frac{\vec{a}}{N},N\right),
 \end{aligned}
 \end{align}
\end{subequations}
where
\begin{equation*}
 \kappa_N^{({\rm I})}=\kappa_N^{({\rm II})}=1,\quad \kappa_N^{(i)}=\Exp{-a_i}.
\end{equation*}
\end{coro}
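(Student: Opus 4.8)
The plan is to establish the two assertions of the corollary in turn: the limits for the linear forms and the type I and type II polynomials, and then the limits for the near-neighbour recurrence coefficients. For the first assertion I would follow the same pattern as in the Hahn-to-descendant corollaries. The preceding proposition already supplies the pointwise limits of the weights: $w_i^{(M2)}(x;a_i/c,c)\to w_i^{(C)}(x;a_i)$ as $c\to0$, $w_i^{(M1)}(x;\beta,a_i/\beta)\to w_i^{(C)}(x;a_i)$ as $\beta\to\infty$, and $e^{a_i}w_i^{(K)}(x;a_i/N,N)\to w_i^{(C)}(x;a_i)$ as $N\to\infty$. Since the Charlier weights \eqref{WeightsC} form an AT system on $\mathbb N_0$ (they are positive and $a_i\neq a_j$ for $i\neq j$), the type II polynomials are the unique monic polynomials of degree $\leqslant|\vec n|$ satisfying \eqref{DO:II}, and the type I data are unique up to the normalization \eqref{DO:I}; so it suffices to check that the suitably renormalized Meixner/Kravchuk objects satisfy, in the limit, the Charlier orthogonality relations. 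For the type II polynomials no renormalization is needed because monicity and degree are preserved, giving $\kappa^{({\rm II})}=1$; for the type I linear form the Meixner weight limits are exact, which forces $\kappa^{({\rm I})}=1$, whereas the extra factor $e^{a_i}$ in the Kravchuk weight limit is cancelled by $\kappa^{(i)}=e^{-a_i}$, so that $\kappa^{(i)}K_{\vec n}^{(i)}\to C_{\vec n}^{(i)}$ makes $K_{\vec n}^{({\rm I})}\to C_{\vec n}^{({\rm I})}$.

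The technical crux of this first part is passing the limit through the sums $\sum_{k\in\Delta}k^j(\cdot)\,w_i(k)$ appearing in \eqref{DO:II}--\eqref{DO:I}. For the two Meixner limits the summation set $\Delta=\mathbb N_0$ is fixed, and I would justify the interchange by dominated convergence: for $c$ small, respectively $\beta$ large, the functions $k\mapsto k^j w_i^{(M2)}(k;a_i/c,c)$ and $k\mapsto k^j w_i^{(M1)}(k;\beta,a_i/\beta)$ are dominated by a fixed summable majorant of the shape $C\,k^j(a_i+\varepsilon)^k/k!$, which one extracts from Stirling's formula \eqref{stirling} applied to the relevant ratios of Gamma functions. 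For the Kravchuk limit $\Delta=\{0,\ldots,N\}$ grows with $N$; here I would extend the finite sums to $\mathbb N_0$, estimate the tail $\sum_{k>N}$ using the $a_i^k/k!$ decay of the Charlier weight to see it is negligible, and then apply dominated convergence on $\mathbb N_0$. This uniform control near each limit is the main obstacle; once it is in place, uniqueness of the AT-system polynomials closes the argument in one line.

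For the recurrence coefficients I would substitute the indicated specializations into the explicit closed forms \eqref{M2:RR}, \eqref{M1:R} and \eqref{K:R} and take $c\to0$, $\beta\to\infty$, $N\to\infty$ respectively. For $j\in\{1,\ldots,p\}$ the limits are immediate: each difference $\beta_i-\beta_q$, $c_i-c_q$ or $\uppi_i-\uppi_q$ either tends to a finite nonzero value or, when it grows, occurs inside a ratio tending to $1$, and one reads off the common value $b_{\vec n}^{(C),j}(\vec a)$. The delicate case is $j=0$: exactly as in the computation behind \eqref{M1:R}, the naive limit of $b_{\vec n}^0(k)$ is an indeterminate difference of individually divergent terms — for Meixner of the second kind, for instance, the $a_k n_k/c$ contributions coming from the first bracket and from the $i=k$ summand of the second sum cancel — so one must expand each Pochhammer and rational factor to one further order in the small parameter before passing to the limit. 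I would also use the three specializations as a mutual consistency check that they all yield the same $b_{\vec n}^{(C),0}(k)$. As a shorter alternative one could note that, the polynomial limits being already established, the near-neighbour relations \eqref{NNRR} pass to the limit and, by linear independence of the shifted polynomials, the coefficients must converge to those of the Charlier family; but the direct computation is what renders the Charlier coefficients explicit.
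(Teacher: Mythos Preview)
The paper states this corollary without proof, treating it (like the analogous Corollaries \ref{Cor:H->M2}, \ref{Cor:H->M1}, \ref{Cor:H->K}) as an immediate consequence of the weight-limit proposition and the uniqueness of multiple orthogonal polynomials in an AT system. Your proposal is a correct and careful way to make this implicit argument rigorous: the use of dominated convergence to pass the limit through the discrete orthogonality sums, the observation that monicity fixes the type II normalization while the $e^{a_i}$ factor in the Kravchuk weight limit forces $\kappa_N^{(i)}=e^{-a_i}$, and the direct substitution into \eqref{M2:RR}, \eqref{M1:R}, \eqref{K:R} for the recurrence coefficients are all exactly what the paper leaves to the reader.
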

\begin{rem}
 The limiting relations arising from Meixner first kind and Kravchuk for the type II polynomials were already deduced at \cite{AskeyII}.
\end{rem}


\subsection{Hypergeometric representation}

Through the previous limits, the monic Charlier type II polynomials are, cf. \cite{AskeyII}:
\begin{equation}
 \label{C:II}\begin{aligned}
 C_{\vec{n}}^{({\rm II})}(x;\vec{a})
 &=\prod_{i=1}^p\left(-a_i\right)^{n_i}\KF{1:1;\cdots;1}{0:0;\cdots;0}{-x:-n_1;\cdots;-n_p}{--:--;\cdots;--}{-\dfrac{1}{a_1},\ldots,-\dfrac{1}{a_p}}.
\end{aligned}
\end{equation}

Below, we will derive a hypergeometric representation for the type I Charlier polynomials. For $p=2$, an alternative representation was found in \cite[Proposition 9.1]{HahnI}.

\begin{teo}
The type I Charlier polynomials can be expressed as follows:
\begin{align}
 \label{C:I}
 C_{\vec{n}}^{(i)}(x;\vec{a}) = \frac{(-1)^{n_i-1} \Exp{-a_i}}{(n_i-1)! \prod_{q=1, q \neq i}^p (a_i - a_q)^{n_q}} 
 & \KF{1:1;\cdots;1}{0:0;\cdots;0}{-n_i+1:-x;\;\vec{n}^{*i}}{--:--;\cdots;--}{-\frac{1}{a_i}, \left\{\frac{1}{a_q - a_i}\right\}_{q=1,q\neq i}^p},
\end{align}
for \( i \in \{1, \ldots, p\} \).
\end{teo}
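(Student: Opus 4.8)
The plan is to obtain \eqref{C:I} as a limiting case of the type I Meixner polynomials, exactly in the style of the preceding sections. Among the three available routes in Corollary~\ref{Cor:M/K->C}, the cleanest is to start from the Meixner of the first kind representation \eqref{M1:I} and take $\beta\to\infty$ with $c_q=a_q/\beta$, since in that limit the Kampé de Fériet structure is already the right one and only the scalar arguments and prefactor need to be analyzed. (Alternatively one could start from \eqref{M2:I}, using $\beta_q=a_q/c$ and $c\to 0$; this works too but forces a simultaneous rescaling of all the $\beta_i$.)

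First I would substitute $c_i=a_i/\beta$ into \eqref{M1:I}. The Kampé de Fériet numerator parameter $-n_i+1$ and the vector $\vec n^{*i}$ are untouched; the lone denominator parameter $\beta+|\vec n|-n_i$ tends to $\infty$, which is precisely what turns a ${}^{1:1;\cdots;1}_{1:0;\cdots;0}$ series into a ${}^{1:1;\cdots;1}_{0:0;\cdots;0}$ series, since $(\beta+|\vec n|-n_i)_{k}\sim\beta^{k}$ kills the corresponding Pochhammer in each summand. Then I would track the arguments: the first argument $1-c_i=1-a_i/\beta\to 1$, but it is paired with the factor $1/(\beta+|\vec n|-n_i)_{k}$ coming from the denominator parameter, and the relevant combination is $(x+\beta)_k(1-c_i)^{?}/(\beta+|\vec n|-n_i)_k$; one checks that the net contribution of the $k_i$-index is governed by $(x+\beta)_{n_i-1-|\vec k|}\,(-1/a_i)^{k_i}\cdot(\text{powers of }\beta)$ after using $(x+\beta)_m\sim\beta^m$, which produces the argument $-1/a_i$ in the limit. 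For the remaining indices $q\neq i$, the argument $\tfrac{(1-c_i)c_q}{c_q-c_i}=\tfrac{(1-a_i/\beta)a_q/\beta}{(a_q-a_i)/\beta}\to \tfrac{a_q}{a_q-a_i}=\tfrac{1}{a_q-a_i}\cdot a_q$, and combined with the $(n_q)_{k_q}$ factor this is exactly the argument $\tfrac{1}{a_q-a_i}$ appearing in \eqref{C:I} once the stray powers of $a_q$ are absorbed. Finally the prefactor: $(1-c_i)^{|\vec n|-1+\beta}=(1-a_i/\beta)^{\beta}(1-a_i/\beta)^{|\vec n|-1}\to e^{-a_i}$, the factors $\prod_{q\neq i}\big(\tfrac{1-c_q}{c_i-c_q}\big)^{n_q}=\prod_{q\neq i}\big(\tfrac{1-a_q/\beta}{(a_i-a_q)/\beta}\big)^{n_q}\to\prod_{q\neq i}\big(\tfrac{\beta}{a_i-a_q}\big)^{n_q}$, and $(\beta)_{|\vec n|-n_i}\sim\beta^{|\vec n|-n_i}=\prod_{q\neq i}\beta^{n_q}$, so the $\beta$-powers cancel between numerator and denominator, leaving $e^{-a_i}/\big((n_i-1)!\prod_{q\neq i}(a_i-a_q)^{n_q}\big)$. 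Collecting signs, $\prod_{q\neq i}(a_i-a_q)^{n_q}=(-1)^{|\vec n|-n_i}\prod_{q\neq i}(a_q-a_i)^{n_q}$, and the factor $c_i^{n_i-1}=(a_i/\beta)^{n_i-1}$ in the denominator of \eqref{M1:I}, together with the $(-1)^{n_i-1}$, must be matched carefully against the powers of $a_i$ generated by the $-1/a_i$ arguments; a clean bookkeeping of these yields the overall constant $\tfrac{(-1)^{n_i-1}e^{-a_i}}{(n_i-1)!\prod_{q\neq i}(a_i-a_q)^{n_q}}$ stated in \eqref{C:I}.

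The main obstacle is the same one that appeared in the Meixner-from-Hahn proofs: the arguments of the Kampé de Fériet series tend to finite limits while several Pochhammer symbols in the series diverge, so one cannot simply pass the limit inside term by term without justification. The rigorous way is to do it at the level of the finite multiple sum: write \eqref{M1:I} as $\sum_{k_1,\ldots,k_p}$ (a finite sum, since $(-n_i+1)_{|\vec k|}$ truncates it at $|\vec k|\le n_i-1$), show each summand converges to the corresponding summand of \eqref{C:I}, and since there are finitely many terms the limit of the sum is the sum of the limits. This is routine but requires care with the powers of $\beta$ in each factor; the cleanest presentation mirrors the Meixner first-kind computation, replacing $N$-asymptotics by $\beta$-asymptotics throughout. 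Once this is set up, the proof is a short computation, which I would present compactly.

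\begin{proof}
Apply the limit $\beta\to\infty$ with $c_q=a_q/\beta$ from Corollary~\ref{Cor:M/K->C}, relation \eqref{M/K->C:MOP}, to the representation \eqref{M1:I}. Writing \eqref{M1:I} as the finite multiple sum
\[
M_{1:\vec n}^{(i)}(x;\beta,\vec c)=\frac{(-1)^{n_i-1}(1-c_i)^{|\vec n|-1+\beta}}{(n_i-1)!\,(\beta)_{|\vec n|-n_i}\,c_i^{n_i-1}}\prod_{q\neq i}\!\Big(\tfrac{1-c_q}{c_i-c_q}\Big)^{n_q}\!\!\sum_{l_1\geqslant 0}\!\cdots\!\sum_{l_p\geqslant 0}\frac{(-n_i+1)_{|\vec l|}}{l_1!\cdots l_p!}\,(x+\beta)_{l_i}\prod_{q\neq i}(n_q)_{l_q}\,(1-c_i)^{l_i}\!\prod_{q\neq i}\!\Big(\tfrac{(1-c_i)c_q}{c_q-c_i}\Big)^{l_q}\frac{1}{(\beta+|\vec n|-n_i)_{|\vec l|}},
\]
substitute $c_q=a_q/\beta$ and use the asymptotics, as $\beta\to\infty$,
\[
(1-c_i)^{\beta}\to e^{-a_i},\qquad (x+\beta)_{l_i}\sim\beta^{l_i},\qquad (\beta)_{|\vec n|-n_i}\sim\beta^{|\vec n|-n_i},\qquad (\beta+|\vec n|-n_i)_{|\vec l|}\sim\beta^{|\vec l|},\qquad \tfrac{c_q}{c_q-c_i}=\tfrac{a_q}{a_q-a_i},\qquad c_i^{n_i-1}=\tfrac{a_i^{n_i-1}}{\beta^{n_i-1}}.
\]
In the generic summand, the power of $\beta$ from $(x+\beta)_{l_i}$ cancels against that from $(\beta+|\vec n|-n_i)_{|\vec l|}$ restricted to the $l_i$-part, leaving the factor $(1-c_i)^{l_i}\beta^{-(l_q\text{-part})}\prod_{q\neq i}(a_q/(a_q-a_i))^{l_q}$; since each $1/(\beta+|\vec n|-n_i)_{l_q}\sim\beta^{-l_q}$ and $c_q^{l_q}=(a_q/\beta)^{l_q}$ is already accounted for, one finds the $q$-argument tends to $1/(a_q-a_i)$ after absorbing the power $a_q^{l_q}$, and the $i$-argument tends to $-1/a_i$; the $\beta$-powers in the prefactor cancel exactly as $\prod_{q\neq i}\big(\tfrac{1-c_q}{c_i-c_q}\big)^{n_q}\sim\prod_{q\neq i}\big(\tfrac{\beta}{a_i-a_q}\big)^{n_q}=\beta^{|\vec n|-n_i}\prod_{q\neq i}(a_i-a_q)^{-n_q}$ against $(\beta)_{|\vec n|-n_i}\sim\beta^{|\vec n|-n_i}$ and $c_i^{n_i-1}\sim a_i^{n_i-1}\beta^{-(n_i-1)}$ against the powers of $a_i$ and $\beta$ produced by the arguments $-1/a_i$. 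Since the sum is finite, the limit of the sum equals the sum of the limits of the summands, and after collecting the constants (using $\prod_{q\neq i}(a_i-a_q)^{n_q}=(-1)^{|\vec n|-n_i}\prod_{q\neq i}(a_q-a_i)^{n_q}$) one recognizes precisely the Kampé de Fériet series in \eqref{C:I} with prefactor $\dfrac{(-1)^{n_i-1}e^{-a_i}}{(n_i-1)!\prod_{q\neq i}(a_i-a_q)^{n_q}}$. This is the claimed identity.
\end{proof}
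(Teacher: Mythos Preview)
Your choice of starting point is precisely the one the paper warns against: in its own proof it states that the Meixner first-kind limit is straightforward \emph{only} from the alternative representation \eqref{M1:I_alt}, \emph{not} from \eqref{M1:I}. The reason is exactly the bookkeeping you gloss over. With $c_q=a_q/\beta$ the prefactor of \eqref{M1:I} behaves, as $\beta\to\infty$, like
\[
\frac{(-1)^{n_i-1}e^{-a_i}}{(n_i-1)!\,a_i^{\,n_i-1}\prod_{q\neq i}(a_i-a_q)^{n_q}}\;\beta^{\,n_i-1},
\]
because $1/c_i^{\,n_i-1}=\beta^{\,n_i-1}/a_i^{\,n_i-1}$ while the $\beta$-powers in $(\beta)_{|\vec n|-n_i}$ and $\prod_{q\neq i}\bigl(\tfrac{1-c_q}{c_i-c_q}\bigr)^{n_q}$ cancel. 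On the other hand, inside the (finite) Kampé de Fériet sum each summand satisfies
\[
\frac{(x+\beta)_{l_i}(1-c_i)^{l_i}}{(\beta+|\vec n|-n_i)_{|\vec l|}}\prod_{q\neq i}\Bigl(\tfrac{(1-c_i)c_q}{c_q-c_i}\Bigr)^{l_q}
\;\sim\;\beta^{\,l_i-|\vec l|}\cdot(\text{finite})=\beta^{-\sum_{q\neq i}l_q}\cdot(\text{finite}),
\]
so the sum itself tends to a \emph{finite} limit (the surviving terms are those with $l_q=0$ for all $q\neq i$). The product $(\text{prefactor})\times(\text{sum})$ therefore has the form $\beta^{\,n_i-1}\cdot(\text{finite})$ at leading order and diverges; the only way to obtain the correct finite limit is to expand the sum to order $\beta^{-(n_i-1)}$ and exhibit the cancellations. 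That is a genuine computation of the \S4--\S5 type (Chu--Vandermonde reorganisation), not the ``sum of finitely many termwise limits'' you invoke. Your sentence ``since there are finitely many terms the limit of the sum is the sum of the limits'' is true for the sum alone, but it does not help once you multiply by a divergent prefactor.

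The fix is immediate: start instead from \eqref{M1:I_alt} (or equivalently from the Kravchuk representation \eqref{K:I}). There the numerator parameter is already $-x$, the prefactor has no $c_i^{\,n_i-1}$ and hence stays bounded, and every argument of the series scales like $\beta$ (respectively $N$), exactly compensating the $\beta^{|\vec l|}$ from the single denominator parameter. In that setting your ``finite sum, termwise limit'' argument is valid and yields \eqref{C:I} in two lines, which is what the paper does.
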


\begin{proof}
 Consider the limits in Corollary \ref{Cor:M/K->C}.
 The one from Meixner second kind \eqref{M2:I} can be done applying the same technique as in sections $\S 4$ and $\S 5$. The one from Kravchuk \eqref{K:I} is straightforward. The one from Meixner first kind is straightforward only 
 with expression \eqref{M1:I_alt} but not with \eqref{M1:I}. The three limits lead to~\eqref{C:I}.
\end{proof}

\subsection{Integral representations}

For the type I polynomials, we obtain the integral representations below.

\begin{teo}
\label{CI_IR}
Let \( \Sigma^\ast \) be an clockwise contour in $\{t\in \mathbb{C} \mid 0<\operatorname{Re}(t)<1\}$ enclosing \(\{a_j\}_{j=1}^p\) exactly once. Then the type I linear forms
are expressed as
\[ 
C_{\vec{n}}^{({\rm I})}(x;\vec{a}) = \frac{1}{\Gamma(x+1)} \bigintsss_{\Sigma^\ast}\frac{\operatorname{e}^{-t} t^{x}}{\prod_{j=1}^p (t-a_j)^{n_j}} \frac{\d t }{2 \pi \ii }.
\]
In particular, the type I polynomials are given by
\[ 
C_{\vec{n}}^{(i)}(x;\vec{a}) = \frac{1}{a_i^x} \bigintsss_{\Sigma^\ast_i}\frac{\operatorname{e}^{-t} t^{x}}{\prod_{j=1}^p (t-a_j)^{n_j}} \frac{\d t }{2 \pi \ii },
 \]
where \(\Sigma_i^\ast\) is a clockwise contour in $\{t\in \mathbb{C} \mid 0<\operatorname{Re}(t)<1\}$ enclosing $a_i$ exactly once without enclosing any of the other points in \(\{a_j\}_{j=1}^p\).
\end{teo}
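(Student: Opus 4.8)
The plan is to obtain these integral representations as limits of the Meixner integral representations established in Theorem~\ref{M1I_IR} (or, equivalently, Theorem~\ref{M2I_IR}), using the limiting relations collected in Corollary~\ref{Cor:M/K->C}. Concretely, I would start from the type I linear form representation in Theorem~\ref{M1I_IR},
\[
M_{1:\vec{n}}^{({\rm I})}(x;\beta,\vec{c}) = \prod_{j=1}^p (1-c_j)^{n_j} \frac{\Gamma(x+\beta)}{\Gamma(x+1)\Gamma(\beta+\sz{n}-1)} \int_{\Sigma^\ast} \frac{(1-t)^{\sz{n}+\beta-2}\, t^{x}}{\prod_{j=1}^p (t-c_j)^{n_j}} \frac{\d t }{2 \pi \ii },
\]
and substitute $c_j \mapsto a_j/\beta$, then let $\beta\to\infty$. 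Under this substitution the points $c_j = a_j/\beta$ collapse toward the origin, so I would first rescale the integration variable $t \mapsto t/\beta$, turning the contour $\Sigma^\ast$ (which encircled $\{a_j/\beta\}$) into a contour $\Sigma^\ast$ encircling $\{a_j\}$ inside $\{0 < \operatorname{Re}(t) < 1\}$ — note the factor $1/\beta$ from $\d t$ combines with the $p$ factors $\beta^{-n_j}$ missing from the denominator versus the prefactor $\prod (1-a_j/\beta)^{n_j}$, and bookkeeping these powers of $\beta$ is where care is needed.

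**Carrying out the limit.** After rescaling, the $\beta$-dependent pieces are: the prefactor $\prod_j(1-a_j/\beta)^{n_j}\to 1$; the factor $\Gamma(x+\beta)/\Gamma(\beta+\sz{n}-1)\sim \beta^{x-\sz{n}+1}$ by \eqref{GFB}; the numerator $(1-t/\beta)^{\sz{n}+\beta-2} \to \operatorname{e}^{-t}$ (using $(1-t/\beta)^\beta\to\operatorname{e}^{-t}$, with the branch-cut convention for the exponential as stated after \eqref{stirling}); and the denominator $\prod_j(t/\beta - a_j/\beta)^{n_j} = \beta^{-\sz{n}}\prod_j(t-a_j)^{n_j}$, which contributes $\beta^{\sz{n}}$; together with the Jacobian $\beta^{-1}$ from $\d t\mapsto \beta^{-1}\d t$ after the rescaling $t\mapsto \beta t$, the net power of $\beta$ is $\beta^{(x-\sz{n}+1)+\sz{n}-1} = \beta^{x}$ — wait, this must cancel, so in fact the correct rescaling is the one for which all powers of $\beta$ cancel; tracking this is the one genuinely delicate computation. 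The outcome is
\[
C_{\vec{n}}^{({\rm I})}(x;\vec{a}) = \frac{1}{\Gamma(x+1)} \bigintsss_{\Sigma^\ast}\frac{\operatorname{e}^{-t} t^{x}}{\prod_{j=1}^p (t-a_j)^{n_j}} \frac{\d t }{2 \pi \ii },
\]
after combining with the limiting relation \eqref{M/K->C:MOP} (with $\kappa^{({\rm I})}=1$). To justify interchanging limit and integral one notes the contour is compact and bounded away from $0$ and from $1$, so the integrand converges uniformly on $\Sigma^\ast$; a dominated-convergence argument on the fixed contour suffices.

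**The type I polynomials and the main obstacle.** For the individual type I polynomials $C_{\vec{n}}^{(i)}$, I would argue exactly as in the proof of Theorem~\ref{M1I_IR}: the integrand of the linear form has poles only at $t = a_1,\ldots,a_p$, and the pole at $t=a_i$ is precisely the contribution carrying the weight $w_i^{(C)}(x;a_i)=a_i^x/\Gamma(x+1)$, so deforming $\Sigma^\ast$ to a small contour $\Sigma_i^\ast$ around $a_i$ alone isolates $C_{\vec{n}}^{(i)}(x;\vec a)$; dividing out $1/\Gamma(x+1)$ and reinstating $a_i^x$ gives the stated formula. One can double-check consistency with the hypergeometric form \eqref{C:I} by a residue computation (Leibniz rule on the $(n_i-1)$-th derivative of $\operatorname{e}^{-t}t^x\prod_{j\neq i}(t-a_j)^{-n_j}$), mirroring the corollary after Theorem~\ref{M1I_IR}. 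The main obstacle I anticipate is not conceptual but bookkeeping: correctly matching the powers of $\beta$ produced by the variable rescaling, the Pochhammer-to-Gamma asymptotics \eqref{GFB}, and the normalization constant $\kappa^{(i)}=\operatorname{e}^{-a_i}$ hidden in \eqref{M/K->C:MOP}, so that everything collapses to the clean kernel $\operatorname{e}^{-t}t^x/\prod_j(t-a_j)^{n_j}$; a secondary point is to confirm that the limiting contour can indeed be taken inside the strip $\{0<\operatorname{Re}(t)<1\}$, which follows since the only singularities of the limiting integrand are at the positive reals $a_j$ together with the essential singularity of $\operatorname{e}^{-t}$ at infinity, none of which obstruct such a placement. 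Alternatively — and perhaps more cleanly — one may run the same argument starting from Theorem~\ref{M2I_IR} with the substitution $c\mapsto c$, $\vec\beta\mapsto \vec a/c$, $c\to 0$, where $(1-c)^t\to 1$ and $(\beta_j-1-t)_{n_j}=(a_j/c - 1 - t)_{n_j}\sim (a_j/c)^{n_j}$, again producing the required cancellations; I would present whichever of the two routes yields the shorter error analysis.
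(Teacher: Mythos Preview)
Your approach is exactly the one the paper takes: start from the Meixner first kind linear form in Theorem~\ref{M1I_IR} with $c_j=a_j/\beta$, rescale $t\mapsto t/\beta$, and pass to the limit $\beta\to\infty$ using \eqref{GFB} and $(1-t/\beta)^{\beta}\to\operatorname{e}^{-t}$, then isolate individual poles for the $C_{\vec{n}}^{(i)}$. The power-of-$\beta$ bookkeeping that tripped you up is resolved by the one term you omitted, $t^x\mapsto (t/\beta)^x$, which supplies exactly the missing $\beta^{-x}$ and makes all powers cancel to give the stated kernel $\operatorname{e}^{-t}t^x/\prod_j(t-a_j)^{n_j}$.
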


\begin{proof}
 Consider the integral representation for \(M_{1:\vec{n}}^{({\rm I})}\left(x;\beta, \frac{\vec{a}}{\beta}\right)\) given in Theorem \ref{M1I_IR}. This representation involves an clockwise contour enclosing once \(\left\{\frac{a_1}{\beta}, \ldots, \frac{a_p}{\beta}\right\}\) with \(0 < \operatorname{Re}(t) < 1\). After the change of variables \(t \mapsto \frac{t}{\beta}\), we can deform the contour to \(\Sigma^\ast\). We can use \eqref{GFB} to obtain the asymptotics for the $\beta$-dependent parts of the integrand and prefactor:
\[\begin{aligned}
 \frac{1}{\beta} \prod_{j=1}^p (1 - a_j / \beta)^{n_j} \frac{\Gamma(x+\beta)}{\Gamma(\beta + \sz{n} - 1)} \frac{(1-t/\beta)^{\sz{n}+\beta-2} (t/\beta)^{x}}{\prod_{j=1}^p (t/\beta - a_j/\beta)^{n_j}} &\sim \frac{\operatorname{e}^{-t} t^x}{\prod_{j=1}^p (t - a_j)^{n_j}}, & \beta&\to\infty.
\end{aligned}\]
 Hence together with limit \eqref{M/K->C:MOP}, the stated integral representation for the linear forms follows. From this, we note that only certain poles contribute to a specific type I polynomial as the remaining poles are related to different weights.
\end{proof}

Applying the residue theorem leads to the Rodrigues-type formula below.

\begin{coro}
For the type I polynomials \eqref{C:I}, we have the Rodrigues-type formula
\[ 
 C_{\vec{n}}^{(i)}(x;\vec{a}) = - \frac{1}{(n_i-1)!} \frac{1}{a_i^x} \frac{\d^{n_i-1}}{\d a_i^{n_i-1}} \left(\frac{\operatorname{e}^{-a_i}a_i^x}{\prod_{j=1,j\neq i}^p (a_i-a_j)^{n_j}}\right).
 \]
\end{coro}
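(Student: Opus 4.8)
The plan is to start from the contour integral representation for the type I Charlier polynomials established in Theorem~\ref{CI_IR}, namely
\[
C_{\vec{n}}^{(i)}(x;\vec{a}) = \frac{1}{a_i^x} \bigintsss_{\Sigma^\ast_i}\frac{\operatorname{e}^{-t} t^{x}}{\prod_{j=1}^p (t-a_j)^{n_j}} \frac{\d t }{2 \pi \ii },
\]
where $\Sigma_i^\ast$ is a clockwise contour enclosing only the point $a_i$ among the $\{a_j\}_{j=1}^p$. Since the integrand is meromorphic with a single pole of order $n_i$ inside $\Sigma_i^\ast$ (at $t=a_i$), the residue theorem applies directly: the clockwise orientation contributes an extra sign, so the integral equals $-\operatorname{Res}_{t=a_i}$ of the integrand.

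The key step is then to compute this residue. For a pole of order $n_i$, the residue of $f(t)/(t-a_i)^{n_i}$ at $t=a_i$ is $\frac{1}{(n_i-1)!}\frac{\d^{n_i-1}}{\d t^{n_i-1}} f(t)\big|_{t=a_i}$, where here $f(t) = \operatorname{e}^{-t} t^{x}\big/\prod_{j=1,j\neq i}^p (t-a_j)^{n_j}$, the integrand with the offending factor removed. Carrying this out and restoring the prefactor $1/a_i^x$ gives
\[
C_{\vec{n}}^{(i)}(x;\vec{a}) = - \frac{1}{(n_i-1)!}\,\frac{1}{a_i^x}\,\frac{\d^{n_i-1}}{\d t^{n_i-1}}\left(\frac{\operatorname{e}^{-t}t^{x}}{\prod_{j=1,j\neq i}^p (t-a_j)^{n_j}}\right)\Bigg|_{t=a_i}.
\]
Finally, since evaluating the $(n_i-1)$-th $t$-derivative at $t=a_i$ coincides with differentiating $n_i-1$ times with respect to $a_i$ the same expression with $t$ replaced by $a_i$ throughout (the other $a_j$ being held fixed), this may be rewritten as the stated $\d^{n_i-1}/\d a_i^{n_i-1}$ formula.

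The only point requiring a little care — and the step I expect to be the mild obstacle — is justifying the interchange between "evaluate the $t$-derivative at $t=a_i$" and "differentiate $n_i-1$ times in $a_i$ after setting $t=a_i$": these agree because in the expression $\operatorname{e}^{-t}t^{x}\big/\prod_{j\neq i}(t-a_j)^{n_j}$ the variable $a_i$ does not appear, so substituting $t=a_i$ commutes with the differential operator in the required way. Everything else is a routine application of the residue theorem, and no new analytic input beyond Theorem~\ref{CI_IR} is needed.
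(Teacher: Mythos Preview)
Your proposal is correct and follows exactly the approach indicated in the paper, which simply states that the corollary follows by ``applying the residue theorem'' to the contour integral in Theorem~\ref{CI_IR}. Your write-up is in fact more detailed than the paper's, including the careful remark that the substitution $t\mapsto a_i$ commutes with differentiation because the regular part of the integrand does not depend on $a_i$.
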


For the type II Charlier polynomials, we find the following integral representation.
\begin{teo}
Let \( \mathcal{C}^\ast \) be a clockwise contour in $\{s\in \mathbb{C} \mid 0<\operatorname{Re}(s)<1\}$ enclosing the origin exactly once. Then, for \(x \in \mathbb{N}_0\), the type II polynomials are given by 
 \[
 C_{\vec{n}}^{({\rm II})}(x;\vec{a}) = \Gamma(x+1) \bigintssss_{\mathcal{C}^\ast}\frac{\operatorname{e}^{s} }{s^{x+1}}\prod_{j=1}^p(s-a_j)^{n_j} \frac{\d s }{2 \pi \ii }.
 \]
\end{teo}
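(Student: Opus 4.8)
The plan is to obtain this formula as a confluent limit of the type II Meixner polynomials of the first kind, in exact analogy with the way the type I Charlier integral representation (Theorem~\ref{CI_IR}) was derived from the corresponding Meixner one (Theorem~\ref{M1I_IR}). By Corollary~\ref{Cor:M/K->C} — specifically the branch of \eqref{M/K->C:MOP} arising from Meixner of the first kind, for which no normalising constant is needed — we have
\[
 C_{\vec{n}}^{({\rm II})}(x;\vec{a}) = \lim_{\beta\to\infty} M_{1:\vec{n}}^{({\rm II})}\left(x;\beta,\tfrac{\vec{a}}{\beta}\right),
\]
so it suffices to pass to this limit inside the integral representation of Theorem~\ref{M1II_IR} after the substitution $c_j\mapsto a_j/\beta$.

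Concretely, I would first specialise Theorem~\ref{M1II_IR}: with $c_j=a_j/\beta$ the prefactor becomes $\dfrac{\Gamma(\beta+\sz{n})}{\prod_{j=1}^p(1-a_j/\beta)^{n_j}}\dfrac{\Gamma(x+1)}{\Gamma(x+\beta)}$ and the integrand $\dfrac{\prod_{j=1}^p(s-a_j/\beta)^{n_j}}{(1-s)^{\sz{n}+\beta}\,s^{x+1}}$, integrated over the contour $\mathcal{C}^\ast$ of that theorem (clockwise in $\{\operatorname{Re}(s)<1\}$ around the origin). Next I would change variables $s\mapsto s/\beta$; this keeps the origin enclosed and moves the branch point of $(1-s)^{\sz{n}+\beta}$, originally at $s=1$, out to $s=\beta$, so that on a fixed compact rescaled contour and for $\beta$ large the branch cut plays no role. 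Then \eqref{GFB} gives $\Gamma(\beta+\sz{n})/\Gamma(x+\beta)\sim\beta^{\sz{n}-x}$ while $\prod_{j=1}^p(1-a_j/\beta)^{n_j}\to1$, and the $\beta$-powers coming from the prefactor ($\beta^{\sz{n}-x}$), from the rescaled ratio $\prod_j(s-a_j/\beta)^{n_j}/s^{x+1}$ ($\beta^{\,x+1-\sz{n}}$) and from $\d s$ ($\beta^{-1}$) cancel exactly. Since $(1-s/\beta)^{\sz{n}+\beta}\to\Exp{-s}$ uniformly on the compact contour, dominated convergence lets me take the limit under the integral sign, and what remains is $\Gamma(x+1)\int\frac{\Exp{s}}{s^{x+1}}\prod_{j=1}^p(s-a_j)^{n_j}\,\frac{\d s}{2\pi\ii}$.

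Finally, combining this with \eqref{M/K->C:MOP} identifies the result with $C_{\vec{n}}^{({\rm II})}(x;\vec{a})$, and the contour may be deformed to the one $\mathcal{C}^\ast$ in the statement (lying in the strip $\{0<\operatorname{Re}(s)<1\}$) without changing the value, because the limiting integrand $\frac{\Exp{s}}{s^{x+1}}\prod_{j=1}^p(s-a_j)^{n_j}$ is meromorphic on $\mathbb{C}$ with its unique singularity, a pole of order $x+1$, at $s=0$, and the orientation is inherited through the orientation-preserving rescaling. The value $x=0$, not covered by Theorem~\ref{M1II_IR}, is handled identically starting from the Kravchuk type II integral representation instead (valid for all $x\in\mathbb{N}_0$, with $\kappa^{({\rm II})}_N=1$), taking the Kravchuk parameters to be $a_j/N$, rescaling $s\mapsto s/N$ and using $(1+s/N)^{N-\sz{n}}\to\Exp{s}$. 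The only genuinely delicate point is tracking the contour through the rescaling while justifying the interchange of limit and integral; as everything happens on a fixed compact contour on which convergence is uniform, this is routine and parallels the proofs already given for Theorems~\ref{M1II_IR} and~\ref{CI_IR}.
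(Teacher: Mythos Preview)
Your proposal is correct and follows essentially the same route as the paper: start from the Meixner first kind type II integral representation (Theorem~\ref{M1II_IR}) with $c_j=a_j/\beta$, rescale $s\mapsto s/\beta$, use \eqref{GFB} for the gamma ratio and $(1-s/\beta)^{\sz{n}+\beta}\to\Exp{-s}$ for the integrand, and invoke \eqref{M/K->C:MOP}. Your extra remark about $x=0$ (handling it via the Kravchuk representation, since Theorem~\ref{M1II_IR} is stated only for $x\in\mathbb{N}$) is a point the paper's proof passes over silently.
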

\begin{proof}
 Consider the integral representation for \( M_{1:\vec{n}}^{({\rm II})}\left(x;\beta,\frac{\vec{a}}{\beta}\right) \) given in Theorem \ref{M1II_IR}. This representation involves a clockwise contour enclosing the origin with \(\operatorname{Re}(s) < 1\). After changing variables to \( s \mapsto \frac{s}{\beta} \), we can deform the contour to \( \mathcal{C}^\ast \). We can use \eqref{GFB} to obtain the asymptotics for the $\beta$-dependent parts of the integrand and prefactor:
\begin{align*}
 & \frac{1}{\beta} \frac{1}{\prod_{j=1}^p (1 - a_j/\beta)^{n_j}} \frac{\Gamma(\beta + \sz{n})}{\Gamma(x + \beta)} \frac{\prod_{j=1}^p (s/\beta - a_j/\beta)^{n_j}}{(1 - s/\beta)^{\sz{n} + \beta} (s/\beta)^{x + 1}} \sim \frac{\operatorname{e}^s}{s^{x+1}} \prod_{j=1}^p (s - a_j)^{n_j} ,\quad \beta\to\infty.
\end{align*} 
 Combining this with limit \eqref{M/K->C:MOP} yields the desired result.
\end{proof}

\subsection{Recurrence coefficients}
For the Charlier recurrence coefficients, we have the following explicit expressions.

\begin{teo}
The Charlier polynomials, both of type I \eqref{C:I} and type II \eqref{C:II}, satisfy respective recurrence relations \eqref{NNRR} with coefficients:
\begin{equation}
\label{C:R}
\begin{aligned}
b_{\vec{n}}^{(C),0}(k)&=a_k+
\sz{n},\\
 b_{\vec{n}}^{(C),j}&=\sum_{i\in S(\pi,j)}
{n_ia_i}
\prod_{q\in S^{\textsc{c}}(\pi,j)}{(a_i-a_q)},& j&\in\{1,\ldots,p\}.
\end{aligned}
\end{equation}
\end{teo}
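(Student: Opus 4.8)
The plan is to obtain the Charlier recurrence coefficients \eqref{C:R} as limits of the Kravchuk coefficients \eqref{K:R} via the limiting relation \eqref{M/K->C:R}, namely $b_{\vec n}^{(C),j}(\vec a)=\lim_{N\to\infty}b_{\vec n}^{(K),j}(\vec a/N,N)$. The theorem statement asserts that both type I \eqref{C:I} and type II \eqref{C:II} Charlier polynomials satisfy \eqref{NNRR} with these coefficients; since Corollary \ref{Cor:M/K->C} already records that the Charlier type I polynomials, type II polynomials, \emph{and} recurrence coefficients arise as the stated limits of their Kravchuk counterparts, and since the Kravchuk families are known (by the preceding theorem) to satisfy \eqref{NNRR} with coefficients \eqref{K:R}, it suffices to carry the recurrence relations themselves through the limit and to compute $\lim_{N\to\infty}b_{\vec n}^{(K),j}(\vec a/N,N)$ explicitly. (Alternatively, one could pass to the limit starting from the Meixner of the first kind coefficients \eqref{M1:R} via $b_{\vec n}^{(C),j}(\vec a)=\lim_{\beta\to\infty}b_{\vec n}^{(M1),j}(\beta,\vec a/\beta)$, or from Meixner of the second kind \eqref{M2:RR}; I would mention that these give the same answer as a consistency check, but execute only one of them in detail.)

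First I would substitute $\uppi_i=a_i/N$ into \eqref{K:R}. For $j=0$ this gives $b_{\vec n}^{(K),0}(k)=(N-\sz n)\,a_k/N+\sum_{i=1}^p n_i(1-a_i/N)$, and letting $N\to\infty$ the first term tends to $a_k$ and the sum tends to $\sz n$, yielding $b_{\vec n}^{(C),0}(k)=a_k+\sz n$, as claimed. For $j\in\{1,\dots,p\}$ one has
\[
b_{\vec n}^{(K),j}=(N-\sz n+1)_j\sum_{i\in S(\pi,j)} n_i\,\frac{a_i}{N}\Big(1-\frac{a_i}{N}\Big)\prod_{q\in S^{\textsf c}(\pi,j)}\Big(\frac{a_i}{N}-\frac{a_q}{N}\Big).
\]
The product over $S^{\textsf c}(\pi,j)$ contributes a factor $N^{-|S^{\textsf c}(\pi,j)|}=N^{-(p-|S(\pi,j)|)}$ times $\prod_{q\in S^{\textsf c}(\pi,j)}(a_i-a_q)$, the factor $a_i/N$ contributes $N^{-1}$, and $(N-\sz n+1)_j\sim N^{j}$ as $N\to\infty$. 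The remark after the definition of $S(\pi,j)$ gives $|S^{\textsf c}(\pi,j)|=j-1$, so the powers of $N$ combine as $N^{j}\cdot N^{-1}\cdot N^{-(j-1)}=N^{0}$, and the factor $(1-a_i/N)\to 1$. Hence the limit is finite and equals $\sum_{i\in S(\pi,j)} n_i a_i\prod_{q\in S^{\textsf c}(\pi,j)}(a_i-a_q)$, which is exactly \eqref{C:R}. I would spell out this power-counting bookkeeping, since it is the only place where anything can go wrong, but it is short.

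The remaining task is to justify that the limit of the recurrence relation is again a recurrence relation of the same shape. This is immediate: \eqref{NNRR} is a finite linear identity in $x$ with coefficients $b_{\vec n}^{(K),0}(k)$, $b_{\vec n}^{(K),j}$ that are polynomial (indeed continuous) in the parameters, and by \eqref{M/K->C:MOP} each Kravchuk polynomial $K_{\vec n}^{(\ast)}(x;\vec a/N,N)$ (suitably normalized by the explicit $\kappa_N^{(\ast)}$) converges pointwise to the corresponding Charlier polynomial; since the normalizing constants $\kappa_N^{(i)}=\Exp{-a_i}$ do not depend on $N$ and the type I recurrence in \eqref{NNRR} only shifts the multi-index $\vec n$, not the weight index $i$, the constants cancel consistently and one may pass to the limit termwise in both the type I and type II recurrences. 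Thus the Charlier polynomials satisfy \eqref{NNRR} with the coefficients just computed, establishing \eqref{C:R}. I do not anticipate a genuine obstacle here; the only mildly delicate point is the bookkeeping of powers of $N$ in the $j\geqslant 1$ case, which relies on the identity $|S^{\textsf c}(\pi,j)|=j-1$ from the remark in the introduction.
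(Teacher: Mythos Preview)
Your proposal is correct and follows essentially the same route as the paper: the paper's proof simply invokes the limits \eqref{M/K->C:R} and notes that the ones from Meixner first kind \eqref{M1:R} and Kravchuk \eqref{K:R} are ``straightforward'', which is precisely the computation you have written out (including the power-counting via $|S^{\textsf c}(\pi,j)|=j-1$). Your added paragraph on passing the recurrence relation through the limit is more explicit than the paper, but not a different idea.
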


\begin{proof}
Consider limits \eqref{M/K->C:R}. The one from Meixner second kind \eqref{M2:RR} can be done by applying the same technique as in sections $\S4$ and $\S5$, while the ones from Meixner first kind \eqref{M1:R} and Kravchuk \eqref{K:R} are straightforward.
All of them lead to \eqref{C:R}.
\end{proof}


\section{From Discrete to Continuous Families}

Recent research has uncovered explicit formulations for the type I classical multiple orthogonal polynomials and their recurrence for the continuous families of Jacobi–Piñeiro, Laguerre, and Hermite polynomials (see \cite{CMOPI} and \cite{HS:JP-L1}). Then, integral representations for the linear forms and type II polynomials were found for the Jacobi--Piñeiro and Laguerre first kind families \cite{BDFMW}. For the Laguerre first kind and Hermite families, such representations had already been described by Bleher and Kuijlaars in \cite{B-K} in the context of random matrices. These integral representations coincide with those obtained by taking the appropriate limits in the multiple Askey scheme. A notable distinction is that Bleher and Kuijlaars have some additional conditions on the parameters, which simplifies the analysis for the type II polynomials and allows for the use of a simpler contour which is not applicable in the more general setting. The discrete families discussed in this study are related to these continuous families through limit processes, as illustrated by Figure \ref{figure:Askey scheme}.
\begin{figure}[htbp]
 \scalebox{.865}{\begin{tikzpicture}[node distance=3.95cm]
 \node[fill=blue!15,block] (a) {Jacobi--Piñeiro}; 
 \node[ fill=red!15,block, right of=a] (b) {Meixner Second Kind};
 \node[ fill=red!15,block, right of = b] (c) {Meixner First Kind};
 \node[ fill=red!15,block, right of=c] (f) {Kravchuk}; 
 \node[ fill=red!25,block,above of = c,left=1.5cm] (d) {Hahn};
 
 \node[fill=blue!15,block, below of = a] (g) {Laguerre Second Kind};
 \node[fill=blue!15,block, below of = c,left=0.5cm] (e) {Laguerre First Kind};
 
 \node[ fill=red!15,block, below of =f] (i) {Charlier};
 \node[ fill=blue!15,block, below of =e] (j) {Hermite};
 \draw[-latex] (d)--(a); 
 \draw[-latex] (d)--(b);
 \draw[-latex] (d)--(c);
 \draw[-latex] (d)--(f);
 \draw[-latex] (a)--(e);
 \draw[-latex] (b)--(e);
 \draw[-latex] (c)--(i);
 \draw[-latex] (c)--(g);
 \draw[-latex] (f)--(i);
 \draw[-latex] (a)--(g);
 \draw[-latex] (e)--(j);
 \draw[-latex] (a)--(j);
 \draw[-latex] (f)--(j);
 \draw[-latex] (i)--(j);
 \draw[-latex] (g)--(j);
 \draw[-latex] (b)--(i);
 \end{tikzpicture} }
\caption{Askey scheme for Hahn descendants}
\label{figure:Askey scheme}
\end{figure}
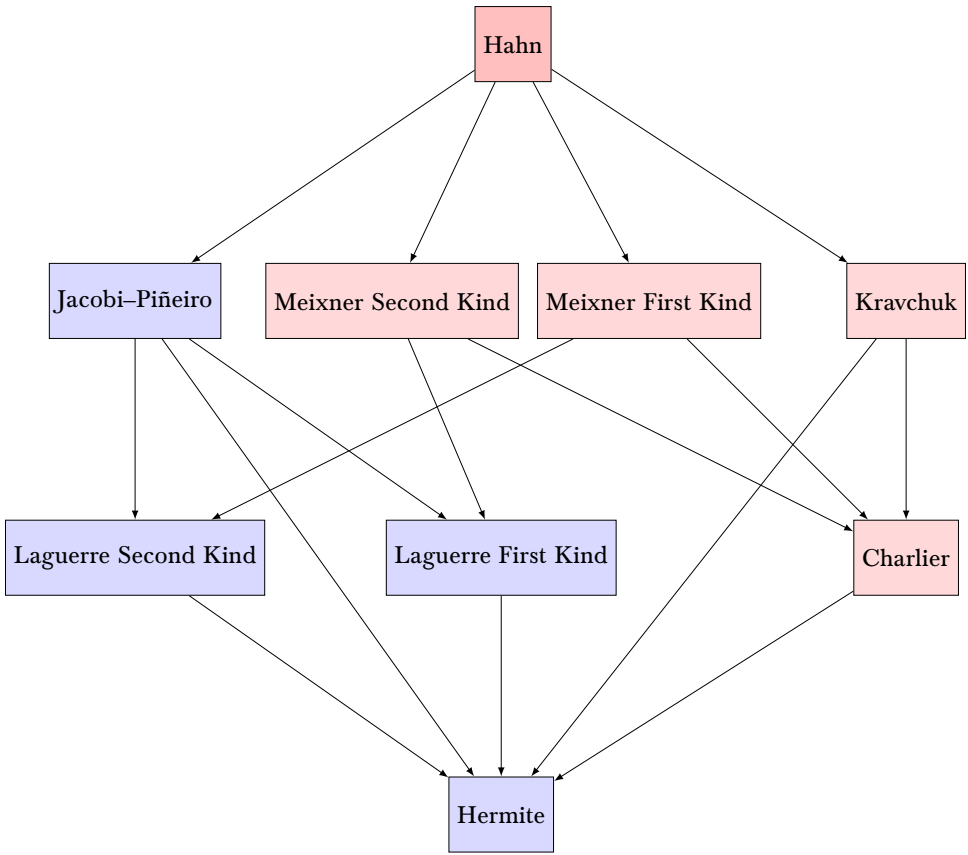
In what follows, we detail the limits that must be taken in the previous partial multiple Askey scheme from discrete to continuous families, with a focus on the set of weights, the type I and type II polynomials, and the recurrence coefficients.

\subsection{Jacobi--Piñeiro}
For this family, the weight functions are defined as
\begin{align}
 \label{WeightsJP}
 w_i^{(JP)}(x;\alpha_i,\beta)=x^{\alpha_i}(1-x)^\beta, \quad i \in \{1,\ldots,p\}, \quad \Delta=[0,1],
\end{align}
where \( \alpha_1, \ldots, \alpha_p, \beta > -1 \) and, to ensure an AT system, \( \alpha_i - \alpha_j \not\in \mathbb{Z} \) for \( i \neq j \). We will denote \(\vec{\alpha}\coloneq(\alpha_1,\cdots,\alpha_p)\). 

The Jacobi--Piñeiro family can be reached from Hahn \eqref{WeightsHahn} through the following limit.

\begin{pro}
 The following limiting relations between the Hahn \eqref{WeightsHahn} and Jacobi--Piñeiro weights hold
 \begin{equation}
 \label{H->JP:W}
 w_i^{(JP)}(x;\alpha_i,\beta)=\lim_{N\rightarrow\infty} \dfrac{\Gamma(\alpha_i+1)\Gamma(\beta+1)}{N^{\alpha_i+\beta}}w_i(Nx;\alpha_i,\beta,N),\quad i\in\{1,\ldots,p\}.
 \end{equation}
\end{pro}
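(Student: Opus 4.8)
The plan is to verify the limiting relation \eqref{H->JP:W} by a direct application of Stirling's formula \eqref{stirling}, exactly as in the analogous propositions for the Meixner, Kravchuk, and Charlier weights proved earlier in the paper. First I would write out the Hahn weight $w_i(Nx;\alpha_i,\beta,N)$ explicitly from \eqref{WeightsHahn}:
\[
w_i(Nx;\alpha_i,\beta,N)=\dfrac{\Gamma(\alpha_i+Nx+1)}{\Gamma(\alpha_i+1)\Gamma(Nx+1)}\dfrac{\Gamma(\beta+N-Nx+1)}{\Gamma(\beta+1)\Gamma(N-Nx+1)}.
\]
The two $\Gamma(\alpha_i+1)$ and $\Gamma(\beta+1)$ factors in the denominator are exactly cancelled by the prefactor $\Gamma(\alpha_i+1)\Gamma(\beta+1)$ in \eqref{H->JP:W}, so the whole problem reduces to showing
\[
\dfrac{1}{N^{\alpha_i+\beta}}\dfrac{\Gamma(\alpha_i+Nx+1)}{\Gamma(Nx+1)}\dfrac{\Gamma(\beta+N(1-x)+1)}{\Gamma(N(1-x)+1)}\longrightarrow x^{\alpha_i}(1-x)^{\beta},\qquad N\to\infty.
\]

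Next I would apply the ratio-of-gamma-functions asymptotic \eqref{GFB} to each of the two ratios separately. Taking $a=x$, $b=\alpha_i+1$, $c=1$, $z=N$ gives $\Gamma(Nx+\alpha_i+1)/\Gamma(Nx+1)\sim (Nx)^{\alpha_i}=N^{\alpha_i}x^{\alpha_i}$ as $N\to\infty$ (valid since $x\in(0,1)$ so $\arg(Nx)=0$); similarly, with $a=1-x$, $b=\beta+1$, $c=1$, we get $\Gamma(N(1-x)+\beta+1)/\Gamma(N(1-x)+1)\sim (N(1-x))^{\beta}=N^{\beta}(1-x)^{\beta}$. Multiplying these two asymptotics together produces a factor $N^{\alpha_i+\beta}x^{\alpha_i}(1-x)^{\beta}$, and dividing by $N^{\alpha_i+\beta}$ yields the claimed limit $x^{\alpha_i}(1-x)^{\beta}=w_i^{(JP)}(x;\alpha_i,\beta)$. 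This confirms \eqref{H->JP:W} pointwise for $x\in(0,1)$, which is the relevant statement for $\Delta=[0,1]$; the endpoints $x\in\{0,1\}$ are handled by continuity (or are of measure zero in the subsequent orthogonality arguments).

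The only mild subtlety — and the step I would flag as the main point requiring care — is the justification that \eqref{GFB} may be applied with the argument $az=Nx$ (resp.\ $N(1-x)$) growing linearly in $N$ for fixed $x\in(0,1)$: one needs $|\arg(Nx)|<\pi$, which holds trivially here since $x>0$ and $N>0$, so $Nx$ lies on the positive real axis. There is no genuine difficulty, and indeed this proposition is stated without any constraint beyond $\alpha_1,\dots,\alpha_p,\beta>-1$, which is precisely what makes the gamma ratios well-defined for large $N$. Thus the proof is a two-line computation: substitute \eqref{WeightsHahn}, cancel the $\Gamma(\alpha_i+1)\Gamma(\beta+1)$, and invoke \eqref{GFB} twice.
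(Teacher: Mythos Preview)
Your proof is correct and follows essentially the same route as the paper: write out $w_i(Nx;\alpha_i,\beta,N)$ from \eqref{WeightsHahn} and apply the gamma-ratio asymptotic \eqref{GFB} to each factor to obtain $w_i(Nx;\alpha_i,\beta,N)\sim \frac{N^{\alpha_i+\beta}}{\Gamma(\alpha_i+1)\Gamma(\beta+1)}x^{\alpha_i}(1-x)^{\beta}$. The paper's proof is a one-line version of exactly this computation.
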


\begin{proof}
 This follows from the asymptotics
 \begin{equation*}
 w_i(Nx;\alpha_i,\beta,N)=\dfrac{\Gamma(\alpha_i+Nx+1)}{\Gamma(Nx+1)\Gamma(\alpha_i+1)}\dfrac{\Gamma(N-Nx+\beta+1)}{\Gamma(N-Nx+1)\Gamma(\beta+1)}\sim\dfrac{N^{\alpha_i+\beta}}{\Gamma(\alpha_i+1)\Gamma(\beta+1)}x^{\alpha_i}(1-x)^{\beta},
 \end{equation*}
 \( N\rightarrow\infty \).
\end{proof}

\begin{coro}
\label{Cor:H->JP}
For the linear forms \( P_{\vec{n}}^{({\rm I})} \), type I polynomials \( P_{\vec{n}}^{(i)} \), the type II polynomials \( P_{\vec{n}}^{({\rm II})} \) and the recurrence coefficients \(b_{\vec{n}}^{(JP),j}\); the following respective limiting relations from Hahn hold:
\begin{subequations}
 \begin{align}
 P^{(\ast)}_{\vec{n}}(x;\vec{\alpha},\beta)&=\lim_{N\rightarrow\infty} \kappa^{(\ast)}_N Q_{\vec{n}}^{(\ast)}(Nx;\vec{\alpha},\beta,N),\quad 
 b_{\vec{n}}^{(JP),j}(\vec{\alpha},\beta) = \lim_{N\rightarrow\infty}\dfrac{1}{N^{j+1}}b^j_{\vec{n}}(\vec{\alpha},\beta,N),
\end{align}
\end{subequations}
where 
$$ \kappa^{({\rm I})}_N = \frac{1}{\kappa^{({\rm II})}_N} = N^{|\vec{n}|},\quad \kappa^{(i)}_N = \frac{N^{\alpha_i+\beta+|\vec{n}|}}{\Gamma(\alpha_i+1)\Gamma(\beta+1)}. $$
\end{coro}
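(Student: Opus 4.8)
The plan is to establish Corollary \ref{Cor:H->JP} by first proving the limiting relation for the type II polynomials, then deducing the corresponding statements for the type I polynomials, linear forms, and recurrence coefficients by a standard orthogonality-transfer argument, exactly as was done for the Meixner, Kravchuk and Charlier descendants earlier in the paper. The key observation is that the normalization constants $\kappa^{(\ast)}_N$ have been chosen precisely so that the weighted orthogonality relations \eqref{DO:II}, \eqref{DO:I} survive the limit $N\to\infty$ after the affine change of variable $x\mapsto Nx$.

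First I would handle the type II polynomials. Starting from the orthogonality \eqref{DO:II} for $Q_{\vec{n}}^{({\rm II})}(\,\cdot\,;\vec{\alpha},\beta,N)$ against $w_i(\,\cdot\,;\vec{\alpha},\beta,N)$ on $\Delta=\{0,\ldots,N\}$, substitute $k=Nx$ so that the sum becomes a Riemann sum with spacing $1/N$ over $[0,1]$; multiplying by suitable $N$-powers and invoking the weight asymptotics \eqref{H->JP:W} from the preceding proposition, the sum converges to $\int_0^1 x^j\,\big(\lim_N N^{-|\vec n|}Q_{\vec n}^{({\rm II})}(Nx;\vec\alpha,\beta,N)\big)\,w_i^{(JP)}(x;\alpha_i,\beta)\,\d x$. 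Since the limiting polynomial is monic of degree $\le|\vec n|$ (the leading coefficient is preserved because $Q_{\vec n}^{({\rm II})}$ is monic in $x$ and the scaling $N^{-|\vec n|}$ exactly compensates $x\mapsto Nx$) and satisfies the Jacobi--Piñeiro orthogonality, it must equal $P_{\vec n}^{({\rm II})}$ by uniqueness; this gives $\kappa^{({\rm II})}_N=N^{-|\vec n|}$. An analogous computation with \eqref{DO:I}, using the scaling of the measure (an extra factor $1/N$ from the Riemann sum, and the factor $N^{\alpha_i+\beta}$ from the weight), produces $\kappa^{(i)}_N=N^{\alpha_i+\beta+|\vec n|}/(\Gamma(\alpha_i+1)\Gamma(\beta+1))$ and hence $\kappa^{({\rm I})}_N=N^{|\vec n|}$ for the linear form $P_{\vec n}^{({\rm I})}=\sum_i P_{\vec n}^{(i)}w_i^{(JP)}$; again uniqueness of the type I family forces the limit to be the Jacobi--Piñeiro type I polynomials. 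For the recurrence coefficients, one takes the limit directly in the near-neighbour recurrence \eqref{NNRR}: replacing $x$ by $Nx$ and matching powers of $N$ on both sides of $xB^{({\rm II})}_{\vec n}=B^{({\rm II})}_{\vec n+\vec e_k}+b^0_{\vec n}(k)B^{({\rm II})}_{\vec n}+\sum_j b^j_{\vec n}B^{({\rm II})}_{\vec n-\vec s_j}$ shows that $b^0_{\vec n}(k)$ scales like $N$ and $b^j_{\vec n}$ like $N^{j+1}$ (since $|\vec n-\vec s_j|=|\vec n|-j$), yielding the stated normalizations $b_{\vec n}^{(JP),0}(k)=\lim_N N^{-1}b^0_{\vec n}(k)$ and $b_{\vec n}^{(JP),j}=\lim_N N^{-j-1}b^j_{\vec n}$, and the limits exist and are finite because both sides of the recurrence converge.

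The main obstacle is justifying the interchange of limit and summation (the convergence of the Riemann sums), together with checking that the limiting polynomials are nondegenerate — i.e.\ that the leading coefficients do not collapse and that the $\alpha_i-\alpha_j\notin\Z$ condition is inherited so the Jacobi--Piñeiro AT system is well defined. For the convergence one can either invoke the explicit hypergeometric expression \eqref{HahnTypeII} and \eqref{HahnTypeI} (which converge term-by-term, each Pochhammer ratio having a clean $N\to\infty$ asymptotic that one reads off with \eqref{GFB}, so the sums are finite and the limit is immediate), or give a dominated-convergence argument for the finite sums. In practice the cleanest route, consistent with the style of the earlier sections, is to pass the limit through the finite hypergeometric sums directly: this sidesteps any analytic subtlety and reduces everything to the elementary gamma-ratio asymptotics already recorded, so I would present the proof as ``this follows from taking the limit in \eqref{HahnTypeII}, \eqref{HahnTypeI} and \eqref{NNRR} using \eqref{H->JP:W} and \eqref{GFB}, the normalizations being dictated by the requirement that the orthogonality relations \eqref{DO:II}, \eqref{DO:I} and the recurrence \eqref{NNRR} are preserved.''
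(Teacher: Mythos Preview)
Your argument is correct and self-contained, but the paper takes a different route. Rather than transferring orthogonality through Riemann sums or passing the limit through the explicit hypergeometric series \eqref{HahnTypeII}, \eqref{HahnTypeI}, the paper verifies the limiting relation for the \emph{linear form} first, by comparing the contour-integral representation of the Hahn linear form in Theorem~\ref{HI_IR} (with $x\mapsto Nx$) to the known Jacobi--Pi\~neiro integral representation from \cite[Theorem~3.8]{BDFMW}; the gamma-ratio asymptotics \eqref{GFB} immediately identify the two, and the remaining relations (type I polynomials, type II polynomials, recurrence coefficients) are then stated to follow from this single check. Your approach buys independence from the integral machinery of \cite{BDFMW} and works entirely at the level of finite sums, making the normalizations $\kappa_N^{(\ast)}$ and the $N^{-j-1}$ scaling for $b_{\vec n}^j$ transparent from first principles; the paper's approach is shorter because the hard work has already been done in the integral representations, so the proof reduces to one asymptotic comparison of integrands.
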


\begin{proof}
In \cite[Theorem 3.8]{BDFMW}, one derived the following integral representation for the linear forms
\begin{align*}
 P_{\vec{n}}^{({\rm I})}(x) = (-1)^{\sz{n}}(1-x)^{\beta} \frac{\prod_{i=1}^p(\alpha_i+\beta+\sz{n})_{n_i}}{\Gamma(\beta+\sz{n})}\bigintssss_\Sigma \frac{\Gamma(t+\beta+\sz{n})}{\Gamma(t+1)} \frac{x^t}{\prod_{i=1}^p(\alpha_i-t)_{n_i}} \frac{ \operatorname d t }{2 \pi \ii },
\end{align*}
 where \( \Sigma\subset\{t\in \mathbb{C} \mid \operatorname{Re}(t)>-1\}\) is a clockwise contour enclosing once \(\cup_{i=1}^p[\alpha_i,n_i+\alpha_i-1]\). This expression can also be obtained by applying the previous mentioned limit over the integral representation for the Hahn linear forms 
 in Theorem \ref{HI_IR}. The remaining limits can then be deduced from this one.
\end{proof}
\begin{rem}
 The limiting relation for the type II polynomials was already deduced at \cite{AskeyII}.
\end{rem}

\subsection{Laguerre of the First Kind}
The weight functions are defined as
\begin{equation}
\label{WeightsL1}
\begin{aligned}
 w_i^{(L1)}(x;\alpha_i)&=x^{\alpha_i}\Exp{-x},& i&\in\{1,\ldots,p\},&\Delta&=[0,\infty),
\end{aligned}
\end{equation}
where \( \alpha_1,\ldots,\alpha_p>-1\) and, to ensure an AT system, \( \alpha_i-\alpha_j\not\in\mathbb Z\) for \( i\neq j\). We will denote \(\vec{\alpha}\coloneq(\alpha_1,\cdots,\alpha_p)\). The Laguerre of the first kind family can be derived from the Jacobi--Piñeiro and Meixner of the second kind family through certain limiting relations. The limit from Jacobi--Piñeiro has already been studied in \cite{AskeyII} for the type II polynomials, in \cite{HS:JP-L1} for the type I polynomials and in \cite{CMOPI} for the recurrence coefficients. Here, we will show the following.
\begin{pro}
 The following limiting relations between the Meixner of second kind \eqref{WeightsM2} and the Laguerre of the first kind weights hold
 \begin{equation}
 \label{M2->L1:W}
\begin{aligned}
 w_i^{(L1)}(x;\alpha_i)&=\lim_{c\rightarrow1}(1-c)^{\alpha_i}\Gamma(\alpha_i+1)w_i^{(M2)}\left(\frac{x}{1-c};\alpha_i+1,c\right),& i&\in\{1,\ldots,p\}.
\end{aligned}
 \end{equation}
\end{pro}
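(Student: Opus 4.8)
The plan is to substitute the explicit expression \eqref{WeightsM2} for the Meixner of the second kind weight into the right-hand side and then extract the limit through the standard asymptotics of the gamma function. Setting $\epsilon:=1-c$, so that $\epsilon\to0^+$ as $c\to1$, and using $\beta_i=\alpha_i+1$ together with the argument $x/(1-c)$, the quantity $(1-c)^{\alpha_i}\Gamma(\alpha_i+1)\,w_i^{(M2)}\!\left(\tfrac{x}{1-c};\alpha_i+1,c\right)$ rewrites as
\[
\epsilon^{\alpha_i}\,\frac{\Gamma\!\left(\alpha_i+1+\tfrac{x}{\epsilon}\right)}{\Gamma\!\left(1+\tfrac{x}{\epsilon}\right)}\,(1-\epsilon)^{x/\epsilon}.
\]

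First I would treat the ratio of gamma functions: applying \eqref{GFB} with large parameter $x/\epsilon$ (legitimate for fixed $x>0$ since then $x/\epsilon\to\infty$ with $|\arg(x/\epsilon)|<\pi$) gives $\Gamma(x/\epsilon+\alpha_i+1)/\Gamma(x/\epsilon+1)\sim(x/\epsilon)^{\alpha_i}$, hence $\epsilon^{\alpha_i}$ times this ratio tends to $x^{\alpha_i}$. Next, the elementary limit $(1-\epsilon)^{x/\epsilon}\to\Exp{-x}$ as $\epsilon\to0^+$ handles the remaining factor. Multiplying the two limits yields $x^{\alpha_i}\Exp{-x}$, which is precisely $w_i^{(L1)}(x;\alpha_i)$ as defined in \eqref{WeightsL1}.

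There is essentially no obstacle here: the argument is a routine asymptotic computation entirely analogous to the proofs of the limiting relations for the Hahn weight given earlier in the paper (e.g. \eqref{H->M2:W}). The only points worth a remark are that one needs $\alpha_i>-1$, so $\alpha_i+1>0$, for the Meixner weight $w_i^{(M2)}(\cdot;\alpha_i+1,c)$ to be well defined, and that the two asymptotic factors above converge separately for each fixed $x>0$, so their product converges to the product of the limits. If a fully self-contained treatment were preferred, one could write the ratio of gamma functions out via Stirling's formula \eqref{stirling} instead of invoking \eqref{GFB}, but this adds nothing conceptual.
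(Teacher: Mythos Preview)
Your proof is correct and follows essentially the same approach as the paper: both substitute the explicit Meixner weight, use the gamma-ratio asymptotic \eqref{GFB} to extract $x^{\alpha_i}$, and use $(1-\epsilon)^{x/\epsilon}\to\Exp{-x}$ for the exponential factor. The paper simply compresses these two steps into a single asymptotic equivalence line, whereas you spell them out separately with the substitution $\epsilon=1-c$.
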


\begin{proof}
This follows from the asymptotics
 \begin{equation*}
\begin{aligned}
 w_i^{(M2)}\left(\frac{x}{1-c};\alpha_i+1,c\right)&=\frac{\Gamma\left(\alpha_i+\frac{x}{1-c}+1\right)}{\Gamma(\alpha_i+1)\Gamma\left(\frac{x}{1-c}+1\right)}c^{\frac{x}{1-c}}\sim \frac{1}{\Gamma(\alpha_i+1)(1-c)^{\alpha_i}}\,x^{\alpha_i}\Exp{-x},& c&\rightarrow1.
\end{aligned}
 \end{equation*}
\end{proof}

\begin{coro}
For the linear forms \( L_{1:\vec{n}}^{({\rm I})} \), type I polynomials \( L_{1:\vec{n}}^{(i)} \), the type II polynomials \( L^{({\rm II})}_{1:\vec{n}} \) and the recurrence coefficients \(b^{(L1),j}_{\vec{n}}\); the following respective limiting relations from Meixner of the second kind hold:
\begin{subequations}
 \label{M2->L1}
\begin{align}
 \label{M2->L1:MOP}
 L_{1:\vec{n}}^{(\ast)}(x;\vec{\alpha}) &= \lim_{c \to 1} \kappa^{(\ast)}_c M_{2:\vec{n}}^{(\ast)}\left(\frac{x}{1 - c};\vec{\alpha}+\vec{1}_p, c\right),\quad 
 b^{(L1),j}_{\vec{n}}(\vec{\alpha})=\lim_{c\rightarrow1}{(1-c)^{j+1}b^{(M2),j}_{\vec{n}}(\vec{\alpha}+\vec{1}_p,c)}
\end{align}
\end{subequations}
where
$$\kappa^{({\rm I})}_c= \frac{1}{\kappa^{({\rm II})}_c} = \frac{1}{(1 - c)^{|\vec{n}|}},\quad \kappa^{(i)}_c= \frac{1}{\Gamma(\alpha_i+1)(1 - c)^{|\vec{n}|+\alpha_i}}.$$
\end{coro}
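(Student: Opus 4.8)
The plan is to mirror the structure already used for the Hahn-to-Meixner-of-the-second-kind passage (Corollary~\ref{Cor:H->M2}) and for the analogous Jacobi--Piñeiro statement (Corollary~\ref{Cor:H->JP}): first establish the limit for one of the objects by a direct computation with an explicit representation, and then deduce the rest formally. Concretely, I would start from the integral representation of the Meixner-of-the-second-kind linear form in Theorem~\ref{M2I_IR}, specialised to the parameters $\vec\beta=\vec\alpha+\vec 1_p$ and weight $c$, evaluated at the rescaled argument $x/(1-c)$. Writing out
\[
M_{2:\vec n}^{({\rm I})}\!\left(\tfrac{x}{1-c};\vec\alpha+\vec 1_p,c\right)
= \frac{(c-1)^{\sz n}}{c^{\sz n-1}}\,\frac{c^{x/(1-c)}}{\Gamma\!\left(\tfrac{x}{1-c}+1\right)}\bigintsss_\Sigma \frac{(1-c)^{t}}{\Gamma(t+1)}\,\frac{\Gamma\!\left(\tfrac{x}{1-c}+t+1\right)}{\prod_{j=1}^p(\alpha_j-t)_{n_j}}\,\frac{\d t}{2\pi\ii},
\]
I would apply the change of variable $t\mapsto t/(1-c)$ inside the integral, so that the contour $\Sigma$ (enclosing $\cup_i[\alpha_i,n_i+\alpha_i-2]$ after the shift $\beta_i=\alpha_i+1$) is deformed to a contour enclosing a neighbourhood of the origin scaled by $1-c$. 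Then I would use Stirling's formula~\eqref{stirling}, or more directly the ratio asymptotics~\eqref{GFB}, to extract the leading behaviour as $c\to 1$ of the factors $c^{x/(1-c)}$, $\Gamma(x/(1-c)+1)$, $\Gamma(x/(1-c)+t/(1-c)+1)$ and $(1-c)^{t/(1-c)}$; the combination $c^{x/(1-c)}\sim\Exp{-x}$ and the Gamma ratios should collapse the integrand to $x^t/\prod_j(\alpha_j-t)_{n_j}$ times $\Exp{-x}$, matching (after the overall normalisation $\kappa_c^{({\rm I})}=(1-c)^{-\sz n}$) the known Laguerre-of-the-first-kind linear-form integral representation from \cite{BDFMW}. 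This establishes \eqref{M2->L1:MOP} for the linear forms.

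Once the limit is proved for the linear forms, the type~I polynomials follow by the same ``only certain poles contribute'' argument used repeatedly in the paper: the residue at $t=\alpha_i+k$ picks out exactly $L_{1:\vec n}^{(i)}$ with the normalisation $\kappa_c^{(i)}=1/(\Gamma(\alpha_i+1)(1-c)^{\sz n+\alpha_i})$, consistent with the asymptotics of the weight in \eqref{M2->L1:W}. For the type~II polynomials one argues from Theorem~\ref{M2II_IR} in exactly the same fashion with the change of variable $s\mapsto s/(1-c)$ and the reciprocal normalisation $\kappa_c^{({\rm II})}=(1-c)^{\sz n}$; alternatively, since the type~II limit from Jacobi--Piñeiro is already recorded in \cite{AskeyII} and the Jacobi--Piñeiro$\to$Laguerre-first-kind diagram commutes with the Hahn$\to$Meixner-second-kind one, consistency of the Askey scheme gives the type~II statement for free. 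The recurrence-coefficient limits $b^{(L1),j}_{\vec n}(\vec\alpha)=\lim_{c\to 1}(1-c)^{j+1}b^{(M2),j}_{\vec n}(\vec\alpha+\vec 1_p,c)$ then follow by inserting the limiting relations for the polynomials into the integral expressions \eqref{NextNeighbourRecurrenceIntegralRep} for the recurrence coefficients, or — more cheaply — by taking the $c\to 1$ limit directly in the closed forms \eqref{M2:RR}: each factor $1/(1-c)$ in $b^{(M2),0}_{\vec n}(k)$ and the prefactor $c^j/(1-c)^{j+1}$ in $b^{(M2),j}_{\vec n}$ is exactly cancelled by the $(1-c)^{j+1}$ normalisation, leaving rational expressions in $\vec\alpha$, $\vec n$ that one checks agree with the Laguerre-first-kind coefficients of \cite{CMOPI}.

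The main obstacle I anticipate is the uniformity of the asymptotics along the contour as $c\to 1$: the substitution $t\mapsto t/(1-c)$ sends the fixed contour $\Sigma$ to one whose size shrinks like $1-c$, and the integrand contains $\Gamma\big((x+t)/(1-c)+1\big)/\Gamma(x/(1-c)+1)$ and $(1-c)^{t/(1-c)}$, both of which blow up or decay exponentially; one must check that $|\arg(\cdot)|<\pi$ holds uniformly so that \eqref{GFB}/\eqref{stirling} applies, and that the error terms are integrable uniformly in $c$, justifying the interchange of limit and integral by dominated convergence. This is the same technical point that arose in the proofs of Theorems~\ref{M2I_IR} and \ref{M2II_IR}, so I expect it to be handled by the identical estimates (controlling the integrand on the portion of the contour near the branch cut and showing the tails decay). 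Everything else is routine bookkeeping of Pochhammer symbols and Gamma-function ratios.
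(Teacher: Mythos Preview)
Your overall strategy is exactly the paper's: take $c\to 1$ in the integral representation of Theorem~\ref{M2I_IR} for the linear form, match it against the known Laguerre first kind integral from \cite{BDFMW}, and then read off the remaining limits. The one misstep is the change of variable $t\mapsto t/(1-c)$: it is unnecessary and is precisely what manufactures the ``main obstacle'' you worry about. After setting $\beta_j=\alpha_j+1$, the contour $\Sigma$ in Theorem~\ref{M2I_IR} encloses $\cup_j[\alpha_j,n_j+\alpha_j-1]$ (not $n_j+\alpha_j-2$), which is \emph{independent of $c$} and already coincides with the Laguerre first kind contour. Keeping $t$ fixed on this compact contour, \eqref{GFB} gives $\Gamma\bigl(\tfrac{x}{1-c}+t+1\bigr)\big/\Gamma\bigl(\tfrac{x}{1-c}+1\bigr)\sim\bigl(\tfrac{x}{1-c}\bigr)^{t}$, so $(1-c)^{t}\,\Gamma\bigl(\tfrac{x}{1-c}+t+1\bigr)\big/\Gamma\bigl(\tfrac{x}{1-c}+1\bigr)\to x^{t}$ uniformly on $\Sigma$; meanwhile $c^{x/(1-c)}\to\Exp{-x}$ and $c^{\sz n-1}\to 1$, and $(c-1)^{\sz n}\kappa_c^{({\rm I})}=(-1)^{\sz n}$. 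Dominated convergence on the fixed compact contour is then routine, with none of the uniformity issues your substitution would introduce. The rest of your outline (poles for the type~I polynomials, direct limit in \eqref{M2:RR} for the recurrence coefficients) is fine.
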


\begin{proof}
In \cite[Theorem 3.3]{BDFMW}, one obtained the following integral representation for the linear forms
\begin{align*}
 L_{1:\vec{n}}^{({\rm I})}(x) = (-1)^{\sz{n}}\Exp{-x} \int_\Sigma \frac{1}{\Gamma(t+1)} \frac{x^t}{\prod_{i=1}^p(\alpha_i-t)_{n_i}} \frac{ \operatorname d t }{2 \pi \ii },
\end{align*}
 where \( \Sigma\) is a clockwise contour in $\{t\in \mathbb{C} \mid \operatorname{Re}(t)>-1\}$ enclosing \(\cup_{i=1}^p[\alpha_i,n_i+\alpha_i-1]\) exactly once. This expression can also be obtained by applying the previous mentioned limit over the integral representation for the second kind Meixner linear forms in Theorem \ref{M2I_IR}. The remaining limits can then be derived from this one.
\end{proof}
\begin{rem}
 The limiting relation for the type II polynomials was already suggested at \cite{AskeyII}.
\end{rem}

\subsection{Laguerre of the Second Kind}

The Laguerre of the second kind weights are given by
\begin{align}
\label{WeightsL2}
 w_i(x;\alpha_0,c_i)=x^{\alpha_0}\Exp{-c_i x},\quad i\in\{1,\ldots,p\},\quad\Delta=[0,\infty),
\end{align}
with \( \alpha_0>-1\), \(c_1,\cdots,c_p>0\) and, to ensure an AT system, \( c_i\neq c_j\) for \( i\neq j\). We will denote \(\vec{c}\coloneq(c_1,\ldots,c_p)\). The Laguerre of the second kind family can be reached either from Jacobi--Piñeiro and Meixner of the first kind through respective limits. The Jacobi--Piñeiro limit has already been studied in \cite{AskeyII} for the type II polynomials and in \cite{CMOPI} for the type I polynomials and recurrence coefficients. Here, we will show the following.
\begin{pro}
The following limiting relations between the Meixner of the first kind \eqref{WeightsM1} and the Laguerre of the second kind weights hold
\[
\begin{aligned}
 w_i^{(L2)}(x;\alpha_0,c_i)&=\lim_{\tau\to\infty} \frac{\Gamma(\alpha_0+1)}{\tau^{\alpha_0}} w_i^{(M1)}\left(\tau x;\alpha_0+1,\frac{\tau}{\tau+c_i}\right),& i&\in{\{1,\ldots,p\}}.
\end{aligned}
\]
\end{pro}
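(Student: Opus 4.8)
The plan is to follow the template used for the preceding weight limits in this section: insert the explicit formula \eqref{WeightsM1} for the Meixner of the first kind weight, factor out the $\tau$-dependent pieces, and estimate each of them separately. First I would write \eqref{WeightsM1} with $\beta=\alpha_0+1$ (which is positive since $\alpha_0>-1$), with $c_i$ replaced by $\tau/(\tau+c_i)\in(0,1)$, and with argument $\tau x$, obtaining
\[
w_i^{(M1)}\!\left(\tau x;\alpha_0+1,\frac{\tau}{\tau+c_i}\right)=\frac{\Gamma(\alpha_0+1+\tau x)}{\Gamma(\alpha_0+1)\,\Gamma(\tau x+1)}\left(\frac{\tau}{\tau+c_i}\right)^{\tau x}.
\]
Multiplying by $\Gamma(\alpha_0+1)/\tau^{\alpha_0}$ cancels the $\Gamma(\alpha_0+1)$ factor and reduces the claim to showing that
\[
\frac{1}{\tau^{\alpha_0}}\,\frac{\Gamma(\alpha_0+1+\tau x)}{\Gamma(\tau x+1)}\left(\frac{\tau}{\tau+c_i}\right)^{\tau x}\longrightarrow x^{\alpha_0}\Exp{-c_i x},\qquad \tau\to\infty,
\]
for $x\in(0,\infty)$, the right-hand side being exactly $w_i^{(L2)}(x;\alpha_0,c_i)$ of \eqref{WeightsL2}.

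Next I would control the two $\tau$-dependent factors. For the ratio of gamma functions I would apply \eqref{GFB} with $a=x$, $b=\alpha_0+1$, $c=1$, which gives $\Gamma(\alpha_0+1+\tau x)/\Gamma(\tau x+1)\sim(\tau x)^{\alpha_0}$ as $\tau\to\infty$; the prefactor $\tau^{-\alpha_0}$ then cancels the $\tau^{\alpha_0}$, leaving $x^{\alpha_0}$. For the power factor I would rewrite $\bigl(\tau/(\tau+c_i)\bigr)^{\tau x}=\bigl(1+c_i/\tau\bigr)^{-\tau x}$ and use the elementary limit $\bigl(1+c_i/\tau\bigr)^{\tau}\to\Exp{c_i}$, so this factor tends to $\Exp{-c_i x}$. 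Since each of the two factors converges to a finite, nonzero limit, the product of the estimates is legitimate, and the limit equals $x^{\alpha_0}\Exp{-c_i x}$, which proves the proposition.

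There is no real obstacle here; the only points deserving a word of care are that \eqref{GFB} requires $|\arg(\tau x)|<\pi$, which holds because $x>0$ and $\tau$ runs over the positive reals, and that the two asymptotic factors may be multiplied, which is justified by the previous sentence. As with the analogous statements earlier in the paper, the accompanying corollary describing the limits of the linear forms, type I and type II polynomials, and near-neighbor recurrence coefficients of the Laguerre of the second kind family would then follow by transporting this weight limit through the integral representation for the Meixner of the first kind linear forms in Theorem~\ref{M1I_IR}, together with \eqref{GFB} applied to the $\tau$-dependent part of the integrand and prefactor, in the same style as the proofs of Theorems \ref{M1I_IR} and \ref{M1II_IR} and of Corollary \ref{Cor:H->M1}.
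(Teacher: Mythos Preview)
Your proof is correct and follows essentially the same approach as the paper: both substitute into \eqref{WeightsM1}, apply the gamma-ratio asymptotic \eqref{GFB} to obtain $(\tau x)^{\alpha_0}$, and use the elementary limit $(1+c_i/\tau)^{-\tau x}\to\Exp{-c_i x}$. The paper simply compresses these two steps into a single displayed asymptotic, whereas you spell them out separately.
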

\begin{proof} 
This follows from the asymptotics
\[\begin{aligned}
 w_i^{(M1)}\left(\tau x;\alpha_0+1,\frac{\tau}{\tau+c_i}\right)&= \frac{\Gamma(\tau x+\alpha_0+1)}{\Gamma(\alpha_0+1)\Gamma(\tau x+1)} \left(\frac{\tau}{\tau+c_i}\right)^{\tau x} \sim \frac{\tau^{\alpha_0} }{\Gamma(\alpha_0+1)}\,x^{\alpha_0}\operatorname{e}^{-c_ix},
\end{aligned}\]
\( \tau \to\infty \).
\end{proof}

\begin{coro} 
\label{Cor:M1->L2}
For the linear forms \( L_{2:\vec{n}}^{({\rm I})} \), type I polynomials \( L_{2:\vec{n}}^{(i)} \), type II polynomials \( L_{2:\vec{n}}^{({\rm II})} \) and recurrence coefficients $b^{(L2),j}_{\vec{n}}$; the following respective limit relations from Meixner first kind hold:
\begin{subequations}
\begin{align}
L_{2:\vec{n}}^{(\ast)}(x;\alpha_0,\vec{c})&= \lim_{\tau\to\infty} \kappa^{(\ast)}_\tau M_{1:\vec{n}}^{(\ast)}\left(\tau x;\alpha_0+1,\left\{\frac{\tau}{\tau+c_i}\right\}_{i=1}^p\right), \quad
 b^{(L2),j}_{\vec{n}}(\alpha_0,\vec{c})&=\lim_{\tau\rightarrow\infty}\dfrac{1}{\tau^{j+1}}b^{(M1),j}_{\vec{n}}\left(\alpha_0+1,\left\{\frac{\tau}{\tau+c_1}\right\}_{i=1}^p\right),
\end{align}
\end{subequations}
where
$$ \kappa^{({\rm I})}_\tau = \frac{1}{\kappa^{({\rm II})}_\tau} = \tau^{|\vec{n}|},\quad \kappa^{(i)}_\tau = \frac{\tau^{|\vec{n}|+\alpha_0}}{\Gamma(\alpha_0+1)}. $$
\end{coro}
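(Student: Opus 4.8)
We follow the template of the previous descendants, most closely that of the Meixner second kind $\to$ Laguerre first kind corollary: the weight limit of the preceding Proposition is lifted to linear forms, polynomials and recurrence coefficients by transplanting it into the contour integral representations for the Meixner polynomials of the first kind from Theorems~\ref{M1I_IR} and~\ref{M1II_IR}. Concretely, in those representations one sets $\beta=\alpha_0+1$ and $c_i=\tfrac{\tau}{\tau+c_i}$, rescales the argument by $x\mapsto\tau x$, and lets $\tau\to\infty$, using \eqref{GFB} to extract the asymptotics of the $\tau$-dependent factors of each integrand and prefactor. This produces contour integral representations for the Laguerre second kind linear forms $L_{2:\vec n}^{({\rm I})}$ and type~II polynomials $L_{2:\vec n}^{({\rm II})}$ relative to the weights \eqref{WeightsL2}; the type~I polynomial relations then follow by keeping only the residues attached to a single weight, and the recurrence coefficient relations by a short direct computation. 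As a built-in consistency check, the $L_{2:\vec n}$ objects obtained this way must agree with those reached from Jacobi--Piñeiro in \cite{AskeyII,CMOPI}, which is exactly what completes the partial Askey scheme of Figure~\ref{figure:Askey scheme}.

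For the linear forms we would start from Theorem~\ref{M1I_IR}: its contour $\Sigma^\ast$ encloses $\{c_j\}_{j=1}^p$, and under $c_j\mapsto\tfrac{\tau}{\tau+c_j}$ these points accumulate at $1$, so the convenient move is the change of variables $t=\tfrac{\tau}{\tau+s}$, which returns the enclosed poles to $s=c_j$ and deforms $\Sigma^\ast$ into a fixed clockwise contour around $\{c_j\}_{j=1}^p$. One then checks $\prod_{j=1}^p(1-c_j)^{n_j}\sim\tau^{-\sz n}\prod_{j=1}^p c_j^{n_j}$, $\Gamma(\tau x+\alpha_0+1)/\Gamma(\tau x+1)\sim(\tau x)^{\alpha_0}$, $(1-t)^{\sz n+\alpha_0-1}\sim\tau^{-\sz n-\alpha_0+1}s^{\sz n+\alpha_0-1}$, $t^{\tau x}\to\Exp{-sx}$ and $\prod_{j=1}^p(t-c_j)^{n_j}\sim\tau^{-\sz n}\prod_{j=1}^p(c_j-s)^{n_j}$, while the Jacobian contributes a further $\tau^{-1}$ and an orientation reversal; all powers of $\tau$ then cancel against $\kappa^{({\rm I})}_\tau=\tau^{\sz n}$, and the weight limit yields the stated integral representation for $L_{2:\vec n}^{({\rm I})}$, hence for $L_{2:\vec n}^{(i)}$ after localising the contour at $s=c_i$. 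The same bookkeeping applied to the hypergeometric form \eqref{M1:I} recovers the hypergeometric expression for $L_{2:\vec n}^{(i)}$.

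The type~II polynomials are obtained analogously, and this is where we expect the only genuine obstacle: the contour $\mathcal C^\ast$ in Theorem~\ref{M1II_IR} already lies against the branch cut of $s\mapsto s^{\alpha_0+1}$, and the factor $s^{-\tau x-1}$ in the integrand is unbounded on a fixed contour as $\tau\to\infty$, so the limit cannot be taken before re-running, uniformly in $\tau$, the contour surgery of that proof (the substitution $s\mapsto\tfrac1s-1$, splitting off the vanishing piece $\mathcal R^0$, and closing up using the $\operatorname{o}(|s|^{-1})$ decay of the integrand). Once the contour is stabilised, \eqref{GFB} gives the asymptotics directly and the limit is routine, reproducing the type~II relation already recorded in \cite{AskeyII}; alternatively, and more cheaply, the type~II limit can be verified termwise on the Kampé de Fériet series \eqref{M1:II}. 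For the recurrence coefficients one simply applies $\tau^{-(j+1)}(\,\cdot\,)$ to \eqref{M1:R}: in contrast to the Hahn$\to$Meixner first kind and Hahn$\to$Kravchuk cases, no separate treatment of $j=0$ is needed here, since $b_{\vec n}^{(M1),0}(k)$ is already a rational function of the parameters whose scaled limit exists term by term. Taken together, all of these limits can in fact be checked directly on the closed formulas \eqref{M1:I}, \eqref{M1:II} and \eqref{M1:R}, an independent route that bypasses the contour analysis entirely.
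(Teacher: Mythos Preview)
Your proposal is correct and takes essentially the same approach as the paper: verify the limit at the level of the linear forms by pushing the integral representation of Theorem~\ref{M1I_IR} through the substitution $\beta=\alpha_0+1$, $c_j\mapsto\tfrac{\tau}{\tau+c_j}$, $x\mapsto\tau x$ and a change of integration variable, thereby recovering the known Bleher--Kuijlaars contour formula for $L_{2:\vec n}^{({\rm I})}$, and then letting the remaining relations follow. The paper is considerably terser---it simply quotes the target integral from \cite{B-K} and asserts that the stated limit reproduces it---whereas you have written out the change of variables $t=\tfrac{\tau}{\tau+s}$, the factor-by-factor asymptotics, and the $\tau$-power bookkeeping explicitly; but the underlying argument is the same.
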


\begin{proof}
In \cite[Theorem 3.2]{B-K}, one found the following integral representation for the linear forms
\begin{equation}
\label{L2I_IR}
L_{2:\vec{n}}^{({\rm I})}(x;\alpha_0,\vec{c}) = \frac{(-1)^{\sz{n}} \prod_{i=1}^p c_i^{n_i} }{\Gamma(\sz{n}+\alpha_0)} x^{\alpha_0} \int_{\Sigma^\ast} \frac{t^{\sz{n}+\alpha_0-1} \operatorname{e}^{-xt}}{\prod_{j=1}^p (t-c_j)^{n_j}} \frac{\d t }{2 \pi \ii }.
\end{equation}
where \( \Sigma^\ast \) is a clockwise contour in $\{t\in \mathbb{C} \mid \operatorname{Re}(t)>0\}$ enclosing \(\{c_i\}_{i=1}^p\) exactly once. This expression can also be obtained by applying the previous mentioned limit over the integral representation for the second kind Meixner linear forms in Theorem \ref{M1I_IR}.
\end{proof}
\begin{rem}
 The previous limit relation for the type II polynomials was already suggested at \cite{AskeyII}.
\end{rem}

\subsection{Hermite}
For the Hermite family, the weights are
\begin{equation}
 \label{WeightsHe}
\begin{aligned}
 w_i^{(H)}(x;c_i)&=\Exp{-x^2+c_ix},& i&\in\{1,\ldots,p\},& \Delta&=\R,
\end{aligned}
\end{equation}
with \(\vec{c}\coloneq(c_1,\ldots,c_p)\in\mathbb R^p\) and, to ensure an AT system, \( c_i\neq c_j\) for \( i\neq j\). According to the Askey scheme; the Hermite family should be reachable from Jacobi--Piñeiro, Laguerre of the first kind, Laguerre of the second kind, Kravchuk and Charlier. Limits from the first two families for the type II polynomials were given in \cite{CVA} for the case $p=2$, and they are straightforward to generalize to general $p\geqslant2$. More recently, in \cite{CMOPI} the similar limits were given for the type I polynomials and the recurrence coefficients.
In what follows, we will complete the Askey scheme by describing the three remaining limits. We will need the following auxiliary results.

\begin{lemma}
 \label{lemmaGamma1}
 Suppose that $\beta,y,z,a,b,c\in\mathbb R$ and $x>0$, then
 \begin{align*}
 \left(x\beta+y\sqrt{\beta}+z\right)^{a\beta+b\sqrt{\beta}+c}\sim
(x\beta)^{a\beta+b\sqrt{\beta}+c}\Exp{
\frac{ay}{x}\sqrt{\beta}+\frac{az}{x}-\frac{ay^2}{2x^2}+\frac{by}{x}},\quad \beta\to\infty.
 \end{align*}
\end{lemma}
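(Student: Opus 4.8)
The plan is to take logarithms and expand asymptotically. Write the left-hand side as $\exp\big((a\beta+b\sqrt{\beta}+c)\log(x\beta+y\sqrt{\beta}+z)\big)$ and factor $x\beta$ out of the logarithm's argument:
\[
\log\big(x\beta+y\sqrt{\beta}+z\big)=\log(x\beta)+\log\!\Big(1+\tfrac{y}{x\sqrt{\beta}}+\tfrac{z}{x\beta}\Big).
\]
Since $x>0$, the quantity $u\coloneqq \tfrac{y}{x\sqrt{\beta}}+\tfrac{z}{x\beta}\to 0$ as $\beta\to\infty$, so one may use $\log(1+u)=u-\tfrac{u^2}{2}+O(u^3)$. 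Collecting powers of $\beta$ gives
\[
\log\!\Big(1+\tfrac{y}{x\sqrt{\beta}}+\tfrac{z}{x\beta}\Big)=\frac{y}{x}\,\beta^{-1/2}+\Big(\frac{z}{x}-\frac{y^2}{2x^2}\Big)\beta^{-1}+O(\beta^{-3/2}).
\]

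Next I would multiply by the exponent $a\beta+b\sqrt{\beta}+c$ and keep only the terms that do not vanish as $\beta\to\infty$. The product $(a\beta+b\sqrt{\beta}+c)\cdot\log(x\beta)$ reproduces exactly the factor $(x\beta)^{a\beta+b\sqrt{\beta}+c}$ after exponentiating. The product $(a\beta+b\sqrt{\beta}+c)\cdot\big(\tfrac{y}{x}\beta^{-1/2}+(\tfrac{z}{x}-\tfrac{y^2}{2x^2})\beta^{-1}+O(\beta^{-3/2})\big)$ contributes: from $a\beta$, the terms $\tfrac{ay}{x}\sqrt{\beta}+\big(\tfrac{az}{x}-\tfrac{ay^2}{2x^2}\big)+O(\beta^{-1/2})$; from $b\sqrt{\beta}$, the terms $\tfrac{by}{x}+O(\beta^{-1/2})$; from $c$, only $O(\beta^{-1/2})$ terms. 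Hence the total exponent is
\[
(a\beta+b\sqrt{\beta}+c)\log(x\beta)+\frac{ay}{x}\sqrt{\beta}+\frac{az}{x}-\frac{ay^2}{2x^2}+\frac{by}{x}+O(\beta^{-1/2}),
\]
and exponentiating yields the claimed asymptotic equivalence, since $\exp(O(\beta^{-1/2}))\to1$.

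The only subtlety worth flagging is that the asserted relation is an asymptotic equivalence of the form $f(\beta)\sim g(\beta)$, i.e.\ $f(\beta)/g(\beta)\to1$, so I do not need to control the $O(\beta^{-1/2})$ remainder beyond noting it tends to $0$; there is no claim about a full asymptotic series. One should also note that for large $\beta$ the base $x\beta+y\sqrt{\beta}+z$ is positive (as $x>0$), so the real powers and logarithms are well defined and no branch issues arise. This is essentially a bookkeeping computation: the main (mild) obstacle is simply organizing the expansion so that exactly the surviving $O(1)$ and $O(\sqrt{\beta})$ contributions are collected and everything else is correctly absorbed into the vanishing remainder. Once that is done, the identity follows by exponentiating.
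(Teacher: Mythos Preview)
Your proof is correct and follows essentially the same approach as the paper: factor out $x\beta$, expand $\log(1+u)$ to second order in $u=\tfrac{y}{x\sqrt\beta}+\tfrac{z}{x\beta}$, multiply by the exponent $a\beta+b\sqrt\beta+c$, and retain only the non-vanishing contributions before exponentiating. The paper's writeup is organized slightly differently (it pulls out the factor $(x\beta)^{a\beta+b\sqrt\beta+c}$ before taking the logarithm rather than splitting the log afterwards), but the computation and the terms collected are identical.
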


\begin{proof}
 Note that:
 \begin{align*}
\left(x\beta+y\sqrt{\beta}+z\right)^{a\beta+b\sqrt{\beta}+c}&=(x\beta)^{a\beta+b\sqrt{\beta}+c}\left(1+\frac{y}{x\sqrt{\beta}}+\frac{z}{x\beta}\right)^{a\beta+b\sqrt{\beta}+c}\\
&=(x\beta)^{a\beta+b\sqrt{\beta}+c}\exp{\left(\left({a\beta+b\sqrt{\beta}+c}\right)\ln{\left(1+\frac{y}{x\sqrt{\beta}}+\frac{z}{x\beta}\right)}\right)}.
 \end{align*}
We can then use the power series of the logarithm
\begin{equation*}
 \ln(1+t)=\sum_{k=1}^{\infty}\frac{(-1)^{k+1}}{k}t^k
\end{equation*}
to obtain
\begin{align*}
\left(x\beta+y\sqrt{\beta}+z\right)^{a\beta+b\sqrt{\beta}+c}
&=(x\beta)^{a\beta+b\sqrt{\beta}+c}\exp{\left(\left({a\beta+b\sqrt{\beta}+c}\right)\left(\frac{y}{x\sqrt{\beta}}+\frac{z}{x\beta}-\frac{y^2}{2x^2\beta}+\operatorname{O}\left(\frac{1}{\sqrt{\beta}^3}\right)\right)\right)}\\
&=(x\beta)^{a\beta+b\sqrt{\beta}+c}\exp{\left(
\frac{ay}{x}\sqrt{\beta}+\frac{az}{x}-\frac{ay^2}{2x^2}+\frac{by}{x}+\operatorname{O}\left(\frac{1}{\sqrt{\beta}}\right)\right)}.
 \end{align*}
 \end{proof}

Combining this result with Stirling's formula \eqref{stirling} gives us
\begin{lemma}
 \label{lemmaGamma2}
 Suppose that $\beta,y,z\in\mathbb R$ and $x>0$, then
 \begin{equation*}
\begin{aligned}
 \Gamma\left(x\beta+y\sqrt{\beta}+z\right)&\sim\sqrt{2\pi}(x\beta)^{x\beta+y\sqrt{\beta}+z-\frac{1}{2}}\Exp{
\frac{y^2}{2x}-x\beta},& \beta&\to\infty.
\end{aligned}
 \end{equation*}
\end{lemma}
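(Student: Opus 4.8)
## Proof proposal for Lemma \ref{lemmaGamma2}

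The plan is to combine Lemma \ref{lemmaGamma1} with Stirling's formula \eqref{stirling} in a direct two-step substitution. First I would write $\Gamma(x\beta+y\sqrt\beta+z)$ and apply Stirling's formula \eqref{stirling} with argument $w = x\beta+y\sqrt\beta+z$, which is valid since $w\to\infty$ and, for $\beta$ large, $|\arg w|<\pi$ because $x>0$ forces $w$ to be eventually positive. This gives
\begin{equation*}
 \Gamma\left(x\beta+y\sqrt\beta+z\right)\sim\sqrt{2\pi}\left(x\beta+y\sqrt\beta+z\right)^{x\beta+y\sqrt\beta+z-\frac12}\Exp{-(x\beta+y\sqrt\beta+z)},\quad\beta\to\infty.
\end{equation*}

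Next I would handle the power factor $\left(x\beta+y\sqrt\beta+z\right)^{x\beta+y\sqrt\beta+z-\frac12}$ by invoking Lemma \ref{lemmaGamma1} with the identifications $a=x$, $b=y$, $c=z-\frac12$ (the base parameters being the same $x,y,z$). Lemma \ref{lemmaGamma1} then yields
\begin{equation*}
 \left(x\beta+y\sqrt\beta+z\right)^{x\beta+y\sqrt\beta+z-\frac12}\sim(x\beta)^{x\beta+y\sqrt\beta+z-\frac12}\Exp{\frac{xy}{x}\sqrt\beta+\frac{xz}{x}-\frac{xy^2}{2x^2}+\frac{y(z-\tfrac12)\cdot 0}{x}},
\end{equation*}
where I must be careful: the $b$-term in Lemma \ref{lemmaGamma1} contributes $\frac{by}{x}=\frac{y^2}{x}$ from the $b\sqrt\beta$ part of the exponent interacting with the $y/(x\sqrt\beta)$ correction, so the exponential correction is $\Exp{y\sqrt\beta+z-\frac{y^2}{2x}+\frac{y^2}{x}}=\Exp{y\sqrt\beta+z+\frac{y^2}{2x}}$. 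Then multiplying by $\Exp{-(x\beta+y\sqrt\beta+z)}$ from Stirling, the $\Exp{y\sqrt\beta}$ and $\Exp{z}$ factors cancel against $\Exp{-y\sqrt\beta-z}$, leaving $(x\beta)^{x\beta+y\sqrt\beta+z-\frac12}\Exp{\frac{y^2}{2x}-x\beta}$, which is exactly the claimed asymptotic.

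The main obstacle — really the only place care is needed — is bookkeeping the several $\operatorname{O}(1/\sqrt\beta)$ error terms coming from Lemma \ref{lemmaGamma1} (and from the $\sim$ in Stirling), and making sure that the cross-term $\frac{by}{x}$ in Lemma \ref{lemmaGamma1}, applied with $b=y$, combines correctly with the $-\frac{ay^2}{2x^2}=-\frac{y^2}{2x}$ term to produce the net $+\frac{y^2}{2x}$ appearing in the statement. One should also double-check that the exponent $c=z-\tfrac12$ contributes nothing beyond the already-accounted $\Exp{az/x}=\Exp{z}$ piece at leading order, i.e. that $c$ enters only through $(x\beta)^c$; this is immediate from the structure of Lemma \ref{lemmaGamma1}. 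Since all error terms are $\operatorname{o}(1)$ in the exponent, they do not affect the stated $\sim$, and the proof concludes by collecting the surviving factors.
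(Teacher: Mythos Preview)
Your proposal is correct and follows exactly the route the paper intends: the paper's proof is a single sentence stating that the lemma follows by combining Lemma~\ref{lemmaGamma1} with Stirling's formula~\eqref{stirling}, and you have carried out precisely that combination (Stirling first, then Lemma~\ref{lemmaGamma1} with $a=x$, $b=y$, $c=z-\tfrac12$, after which the $y\sqrt\beta$ and $z$ terms cancel against those from $\Exp{-(x\beta+y\sqrt\beta+z)}$). The only cosmetic issue is the stray placeholder ``$\frac{y(z-\tfrac12)\cdot 0}{x}$'' in your first displayed application of Lemma~\ref{lemmaGamma1}; you should simply delete it, since you correctly recompute the exponential correction immediately afterward.
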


We are now ready to establish the limiting relations.
\begin{pro}
The following limiting relations between the Kravchuk \eqref{WeightsK}, Charlier \eqref{WeightsC}, Laguerre of the second kind \eqref{WeightsL2} and Hermite weights hold
\begin{align}
 w_i^{(H)}(x;c_i)&=\lim_{N\to\infty}\sqrt{\frac{\pi N}{2}}\Exp{\frac{c_i^2}{4}}w_i^{(K)}\left(\frac{N}{2}+x\sqrt{\frac{N}{2}};\frac{1}{2}+\frac{c_i}{2\sqrt{2N}},N\right)
 \\&=\lim_{\beta\rightarrow\infty}\sqrt{2\pi\beta}\Exp{-\beta-c_i\sqrt{\frac{\beta}{2}}+\frac{c_i^2}{4}}w_i^{(C)}\left(\beta+x\sqrt{2\beta};\beta+c_i\sqrt{\frac{\beta}{2}}\right)\\
 &=\lim_{\beta\to\infty}\beta^{-\beta}\Exp{\beta-c_i\sqrt{\frac{\beta}{2}}}w_i^{(L2)}\left(\beta+x\sqrt{2\beta};\beta,1-\frac{c_i}{\sqrt{2\beta}}\right),\quad i\in\{1,\ldots,p\}.
\end{align}
\end{pro}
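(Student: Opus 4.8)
The plan is to prove the three asymptotic identities separately, in each case substituting the explicit weight formula, applying the refined Stirling estimates of Lemmas~\ref{lemmaGamma1} and~\ref{lemmaGamma2} (together with ordinary Stirling~\eqref{stirling}), and checking that the carefully chosen prefactor cancels every divergent and $O(1)$ factor except $\Exp{-x^2+c_ix}=w_i^{(H)}(x;c_i)$. In all three cases the argument of the weight is a parabolic rescaling, $\tfrac N2+x\sqrt{N/2}$ in the Kravchuk case and $\beta+x\sqrt{2\beta}$ in the Charlier and Laguerre cases, so the large-parameter expansions of Lemmas~\ref{lemmaGamma1}--\ref{lemmaGamma2} apply directly.

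For the Charlier limit I would start from $w_i^{(C)}\big(\beta+x\sqrt{2\beta};\beta+c_i\sqrt{\beta/2}\big)=\big(\beta+c_i\sqrt{\beta/2}\big)^{\beta+x\sqrt{2\beta}}\big/\Gamma\big(\beta+x\sqrt{2\beta}+1\big)$. Lemma~\ref{lemmaGamma1} with base data $(1,c_i/\sqrt2,0)$ and exponent data $(1,x\sqrt2,0)$ gives $\big(\beta+c_i\sqrt{\beta/2}\big)^{\beta+x\sqrt{2\beta}}\sim\beta^{\beta+x\sqrt{2\beta}}\Exp{c_i\sqrt{\beta/2}-c_i^2/4+c_ix}$, while Lemma~\ref{lemmaGamma2} with $(1,x\sqrt2,1)$ gives $\Gamma\big(\beta+x\sqrt{2\beta}+1\big)\sim\sqrt{2\pi\beta}\,\beta^{\beta+x\sqrt{2\beta}}\Exp{x^2-\beta}$. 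Dividing and then multiplying by the stated prefactor $\sqrt{2\pi\beta}\,\Exp{-\beta-c_i\sqrt{\beta/2}+c_i^2/4}$ cancels the powers of $\beta$, the $\sqrt{2\pi\beta}$, and the exponentials $\Exp{\pm\beta}$, $\Exp{\pm c_i\sqrt{\beta/2}}$, $\Exp{\pm c_i^2/4}$, leaving $\Exp{-x^2+c_ix}$. The Laguerre-of-the-second-kind limit is shorter still: $w_i^{(L2)}\big(\beta+x\sqrt{2\beta};\beta,1-\tfrac{c_i}{\sqrt{2\beta}}\big)=\big(\beta+x\sqrt{2\beta}\big)^{\beta}\Exp{-(1-c_i/\sqrt{2\beta})(\beta+x\sqrt{2\beta})}$; Lemma~\ref{lemmaGamma1} with data $(1,x\sqrt2,0)$, $(1,0,0)$ gives $\big(\beta+x\sqrt{2\beta}\big)^{\beta}\sim\beta^{\beta}\Exp{x\sqrt{2\beta}-x^2}$, the exponent expands exactly to $-\beta-x\sqrt{2\beta}+c_i\sqrt{\beta/2}+c_ix$, and multiplying by $\beta^{-\beta}\Exp{\beta-c_i\sqrt{\beta/2}}$ again collapses everything to $\Exp{-x^2+c_ix}$.

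The Kravchuk limit needs one extra ingredient. Writing $\uppi_i=\tfrac12\big(1+\tfrac{c_i}{\sqrt{2N}}\big)$, $1-\uppi_i=\tfrac12\big(1-\tfrac{c_i}{\sqrt{2N}}\big)$ and the rescaled argument as $\tfrac N2+\tfrac{x}{\sqrt2}\sqrt N$, I would first use Stirling~\eqref{stirling} on $\Gamma(N+1)$ and Lemma~\ref{lemmaGamma2} on $\Gamma\big(\tfrac N2\pm\tfrac{x}{\sqrt2}\sqrt N+1\big)$ to obtain $\Gamma(N+1)\big/\big(\Gamma(x+1)\Gamma(N-x+1)\big)\sim 2^{N+1}\big/\big(\sqrt{2\pi N}\,\Exp{x^2}\big)$. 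Then I would expand $\ln\!\big[(1+\tfrac{c_i}{\sqrt{2N}})^{x}(1-\tfrac{c_i}{\sqrt{2N}})^{N-x}\big]$ using the series $\ln(1+t)=\sum_{k\ge1}\tfrac{(-1)^{k+1}}{k}t^k$; collecting the $\sqrt N$ terms (which cancel) and the $O(1)$ terms yields $\uppi_i^{x}(1-\uppi_i)^{N-x}\sim 2^{-N}\Exp{-c_i^2/4+c_ix}$. Combining the two estimates and multiplying by $\sqrt{\pi N/2}\,\Exp{c_i^2/4}$ gives $\Exp{-x^2+c_ix}$ once more, completing all three identities.

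The calculations are otherwise routine; the delicate point — and the step most prone to sign or factor slips — is the bookkeeping of the subleading terms. The leading power factors ($\beta^{\beta+x\sqrt{2\beta}}$, respectively $2^{N}$) must cancel \emph{exactly}, and, more subtly, the $O(\sqrt\beta)$ and $O(1)$ corrections in the exponent produced by Lemmas~\ref{lemmaGamma1}--\ref{lemmaGamma2} and by the logarithm expansion are precisely what the prefactors $\sqrt{2\pi\beta}\,\Exp{-\beta-c_i\sqrt{\beta/2}+c_i^2/4}$, $\beta^{-\beta}\Exp{\beta-c_i\sqrt{\beta/2}}$, $\sqrt{\pi N/2}\,\Exp{c_i^2/4}$ are designed to kill; one must therefore retain every contribution down to order $\Exp{O(1)}$, discarding only the $\Exp{O(1/\sqrt\beta)}\to1$ remainders, to witness the cancellation. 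Since each right-hand limit then equals $w_i^{(H)}(x;c_i)$, the three displayed equalities hold simultaneously.
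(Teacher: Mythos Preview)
Your proposal is correct and follows essentially the same route as the paper: substitute the explicit weight, apply Lemmas~\ref{lemmaGamma1}--\ref{lemmaGamma2} (together with Stirling~\eqref{stirling}) to extract the asymptotics, and observe that the prefactor cancels all but $\Exp{-x^2+c_ix}$. The paper's proof simply records the three resulting asymptotic equivalences without spelling out the intermediate bookkeeping you provide.
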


\begin{proof}
Using the two previous lemmas, one can find
 \begin{align*}
 w_i^{(K)}\left(\frac{N}{2}+x\sqrt{\frac{N}{2}};\frac{1}{2}+\frac{c_i}{2\sqrt{2N}},N\right)&=\dfrac{\Gamma(N+1)}{\Gamma\left(\frac{N}{2}+x\sqrt{\frac{N}{2}}+1\right)\Gamma\left(\frac{N}{2}-x\sqrt{\frac{N}{2}}+1\right)} \left(\frac{1}{2}+\frac{c_i}{2\sqrt{2N}}\right)^{\frac{N}{2}+x\sqrt{\frac{N}{2}}}\left(\frac{1}{2}-\frac{c_i}{2\sqrt{2N}}\right)^{\frac{N}{2}-x\sqrt{\frac{N}{2}}}\\
 &\sim\sqrt{\frac{2}{\pi N}}\Exp{-\frac{c_i^2}{4}-x^2+c_ix},\quad N\to\infty,\\
 w_i^{(C)}\left(\beta+x\sqrt{2\beta};\beta+c_i\sqrt{\frac{\beta}{2}}\right)&=\frac{\left(\beta+c_i\sqrt{\frac{\beta}{2}}\right)^{\beta+x\sqrt{2\beta}}}{\Gamma\left(\beta+x\sqrt{2\beta}+1\right)}\sim
 \frac{1}{\sqrt{2\pi\beta}}\Exp{\beta+c_i\sqrt{\frac{\beta}{2}}-\frac{c_i^2}{4}-x^2+c_ix}
 ,\quad \beta\to\infty,\\
 w_i^{(L2)}\left(\beta+x\sqrt{2\beta};\beta,1-\frac{c_i}{\sqrt{2\beta}}\right)&=\left(\beta+x\sqrt{2\beta}\right)^{\beta}\Exp{-\left(1-\frac{c_i}{\sqrt{2\beta}}\right)\left(\beta+x\sqrt{2\beta}\right)}\sim\beta^\beta\Exp{-\beta+c_i\sqrt{\frac{\beta}{2}}-x^2+c_ix},\quad\beta\to\infty.
 \end{align*}
\end{proof}

\begin{coro}
\label{Cor:K/C/L2->He}
For the linear forms \( H_{\vec{n}}^{({\rm I})} \), type I polynomials \( H_{\vec{n}}^{(i)} \), type II polynomials \( H_{\vec{n}}^{({\rm II})} \) and recurrence coefficients \(b^{(H),j}_{\vec{n}}\); the following respective limiting relations from Kravchuk, Charlier and Laguerre of the second kind hold:
\begin{subequations}
 \label{K/C/L2->He}
\begin{align}
 \label{K/C/L2->He:LF}
 H_{\vec{n}}^{(\ast)}(x;\vec{c})&=\lim_{N\to\infty}
\kappa^{(K:\ast)}_{N} K_{\vec{n}}^{(\ast)}\left(\frac{N}{2}+x\sqrt{\frac{N}{2}};\frac{\vec{1}_p}{2}+\frac{\vec{c}}{2\sqrt{2N}},N\right)
=\lim_{\beta\rightarrow\infty}\kappa^{(C:\ast)}_{\beta}
C_{\vec{n}}^{(\ast)}\left(\beta+x\sqrt{2\beta};\vec{1}_p\beta +\vec{c}\sqrt{\frac{\beta}{2}}\right) 
\\
&=\lim_{\beta\rightarrow\infty}\kappa^{(L2:\ast)}_{\beta} L_{2:\vec{n}}^{(\ast)}\left(\beta+x\sqrt{2\beta};\beta,\vec{1}_p-\frac{\vec{c}}{\sqrt{2\beta}}\right),
\end{align}
\begin{align} \label{K/C/L2->He:R}
 b_{\vec{n}}^{(H),j}(\vec{c})&=\lim_{N\to\infty} {\sqrt{\frac{2}{N}}^{j+1}}\left(b_{\vec{n}}^{(K),j}\left(\frac{\vec{1}_p}{2}+\frac{\vec{c}}{2\sqrt{2N}},N\right){-\frac{N}{2}}\delta_{j,0}\right)
 =\lim_{\beta\rightarrow\infty}{\frac{1}{\sqrt{2\beta}^{j+1}}}\left(b_{\vec{n}}^{(C),j}\left(\vec{1}_p\beta +\vec{c}\sqrt{\frac{\beta}{2}}\right){-\beta}\delta_{j,0}\right)\\
 &=\lim_{\beta\to\infty}
 \frac{1}{\sqrt{2\beta}^{j+1}}\left(b_{\vec{n}}^{(L2),j}\left(\beta,\vec{1}_p-\frac{\vec{c}}{\sqrt{2\beta}}\right)-\beta\delta_{j,0}\right),
\end{align}
\end{subequations}
where
\begin{align}
 \kappa^{(K:I)}_{N} &= \frac{1}{\kappa^{(K:II)}_{N}} = \sqrt{\frac{N}{2}}^{\sz{n}},\quad \kappa^{(K:i)}_{N} = {\sqrt{\frac{N}{2}}^{\sz{n}-1}}\frac{\Exp{-\frac{c_i^2}{4}}}{\sqrt{\pi}},\\ \kappa^{(C:I)}_{\beta} &= \frac{1}{\kappa^{(C:II)}_{\beta}} = \kappa^{(L2:I)}_{\beta} = \frac{1}{\kappa^{(L2:II)}_{\beta}} = \sqrt{2\beta}^{\sz{n}},\quad
 \kappa^{(C:i)}_{\beta} &= \frac{{\sqrt{2\beta}^{\sz{n}-1}}}{\sqrt{\pi}}\Exp{\beta+c_i\sqrt{\frac{\beta}{2}}-\frac{c_i^2}{4}}, \quad
 \kappa^{(L2:i)}_{\beta} = \sqrt{2\beta}^{\sz{n}}\beta^{\beta}\Exp{-\beta+c_i\sqrt{\frac{\beta}{2}}}.
\end{align}
\end{coro}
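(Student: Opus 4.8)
The plan is to follow the same pattern used for all the preceding corollaries in this section: start from the weight limiting relations established in the Proposition immediately preceding this corollary, promote them to the level of integral representations, and then read off the consequences for the linear forms, the type I and type II polynomials, and the recurrence coefficients. The three routes (Kravchuk, Charlier, Laguerre of the second kind) are handled in parallel, and their mutual consistency comes out of the computation.

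First I would treat the type I linear forms. For each route I take the known contour-integral representation of the relevant linear form --- Theorem~\ref{KI_IR} for Kravchuk, Theorem~\ref{CI_IR} for Charlier, and \eqref{L2I_IR} for Laguerre of the second kind --- evaluate it at the shifted argument ($x\mapsto \frac{N}{2}+x\sqrt{N/2}$ in the Kravchuk case, $x\mapsto \beta+x\sqrt{2\beta}$ in the Charlier and Laguerre cases), rescale the integration variable so that the contour stabilizes, and compute the asymptotics of the $N$- (respectively $\beta$-)dependent parts of the integrand and prefactor. Here the ratio-of-gamma asymptotics \eqref{GFB} together with the two auxiliary Lemmas~\ref{lemmaGamma1} and~\ref{lemmaGamma2} do the work, producing the Gaussian factor $\Exp{-(x-s/2)^2}$ characteristic of the Hermite family and fixing the normalizing constants $\kappa^{(K:i)}_N,\kappa^{(C:i)}_\beta,\kappa^{(L2:i)}_\beta$ (the remaining constants are forced by matching leading powers). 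Combining this with the weight limits yields \eqref{K/C/L2->He:LF} for the linear forms, and the individual type I polynomials follow, as in the earlier proofs, by observing that only the poles attached to the $i$-th weight survive in the limit; equivalently, one may simply pass the limit through the Rodrigues-type formulas already recorded for the Kravchuk and Charlier type I polynomials. For the type II polynomials I would argue analogously using the type II integral representations, or invoke \cite{AskeyII} (and \cite{CVA} for the Laguerre-first-kind style computation) for the limits already known there. For the recurrence coefficients with $j\in\{1,\dots,p\}$, the relations \eqref{K/C/L2->He:R} are obtained by direct substitution into the explicit formulas \eqref{K:R}, \eqref{C:R} and, via the explicit Meixner-of-the-first-kind formulas \eqref{M1:R} together with Corollary~\ref{Cor:M1->L2}, for the Laguerre-second-kind route, keeping track of the powers of $\sqrt{N}$ and $\sqrt{\beta}$ and of the parameters tending to their limits ($\tfrac12$, $1$, and so on).

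The main obstacle is the $j=0$ coefficient. As in the Meixner-of-the-first-kind and Kravchuk computations, the leading contribution of $b^0_{\vec n}$ --- of order $\sqrt{N}$ (respectively $\sqrt{\beta}$) --- cancels against the subtracted shift $-\frac{N}{2}\delta_{j,0}$ (respectively $-\beta\delta_{j,0}$), so one must expand each factor of the explicit expression to the next order, effectively a Taylor expansion in $1/\sqrt{\beta}$ of products of the form $\prod_{q\neq k}\bigl(1+\tfrac{\mathrm{const}}{(\text{linear in }\beta)}\bigr)$, collect the surviving $O(1)$ terms, and check that the spurious pieces are $O(1/\sqrt{\beta})$. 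A secondary subtlety, already visible in the Charlier section, is that any limit routed through Meixner of the first kind is transparent only through the alternative hypergeometric representation \eqref{M1:I_alt}, not \eqref{M1:I}; I would therefore phrase the Laguerre-of-the-second-kind route using that representation (or the corresponding integral form) to avoid an apparent divergence. Once these expansions are carried out, simplification produces the stated Hermite recurrence coefficients and, in particular, confirms that the Kravchuk, Charlier and Laguerre-of-the-second-kind routes agree.
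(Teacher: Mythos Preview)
Your proposal is correct and follows essentially the same approach as the paper: verify the limits at the level of the contour-integral representations for the linear forms (Theorems~\ref{KI_IR}, \ref{CI_IR}, and \eqref{L2I_IR}), using Lemmas~\ref{lemmaGamma1}--\ref{lemmaGamma2} and \eqref{GFB} to recover the Bleher--Kuijlaars Hermite integral, and then deduce the remaining limits from this. The paper's proof is considerably terser---it simply quotes the target integral from \cite{B-K} and asserts that ``the remaining limits then follow''---so your more detailed treatment of the $j=0$ recurrence coefficient and of the normalizing constants is welcome; your aside about routing the Laguerre-second-kind case through \eqref{M1:I_alt} is unnecessary here, however, since \eqref{L2I_IR} is already available directly.
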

\begin{proof}
In \cite[Theorem 2.3]{B-K}, one obtained the following integral representation for the linear forms
\[
H_{\vec{n}}^{({\rm I})}(x; \vec{c}) = \frac{2^{|\vec{n}| - 1}}{\sqrt{\pi}} \bigintsss_{\Sigma^\ast} \frac{\operatorname{e}^{-\left(x-\frac{t}{2}\right)^2}}{\prod_{i=1}^p (t - c_i)^{n_i}} \frac{\d t}{2 \pi \ii}.
\]
where \( \Sigma^\ast \) is a clockwise contour in $\{t\in \mathbb{C} \mid 0<\operatorname{Re}(t)>0\}$ enclosing \(\{c_i\}_{i=1}^p\) exactly once. This expression can also be obtained by applying the stated respective limits over the integral representation for the Kravchuk (Theorem \ref{KI_IR}), Charlier (Theorem \ref{CI_IR}) and Laguerre second kind \eqref{L2I_IR} linear forms. The remaining limits then follow from these ones.
\end{proof}

\section*{Conclusions and Outlook}

In this paper, explicit hypergeometric and integral representations are provided for the discrete descendants of the Hahn polynomials within the Askey scheme, including the multiple Meixner polynomials of the first and second kinds, as well as the Kravchuk and Charlier polynomials. Additionally, the corresponding near-neighbor recurrence coefficients are presented. The multiple Askey scheme has been completed with corresponding limits.

A intriguing and largely unexplored area lies in mixed multiple orthogonal polynomials. These extend the classical families of multiple orthogonal polynomials discussed in this paper to incorporate mixed multiple cases. Currently, explicit hypergeometric expressions for mixed Hahn multiple orthogonal polynomials are yet to be discovered, as are contour integral representations for these polynomials. This domain offers a promising avenue for future research, potentially unveiling new insights into the interactions between different orthogonal polynomial families and their applications.

Another promising line of research is to discover the explicit bidiagonal factorization for the recursion matrix on the stepline. This involves characterizing the regions of positivity and total positivity of this banded Hessenberg matrix. Additionally, constructing explicit Markov chains in terms of these multiple orthogonal polynomials is an intriguing avenue, where one can characterize their properties and factorizations in terms of pure birth and pure death chains. This could lead to a deeper understanding of the underlying stochastic processes and their connections to multiple orthogonal polynomials.

\section*{Acknowledgments} AB acknowledges Centre for Mathematics of the University of Coimbra funded
by the Portuguese Government through FCT/MCTES, DOI: 10.54499/UIDB/00324/2020.

AF and JEFD acknowledge Center for Research and Development
in Mathematics and Applications (CIDMA) from University of Aveiro funded by the Portuguese Foundation
for Science and Technology (FCT) through projects DOI: 10.54499/UIDB/04106/2020 and DOI:
10.54499/UIDP/04106/2020. Additionally, JEFD acknowledges PhD contract
DOI: 10.54499/UI/BD/152576/2022 from FCT.

JEFD and MM acknowledge research project [PID2021- 122154NB-I00], \emph{Ortogonalidad y Aproximación con Aplicaciones en Machine Learning y Teoría de la Probabilidad} funded by \href{https://doi.org/10.13039/501100011033}{MICIU/AEI/10.13039/501100011033} and by ``ERDF A Way of making Europe''.

TW acknowledges PhD project 3E210613 funded by BOF KU Leuven.


\end{document}